\colorlet{symbols}{black} 
\tikzset{
	dot/.style={circle,fill=symbols,draw=symbols,inner sep=0pt,minimum size=0.4pt},
	basic/.style={draw=symbols},
	>=stealth,
	}
\definecolor{darkgreen}{rgb}{0.0, 0.2, 0.13}
\newcommand{\red}[1]{\begingroup\color{red} #1\endgroup}
\newcommand{\Null}[1]{}
\newtheorem{theorem}{Theorem}
\newtheorem{lemma}[theorem]{Lemma}
\newtheorem{proposition}[theorem]{Proposition}
\newtheorem{cor}[theorem]{Corollary}
\newtheorem{definition}[theorem]{Definition}
\newtheorem{remark}[theorem]{Remark}
\newtheorem{conjecture}[theorem]{Conjecture}
\renewcommand{\L}{{\mathcal L}}
\newcommand{\R}{{\mathbb R }}
\newcommand{\C}{{\mathbb C }}
\newcommand{\Z}{{\mathbb Z }}
\renewcommand{\H}{{\mathbb H }}
\renewcommand{\Z}{{\mathbb Z}}
\newcommand{\Phase}{\mathbf S} 
\newcommand{\Dress}{\mathbf B_+^N}
\newcommand{\Dresso}{\mathbf B^N}
\newcommand{\Undress}{\mathbf B_-^N}
\newcommand{\M}{\mathbf M}
\newcommand{\V}{\mathbf V}
\newcommand{\Backlund}{B\"acklund }
\newcommand{\balpha}{\bm{\alpha}}
\newcommand{\bbeta}{\bm{\beta}}
\newcommand{\bpsi}{\bm{\psi}}
\newcommand{\bzeta}{\bm{\zeta}}
\newcommand{\bfs}{{\mathbf s}}
\newcommand{\bfz}{{\mathbf z}}
\newcommand{\bfu}{{\mathbf u}}
\newcommand{\bfk}{{\bm{\kappa}}}
\newcommand{\tpsi}{{\tilde \psi}}
\newcommand{\bfH}{{\mathbf H}}
\DeclareMathOperator{\real}{\mathrm{Re}}
\DeclareMathOperator{\im}{\mathrm{Im}}
\DeclareMathOperator{\dist}{\mathrm{dist}} 
\DeclareMathOperator{\sech}{\mathrm{sech}}
\DeclareMathOperator{\tr}{\mathrm{Tr}} 
\DeclareMathOperator{\diag}{\mathrm{diag}} 
\DeclareMathOperator{\Ran}{\mathrm{Range}} 
\DeclareMathOperator{\sgn}{\mathrm{sgn}}
\numberwithin{equation}{section}
\numberwithin{theorem}{section}
\begin{document}
\title{Multisolitons for the cubic NLS in 1-d and their stability}

\author{Herbert Koch}
\address{Mathematisches Institut   \\ Universit\"at Bonn }
\email{koch@math.uni-bonn.de}

\author{ Daniel Tataru}
\address {Department of Mathematics \\
  University of California, Berkeley} 
\email{tataru@math.berkeley.edu}

\begin{abstract}
  For both the cubic Nonlinear Schr\"odinger Equation (NLS) as well as the
  modified Korteweg-de Vries (mKdV) equation in one space dimension we consider  the set $\M_N$ of pure $N$-soliton states, and their associated multisoliton solutions.
We prove that (i)  the set $\M_N$ is a uniformly smooth manifold, and (ii) the $\M_N$ states are uniformly  stable in  $H^s$,  for  each $s>-\frac12$.

One main tool in our analysis is an iterated \Backlund transform, which allows us to nonlinearly add a multisoliton to an existing soliton free state (the soliton addition map)  or alternatively to remove a multisoliton from a multisoliton state (the soliton removal map). The properties and the regularity of these maps are extensively studied. 
\end{abstract}

\maketitle

\setcounter{tocdepth}{1}
\tableofcontents

\section{Introduction}
In this article we consider the focusing cubic Nonlinear Schr\"odinger  equation (NLS) 
\begin{equation}
i u_t + u_{xx} + 2 u |u|^2 = 0, \qquad u(0) = u_0,
\label{nls}\end{equation}
and the complex focusing modified Korteweg-de Vries equation (mKdV) 
\begin{equation}
u_t + u_{xxx} + 6|u|^2u_x  = 0, \qquad u(0) = u_0,
\label{mkdv}\end{equation}
on the real line, with real or complex solutions in one space dimension. 

We are considering these equations together since they are commuting 
Hamiltonian flows. They are also completely integrable with a common Lax
operator, and they are the first two nontrivial flows of the NLS-hierarchy of a countable number of commuting flows. 
Both admit soliton solutions, and the pure soliton states are common for the  two equations. These can be obtained from the state
\begin{equation}\label{soliton}
Q_0 = 2 \sech 2x
\end{equation}
via scaling and Galilean symmetry (which we will call \emph{spectral parameters}) and translations
and phase shifts  (which we will call the \emph{scattering parameters}).
The set of all single soliton states can be thought of as a four dimensional real manifold, or alternately as a two dimensional complex
manifold, which we denote by $\M_1$. The pure soliton states are known to be orbitally stable in various topologies such as $L^2$ and $H^1$.

Our aim in this paper is to study multisoliton states and solutions,
which can be thought of as the nonlinear superposition of several
single soliton solutions.  
  Precisely, for $N \geq 1$ we consider the
family of pure $N$-soliton states, which we denote by $\M_N$, and we
investigate its geometry as well as its stability with respect to both
the mKdV flow, the NLS flow as well as all higher order flows. 
Some of this analysis has been carried out before by several authors, under the assumption that the spectral
parameters of the $N$ component solitons
(or equivalently, the eigenvalues of the associated Lax operator) are separated. Instead, our
emphasis will be on what happens when the spectral parameters are
close, including the higher multiplicity case.  Our two main results can be summarized as follows:

\begin{description}
\item[I. Regularity] The family $\M_N$ of $N$-solitons is a uniformly smooth, symplectic $4N$ dimensional submanifold
of $H^s$ for all $s > -\frac12$.

\item[II. Stability] The family $\M_N$ is uniformly 
  stable with respect to both the NLS and the mKdV flows in $H^s$ for
  all $s > -\frac12$, in the sense that for any initial data $u_0$ with distance $\varepsilon$ to $\M_N$ there is a pure $N$ soliton solution $v$ so that $\Vert u(t) - v(t) \Vert_{H^s} = O(\varepsilon) $. 
\end{description}

The main tool in our analysis is the \Backlund transform,
which has been studied before in a context where the spectral parameters are separated.  Instead, here we study
the \Backlund transform when the spectral parameters are close are equal, and the results above
can be viewed to a certain extent as a consequence of this analysis. Our study  of the 
\Backlund transform is contained in Sections~\ref{s:backlund}, \ref{s:dress}, \ref{s:extended}, followed by the main results 
 in Section~\ref{s:dress+}.

The aim of the rest of the introduction is to provide sufficient
background in order to allow us to state more precise formulations for
both results above.

\subsection{Symmetries  and conservation laws}
Both the NLS equation \eqref{nls} and the mKdV equation \eqref{mkdv} are invariant with respect to translations in space and time and  with respect to phase shifts, i.e. multiplication by a complex number of modulus $1$.

Moreover the NLS equation is invariant 
with respect to  scaling
\[
u(x,t) \to \lambda u(\lambda x,\lambda^2 t),
\]
whereas  the mKdV  equation is invariant with respect to 
\[
u(x,t) \to \lambda u(\lambda x,\lambda^3 t). 
\]
The initial data for the two problems scales in the same way,
\begin{equation} \label{scaling}
u_0(x) \to \lambda u_0(\lambda x),
\end{equation}
 and so does the Sobolev space $\dot H^{-\frac12}$, which one may view as the
critical Sobolev space. 
 
The NLS and the mKdV flows  are in effect part of an hierarchy of  an infinite number of  commuting 
Hamiltonian flows with respect to the symplectic form
\[
\omega(u,v) = 2\im \int u \bar v \ dx,
\]
hence the Hamiltonian equations of the Hamiltonian $H$ are
\[ \frac{d}{ds} H(u+sv)\Big|_{s=0} = i  \int \dot u v - \overline{\dot u} v dx. \]
We consider Hamiltonians which are integrals over densities which we write as
sums over products of $u$ and $\bar u$ and their derivatives, which are invariant under exchanging $u$ and $\bar u$. The Hamiltonian equations are
\begin{equation}   \dot u = \frac1i \frac{\delta H(u)}{\delta \bar u }. \end{equation}
Each of these Hamiltonians of the hierarchy yields joint conservation laws for all of
these flows. The first several energies are as follows:
\begin{equation} \label{eq:energies} 
\begin{split} 
H_0=&    \int |u|^2 \, dx, \\
 H_1=& \  \frac1i \int u  \partial_x
              \bar u \, dx ,
\\ H_2=&  \int
              |u_x|^2 - |u|^4 dx, 
\\H_3 = & \  \frac1i \int \ u_x \partial_x \overline{ u}_x     
- 3  |u|^2 u \partial_x  \bar u\, dx ,
\\ H_4 =&  \int  |u_{xx}|^2 - ||u|^2_x|^2 - \frac32 |(u^2)_x|^2 + 2 |u|^6\, dx.
\end{split}
\end{equation}
The even ones are even with respect to complex conjugation and have a
positive definite principal part, and we will refer to them as
energies. The odd ones are odd under the replacement of $u$ by its
complex conjugate, and we will refer to them as momenta.
With respect
to the symplectic form above, With respect
to the symplectic form above, these commuting Hamiltonians generate
flows as follows: $H_0$ generates the phase shifts $e^{-it} u_0$, $H_1$ generates
the group of translations $u_0(x+t)$, 
$H_2$ the NLS flow, $H_3$ the mKdV flow,
etc. We denote the respective flows by $\Phi_n$, as operators acting 
on suitable Sobolev spaces. Because of the commuting property, one can also easily consider combinations of these flows.  To denote these
combined flows   in a compact fashion, for an arbitrary real polynomial 
\[
P(z) = \sum_{j=0}^n \beta_j z^j,
\]
we define
\begin{equation}\label{flows}
\Phi(2^{-1}P) = \Phi_0(\beta_0) \circ \Phi_1(2^{-1} \beta_1) \cdots \circ \Phi_n(2^{-n}\beta_n).
\end{equation}
Here the factors of $2$ arise as a consequence of a mismatch between the standard notations for the Fourier transform and for the scattering transform.

The integrable structure manifests itself in the existence of Lax pairs with the Lax operator 
\[ 
\mathcal{L}(u) = i \left( \begin{matrix} \partial_x & -u \\ -\bar u & -\partial_x  \end{matrix}\right) 
\] 
which we will discuss below.  The Lax operator for NLS was first introduced by Zakharov-Shabat~\cite{MR0406174}, following the circle of ideas initiated by Kruskal-Gardner-Green-Miura~\cite{kruskal1967method} and Lax~\cite{MR235310}.
Together with the associated scattering transform, it was studied later by many authors, including \cite{MR0450815}, \cite{MR928046}. The above flows define an evolution of the Lax operator and associated similarity transforms between those Lax operators.  

Here we note that if $u=Q_0$, the pure soliton in \eqref{soliton}, then 
\[ 
\mathcal{L}(Q_0)  \left(\begin{matrix}  \sech(2x) e^{-x} \\ - \sech(2x) e^x    \end{matrix} \right) 
= i \left( \begin{matrix} \sech(2x) e^{-x} \\ -\sech(2x) e^x      \end{matrix} \right), \qquad    \mathcal{L}(Q_0)  \left(\begin{matrix}  \sech(2x) e^{x} \\  \sech(2x) e^{-x}    \end{matrix} \right) 
= - i \left( \begin{matrix} \sech(2x) e^{x} \\ \sech(2x) e^{-x}      \end{matrix} \right),
\] 
and hence $z=\pm i$ are eigenvalues of $\mathcal{L}(Q_0) $.
The full spectrum of $L(Q_0)$ consists of these two eigenvalues together 
with the real axis, which represents its continuous spectrum.

All equations of the hierarchy are well-posed for initial data in the Schwartz space, see Zhou \cite{MR1617249}:  A modified inverse scattering transform maps
  \[ H^{l,k} = \{ u \in H^l, x^k u \in L^2 \} \]
  bijectively to the scattering data (more precisely some Riemann-Hilbert data) in a space denoted by
  $H^k \cap L^2(1+|z|^{2l}|dz|)$ for any $k \ge 1$ and $l\ge 0$ on a contour which is the union of the real line with a large circle. For more details,  see Theorem 1.8 in \cite{MR1617249} for the map, and Theorem 2.7 and Theorem 2.11 for the inverse map for the simpler cases $l=0,1$; The case of
  large $l$ require only an understanding of the situation of large $z$, which is the same as for small data and has been understood already in \cite{MR928046}. 
  The Hamiltonian evolution of the $j$-th Hamiltonian is effectively linear on the scattering data $r$: It maps
  \[  r   \to  e^{ i  \frac{t}2  (2z)^j} r   \]
  which defines a strongly continuous group on $H^k \cap L^2( 1+|z|^{2jk} ) $ 
 and  a unique evolution on Schwartz functions. This result is much older in the case of generic data, see Beals and Coifman \cite{MR928046}.    

The odd order equations preserve real data. This is mKdV hierarchy. In
this case stronger local existence results as well as illposedness results in the sense of failure of uniform continuity have been proven by
Gr\"unrock \cite{MR2653659}. These flows are never smooth in $H^s$
with respect to the initial data, no matter how large $s$ is chosen, if the order of the equation is $5$ or higher.  In view of recent work by Harrop-Griffith, Killip and Visan \cite{HKV}, who proved well-posedness for NLS and mKdV in $H^s$, $ s >-1/2$, one might
hope that the whole hierarchy is well-posed in $H^s$ for all $s>-1/2$
on the real line.

\subsection{ The Galilean invariance and frequency shifts}

Classically the Galilean invariance is a symmetry of the NLS equation,
\begin{equation}\label{galilei}
u(x,t) \to  e^{i(x \xi_0  -t\xi_0^2)}      u(x-2t\xi_0 ,t).  
\end{equation}
This can be reformulated using the notation of \eqref{flows} as 
\begin{equation}\label{phaseshift}
  \begin{split} 
\Phi(2tz^2) e^{ix\xi_{0}} u_0 & \,  =    \Phi_2(t) (e^{ix\xi_0 } u_0) = e^{i x\xi_0} \Phi_2(t)\Phi_1(-2t\xi_0) \Phi_0(\xi_0^2) u_0
\\ & =   e^{ix\xi_0} \Phi(2 t z^2 - 2t\xi_0 z +  t \xi_0^2 )u_0
= e^{ix\xi} \Phi(2t (z-\xi_0/2)^2) u_0,
\end{split}
\end{equation}  
which says that phase shifts lead to linear combinations of all the lower flows. In this form it generalizes to all higher flows,
\begin{equation} 
\Phi(P) (e^{ix\xi_0 } u_0) = e^{ix\xi_0} \Phi(P(.-\xi_0/2))u_0.
\end{equation}

This is a straightforward algebraic computation for the corresponding linear flows, where one can use their Fourier representation. To pass to the  nonlinear flows, the easiest way is to use the scattering transform. This conjugates the nonlinear flows to the linear flows, while commuting with the phase shifts $G_\xi$. Such an argument 
applies rigorously in the Schwartz class, and extends by density to any Sobolev spaces $H^s$ where these flows are well defined.

\subsection{Solitons and soliton parameters}

Here we start from the soliton data $Q_0$ given by \eqref{soliton}.  
Then $2e^{4it} \sech(2x)$ is a soliton solution to NLS with initial data $Q_0$, and  $2\sech(2x-8t)$ is a soliton of mKdV with the same data. 
We use the symmetries of the NLS equation to generate a full set of
single soliton data. We divide this process into two parts:

\bigskip

\emph{a) Spectral parameters.} These correspond to two symmetries, namely 
(i) modulation,
\[
u(x) \to e^{-i x (2\xi)} u(x),
\]
and (ii) the scale invariance,
\[
 u(x) \to \lambda u(\lambda x).  
 \]
 The reason for the factor $2$ is again due to the mismatch between standard notations for the Fourier transform and the scattering transform as in \eqref{flows}.
We compose the two  symmetries into 
\begin{equation}\label{zdef} 
z =  \xi + i \lambda \in \bfH
\end{equation} 
and 
\[
S_z u(x) = e^{-i x (2\xi)}  \lambda u(\lambda x).  
\]

\bigskip 

\emph{b) Scattering parameters.} 
These correspond to two symmetries, namely 
(i) the translation invariance, which at the level of the initial data yield 
\[
u(x) \to u(x-a),
\]
and (ii) the phase invariance,
\[
 u(x) \to  e^{-2i \theta}  u(x).
\]

We also assemble these together into a complex variable as 
\begin{equation}\label{kappadef}  
\kappa =  a + i \theta,
\end{equation} 
and define
\[
S_\kappa u (x) = e^{-2i\theta} u(x-a).
\]
Then the set $\M_1$ of all single soliton data is given by 
 \[
Q_{z,\kappa}(x) =   S_{z} S_{\kappa} Q_0(x) = e^{-2i(\theta +   x\xi)} \lambda Q_0(\lambda(x-x_0) )
 , \qquad (z,\kappa) \in \Phase_1:=\bfH \times 
(\C/\ \pi i \Z),
\]
where the soliton location $x_0$ is defined by 
\[
a = x_0 \lambda.
\]
We can also represent $Q_{z,\kappa}$ using the first two flows,
\[
Q_{z,\kappa} =S_z \Phi( \theta -a \, \cdot) Q_0=  \Phi( \theta-\xi x_0 -x_0 \, \cdot) S_z Q_0.
\]

Again we calculate  
  \begin{equation}  \mathcal{L}(Q_{z,\kappa}) \left( \begin{matrix}   \sech(2(\lambda (x-x_0)) e^{-(i \theta+i \xi x  +\lambda(x-x_0))}      \\ -\sech(2(\lambda (x-x_0)) e^{i\theta+i \xi x + \lambda(x-x_0) }   \end{matrix} \right)   = z
    \left( \begin{matrix} \sech(2\lambda (x- x_0)) e^{-(i \theta +i \xi x + \lambda(x-x_0))  }   \\ - \sech(2(\lambda x-x_0))
      e^{i \theta+i\xi x +  \lambda(x-x_0)} \end{matrix} \right), 
  \end{equation}
which shows that $z$ is an eigenvalue and, with $\psi$ denoting the above eigenfunction to the eigenvalue $z$, we obtain 
\begin{equation} 
- e^{2\kappa}= \frac{\lim\limits_{x\to \infty}  e^{-izx} \psi_2(x)}{\lim\limits_{x\to -\infty} e^{izx}  \psi_1(x) }, 
\end{equation}
which gives the interpretation of $\kappa=i  \theta + \lambda x_0 $ as a scattering parameter.

  It is  interesting to describe the NLS and mKdV evolution in the single soliton space parametrized by $(z,\kappa) \in \Phase_1= \bfH \times 
(\C/\ \pi i \Z)$ as above. Clearly
\[
e^{4it} Q_0(x-x_0)
\]
is an NLS solution, and by scaling so is 
\[
 e^{4i \lambda^2 t} \lambda Q_0(\lambda(x-x_0)).
\]
The Galilean transform and phase shift give the general one soliton NLS solution  
\begin{equation} 
Q_{\theta,a,\xi,\lambda} =   e^{-2i\theta_0 -2 i \xi x}  e^{-i(4\xi^2 t - 4 \lambda^2 t) }         \lambda   Q_0(\lambda (x+4\xi t-x_0)). 
\end{equation}  
Here the parameters $\lambda$ and $\xi $ stay fixed, while for 
  $a$ and $\theta$ we obtain
  \[ 
  \dot a  = - 4 \lambda \xi, 
  \qquad
  \dot \theta  = 2( \xi^2 - \lambda^2).
  \]
  
  Thus for NLS (recall \eqref{zdef} and \eqref{kappadef}) we have
\begin{equation}\label{nls-S1}
\dot z = 0, \qquad 2 \dot \kappa = 2 ( \dot a + i \dot \theta )=    i (2z)^2, 
\end{equation}
Similarly,  for mKdV 
\[ 
Q_0( x-4t-x_0) 
\] 
is a solution, scaling gives gives the rescaled solution 
\[ 
\lambda Q( \lambda (x - 4 \lambda^2 t-x_0)), 
\] 
and \eqref{phaseshift}
specializes to  the general complex soliton solution 
\[
\begin{split} 
e^{2i\xi x}    \Phi_3(t) \Phi_2(-6\xi_0 t) \Phi_1(12t\xi_0^2) \Phi_0(-8t \xi_0^3)
\lambda Q_0( \lambda( x- x_0) )
 &
\\ 
= e^{i(2\xi x - t (8\xi^3 - 24 \xi \lambda^2))} Q_0(\lambda(x-x_0- 4 \lambda^2 t + 12t \xi^2 )) 
\end{split} 
\] 
and
\[ 
\dot a =  4 \lambda^3 - 12 \lambda \xi^2,     \qquad \dot \theta = 4 \xi^3 - 12 \xi \lambda^2,    \]
hence 
\begin{equation}\label{kdv-S1}
\dot z = 0, \qquad 2 \dot \kappa =     i (2z)^3.
\end{equation}
One can interpret these as Hamiltonian flows on 
\[ 
\M_1 = \{ Q_{\theta, a, \xi,\lambda} : \theta \in \R / \pi \Z, a, \xi \in \R, \lambda >0 \},
\] 
where 
\begin{equation}  
Q_{\theta,a,\xi,\lambda}(x) = e^{-2i\theta - 2i \xi x}     
\lambda   Q_0(\lambda x -a). 
\end{equation} 
 The trace formula (see Proposition~\ref{prop:trace}) shows that the restriction of the Hamiltonians to the manifold $\M_1$ is 
\[
H_{NLS} = \frac23 \im (2z)^3, \qquad H_{mKdV} = \frac24 \im (2z)^4,
\]
which could also be seen by a direct calculation. The restriction of the symplectic form 
\[ 
\sigma(u,v) = 2\im \int u \bar v dx 
\] 
defines a symplectic form on $\M_1$ which can be expressed in terms of the coordinates $(\lambda,\theta,a,\xi)$. We obtain the symplectic form by a direct calculation,
\Null{We begin with 
\[ 
\Vert Q_0 \Vert_{L^2}^2 = 4 \int \sech^2(2x) dx = 4 
\] 
and 
\[
\im \int \partial_\theta Q_{\theta,a,\xi,\lambda}\overline{ \partial_{a} Q_{\theta,a,\xi,\lambda}} dx = -2\lambda^{-1} \int Q_{0,0,\xi,\lambda} \partial_x Q_{0,0,\xi,\lambda} dx = 0, 
\] 
\[
\im \int \partial_\theta Q_{\theta,a,\xi,\lambda}\overline{ \partial_{\xi} Q_{\theta,a,\xi,\lambda}} dx = 0,
\]
\[
\im \int \partial_\theta Q_{\theta,a,\xi,\lambda}\overline{ \partial_{\lambda } Q_{\theta,a,\xi,\lambda}} dx = -2\lambda^{-1} \int Q_{0,0,0,\lambda} (Q_{0,0,0,\lambda} + x\partial_x Q_{0,0,0,\lambda}) dx = - \lambda^{-1} \Vert Q_{0,0,0,\lambda} \Vert_{L^2}^2 = -4,
\]
\[ 
  \im \int \partial_{a}  Q_{\theta,a,\xi,\lambda}\overline{ \partial_{\xi} Q_{\theta,a,\xi,\lambda}} dx = - 2\lambda^{-1}  \real \int \partial_x Q_{0,0,\xi, \lambda} x \overline{Q_{0,0,\xi, \lambda}} dx
      = \lambda^{-1}  \Vert Q_{0,0,0,\lambda} \Vert_{L^2}^2 = 4, 
\]
\[
\begin{split} 
      \im \int \partial_{a}  Q_{\theta,a,\xi,\lambda}\overline{ \partial_{\lambda } Q_{\theta,a,\xi,\lambda}} dx
       = &  -\lambda^{-1} \im \int \partial_{x} Q_{0,0,0,\lambda} (Q_{0,0,0,\lambda} + x  Q'_{0,0,0,\lambda})  dx = 0, 
\end{split} 
\]    
\[ \im \int \partial_{\lambda}  Q_{\theta,a,\xi,\lambda}\overline{ \partial_{\xi} Q_{\theta,a,\xi,\lambda}} dx = 2\lambda^{-1} \int ( 1+ x\partial_x ) Q_{0,0,0, \lambda} x Q_{0,0,0, \lambda} dx = 0. 
\] }
\[ 
\omega =
      8 (d\lambda \wedge d\theta  + da \wedge d\xi) = 2 (d \kappa \wedge dz +
      d \bar \kappa \wedge d \bar z),
\]
     and the Hamiltonian equations for the NLS flow on $\M_1$ are
\[
\dot \lambda =  -\frac18   \frac{\partial H}{\partial \theta}, 
\quad
\dot \xi =  \frac18  \frac{\partial H}{\partial a}, 
\quad
\dot a = -\frac18\frac{\partial H}{\partial \partial \xi},
\quad
\dot \theta = \frac18 \frac{\partial H}{\partial \lambda}.
\] 
which coincides with the dynamics above for NLS and mKdV in \eqref{nls-S1}, \eqref{kdv-S1}. 
     
A similar reasoning, also based on trace formulas, shows that the $n$-th Hamiltonian restricted to $\M_1$ is
\[
H_n= \frac{2}{n+1}  \im (2z)^{n+1},
\]
and the $n$-th evolution in $\M_1$ is
\begin{equation}\label{nflow-S1}
   \dot z = 0, \qquad  2\dot \kappa = i (2z)^n .
\end{equation}

\subsection{ The Lax operator and the transmission coefficient}

The Lax operator associated to a state $u$ is given by 
\[
\mathcal L = i \left(\begin{matrix} \partial_x & - u \\ - \bar u  & - \partial_x
 \end{matrix} 
\right),
\]
and the associated spectral problem is 
\[
L \psi = z \psi.
\]

For $z$ in the upper half-space one can consider special solutions $\psi_l$ and $\psi_r$, 
called the  Jost  solutions, 
\[
\psi_l (\xi,x,t) = \left( \begin{array}{c}    e^{-iz x}  \cr 
0 \end{array}\right)+ o(1) e^{\im z x}  \ \ \ \text{ as $x \to -\infty$},
\] 
\[ 
 \psi_l (\xi,x,t) = \left( \begin{array}{c}   T^{-1}(z)  e^{-iz x}  \cr 
0 \end{array}\right) + o(1)e^{\im zx}  \ \ \ \text{ as $x \to \infty$}.
\] 
The function $T = T(z,u)$, called the \emph{transmission coefficient}, 
is a meromorphic function in the upper half-space, and 
satisfies $|T| \geq 1$.  As $u$ evolves along any of the commuting flows
of the NLS family, the transmission coefficient rests unchanged.

The $L^2$ size of a state $u$ can be described using the transmission coefficient as 
\[
\| u\|_{L^2}^2 = \lim_{z \to i\infty} 2z  \ln T(z,u).
\]
Moving this relation to the real axis using the residue theorem yields the trace formula
\begin{equation} \label{tracel2}
\| u\|_{L^2}^2 = \frac1{\pi} \int_{\R} \ln |T(\xi/2)|d\xi  + 2  \sum_j n_j \im (2z_j),
\end{equation}
where $(z_j,n_j)$ are the poles of $T$ with their multiplicity.

A function  $u= Q_{z_0,\kappa}$ is a single soliton with spectral parameter $z_0$ 
if and only if  its transmission coefficient 
has a pole exactly at $z_0$  and
\[
T(z) =  \frac{z - \bar{z}_0}{z-z_0}. 
\]
In particular $z_0$ is an eigenvalue of its Lax operator $\mathcal L $, and the corresponding eigenfunction $\phi_{z_0}$ is a multiple of both $\psi_l$ and $\psi_r$. Then the  scattering 
parameter $\kappa$ can be read as the proportionality factor
between the two Jost functions,
\begin{equation}\label{scattering}  
\psi_l = - e^{2\kappa} \psi_r. 
\end{equation}

\subsection{ A full family of conservation laws}
In a prior article \cite{MR3874652} the authors have  extended the countable family of conservation laws
for the $1$-d cubic NLS and mKdV, associated to integer Sobolev indices, 
to a continuous family, associated to all real Sobolev exponents $s > -\frac12$. Related conserved energies have been independently constructed by Killip-Visan-Zhang \cite{MR3820439}, for the range $ -\frac12 < s \le 1$.  

\begin{theorem}[\cite{MR3874652}]\label{energies} 
  For each $s > -\frac12$ there exist energy functionals $E_s$  which are globally defined
\[
E_s :  H^s    \to \R,
\]
with the following properties:
\begin{enumerate}
\item $E_s$ is conserved along the NLS and mKdV flow.

\item For all $u \in H^s$ the limit of\footnote{The choice of signs
    $\mp$ corresponds to the defocusing/focusing case} $\mp \log |T|$
  exists as a positive measure, and the trace formula \eqref{deff+}
  holds with absolute convergence in all sums and
  integrals.
\item  If $\Vert u \Vert_{l^2_1DU^2}\le 1 $ then   
\[
\left| E_s(u) - \|u\|_{H^s}^2 \right| \lesssim  \Vert u \Vert_{l^2_1DU^2}^2 
\Vert u \Vert_{H^s}^2. 
\]
\item The  map 
\[ 
H^{\sigma} \times  (-\frac12,\sigma] \ni (u,s) \to  E_s(u)  
\]
 is analytic provided $\frac{i}2$ is not an eigenvalue, and it is continuous   in $u \in H^\sigma$
in general. It is also continuous in $s$, and analytic in $s$ for $s < \sigma$.
\end{enumerate}
\end{theorem}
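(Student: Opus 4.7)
The plan is to construct $E_s$ from the transmission coefficient $T(z,u)$, which is already a conserved quantity under the entire hierarchy by the isospectral property of the Lax pair. The classical trace formula \eqref{tracel2} realizes $\|u\|_{L^2}^2$ as an integral of $\log|T|$ along the real axis plus a sum over the poles of $T$, and the analogous identities for integer $H^n$ norms involve an extra polynomial weight in $\xi$. The natural extension to general real $s > -\tfrac12$ is
\[
E_s(u) \;=\; c_s \int_{\R} |\xi|^{2s}\bigl(\mp\log|T(\xi/2,u)|\bigr)\, d\xi \;+\; \frac{2}{s+\tfrac12}\sum_j n_j\, \im (2z_j)^{2s+1},
\]
with $c_s$ chosen so that the leading quadratic (Born) term matches $\|u\|_{H^s}^2$ and the sign $\mp$ reflecting the defocusing/focusing case. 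The threshold $s > -\tfrac12$ is exactly what is needed for $|\xi|^{2s}$ to be locally integrable at the origin, given that $\mp\log|T|$ is continuous and vanishes at least linearly there for small data. That $\mp\log|T|$ is a positive measure on the real axis (part of item (2)) follows in the defocusing case from $|T| \le 1$ and in the focusing case from the fact that all bound states have been removed into the discrete sum. Conservation along NLS and mKdV, item (1), is immediate from the invariance of $T$ itself.

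The main analytic content is the small-data comparison (3). I would expand $\log T(z,u)^{-1}$ as a convergent Fredholm-type series in $u$, via the resolvent expansion of $\mathcal L(u)-z$, isolate the quadratic term whose Born approximation, after weighting by $|\xi|^{2s}$ and integrating, reproduces $\|u\|_{H^s}^2$, and estimate each $2n$-linear remainder by a multilinear Fourier form bounded by $\|u\|_{l^2_1 DU^2}^{2n-2}\|u\|_{H^s}^2$. Summing the geometric series in $\|u\|_{l^2_1 DU^2}$ yields the claimed quadratic error. These bounds must be uniform as $z$ approaches the real axis and must be robust against the low-frequency singularity of the weight $|\xi|^{2s}$: this is the main technical obstacle, requiring scale-invariant multilinear estimates in the $l^2_1 DU^2$ space, tailored to the range $s>-\tfrac12$. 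Extension from Schwartz to arbitrary $u \in H^s$ then proceeds by a scaling/localization argument combined with a near-additivity of $E_s$ under frequency decomposition, itself derived from the multiplicative behavior of $T$ under nonlinear superposition and from the bounds above.

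Finally, item (4), the analyticity in $u$ and $s$, follows from the convergent series representation of $\log T$: when $\tfrac{i}{2}$ is not an eigenvalue, the series depends jointly analytically on $(u,s)$ in the relevant range, and differentiation under the weighted integral is justified by the same multilinear bounds. Continuity in $u$ across the degenerate set where an eigenvalue collides with the real axis is delicate and requires a bifurcation analysis: as an eigenvalue $z_j$ emerges from or disappears into the upper half plane, the jump in the discrete sum is cancelled by a corresponding jump in the weighted $\log|T|$ integral, and the sum is continuous even though the two pieces separately are not. Analyticity in $s$ for $s<\sigma$ is then immediate, and continuity in $s$ up to $\sigma$ follows from dominated convergence using the bounds of step (3).
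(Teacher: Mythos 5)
There is a genuine gap, and it lies at the very first step: your proposed definition of $E_s$ as a real-axis integral of $\mp\log|T(\xi/2)|$ against the weight $|\xi|^{2s}$ cannot serve as the \emph{definition} for general $u\in H^s$. For rough data ($s$ possibly negative), $T(\xi)$ for real $\xi$ is not defined pointwise — one essentially needs an $L^1$-type integrability condition on $u$ to solve the Jost ODEs up to the real axis — and the statement that $\mp\log|T|$ exists as a positive measure on $\R$ is itself conclusion (2) of the theorem, not something you may assume at the outset. The construction in \cite{MR3874652} avoids this by choosing the weight $\eta_s(\xi)=(1+\xi^2)^{s}$ precisely because it extends holomorphically to the upper half-plane minus the cut $i[1,\infty)$; one then deforms the contour and \emph{defines} $E_s$ by the integral over the half-line $i[1,\infty)$ as in \eqref{deff+}, where $T(i\tau/2)$ is robustly controlled by $l^2DU^2$ and $H^s$ norms alone (this is where the convergent multilinear expansion and the bound (3) live). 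The real-axis expression \eqref{deff++} is then the trace formula one \emph{proves} afterwards. Your homogeneous weight $|\xi|^{2s}$ defeats this mechanism twice over: it has a branch point at $\xi=0$ sitting on the contour, so no clean deformation into the upper half-plane is available, and it matches $\dot H^s$ rather than $H^s$, so the comparison in (3) with the inhomogeneous norm would fail at low frequencies.

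Two further points would need repair even granting a definition. First, your pole contribution $\frac{2}{s+1/2}\sum_j n_j \im(2z_j)^{2s+1}$ is branch-ambiguous for non-integer $s$ and complex $z_j$; the correct residue term is $2\sum_k m_k\,\Xi_s(2z_k)$ with $\Xi_s(z)=\im\int_0^z(1+\zeta^2)^s\,d\zeta$, which is single-valued off $i[1,\infty)$. Second, the exceptional set for analyticity in (4) is not "an eigenvalue colliding with the real axis" but the event that $i/2$ is an eigenvalue, i.e.\ $2z_k=i$ hits the branch point of $\Xi_s$; away from that point the pole contribution is real-analytic in the data, and the positivity structure of $\real\ln T$ on $\R$ plus the positivity of all but finitely many residue terms is what guarantees the absolute convergence claimed in (2), even when $T$ has infinitely many poles accumulating at the real line. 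Your proposed "cancellation of jumps" between the discrete sum and the weighted integral is not the mechanism used, and would be hard to substantiate without the contour-deformed representation.
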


Here the threshold $i/2$ is not important, and can be changed by scaling. Of course this would also change the energies $E_s$, though not in an essential  way.

The Banach space $l^2 DU^2 = L^2 +DU^2$ is the inhomogeneous version
of the $DU^2$ space, and contains all $H^s$ spaces with $s >
-\frac12$.  It is described in full detail in Appendix A of
\cite{MR3874652}. It can be viewed as a replacement for the unusable scaling
critical space $H^{-\frac12}$ and satisfies
\begin{equation}  
\Vert u \Vert_{l^2 DU^2} \lesssim \Vert u \Vert_{B^{-\frac12}_{2,1}} 
\lesssim \Vert u \Vert_{H^{s}}, \qquad s >-\frac12.
\end{equation}

The energies $E_s$ in the theorem were defined in \cite{MR3874652} in terms 
of the transmission coefficient $T$. A full description is provided 
in the next result, also from \cite{MR3874652}, which also doubles as a trace 
formula.

\begin{proposition}[Trace formulas, \cite{MR3874652}]
\label{prop:trace}  
Let  $N > [s] $ and $ u \in \mathcal{S}$.  In the upper half-space we define the function
\[
 \Xi_s(z) = \im \int_0^z (1+\zeta^2)^{s} d\zeta, 
\] 
which does not depend on the path of integration.  Then
\begin{equation}\label{deff++} 
\begin{split}  
E_s(u) = & \ \int (1+\xi^2)^{s} \real \ln T(\xi/2) d\xi+ 2 \sum_k m_k \Xi(2z_k)  \\
    = & \ 4\sin(\pi s) \!\! \int_1^\infty \!\! (\tau^2 -1)^ s\Big[\!\! - \!\! \real  \ln T(i\tau/2)  +\frac1{2\pi} 
\sum_{j=0}^N (-1)^j  H_{2j} \tau^{-2j-1}\Big] d\tau 
+ \sum_{j=0}^N \binom{s}{j}  H_{2j}, 
\end{split} 
\end{equation} 
where the $k$ sum runs  over all the poles $z_k$ of $T$ with multiplicity $m_j$. 
\end{proposition}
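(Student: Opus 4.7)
Both formulas express $E_s(u)$ as an integral of $\log T$ against the fractional weight $(1+z^2)^s$, plus corrections accounting for the discrete spectrum and the large-$|z|$ behavior of $\log T$. My strategy is to treat the first formula as the natural starting point — it follows from the definition of $E_s$ in \cite{MR3874652}, which effectively pairs $\log T$ with $(1+\xi^2)^s$ along the real axis while accounting for eigenvalues via the primitive $\Xi_s$ — and then derive the second formula by contour deformation into the upper half-plane $\bfH$.

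The key inputs are: (i) for $u \in \mathcal{S}$, $T$ is meromorphic in $\bfH$ with finitely many poles $z_k$ of multiplicities $m_k$, and $\log T(\xi/2)$ decays rapidly on $\R$; (ii) as $z\to\infty$ in $\bfH$, $\log T$ admits the asymptotic expansion whose even-order coefficients are the Hamiltonians $H_{2j}$ from \eqref{eq:energies}, specifically $\real\log T(i\tau/2) \sim -\frac{1}{2\pi}\sum_{j\ge 0}(-1)^j H_{2j}\tau^{-2j-1}$ as $\tau\to+\infty$. To pass from formula (1) to (2), I deform the real-axis contour upward so that it wraps the branch cut of $(1+z^2)^s$ along the imaginary ray $[i,i\infty)$. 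Across this cut at $z=i\tau$, the weight has a jump $2i\sin(\pi s)(\tau^2-1)^s$, producing after taking real parts the prefactor $4\sin(\pi s)$. During the deformation the contour must also detour around each pole $2z_k$ of $\log T(z/2)$; the contribution of such a loop is, by a short computation using the primitive $\Xi_s$, exactly $-2m_k\Xi_s(2z_k)$, cancelling the discrete term from formula (1). This cancellation is precisely the reason for the definition of $\Xi_s$ as a path-integral of the weight: $\Xi_s(2z_k)=\im\int_0^{2z_k}(1+\zeta^2)^s d\zeta$ is literally the weight integrated along a path running from $\R$ up to the pole, matching the path traced by the contour deformation.

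The resulting cut integral $\int_1^\infty(\tau^2-1)^s\real\log T(i\tau/2)d\tau$ is generally divergent at $\infty$ for $s>-\tfrac12$; this is repaired by subtracting the first $N+1$ terms of the asymptotic expansion in (ii), which makes the integrand $O(\tau^{2s-2N-3})$ and hence integrable whenever $N>[s]$. The subtracted tails are evaluated explicitly via the identity
\[
4\sin(\pi s)\int_1^\infty (\tau^2-1)^s\tau^{-2j-1}d\tau = 2\pi\binom{s}{j},
\]
a consequence of the beta function together with the reflection formula for $\Gamma$, yielding the algebraic correction $\sum_{j=0}^N \binom{s}{j}H_{2j}$ in formula (2). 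The independence of the result from the auxiliary cutoff $N$ follows because passing from $N$ to $N+1$ adds one term to the explicit sum and subtracts an identical term from the regularized integral.

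The principal technical obstacle is the simultaneous control of (a) the branch cut of $(1+z^2)^s$, delicate near the branch points $\pm i$; (b) the logarithmic branch points of $\log T$ at the eigenvalues, which may be clustered near the imaginary axis or have high multiplicity; and (c) divergent tails at infinity. The bookkeeping of branches along each segment of the deformed contour — especially where the contour must squeeze between an eigenvalue and the weight's branch cut — is the most delicate step, and the higher-multiplicity case requires care since one single factor $\Xi_s(2z_k)$ must absorb the residue contribution of a pole of order $m_k$ rather than $m_k$ simple poles.
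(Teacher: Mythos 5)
Your proposal is correct and follows essentially the same route the paper takes (the paper itself only sketches this, deferring details to \cite{MR3874652}): deform the real-axis integral onto the branch cut $i[1,\infty)$ of $(1+z^2)^s$, where the jump produces the $4\sin(\pi s)(\tau^2-1)^s$ weight, absorb each pole's contribution into $\Xi_s(2z_k)$, and regularize the divergent tail by subtracting the asymptotic expansion of $\ln T$ whose coefficients are the $H_{2j}$, evaluating the subtracted tails by the beta-function identity to get $\sum_j \binom{s}{j} H_{2j}$. The only cosmetic difference is direction: the paper adopts the regularized half-line expression \eqref{deff+} as the \emph{definition} of $E_s$ and recovers the real-axis formula by Cauchy's theorem, whereas you run the same contour argument from the real-axis formula to the half-line one.
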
 

If there are infinitely
many poles for $T$ in the upper half-space then the second expression
above is always a convergent integral, whereas in the first expression
we have a non-negative integral, plus a sum where all but finitely many
terms are positive. This simultaneously allows us to interpret the
trace of $\ln |T|$ on the real line as a non-negative measure, and to guarantee
the convergence in the $k$ summation.

We also remark on the contribution of the poles which are on the imaginary axis.
Precisely the function $\Xi_s$  is real analytic away from $z=i$. Thus the 
only nonsmooth dependence on $u$ in $E_s$ via the poles comes from
the poles which are  at  $i$. 

Here the choice of the function $(1+z^2)^s$ was somewhat arbitrary,
all that matters is that it is holomorphic in the upper half-space
minus $i[1,\infty)$ and has the appropriate behavior at infinity. 
In particular, the conserved Hamiltonians $H_j$ correspond to the 
the functions $z^j$, and they can be expressed as  
\begin{equation}
    H_j(u) =  \frac1\pi \int \xi^{j} \real \ln T(\xi/2) d\xi+ 2 \sum_k m_k \dot\Xi_j(2z_k),  
\end{equation}
where
\[
\dot\Xi_j(z) = \im \int_0^{2z} \zeta^j d\zeta =  \frac{1}{j+1} \im (2z)^{j+1}.
\]
For pure solitons the contribution of the first term vanishes, and we are 
left with
\begin{equation}
    H_j(Q_{z,\kappa}) = \frac{2}{j+1} \im (2z)^{j+1},  
\end{equation}
as mentioned earlier in the paper.


To further clarify the assertions in the theorem, we note that the energy conservation result is established for regular initial data.
By the local well-posedness theory, this extends to all
 $H^s$ data above the (current) Sobolev local well-posedness threshold,
which is $s \geq 0$ for NLS, respectively $s \geq \frac14$ for
mKdV. If $s$ is below these thresholds, then the energy conservation
property holds for all data at the threshold, i.e. for $L^2$ data for
NLS, respectively $H^\frac14$ data for mKdV.  It is not known whether
the two problems are well-posed below these thresholds and above the
scaling; however, it is known that local uniformly continuous
dependence fails, see \cite{MR2376575}. Recently Harrop-Griffith, Killip and Visan \cite{HKV} proved that in the defocusing case  the flow map extends to a continuous map on $H^s$, for $ s >-\frac12$, for both NLS and mKdV. 

One key consequence of the above result is that, if the initial data is in
$H^s$, then the solutions remain bounded in $H^s$ globally in
time in a uniform fashion:

\begin{cor}[\cite{MR3874652}] \label{c:large-data}
Let $s > -\frac12$, $R > 0$ and $u_0$ be an initial data for either NLS or mKdV so that 
\[
\| u_0\|_{H^s} \leq R
\]
Then the corresponding solution $u$  satisfies the global bound
\[
\| u(t)\|_{H^s} \lesssim F(R,s) : = \left\{ \begin{array}{ll} R+ R^{1+2s} & s \geq 0 \cr
R+ R^{\frac{1+4s}{1+2s}} &  s < 0 \end{array} \right.
\]
\end{cor}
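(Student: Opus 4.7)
\emph{Step 1: Rescaling.} The corollary follows by combining Theorem~\ref{energies} with the scaling symmetry~\eqref{scaling} in the standard ``scale down, apply conservation, scale back up'' fashion. Given $\|u_0\|_{H^s}\le R$, I would set $v_0(x) = \lambda u_0(\lambda x)$ for a small parameter $\lambda \in (0,1]$. Then
\[
\|v_0\|_{L^2}^2 = \lambda\|u_0\|_{L^2}^2, \qquad \|v_0\|_{\dot H^s}^2 = \lambda^{2s+1}\|u_0\|_{\dot H^s}^2.
\]
Since $s>-\tfrac12$ both exponents are positive, so for $\lambda$ sufficiently small (depending on $R$ and $s$) we can arrange $\|v_0\|_{H^s}\le c_0$ for any prescribed small absolute constant $c_0$.

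\emph{Step 2: Energy equivalence and bootstrap.} The embedding $\|\cdot\|_{l^2_1 DU^2}\lesssim \|\cdot\|_{H^s}$ gives $\|v_0\|_{l^2_1 DU^2}\le C c_0$, so part~(3) of Theorem~\ref{energies} yields
\[
\tfrac12\|v_0\|_{H^s}^2 \le E_s(v_0)\le 2\|v_0\|_{H^s}^2
\]
provided $c_0$ is chosen small enough. Let $v(t)$ denote the NLS or mKdV evolution of $v_0$. Conservation of $E_s$ (part~(1)) gives $E_s(v(t))=E_s(v_0)$, so as long as $\|v(t)\|_{H^s}\le 2c_0$ the same equivalence applied to $v(t)$ yields $\|v(t)\|_{H^s}^2\le 2E_s(v_0)\le 4c_0^2$. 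A continuity-in-time bootstrap then propagates this bound for all $t$.

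\emph{Step 3: Unscaling.} Mass conservation gives $\|u(t)\|_{L^2}=\|u_0\|_{L^2}\le R$. For the homogeneous piece, undoing the scaling yields
\[
\|u(t)\|_{\dot H^s}^2 = \lambda^{-(2s+1)}\|v(\lambda^{-2}t)\|_{\dot H^s}^2 \lesssim c_0^2\,\lambda^{-(2s+1)},
\]
with $\lambda^{-3}t$ replacing $\lambda^{-2}t$ in the mKdV case. Optimizing $\lambda$ against the two constraints $\lambda\|u_0\|_{L^2}^2\le c_0^2$ and $\lambda^{2s+1}\|u_0\|_{\dot H^s}^2\le c_0^2$ yields the stated $F(R,s)$: for $s\ge 0$ the $L^2$ constraint binds and the $\dot H^s$ bound becomes $R^{1+2s}$, while for $s<0$ the $\dot H^s$ constraint binds and the resulting exponent is the subcritical one written in the statement, in any case bounded by $R$.

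\emph{Main obstacle.} The bootstrap requires $t\mapsto\|v(t)\|_{H^s}$ to depend continuously on $t$, which is automatic above the current local well-posedness thresholds ($s\ge 0$ for NLS, $s\ge \tfrac14$ for mKdV) but not below. For $s$ below these thresholds one first establishes the bound for Schwartz initial data, where continuity poses no issue, and then passes to general $u_0\in H^s$ by density, invoking the continuity in $u$ of $E_s$ on $H^\sigma$ for some $\sigma$ above the threshold (part~(4) of Theorem~\ref{energies}). This is the only genuinely delicate point; the rest of the argument is purely algebraic optimization of scaling exponents.
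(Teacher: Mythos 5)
Your overall strategy is exactly the one the paper intends: its entire justification of the corollary is the remark that one rescales by $\lambda=cR^{-2}$ for $s\ge0$, respectively $\lambda=cR^{-2/(1+2s)}$ for $s<0$, and then applies the small-data statement of Theorem~\ref{energies}. Your Steps 1--2 and the $s\ge0$ branch of Step 3 are correct and complete (including the density/continuity caveat below the well-posedness threshold, which matches the paper's own discussion).

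The $s<0$ branch of Step 3, however, has a genuine gap. First, the inequality $\|v(t)\|_{\dot H^s}\lesssim \|v(t)\|_{H^s}$ that you use implicitly when you write $\|u(t)\|_{\dot H^s}^2=\lambda^{-(2s+1)}\|v\|_{\dot H^s}^2\lesssim c_0^2\lambda^{-(2s+1)}$ is false for $s<0$: the homogeneous norm diverges at low frequencies and is not controlled by the conserved energy $E_s(v)\approx\|v\|_{H^s}^2$ (indeed $\|v_0\|_{\dot H^s}$ need not even be finite when $u_0\in H^s$). The correct way to unscale is to use $(1+\xi^2)^s\le(1+\lambda^2\xi^2)^s$ for $s<0$, $\lambda\le1$, which gives $\|u(t)\|_{H^s}^2\le\lambda^{-1}\|v(\lambda^{-2}t)\|_{H^s}^2\lesssim\lambda^{-1}\cdot\lambda^{1+2s}R^2=\lambda^{2s}R^2$. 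Second, and more seriously, once this is done the optimization does not produce the stated exponent: with the largest admissible $\lambda=cR^{-2/(1+2s)}$ one gets $\|u(t)\|_{H^s}\lesssim R^{1/(1+2s)}$, and since $\tfrac{1}{1+2s}>1>\tfrac{1+4s}{1+2s}$ for $-\tfrac12<s<0$, this is strictly weaker than $F(R,s)\approx R$ for large $R$. Your parenthetical claim that the $s<0$ exponent is ``in any case bounded by $R$'' is therefore unsubstantiated. The obstruction is concrete: the conserved quantity controls $\int(1+\lambda^2\xi^2)^s|\hat u(t)|^2\,d\xi$, and the frequencies $1\lesssim|\xi|\lesssim\lambda^{-1}$ of $v$ (equivalently $\lambda\lesssim|\eta|\lesssim1$ of $u$... after relabeling, the low frequencies $|\xi|\le1$ of $u(t)$) are weighted by $O(1)$ in the conserved quantity but cost a factor $\lambda^{-1}$ upon unscaling, and nothing in part (3) of Theorem~\ref{energies} prevents the flow from piling $O(c_0^2)$ of mass there. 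To recover the exponent $\tfrac{1+4s}{1+2s}$ one needs frequency-localized conservation information --- e.g.\ the conservation of the non-negative measure $\real\ln|T|\,d\xi$ together with its local comparison to $|\hat u|^2\,d\xi$ from the trace formula --- which goes beyond the global energy equivalence you invoke. (The same issue is latent in the paper's two-sentence justification; you have faithfully reproduced its route, but as written the route does not yield the stated bound for $s<0$.)
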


This follows directly from the above theorem  if $R \ll 1$. For larger 
$R$  is still follows from the theorem, but only after applying the scaling 
\eqref{scaling}. Here one needs to make the choice   $\lambda = cR^{-2}$, with $c \ll 1$ if 
$s \geq 0$, respectively $\lambda = cR^{-\frac{2}{1+2s}}$ if $s < 0$.

\subsection{ Multisolitons}
The main objective of this article is to study multisoliton solutions,
both by investigating the geometry of the set of multisoliton states
and by studying its stability under the family of commuting flows.
The first step is to define multisoliton solutions.

A natural venue to  define $N$-soliton solutions for NLS is to start with $n$  single solitons $Q_1, \cdots, Q_N$, with soliton parameters $(z_1,\kappa_1), \cdots (z_n,\kappa_N)$. Assuming that the soliton speeds $\real z_1, \cdots \real z_N$ are distinct, these solitons separate at infinity, and one can actually prove (see \cite{MR905674})  the existence of a unique solution $Q$ so that 
\begin{equation} \label{asymptoticdistance}
Q - (Q_1+ \cdots + Q_N) \to 0 \quad in \ L^2 \qquad \text{as} \ t \to \infty.
\end{equation} 

However, the above venue does not readily extend to solitons with equal speeds, and also it involves the time evolution. Instead, we will take advantage of the complete integrability of the problem, and 
use the spectral picture for the Lax operator in order to define 
$N$-multisoliton states:

\begin{definition}   A function $Q$ is an $N$-multisoliton state
with spectral parameters $z_j$, $1 \le j \le N$, if its transmission coefficient is 
\begin{equation}\label{T-SN}
T(z) = \prod_{j=1}^N \frac{z-\bar z_j}{z-z_j}. 
\end{equation}
The set of all $N$-multisolitons is denoted by $\M_N$.
\end{definition} 
The corresponding Lax operator has the values $z_j$ as eigenvalues, with multiplicity corresponding to the multiplicity of the $z_j$'s. 
If the spectral values $z_j$ are all different then we define the associated scattering parameters using the corresponding eigenfunctions by \eqref{scattering}. Alternatively one may define $N$ solitons as stationary solutions to a linear combination of the first $2N$ flows \cite{MR1992536,MR1636296}, an approach we do not pursue. 

We remark that when the definition based on \eqref{asymptoticdistance}
applies, it yields the same spectral parameters $z_j$ as in \eqref{T-SN}.
On the other hand, the scattering parameters predicted by \eqref{scattering} and \eqref{asymptoticdistance} are slightly different,
as a shift in the effective scattering parameters occurs when two solitons interact. This shift was approximatively computed by Faddeev and Takhtajan \cite{MR905674}, at least 
in the case of separated spectral parameters $z_j$.
Precisely,  if all the $z_j$ are distinct 
then the effective soliton position $\hat x_j$ and phase $ \hat \theta_j$ at infinity in \eqref{asymptoticdistance}  satisfy 
\[ \begin{split} 
\hat x_j - x_j \approx & \,  \frac1{2 \im z_j } \Big[  \sum_{x_k < x_j} \ln \left| \frac{ z_j-\bar z_k}{z_j-z_k}\right| - \sum_{x_j > x_k} \ln \left| \frac{z_j-\bar z_k}{z_j-z_k}\right| \Big]
\end{split} 
\] 
and, modulo $\pi$, 
\[ 
\begin{split} 
\hat \theta_j - \theta_j \approx &  \sum_{x_k< x_j} \arg \frac{z_j-\bar z_k}{z_j-z_k}
- \sum_{x_j > x_k}  \arg \frac{ z_j-\bar z_k}{z_j-z_k}   
\end{split} 
\] 
with errors that decay to zero as $x_j$'s separate.  On the other hand, these errors
grow as the $z_j$'s get closer to each other.
 
 We note that the $x_j$'s can always be separated by flowing far enough 
 along the combined NLS-mKdV flow. Indeed, suppose that the spectral parameters $(z_j)_{j \le N}$ are all simple and denote the scattering parameters by $\kappa_j$. Let $s$ resp $t$ be the times of the flow of NLS, respectively
 mKdV. Then 
\[
x_j(s,t)\im z_j  + i \theta_j(s,t) =  \kappa_j(s,t) =   \kappa_j +  2is z^2_j + 4it z_j^3,
\]
and we can choose $(s_n, t_n)$ so that the distance between the $x_j$'s tends to $\infty$. 

It follows from calculations as in Faddeev and Takhatajan ~\cite{MR905674} 
that the subset of multisolitons with simple eigenvalues is smoothly parameterized by the 
\[  \{ (z_j,\kappa_j) | z_j \in \{ \im z >0\}, \kappa_j \in \R / (\pi i \Z) , z_j \ne z_k \} 
\] 
On the other hand the singularity on the diagonal in the asymptotic formulas above reflects the fact, discussed in detail later on,  that the parametrization of the set of multisoliton states via the soliton parameters is singular near the diagonals $z_j = z_k$. This leads us to a very interesting question:
   
  \medskip 
  
   {\bf Is the set $\M_N$ of all $N$ multisoliton states a smooth manifold in some (any) reasonable Sobolev topology, or is it singular at the spectral values with higher multiplicity ?}

\medskip 

The first aim of this article is to provide an answer to
this question:

\begin{theorem}\label{uniformSol} 
  The set $\M_N$ is a uniformly smooth $4N$ dimensional symplectic 
  submanifold of $H^s$ for each $s > -\frac12$. 
Here uniformity holds with respect to spectral parameters restricted to a compact subset of   the upper half-space, but without any separation assumption.
  \end{theorem}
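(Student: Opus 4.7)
My plan is to exhibit $\M_N$ as the image of an iterated \Backlund transform applied to the zero state, and then to show that, after passing to symmetric coordinates in the soliton parameters, this map provides a uniformly smooth chart into $H^s$. The key structural tool is the $N$-fold soliton-addition map $\Dress$ developed in Sections~\ref{s:backlund}--\ref{s:dress+}: applied to $u = 0$, it produces a state whose transmission coefficient is exactly $T(z) = \prod_{j=1}^N (z-\bar z_j)/(z-z_j)$, so that its image coincides with $\M_N$.

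I would proceed in three main steps. On the open locus where the $z_j$'s are distinct, the ordered tuples $(z_j, \kappa_j) \in \bfH \times (\C/\pi i \Z)$ already yield a smooth parametrization by the single-step \Backlund estimates; however, these parameters descend to $\M_N$ only modulo the symmetric group $\mathfrak{S}_N$, and the quotient is singular along the diagonals $\{z_j = z_k\}$. The remedy is to reparametrize by $\mathfrak{S}_N$-invariant functions --- for instance by the coefficients of $p(z) = \prod_j (z-z_j)$ together with $N$ further complex quantities encoding the $\kappa_j$'s symmetrically (equivalently, the divisor data of $T$ together with eigenfunction residue data). Second, using the algebraic analysis of $\Dress$ in Sections~\ref{s:dress} and~\ref{s:extended}, I would verify that the Darboux-type formula for $\Dress$ is a rational expression in the ordered parameters that is in fact symmetric in the pairs $(z_j,\kappa_j)$, and whose apparent poles along $\{z_j = z_k\}$ cancel; combined with the uniform $H^s$ bounds on $\Dress$ this yields a uniformly smooth map from the symmetric parameter space into $H^s$ for every $s > -\tfrac12$, with bounds depending only on a compact subset of $\bfH^N$ containing the $z_j$'s. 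Third, to conclude that $\M_N$ is a $4N$-dimensional submanifold, I would check injectivity of the chart (the transmission coefficient together with the Jost-function data recovers the symmetric parameters) and compute the rank of its differential by exhibiting $4N$ linearly independent tangent vectors generated by the commuting flows $\Phi_j$ and by infinitesimal variations of the spectral data, with the rank controlled via the trace formula of Proposition~\ref{prop:trace}. For the symplectic structure, pulling back $\omega(u,v) = 2\im\int u\bar v\,dx$ through the chart specializes on $\M_1$ to the explicit $2(d\kappa\wedge dz + d\bar\kappa\wedge d\bar z)$ computed in the introduction; on $\M_N$ the analogous formula in symmetric coordinates follows from the symplecticity of each single-step \Backlund transform, and nondegeneracy then propagates across the whole manifold by continuity.

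The principal obstacle is the second step: showing that the iterated \Backlund map extends smoothly across the collision locus $\{z_j = z_k\}$. In terms of the ordered parameters the explicit formula for $\Dress$ has apparent singularities at collisions, and extracting the cancellation demands a careful algebraic study of the Darboux-type iteration --- precisely the content of Sections~\ref{s:dress} and~\ref{s:extended}, and the reason the paper devotes its core technical work to the \Backlund transform before arriving at the theorem. Once this smoothness is in place, the remaining pieces --- the uniform $H^s$ estimates, the dimension count, and the symplectic structure --- follow from the trace-formula machinery of Proposition~\ref{prop:trace} together with the energy bounds of Theorem~\ref{energies}.
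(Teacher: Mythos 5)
Your overall architecture (iterated B\"acklund transform applied to $u=0$, passage to symmetric coordinates to resolve the collision locus $\{z_j=z_k\}$, cancellation of the apparent poles in the Darboux formula) matches the paper's construction of the \emph{smooth} structure on $\M_N$, and the paper does prove exactly the symmetric-function cancellation you call the principal obstacle (Lemma~\ref{l:zhol} and Theorem~\ref{t:dress-reg}). However, there is a genuine gap in your second and third steps: you conclude \emph{uniform} smoothness of $\M_N$ from the claim that the chart $(\bfz,\bbeta)\mapsto \Dress(0,\bfz,\bbeta)$ is a uniformly smooth parametrization ``with bounds depending only on a compact subset of $\bfH^N$.'' That is false, and the paper says so explicitly: restricting the spectral parameters to a compact set does not restrict the scattering parameters, and the $(\bfz,\bbeta)$ chart degenerates as $\bbeta\to\infty$, i.e.\ as the solitons separate. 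Theorem~\ref{t:dress-reg}(a) gives uniformity only for $\bbeta$ in compact sets, and even the global-in-$\bbeta$ bound of part (b) controls only the upper bounds on derivatives; the differential of the chart is not uniformly injective. The two-soliton computation makes this quantitative: the bump separation is $\approx \log|\beta_2+i\beta_3|$, so the $\bbeta$-derivative of the bump positions decays like $|\bbeta|^{-1}$, and ``$4N$ linearly independent tangent vectors'' does not upgrade to a uniform rank bound. Your argument therefore proves local smoothness and uniform smoothness on compact $\bbeta$-regions, but not the uniform statement of the theorem, whose content is precisely that the constants do not deteriorate anywhere on the (noncompact) manifold.

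The paper closes this gap by a different mechanism that your proposal is missing: an implicit-function-theorem characterization of $\M_N$ that bypasses any global parametrization. On the set $\V_U^N$ of states with exactly $N$ eigenvalues in $U$ one defines the residual energy
\[
F(v) \;=\; \Vert v\Vert_{L^2}^2 \;-\; 2\sum_{k=1}^N \im z_k ,
\]
which by the trace formula equals $\Vert u\Vert_{L^2}^2$ when $v=\Dress(u,\bfz,\bbeta)$, hence is uniformly smooth, non-negative, and vanishes exactly on $\M_N$. One then shows (i) $\M_N=\{v\in\V_U^N: DF(v)=0\}$, (ii) $D^2F$ is uniformly positive definite on the range of $D_u\Dress(0,\bfz,\bbeta)$, which is a closed subspace of codimension $4N$, and (iii) that range bound is uniform because $\bfs$ is recovered from a nondegenerate contour integral of $T$. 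The implicit function theorem applied to $DF=0$ then yields the uniformly smooth $4N$-dimensional submanifold. The B\"acklund/symmetric-coordinate analysis you outline is still needed --- it supplies the local smoothness and the identities (i)--(iii) --- but the uniformity is extracted from the conserved energy $F$ and the trace formula, not from the chart. I would also note that your injectivity/rank step and the symplectic claim inherit the same defect (uniform transversality, not mere linear independence, is what is required), so they too should be routed through the Hessian of $F$ rather than through the parametrization.
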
  
  
  Further results concerning the structural properties of $N$-solitons are developed in Section~\ref{s:structure}.
 
 The first challenge in the proof of the theorem will be to resolve the apparent singularity near the diagonal $z_j = z_k$. But a second, equally difficult challenge is to establish the uniformity in the theorem.
 
 We now briefly outline the main steps in the construction of our
 smooth parametrization of the $N$-soliton
 manifold $\M_N$:
 
 \begin{itemize}
     \item To capture the symmetry of spectral parameters, we endow the set $\bfz = (z_1, \cdots z_N)$ of unordered spectral parameters
     with the smooth topology defined by the symmetric polynomials $\bfs$ in $\bfz$, 
     \[  s_j = \sum_{n=1}^N z_n^j \] 
     for $1 \le j \le N$. Let 
\[ W_{N} = \{ \bfs \in \C^N: \im z_n > 0 \}. \]      .
          \item Corresponding to scattering parameters $\kappa_j = 0$
we have a $N$-soliton state $Q_{\bfz,0}$, which is shown to depend smoothly on $\bfz$ in the above smooth topology.
\item We shift the scattering parameters away from $0$ using the first $2N$ commuting flows, see \eqref{flows}, in the form
  \[  P(z) = \sum_{n=0}^{2n-1} \beta_n z^n \] 
  to  define
    \[
    Q_{\bfz,\bbeta} = \Phi(P) Q(\bfz,0) = \prod_{n=0}^{2N-1}  \Phi_n( 2^{n-1} \beta_n  )   Q(\bfz,0).  
    \]
         \item \label{smoothness} The map 
    \[
    (\bfz,\bbeta) \to Q_{\bfz,\bbeta}
    \]
     provides locally a smooth parametrization of the $N$-soliton manifold
     $\M_N$.
 \end{itemize}
 
 We note that this parametrization is global, and corresponds to scattering parameters
 \[
\kappa_j = i P(z_j),  
 \]
 at least for distinct eigenvalues. It defines a diffeomorphism between the smooth manifold  
 \[ 
 \Phase^N = ( W_N \times \C^N)/ \{ \kappa_j \in i \pi \Z\} 
 \] 
 and $\M_N$.  However, it is not a uniform parametrization.

\subsection{ Soliton stability}

One can view the small data part of our earlier result in Theorem~\ref{energies} as a stability
statement in $H^s$ for the zero solution of the NLS or mKdV
equations. In the focusing case another interesting class of solutions
are the pure $N$-soliton solutions, which belong to all Sobolev spaces 
$H^s$.

The second goal of  the present article is to  establish the $H^s$ stability of 
these families of solutions. For convenience we state here 
a less precise form of the result, but which has the advantage 
that no further preliminaries are needed.
A more precise form is provided in Section~\ref{s:stable}.

\begin{theorem}\label{t:solitons}  
Let $s > -\dfrac12$, and $u_0 \in \M_N$ be a pure $N$-soliton 
state. Then

a)  There exist $\varepsilon_0  > 0$ and $C>0$ so that, for each 
initial data $w_0$ which satisfies
\begin{equation}
\| u_0 -w_0\|_{H^s} = \varepsilon < \varepsilon_0,  
\end{equation}
there exists another pure multisoliton data $v_0 \in \M_N$ so that 
the corresponding solutions for either NLS or mKdV satisfy
\begin{equation}
  \sup
  _{t \in \R} \| w(t) - v(t)\|_{H^s} \leq C \varepsilon.
\end{equation}

b) Furthermore, this result is uniform with respect to all $N$-soliton states $u_0$ with spectral parameters in a compact subset of the open upper half-plane, and holds for all commuting flows of the NLS hierarchy.   
\end{theorem}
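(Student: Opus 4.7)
The plan is to decouple $w_0$ into a pure $N$-soliton component and a small soliton-free residual, evolve each separately using the integrable structure, and recombine. Concretely, using the \Backlund removal/addition machinery announced in the introduction (to be developed in Sections~\ref{s:backlund}--\ref{s:dress+}), if $w_0$ is $\varepsilon$-close to $u_0$ in $H^s$ then the $N$ eigenvalues of $\mathcal{L}(w_0)$ in the upper half-plane (encoded through the symmetric functions $\bfs$, so that coincidences cause no issue by Theorem~\ref{uniformSol}) lie within $O(\varepsilon)$ of those of $\mathcal{L}(u_0)$, and the transmission coefficient is close away from its poles. The soliton-removal map then produces a factorization $w_0 = \Dress_{\bfz,\bbeta}\, \tilde w_0$ where $\tilde w_0$ is a soliton-free residual of size $O(\varepsilon)$ in $H^s$ and $(\bfz,\bbeta)$ are near the parameters of $u_0$; this rests on the uniform Lipschitz/smoothness statement of Theorem~\ref{uniformSol} applied locally, since by hypothesis the $z_j$ of $u_0$ lie in a fixed compact subset of $\bfH$.

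Next I evolve separately. Since $\tilde w_0$ is small in $H^s$, the energies $E_s$ from Theorem~\ref{energies}, together with Corollary~\ref{c:large-data}, give
\[
\|\tilde w(t)\|_{H^s} \le C\, \|\tilde w_0\|_{H^s} \lesssim \varepsilon
\]
uniformly in $t\in\R$, simultaneously for the NLS flow, the mKdV flow, and any flow in the hierarchy. On the soliton side, the parameters $(\bfz(t),\bbeta(t))$ evolve according to the explicit Hamiltonian ODE on $\M_N$ obtained by extending \eqref{nflow-S1}; in particular $\bfz(t)\equiv\bfz$ is constant, so the spectral data remain in a compact subset of $\bfH$ for all time. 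I define the candidate $v(t)$ as the pure $N$-soliton with parameters $(\bfz(t),\bbeta(t))$, i.e. $v_0=\Dress_{\bfz,\bbeta}\,0 \in\M_N$.

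To recombine I use that each flow in the hierarchy commutes with the \Backlund dressing: adding $N$ eigenvalues $\bfz$ commutes with the diagonal time evolution on the scattering data, so the evolution of the dressed data equals the dressing of the evolved residual with the evolved parameters. Thus $w(t)=\Dress_{\bfz(t),\bbeta(t)}\, \tilde w(t)$ and $v(t)=\Dress_{\bfz(t),\bbeta(t)}\, 0$, and a final application of the uniform Lipschitz bound for $\Dress$ in its soliton-free argument (uniform in soliton parameters restricted to a compact subset of $\bfH^N$) gives
\[
\sup_{t\in\R}\|w(t)-v(t)\|_{H^s} \;\lesssim\; \sup_{t\in\R}\|\tilde w(t)\|_{H^s} \;\lesssim\; \varepsilon,
\]
proving both (a) and (b) with a constant depending only on the compact subset of $\bfH$.

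The main obstacle is entirely contained in the \Backlund analysis: the construction and uniform $H^s$-regularity (in both directions) of the soliton removal and addition maps at configurations where the spectral parameters are close or coincide, together with the verification that these maps intertwine the full hierarchy of flows. This is precisely the novel content of Sections~\ref{s:backlund}--\ref{s:dress+} and Theorem~\ref{uniformSol}; once these uniform regularity statements and the intertwining relation are in hand, the small-data global $H^s$ bound from Corollary~\ref{c:large-data} upgrades stability for the zero solution to stability for every pure $N$-soliton, essentially by conjugation.
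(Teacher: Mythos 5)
Your overall architecture --- remove the $N$ solitons from $w_0$, evolve the small soliton-free residual using the conserved energies, evolve the soliton parameters by the explicit Hamiltonian ODE on $\Phase^N$, and recombine using the commutation of the addition map with the flows together with its uniform regularity --- is exactly the paper's proof. The recombination step is correctly grounded: Theorem~\ref{t:dress-reg}(b) gives uniform smoothness of $\Dress$ in $(u,\bbeta)$ for $u$ bounded and $\bfz$ in a compact set, so $\|w(t)-v(t)\|_{H^s}\lesssim\|\tilde w(t)\|_{H^s}$ holds uniformly in $t$, and the small-data conservation bound for $\tilde w(t)$ is the right tool.

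The gap is in the first quantitative step, the bound $\|\tilde w_0\|_{H^s}\lesssim\varepsilon$ on the residual at time zero. You derive it from a ``uniform Lipschitz/smoothness statement'' for the soliton removal map, citing Theorem~\ref{uniformSol}; but that theorem concerns the smoothness of the manifold $\M_N$ itself, not of $\Undress$, and the regularity actually proved for the removal map (Theorem~\ref{t:undress-reg}) is uniform only for $\bbeta$ in a compact set. Part (b) requires uniformity over all $N$-solitons with spectral parameters in a fixed compact set, hence over all scattering parameters $\bbeta\in\R^{2N}$ (arbitrarily separated bumps); your route would therefore need precisely the global uniform regularity of the removal map that the paper states only as a conjecture. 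The paper circumvents this with an energy argument: by the trace formula, $E_s(\tilde w_0)=F_s(w_0):=E_s(w_0)-2\sum_k m_k\Xi_s(2z_k)$, and $F_s$ is a uniformly smooth, non-negative functional on $\V_U^N$ whose differential vanishes on $\M_N$, so that $F_s(w_0)\lesssim\varepsilon^2$ whenever $\dist_{H^s}(w_0,\M_N)=\varepsilon$; coercivity of $E_s$ for small data then yields $\|\tilde w_0\|_{H^s}\lesssim\varepsilon$ with a constant depending only on the compact set of spectral parameters. This trace-formula coercivity is the missing ingredient: with it your argument closes; without it the uniform constant in part (b) (and even the constant in part (a), if one insists on tracking its dependence) is not justified by anything proved in the paper.
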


To place this result into context, we note that the stability of solitons has been an intensely studied area. During the last decade there have been several stability results proved using the integrable structure: Hoffman and Wayne \cite{MR3059166} describe the use of the \Backlund transform to obtain stability of (multi)solitons.  
Mizumachi and Pelinovsky \cite{MR2920823} proved stability in $L^2$ and  Cuccagna and Pelinovsky  \cite{MR3180019} proved asymptotic stability of single solitons for localized initial data.

Turning our attention to multisolitons, we remark that a much more restrictive form of this theorem has been previously proved in $L^2$ or $H^1$ under the additional assumption that the spectral parameters of the multi-soliton are distinct; Alejo-Mu\~{n}oz \cite{MR3353827} considered the dynamics of solitons for complex mKdV and their stability and the stability of breathers \cite{MR3116324, MR2989213}.  Kapitula \cite{MR2307885} and  Contreras-Pelinovsky \cite{MR3214610} studied the stability of NLS multisolitons.

 Our result here drastically  improves these prior results as follows:

\begin{itemize}
\item We allow multisolitons with multiple spectral parameters.
\item We prove a result which is uniform near such multi-solitons.
\item We prove stability in a full range of Sobolev spaces.
\end{itemize}
The proof of the above theorem is  completed in Section~\ref{s:stable}.
A key ingredient in the proof is provided by the trace formula
in Proposition~\ref{prop:trace}. This allows us to find energies for which the infimum on the set of potentials with given eigenvalues $z_j$ with multiplicity is minimized for pure $N$-solitons with these spectral parameters. Moreover we may show that this energy is uniformly convex in 
the transverse direction in a uniform neighbourhood of this set of $N$-solitons. The stability then follows once we prove the uniform smoothness of the $N$-soliton manifold $\M_N$.

\subsection{Iterated \Backlund transforms}
A key tool in proving the results in this paper is the \Backlund transform, which adds or subtracts a soliton with given spectral and scattering parameter. 
On the level of the trace formula the action is very transparent: One adds or subtracts the contribution coming from this spectral value. 

Its use in a setting without multiplicities is described by Hoffman and Wayne \cite{MR3059166}.
Similarly, the iterated \Backlund transform adds or subtracts a 
multi-soliton  \cite{MR905674, MR1146435,MR3214610,MR2307885,MR3353827,zbMATH02209608}.  Unfortunately the naive iterated \Backlund transform deteriorates as spectral parameters get close. We overcome this difficulty by 

\begin{enumerate} 
\item Choosing an appropriate blow-up of the spectral and scattering coordinates near points with multiplicity, i.e. we prove that the parametrization described in Subsection \ref{smoothness} is smooth.  
\item Establishing a cancellation property for the iterated \Backlund transform, and using it to  verify  the surjectivity of this parametrization.
\end{enumerate} 

The outcome of this analysis is 

\begin{enumerate}[label = (\roman*)]
\item A soliton addition map
\[
B^N_+: H^s \times \M_N \to H^s,
\]
which nonlinearly adds an $N$-soliton $Q_{\bfz,\bbeta}$ to an $H^s$ state with no eigenvalues near $\bfz$.

\item A soliton removal map
\[
B^N_-: H^s \to H^s \times \M_N, 
\]
which reverses the above process, nonlinearly splitting an 
$H^s$ state near $\M_N$ into an $N$-soliton $Q_{\bfz,\bbeta}$ and 
an $H^s$ state with no eigenvalues near $\bfz$.
\end{enumerate}

For these maps we establish two smoothness and flow commutation properties:

\begin{itemize}
    \item They are smooth, inverse maps.
    \item They commute with the NLS and mKdV flow, and all other 
    commuting flows, which are viewed as acting separately on each of the inputs/outputs.
    
\end{itemize}

The arguments establishing this  are local, and yield the smoothness of the $N$ soliton set $\M_N$. To obtain the uniform smoothness of 
$\M_N$ we combine this with the uniform regularity of the energies and  with relations derived from the trace formula. 

Finally, we turn our attention to the full uniform smoothness of the soliton addition and removal maps. In full generality, this remains open:

\begin{conjecture}
The maps $B^N_+$ and $B^N_-$ are uniformly smooth on bounded sets in $H^s$.
\end{conjecture}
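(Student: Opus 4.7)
The plan is to combine three ingredients that are already available: the trace formula of Proposition~\ref{prop:trace}, the uniform quantitative control on norms supplied by the energies $E_s$ of Theorem~\ref{energies}, and the cancellation structure of the iterated \Backlund transform that is stated to underlie the local smoothness of $B^N_\pm$. The overall strategy is bootstrap: start from the local smoothness established in Section~\ref{s:dress+}, and upgrade it to uniform-on-bounded-sets smoothness by reducing all unbounded directions (large scattering parameters $\bbeta$, large $H^s$ data, spectral parameters near the boundary of $\bfH$) to controlled normal forms through the hierarchy flows and the a priori energy estimates.

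The first step is to reduce the relevant parameter ranges. Since uniformity is claimed for $\bfz$ in a compact set $K\subset \bfH$, I would fix such a $K$ and use the symmetric-function topology on $\bfs = (s_1,\dots,s_N)$ to treat $\bfz$ uniformly across multiplicities. For the scattering parameters, I would exploit the commutation of $B^N_\pm$ with the flows $\Phi(P)$ of \eqref{flows}: conjugating by a suitable polynomial flow reduces the scattering data $\bbeta$ to a bounded fundamental domain modulo the hierarchy action, so it suffices to prove uniform smoothness on bounded $(\bfz,\bbeta)\in \Phase^N|_{K}$. For the $H^s$ input, the trace formula gives $T_v=T_u\cdot\prod_j(z-\bar z_j)/(z-z_j)$, hence the energies transform as $E_s(v)=E_s(u)+2\sum_k m_k\,\Xi_s(2z_k)$, so that $\|v\|_{H^s}$ is controlled by $\|u\|_{H^s}$ and the compactness of $K$ via Corollary~\ref{c:large-data}. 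This yields uniform a priori bounds on outputs.

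The second and main analytic step is a uniform derivative estimate. The differential of $B^N_+$ in the $H^s$ direction is a bounded linear operator built from the Jost solutions of the combined state, while the differential in the $(\bfs,\bbeta)$ direction is controlled by the uniform smoothness of $\M_N$ (Theorem~\ref{uniformSol}) together with the bilinear pairing structure of the \Backlund addition. To get uniform bounds on these derivatives, I would write the iterated \Backlund addition via the cancellation form used in Section~\ref{s:extended}, which replaces the divisors $(z_j-z_k)^{-1}$ arising in the naive product by finite-difference quotients of Jost data in $(\bfs,\bbeta)$. Combined with the energy control, this gives pointwise bounds on the first derivative; higher derivatives follow inductively, because each extra derivative either acts on the input (adding a factor controlled by the resolvent of $\mathcal{L}(u)$ away from $\bfz$) or on the multisoliton parameters (controlled by the smooth structure of $\M_N$). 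Coupling these estimates with the inverse function theorem adapted to the symplectic manifold structure would then yield uniform smoothness of both $B^N_+$ and its inverse $B^N_-$.

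The hard part, and the reason the statement is only a conjecture, is the behavior when the spectral parameters $\bfz$ of the multisoliton being added or removed come close to eigenvalues already present in the Lax operator $\mathcal{L}(u)$ of the $H^s$ input $u$. For generic bounded $u\in H^s$ the eigenvalues of $\mathcal{L}(u)$ may accumulate anywhere in $\bfH$, so there is no uniform separation from $K$; the linearized inverse scattering problem then develops small divisors. The cancellation argument at the core of the paper handles collisions \emph{within} $\bfz$ by blowing up in the symmetric coordinates $\bfs$, but it does not yet handle collisions \emph{between} $\bfz$ and the a priori uncontrolled eigenvalues of $u$. A successful resolution would require extending the blow-up to a joint Grassmannian parametrization of the combined eigenvalue system of $u$ and $Q_{\bfz,\bbeta}$, proving a uniform cancellation property there, and showing that the resulting maps remain smooth through the resonance locus; obtaining this uniform version of the iterated \Backlund cancellation is the principal obstacle.
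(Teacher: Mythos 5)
You are attempting to prove a statement that the paper explicitly leaves open: it is stated as a conjecture, the paper proves only the partial results of Theorem~\ref{t:dress-reg} (uniform smoothness of $\Dress$ for $\bbeta$ in compact sets, and joint uniform smoothness in $(u,\bbeta)$ for $\bfz$ in a fixed compact set), and your sketch does not close the gap either — it ends by conceding that a ``principal obstacle'' remains. More importantly, you misidentify that obstacle. Collisions between $\bfz$ and the eigenvalues of $\mathcal{L}(u)$ are excluded by the very definition of the domain: the addition map acts on $\V_U^0 \times \Phase_U^N$ with $\sigma(\mathcal{L}(u)) \cap U = \emptyset$ and $\bfz$ confined to a compact subset of $U$, so there is uniform separation and no small-divisor problem of the kind you describe. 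The difficulty the paper actually identifies is the opposite regime: the smooth $(\bfs,\bbeta)$ coordinates on $\M_U^N$ degenerate as $\bbeta \to \infty$ (solitons separating), and the two-soliton analysis of Section~\ref{doubleeigenvalue} makes this quantitative — near a double eigenvalue the bump separation is $\approx \log|\beta_2+i\beta_3|$, so the Euclidean metric on $\bbeta$ cannot uniformly measure it. This is precisely why the refined form of the conjecture replaces $\Phase_U^N$ by $\M_U^N$ with its induced Riemannian metric. Your sketch never engages with the regime of large $\bbeta$ combined with near-multiple $\bfz$, which is where the known arguments break down.

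Your proposed reduction of $\bbeta$ to a bounded fundamental domain via the hierarchy flows also fails. Since $B^N_{\pm}$ commutes with the flows only when they act simultaneously on $u$ and on $(\bfz,\bbeta)$, normalizing $\bbeta$ forces you to push $u$ through $\Phi(P)$ for a real polynomial $P$ of degree up to $2N-1$ with unboundedly large coefficients. For $N \geq 3$ the higher flows are not known to be well-posed on $H^s$ for general $s>-\frac12$, and by the results of Gr\"unrock cited in the introduction they are never smooth in the initial data once the order is $5$ or more; even for the NLS and mKdV flows, the solution map over times of size $|\bbeta|$ does not have uniformly bounded differentials on bounded sets of $H^s$. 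So uniform smoothness at the normalized point cannot be transported back, and the quotient you would need is not one the flows can realize uniformly. Finally, the a priori bounds you extract from the trace formula and Corollary~\ref{c:large-data} control only norms of the output, not derivatives of the map, and your ``main analytic step'' restates the desired derivative estimates (``pointwise bounds on the first derivative; higher derivatives follow inductively'') without confronting the degeneration that makes them hard. As written, the proposal neither proves the conjecture nor correctly locates why it is open.
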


A more precise version of this conjecture is stated in Section~\ref{s:dress+}. A main difficulty in proving this is that 
our smooth $(\bfz,\bbeta)$ parametrization of the $N$-soliton
manifold deteriorates as the solitons separate (which corresponds to $\bbeta \to \infty$). Nevertheless, in the same section we prove some partial results in this direction for the soliton addition map;

\begin{itemize}
    \item The map $(u,\bfz,\bbeta) \to B_N^+(u,\bfz,\bbeta)$ is uniformly 
    smooth on compact sets in $\bbeta$.
    \item The same map is uniformly smooth in $(u,\bbeta)$
\end{itemize}
We also establish a uniform invertibility result for the soliton addition map with respect to its first argument, as follows:

\begin{theorem}\label{t:ureg} 
  The set of potentials $M_{\bfz,\bbeta}$ 
with spectral values $(\bfz,\bbeta)$ is a manifold of codimension $4N$. It is uniformly smooth in an $\varepsilon$ neighborhood of $\M_N$, and uniformly transversal to $\M_N$. 
  \end{theorem}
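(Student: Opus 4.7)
The plan is to realize each fiber $M_{\bfz,\bbeta}$ locally as a level set of a spectral projection built from the soliton removal map $B^N_-$, and then invoke a quantitative implicit function theorem. In a neighborhood $U$ of $Q_{\bfz_0,\bbeta_0}\in\M_N$ the map $B^N_-$ is smooth and produces a decomposition $u\mapsto(\tilde u, Q_{\bfz(u),\bbeta(u)})$ with $\tilde u$ free of eigenvalues near $\bfz_0$. Defining $\pi(u):=(\bfz(u),\bbeta(u))\in\Phase^N$, the intersection $M_{\bfz,\bbeta}\cap U$ is precisely the level set $\pi^{-1}(\bfz,\bbeta)$, so the theorem reduces to understanding the differential of $\pi$.

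The key structural identity is that removing an $N$-soliton from the pure state $Q_{\bfz,\bbeta}$ returns the trivial state with unchanged spectral data, i.e.\ $\pi\circ Q_{\cdot,\cdot}=\Id_{\Phase^N}$ on the parameter space. Differentiating at $(\bfz,\bbeta)$ gives $d\pi(Q_{\bfz,\bbeta})\circ dQ(\bfz,\bbeta)=\Id$, so $d\pi$ is surjective and $d\pi|_{T_{Q_{\bfz,\bbeta}}\M_N}$ is a topological isomorphism onto $T_{(\bfz,\bbeta)}\Phase^N$. The implicit function theorem then presents $M_{\bfz,\bbeta}$ as a smooth submanifold of codimension $4N$, and the right inverse furnished by $dQ$ produces the transversal splitting
\[
T_{Q_{\bfz,\bbeta}}H^s \;=\; T_{Q_{\bfz,\bbeta}}\M_N \;\oplus\; T_{Q_{\bfz,\bbeta}}M_{\bfz,\bbeta},
\]
with the transversality angle controlled by $\|dQ(\bfz,\bbeta)\|$ and $\|(d\pi|_{T\M_N})^{-1}\|$.

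For the uniformity claims I would combine three inputs: the uniform smoothness of $\M_N$ from Theorem~\ref{uniformSol}, which bounds $dQ$ uniformly in the symmetric-polynomial topology on $\bfz$ from the introduction; the partial uniform regularity of $B^N_+$ in $(u,\bbeta)$ from the bullet list preceding the statement, which bounds $d\pi$ uniformly; and the uniform invertibility of $B^N_+(\cdot,Q_{\bfz,\bbeta})$ on bounded sets of $H^s$, providing the matching lower bound. Together these turn the pointwise IFT into one with uniform neighborhood sizes and uniform splitting constants, independent of $\bbeta$ for $\bfz$ ranging in a compact subset of the open upper half-plane.

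The main obstacle is a uniform lower bound on $(d\pi|_{T\M_N})^{-1}$ in two extreme regimes: when the solitons coalesce ($z_i\to z_j$) and when they separate ($|\bbeta|\to\infty$). For coalescence, the blow-up in the symmetric polynomials $\bfs$ introduced in the parametrization removes the apparent singularity at the diagonals, and the cancellation property of iterated \Backlund transforms (the second ingredient listed for the construction) keeps $dQ$ bounded below. For separation, I would factor $B^N_+$ as a product of single-soliton \Backlund steps acting on asymptotically disjoint spatial regions, and use the uniform invertibility of each step together with the quantitative control provided by the energies $E_s$ and the trace formula of Proposition~\ref{prop:trace} to guarantee that projection onto $T\M_N$ retains a quantitative lower bound. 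This closes the uniform transversality and hence the uniform smoothness of $M_{\bfz,\bbeta}$ in the $\varepsilon$-neighborhood of $\M_N$.
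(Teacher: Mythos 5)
Your reduction of the statement to the implicit function theorem for the map $\pi(u)=(\bfz(u),\bbeta(u))$ extracted from the soliton removal map is natural, and the identity $d\pi\circ dQ=\Id$ does give surjectivity of $d\pi$ pointwise. But the argument has two genuine gaps. First, it is partly circular: in your list of uniformity inputs you invoke ``the uniform invertibility of $B^N_+(\cdot,Q_{\bfz,\bbeta})$ on bounded sets of $H^s$, providing the matching lower bound'' --- but that uniform invertibility of the addition map in its first argument is precisely the content of Theorem~\ref{t:ureg}; it is not available as an input. Second, realizing $M_{\bfz,\bbeta}$ as a level set of $\pi$ requires \emph{uniform} $C^k$ bounds on $\pi$, i.e.\ on the removal map $\Undress$, globally in $\bbeta$. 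The paper only establishes uniform regularity of $\Undress$ for $\bbeta$ in compact sets (Theorem~\ref{t:undress-reg}); global uniformity of the removal map is explicitly left as a conjecture. So the quantitative IFT you want to run degenerates exactly in the separated-soliton regime $|\bbeta|\to\infty$, and your proposed repair there (factoring into single \Backlund steps on disjoint regions) does not supply the missing lower bound, because the intermediate states of the iterated transform applied to a general background $u$ are not uniformly controlled.

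The mechanism that closes this in the paper is different and you only brush against it: the trace formula gives the exact identity $F(\Dress(u,\bfz,\bbeta))=\Vert u\Vert_{L^2}^2$, where $F(v)=E_0(v)-2\sum_k\im z_k$ is the uniformly smooth ``soliton-free energy''. Differentiating this identity twice at $u=0$, and using that $DF$ vanishes on $\M_N$, yields
\[
2\Vert w\Vert_{L^2}^2 \;=\; D^2F(Q_{\bfz,\bbeta})\bigl[D_u\Dress(0,\bfz,\bbeta)w,\;D_u\Dress(0,\bfz,\bbeta)w\bigr],
\]
and since $D^2F$ is uniformly bounded above, this gives the uniform lower bound $\Vert D_u\Dress(0,\bfz,\bbeta)w\Vert_{L^2}\gtrsim\Vert w\Vert_{L^2}$ with no input from the removal map and no dependence on $\bbeta$. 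The range $R(\bfz,\bbeta)$ of $D_u\Dress(0,\bfz,\bbeta)$ is then a closed subspace of codimension $4N$ (the tangent space of $M_{\bfz,\bbeta}$), the same identity shows $D^2F$ is uniformly positive definite on $R(\bfz,\bbeta)$ while $T\M_N$ lies in its kernel, and this is what delivers the uniform transversality; the uniform smoothness of the fiber in the $\varepsilon$-neighborhood then comes from the uniform smoothness of $\Dress$ in $(u,\bbeta)$ from Theorem~\ref{t:dress-reg}(b). Without this energy identity (or a substitute for it), your proof does not close in the non-compact $\bbeta$ regime.
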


\subsection{An outline of the paper}
\label{s:outline} 
This paper aims to accomplish several goals, all having to do with 
multisoliton states and their perturbations in the 
completely integrable NLS and mKdV flows:
\begin{enumerate}
    \item Understand the structure of multisoliton states, with emphasis on close or multiple
    spectral parameters.
    \item Understand the geometry (smoothness and uniformity) of the multisoliton manifold.
    \item Study nearby states and their evolution  using the (multi)soliton addition and removal maps.
    \item Prove uniform orbital stability of the multisoliton manifold.
\end{enumerate}

Here we provide a road map for the reader:

\bigskip 

\emph {1. The Lax operator $\mathcal{L}$ and the associated spectral problem.}
 The goal of the next section is to provide an overview
of this spectral problem, in particular the left and right Jost
functions and more generally their linear combinations, which are
called \emph{wave functions}.  These are in turn used to define the
transmission coefficient $T(z)$ as a meromorphic function in the
upper half-space. Finally, the transmission coefficient $T$ has also
played a key role in the construction of the continuous family of
conservation laws; we provide an overview of these as well.

\bigskip
\emph {2. The  \Backlund transform.} 
This allows one to add or remove a soliton with given spectral and scattering 
parameters $(z,\kappa)$ to/from an existing state $u \in H^s$. It is, in turn,
constructed using the wave functions for Lax wave operator.
Section~\ref{s:backlund} is devoted to the \Backlund transform, 
both in the standard form and in an extended form; the latter is needed in order to better 
understand its symmetry properties. A new notion we introduce here it that of analytic families of wave 
functions, and their  associated \Backlund transform.

\bigskip

\emph{3. The  multisoliton addition and removal maps.} 
These allow one to add/subtract an $N$-soliton
to/from a given state, and are obtained by iterating
\Backlund transforms. They are classically defined relative to
solitons with distinct spectral parameters, and are described in
Section~\ref{s:dress}.

\bigskip

\emph{4. The smooth parametrization of the multisoliton addition and removal maps.}
Viewed as functions of the spectral and scattering parameters for $N$-solitons,
the addition and removal maps are singular at the diagonal, near multiple spectral 
parameters. A key result of this paper is that this is a singularity of the parametrization,
which can be removed by making a better choice for the parametrization of 
the sets of joint spectral and scattering parameters. This yields a smooth extension of 
the soliton addition and removal maps to solitons with higher multiplicity spectral parameters, 
and is naturally done using analytic families of wave functions. We introduce our 
extended spectral/scattering parameters, which in particular provide the 
desired reparametrization of multisoliton states in Section~\ref{s:dress+}. 
The smoothness of both the multisoliton addition  and removal  maps is 
proved in the next section.

\bigskip

\emph{5. The uniformity question.}
Ideally, one would like both the multisoliton addition  and removal  maps to be uniformly smooth 
when restricted to spectral parameters in a compact subset of the upper half-space. One major difficulty 
we encounter is that our smooth parametrization of spectral/scattering parameters, although 
natural, is not uniform. Nevertheless, in Section~\ref{s:dress+} we are able to prove several partial 
uniformity results. These in particular lead us to the proof of one of our main results, namely the uniform regularity of the multisoliton manifold.

\bigskip

\emph{6. The structure of multisolitons.}
One can think of multisoliton states as a collection of bump functions, with exponential 
decay away from these bumps. In Section~\ref{s:structure} we show that if sufficiently separated, 
these bump functions are exponentially close to lower rank multisolitons. This also leads to 
a good local uniform  description of the multisoliton manifold $\M_N$ as an approximate sum of $\M_{N_j}$'s in the single bump regime, where our parametrization is uniform.

\bigskip

\emph{7. Uniform orbital stability of multisolitons.} Section~\ref{s:stable} contains the proof of the stability result.  This relies heavily on our previous results on the uniformity properties for the soliton addition and removal maps, as well as on the conserved energies developed in our prior work.

\bigskip

\emph{8. $2$-solitons: a case study.} The aim of the last section
of the paper is to provide a complete analysis  for the case of $2$-solitons. In particular we
accurately describe both the $2$-soliton manifold, as well as the NLS
and mKdV flows on this manifold. This serves to both illustrate the concepts introduced in the rest of the paper, as well as to provide a full analysis of the interaction patterns of two solitons, both 
for the NLS and for the mKdV flows. Multiple pictures are also provided.

\subsection{Acknowledgements}
The first author was supported by  the DFG through the SFB 611. 
The second author was supported by the NSF grant DMS-1800294 as well as by a Simons Investigator grant from the Simons Foundation.

\section{An overview of the scattering transform}
\label{s:scattering}
\subsection{Lax pair, Jost solutions and scattering transform}  
Here we recall some basic facts about the inverse scattering transform
for NLS and mKdV.  Both the NLS evolution \eqref{nls} and the mKdV
evolution \eqref{mkdv} are completely integrable, so we have at our
disposal the inverse scattering transform conjugating the nonlinear
flow to the corresponding linear flow.  To describe their Lax pairs
 we consider the system
\begin{equation}\begin{split}
\psi_x = &\left(\begin{matrix} -iz & u \\ - \bar u  &iz \end{matrix} 
\right) \psi   \\
\psi_t = & i\left(\begin{matrix}-[2z^2 -|u|^2] & -2izu+u_x \\
+ 2 i z\bar u + \bar u_x  & 2z^2 -|u|^2\end{matrix}\right)\psi,  
\end{split} \label{laxpairnls}
\end{equation}
 where $z$ is a complex parameter. The focusing
NLS equation arises as a compatibility condition for the system
\eqref{laxpairnls}: For fixed $z$ there exist two unique solutions
$\psi_1$, $\psi_2$ to \eqref{laxpairnls} with $\psi_1(0,0)=(1,0)$ and
$\psi_2(0,0)=(0,1)$ if and only if $u$ satisfies the nonlinear
Schr\"odinger equation.  The above is  often  referred to in the literature as the Lax pair for NLS.

If instead we want the canonical form $\mathcal L,\mathcal P$ with 
\[
\mathcal L_t = [\mathcal P,\mathcal L],
\]
then we should view the first equation above as $L \psi = z \psi$
where
\[
\mathcal L = i \left(\begin{matrix} \partial_x & - u \\ -\bar u  & - \partial_x
 \end{matrix} 
\right) 
\]
and $\mathcal P$ is given by the second matrix in \eqref{laxpairnls} where $z$ has been eliminated 
using the relations $L \psi = z \psi$,
\[
\begin{split}
\mathcal P =
i \left(\begin{matrix} 2 \partial_x^2 + |u|^2  &  -  u \partial_x -   \partial_x u \\  - \bar u \partial_x - \partial_x \bar u & -2 \partial_x^2 - |u|^2   \end{matrix}\right).
\end{split}
\]
This is equivalent to the pair of Kappeler and Grebert
\cite{MR3203027}.  Much of this formalism can be found
in the seminal paper by Ablowitz, Kaup, Newell and Segur
\cite{MR0450815}.
The Lax operator $\mathcal L$ is the same for mKdV and for all
the other commuting flows. It is only the operator $\mathcal P$
that will change.

The scattering transform associated to both the focusing NLS and the
focusing mKdV is defined via the first equation of
\eqref{laxpairnls}  which we write as linear
system
\begin{equation}\label{scatter}
\left\{ 
\begin{array}{l}
\dfrac{d\psi_1}{dx} = -i z \psi_1 + u \psi_2 
\cr \cr
\dfrac{d\psi_2}{dx} =  i z \psi_2 - \bar u \psi_1.
\end{array}
\right.
\end{equation}

One part of the scattering data for this problem is
obtained for $z=\xi$, real, by considering the relation between the
asymptotics for $\psi$ at $\pm \infty$.  Precisely, one considers the
Jost solutions $\psi_l$ and $\psi_r$ with asymptotics
\[
\psi_l (\xi,x,t) = \left( \begin{array}{c}    e^{-i\xi x}  \cr 
0 \end{array}\right) + o(1) \ \ \text{ as } x \to -\infty,
\quad \psi_l (\xi,x,t) = \left( \begin{array}{c}   T^{-1}(\xi)  e^{-i\xi x}  \cr 
R(t,\xi) T^{-1}(\xi)  e^{i\xi x} \end{array}\right) + o(1) \ \ \text{ as } x \to \infty,
\] 
respectively
\[
\psi_r (\xi,x,t) = \left( \begin{array}{c}  L(t,\xi)  T^{-1}(\xi)  e^{-i\xi x}  \cr 
T^{-1}(\xi)  e^{i\xi x} \end{array}\right) + o(1) \ \text{ as }x \to -\infty,
\quad \psi_l (\xi,x,t) = \left( \begin{array}{c}   0  \cr 
e^{i\xi x} \end{array}\right) + o(1)  \ \text{ as }x \to \infty.
\] 
These are viewed as initial value problems with data at $-\infty$,
respectively $+\infty$.  We note that the $T$'s in the two solutions
$\psi_l$ and $\psi_r$ are the same since the Wronskian of the two
solutions is constant:
\[
\det ( \psi_l ,\psi_r) \to T^{-1}(\xi) \qquad \text{ for } x \to \pm \infty. 
\] 
The quantity $|\psi_1|^2 + |\psi_2|^2$ is also conserved,
which shows that on the real line we have
\[
|T| \geq 1, \qquad |T|^2 = 1+|R|^2 = 1+|L|^2.  
\]
 Further, we have the symmetry $(\psi_1,\psi_2) \to (\bar
\psi_2,-\bar \psi_1)$ which via the Wronskian leads to
\[
L \bar T = \bar R T.
\]

It is an immediate consequence of the existence of the Lax pair that
as $u$ evolves along the  NLS flow \eqref{nls}, the functions $L,R,T$
evolve according to
\begin{equation} \label{NLSflow} 
T_t = 0, \qquad L_t = - 4i \xi^2 L, \qquad R_t = 4i\xi^2 R,
\end{equation} 
if $u$ evolves according to the mKdV flow \eqref{mkdv} then 
\begin{equation} \label{mKdVflow} 
T_t = 0, \qquad L_t = - 8i \xi^3 L, \qquad R_t = 8i\xi^3 R,
\end{equation} 
and if $z$ evolves according to the $n$th flow 
\begin{equation} \label{nthflow} 
T_t = 0, \qquad L_t = - i (2\xi)^n L, \qquad R_t = i(2\xi)^n R.
\end{equation}

Thus one part of  scattering map for $u$ is given by 
\[
u \to R,
 \]
which maps  the NLS flow \eqref{nls} to the
(Fourier transform of) the linear Schr\"odinger evolution, and
simultaneously the mKdV flow to the linear Airy flow.

More generally, for any $z$ in the closed upper half plane there exist the Jost 
solutions 
\[
\psi_l (\xi,x,t) = \left( \begin{array}{c}    e^{-iz x}  \cr 
0 \end{array}\right)+ o(1) e^{\im z x}  \ \ \ \text{ as $x \to -\infty$},
\] 
\[ 
 \psi_l (\xi,x,t) = \left( \begin{array}{c}   T^{-1}(z)  e^{-iz x}  \cr 
0 \end{array}\right) + o(1)e^{\im zx}  \ \ \ \text{ as $x \to \infty$},
\] 
This provides a holomorphic extension of $T^{-1}$ to the upper half-space, and thus 
a meromorphic extension for $T$. Here  $T$ may have poles in the upper half-space, which 
 correspond to non-real eigenvalues of $\mathcal L$.  The poles of $T$ 
must be isolated in the open upper half space, though they can accumulate 
on the real line.


For data $u$ for which $T$ is holomorphic in the upper half-space, the
scattering data is fully described by the reflection coefficient $R$. If
instead $T$ is merely meromorphic, then the scattering data involves not only the function $R$ on the real line, but also at least the singular part of the
Laurent series of $T$ at the poles.  However, this still does not fully
describe the problem, as by the results of Zhou \cite{MR1008796},  $T$ may have poles in
the upper half space accumulating at the real axis even for Schwartz
functions $u$.

There is, however, one redeeming feature: All such poles are localized
in a strip near the real axis if $u \in L^2$, and more generally in a
polynomial neighbourhood $0 \leq \im z \lesssim_{\|u\|_{H^s}}
(1+|\real z|)^{-2s}$ of the real line if $u \in H^s$ with $-1/2 < s < 0$.
In the limiting case $s = -1/2$, smallness of $u$ in $l^2 DU^2$
guarantees the localization of the poles in   $0 \leq \im z \ll
(1+|\real z|)$.  

A key difference between real and nonreal $z$ is that for real $z$,
one essentially needs $u\in L^1$ in order to define the scattering
data $L(\xi)$ and $T(\xi)$ in a pointwise fashion. This restricts the
use of the inverse scattering transform to localized, rather than
$L^2$ data. On the other hand, for $z$ in the open upper half space it
suffices to have some $L^2$ type bound on $u$ in order to define
$T(z)$.

Reconstructing $u$ from the scattering data requires solving a
Riemann-Hilbert problem, see \cite{MR1207209} for this approach for
the modified Korteweg-de Vries equation. 

\subsection{Symmetries}

  The main symmetries are multiplication by a phase, translations in $x$, modulations resp. translations in frequency, and scaling. We define them simultaneously on distributions by
  \[   f \to   U_{\theta, \xi_0, x_0,  \lambda} f =  e^{i \theta + i x \xi}   \lambda f( \lambda(x-x_0)).   \]
  This fixes a representation on a central  extension of the Heisenberg group, a notion which we do not use in the sequel. On the Fourier side
  \[     \mathcal{F}\Big(  e^{i \theta + i x \xi_0}   \lambda f( \lambda(x-x_0))\Big)(\xi)  =  e^{i(\theta+ x_0 \xi_0)}  e^{-i x_0 \xi}          \hat f(\lambda^{-1}( \xi -\xi_0)).    \] 
We compute the  effect of the symmetries on the Lax operator and $z$ waves: 
 \[
 i \left(\begin{matrix} \partial_x & - e^{i\theta} u \\ - \overline{e^{i\theta}  u}  & - \partial_x  \end{matrix}  \right) 
 = \left( \begin{matrix} e^{i\theta/2}    & 0 \\ 0 & e^{-i\theta t/2} \end{matrix} \right)
    i \left(\begin{matrix} \partial_x & - u \\ - \bar u  & - \partial_x
    \end{matrix} \right)
    \left( \begin{matrix} e^{-i\theta t/2}    & 0 \\ 0 & e^{i\theta t/2} \end{matrix}  \right).
\]
    Let $V(h)f= f(x-h)$. Then
\[
 i \left(\begin{matrix} \partial_x & -  V(h) u \\ - \overline{ V(h) u}  & - \partial_x  \end{matrix}  \right) 
 = V(h) 
    i \left(\begin{matrix} \partial_x & - u \\ - \bar u  & - \partial_x
    \end{matrix} \right) V(-h).  
\]
Also,    
\[
 i \left(\begin{matrix} \partial_x & - e^{i x\xi } u \\ - \overline{e^{-i x\xi}  u}  & - \partial_x  \end{matrix}  \right) 
 = \left( \begin{matrix} e^{ix \xi/2}    & 0 \\ 0 & e^{-i x\xi/2} \end{matrix} \right)
  \left[   i \left(\begin{matrix} \partial_x & - u \\ - \bar u  & - \partial_x
    \end{matrix} \right) -  \left( \begin{matrix}  \xi/2 & 0 \\ 0 & \xi/2\end{matrix}   \right)    \right] 
    \left( \begin{matrix} e^{-ix\xi/2}    & 0 \\ 0 & e^{ix \xi/2} \end{matrix}  \right), 
\] 
and with $R(\lambda)f(x) = f(\lambda x)$, the scaling by $\lambda>0$
acts as follows:
\[
  i \left(\begin{matrix} \partial_x & -  \lambda u(\lambda x)  \\ - \overline{
      \lambda u(\lambda x) }  & - \partial_x  \end{matrix}  \right) 
 = 
    i  \lambda R_\lambda    \left(\begin{matrix} \partial_x & - u \\ - \bar u  & - \partial_x
    \end{matrix} \right) R_{\lambda^{-1}}.    
    \]
    We obtain
    \begin{lemma} We define
      \[ \tilde U_{\theta, \xi, x_0,\lambda}\left( \begin{matrix} \psi_1 \\ \psi_2 \end{matrix} \right)
      = \lambda^{-1/2}   \left( \begin{matrix} e^{-i(\theta+\xi x_0)/2} \psi_1(\lambda^{-1}(.-x_0)) \\ e^{i(\theta+\xi x_0)/2}
        \psi_2(\lambda^{1}(.-x_0))  \end{matrix} \right). \] 
      Then
 \[       i \left(\begin{matrix} \partial_x & -   U_{\theta, \xi_0, x_0, \lambda} u  \\ - \overline{
     \lambda U_{\theta, \xi_0, x_0, \lambda} u(\lambda x) }  & - \partial_x  \end{matrix}  \right) \Psi
 =    i      \left(\begin{matrix} \partial_x & - u \\ - \bar u  & - \partial_x
    \end{matrix} \right)   \tilde U_{\theta, \xi_0, x_0, \lambda} \Psi 
- \xi/2 \tilde U_{\theta, \xi_0, x_0, \lambda} \Psi.  
\] 
Moreover
\begin{equation}    T(U_{\theta, \xi_0, x_0, \lambda} u, z ) = T(u, \lambda^{-1} (z -\xi_0/2)) \end{equation}  
\end{lemma}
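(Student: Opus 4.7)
My proof plan has two parts, matching the two assertions in the lemma.

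\textbf{Part 1: the intertwining identity for $\mathcal L$.} I would build this up by composition from the four elementary identities displayed immediately before the lemma. Writing $U_{\theta,\xi_0,x_0,\lambda} u = e^{i\theta}\cdot e^{ix\xi_0}\cdot V(x_0)\cdot R_\lambda \cdot (\lambda u)$, one applies the four separate conjugation formulas in order: phase rotation and translation conjugate $\mathcal L$ without any spectral shift, the modulation by $e^{ix\xi_0}$ conjugates and simultaneously subtracts $\xi_0/2\cdot \mathbf{Id}$, and scaling conjugates by $R_\lambda$ with an overall factor of $\lambda$. The operator $\tilde U_{\theta,\xi_0,x_0,\lambda}$ defined in the lemma is precisely the matching product of the four corresponding spinor symmetries; the extra $\lambda^{-1/2}$ is an $L^2$-normalization that plays no role at the algebraic level of intertwining. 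Composing the four identities then gives the claimed relation, with the $-\xi_0/2$ shift inherited entirely from the modulation step.

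\textbf{Part 2: the transformation law for $T$.} I would use the characterization of $T$ through the asymptotics of the left Jost solution. Set $w = \lambda^{-1}(z-\xi_0/2)$ and let $\psi=\psi_l(\,\cdot\,; w, u)$ be the left Jost solution of $\mathcal L(u)$ at spectral parameter $w$, with the normalizations $\psi\sim (e^{-iwx},0)^T$ at $x\to -\infty$ and $\psi\sim (T^{-1}(w,u)\,e^{-iwx},0)^T$ at $x\to +\infty$. By Part 1, the spinor $\tilde\psi:=\tilde U_{\theta,\xi_0,x_0,\lambda}\psi$ is an eigenfunction of $\mathcal L(U_{\theta,\xi_0,x_0,\lambda}u)$ at spectral parameter $z = \lambda w + \xi_0/2$. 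The definition of $\tilde U$ is pointwise, so direct substitution of the asymptotic profiles of $\psi$ shows that $\tilde\psi$ behaves at $x\to\pm\infty$ like $(e^{-izx},0)^T$ up to the same constant prefactor on both ends; in particular this same prefactor cancels out of the ratio. Matching $\tilde\psi$ to the normalization of the left Jost solution for the transformed potential then yields $T(U_{\theta,\xi_0,x_0,\lambda}u, z) = T(u, w) = T(u, \lambda^{-1}(z-\xi_0/2))$.

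The main obstacle is the bookkeeping of constant prefactors in Part 2: when $\tilde U$ is applied to $\psi$, the exponential $e^{-iwx}$ acquires factors of the form $e^{i(\theta+\xi_0 x_0)/2}$, $\lambda^{-1/2}$, $e^{iwx_0}$ and $e^{-i\xi_0 x/2}$ from the four components of $\tilde U$; these must combine in precisely the right way to reproduce the profile $e^{-izx}$ (with $z=\lambda w+\xi_0/2$) at both endpoints with the \emph{same} overall constant, so that only the invariant ratio $T$ appears in the final identity. This is a routine but delicate computation. A secondary point requiring care is the sign convention in the modulation identity as displayed just above the lemma: depending on how the complex-conjugate entry is interpreted, the spectral shift comes out as $\pm\xi_0/2$, and one must verify that the convention used in the lemma statement matches the convention under which Jost solutions and $T$ were defined earlier in this section.
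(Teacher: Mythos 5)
Your proposal is correct and follows essentially the same route as the paper, which states the lemma as an immediate consequence ("We obtain...") of the four conjugation identities displayed just before it and gives no further argument; your Part 2 supplies the standard Jost-function bookkeeping for the $T$ transformation law that the paper leaves implicit. Your caution about the $\pm\xi_0/2$ sign convention is well placed, since the displayed modulation identity (and the $\lambda^{\pm 1}$ exponents in the definition of $\tilde U$) contain typographical inconsistencies that must be resolved for the composition to match the stated formula $T(U_{\theta,\xi_0,x_0,\lambda}u,z)=T(u,\lambda^{-1}(z-\xi_0/2))$.
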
 
The last equation expresses the mismatch between transformations of Fourier variables and spectral variables.

\subsection{The transmission coefficient in the upper 
half-plane and conservation laws} 

Our construction of fractional Sobolev conserved quantities in \cite{MR3874652} relies essentially on the 
fact that the transmission coefficient $T$ is preserved along both the NLS and mKdV 
flows. In principle this gives us immediate access to infinitely many conservation
laws, but the question is whether one can relate (some of) them nicely to the standard scale 
of Sobolev spaces. 

If $u$ is a Schwartz function then $\ln |T|$ is a Schwartz
function on the real line, and has a  Taylor expansion 
\begin{equation}\label{expand-f} 
\ln T(z) \approx  \frac1{2\pi i}   \sum_{j=0}^{\infty}  H_{j}(2z)^{-j-1}.
\end{equation}
If $T$ has no poles ith the upper half-space then by the residue theorem 
the conserved energies $H_j$ can be expressed in terms of the 
values of $T$ on the real axis,
\begin{equation}  \label{energy} 
H_k=\int \xi^k  \ln |T(-\xi/2)|d\xi.   
\end{equation} 
However, $\ln T$ may have poles in the upper half plane, and the right hand side  in the
formula \eqref{energy} above has to be modified to account for the residues at the
poles. Precisely, if the poles of $T$ are located at $z_j$ with multiplicities $m_j$
then the counterpart of the relation \eqref{energy} is
\begin{equation}  
H_k= \int \xi^k  \ln |T(-\xi/2)|d\xi + 2 \sum_j  \frac{1}{k+1} m_j \im (2z_j)^{k+1}. 
\label{energy-f}   
\end{equation} 
This is clear if $T$ has finitely many poles away from the real line, but can also be
 justified in general by  interpreting the trace of $\ln|T|$ on the real line as a non-negative measure.

More generally,  for any function  $\eta : \R\to \R$  the expression
\[
 \int \eta(\xi) \ln |T(-\xi/2)|\,   d\xi 
\]
is formally conserved.  

Thus a natural candidate for a fractional Sobolev conservation law 
may be obtained by choosing any (real) function $\eta$ so that 
\[
\eta(\xi) \approx (1 + \xi^2)^s.
\]

However, there are two issues with such a general choice. First,
it is quite difficult to get precise estimates for $\log |T|$ on the
real line without assuming any integrability condition on
$u$. Secondly, in the focusing case such a choice would still miss the
poles of the transmission coefficient.

To remedy both of these issues, it is natural to use much more precise real weights which 
have a holomorphic extension at least in a strip around the real line. Our choice in \cite{MR3874652} was
to use the weights 
\[
\eta_s (\xi) = (1 + \xi^2)^s, \qquad s > -\frac12.
\]
which not only have the appropriate size on the real axis, but can also be extended as holomorphic functions to the subdomain $D = U \setminus i[1,\infty)$ 
of the upper half-space $U$.  

In the absence of poles for $T$ in the upper half-space one can formally  define the 
conserved energies by 
\[
E_s (u)=  \ \int (1+\xi^2)^{s} \real \ln T(\xi/2) d\xi.
\]
By Cauchy's theorem the integral can be switched to the half-line 
$i[1,\infty)$ to give
\begin{equation}\label{deff+} 
\begin{split}  
E_s(u) = 4\sin(\pi s)\! \int_1^\infty \! (\tau^2 -1)^ s\Big[\!-\!\real  \ln T(i\tau/2)  +\frac1{2\pi} 
\sum_{j=0}^N (-1)^j  H_{2j} \tau^{-2j-1}\Big] d\tau 
+ \sum_{j=0}^N \binom{s}{j}  H_{2j} 
\end{split} 
\end{equation} 
Here the conserved integer energies $H_{2j}$ are used to remove the
leading terms in the expansion at infinity, which is needed in order 
to insure the absolute convergence of the integral. 

One key advantage to switching the integral into the upper halfspace
is that the transmission coefficient is more robust there, depending
only on Sobolev norms of $u$. For this reason, in \cite{MR3874652} we adopt the 
formula \eqref{deff+} as the definition of the conserved energy $E_s$. 

This works also in the case when the transmision coefficient $T$ has poles in the upper half-space,
Then  $T$ may have only finitely many poles on the half-line $i[1,\infty)$. 
We also note the  role played by the smallness condition  for $u$ in $l^2 DU^2$,
which is present in Theorem~\ref{energies}. This guarantees that $T$ has a convergent 
multilinear expansion on the  half-line $i[1,\infty)$, and in particular  has no poles 
  there.


\section{The B\"acklund transform} 
\label{s:backlund} 

The central object in this section is the intertwining operator, which is related to the one in the work of Cascaval, Gesztesy, Helge and Latushkin \cite{zbMATH02209608}. We also heavily exploit complex differentiability here, which brings in the tools of complex analysis. Unfortunately the dependence on the state $u$ is not holomorphic, since the complex conjugate occurs in the Lax operator.  To rectify this, it turns  out to be useful to relax the relation between the off-diagonal entries of the Lax operator and to consider the generalized spectral problem
\begin{equation} \label{eigen}    
\psi_x = \left(\begin{matrix} -iz & u_1  \\ -u_2 & iz \end{matrix} \right)\psi. 
\end{equation}

\subsection{Regularity of Jost solutions} 
 We characterize the regularity of the Jost functions  in the following  summary of results  of \cite{MR3874652}, where, for the left Jost function,  we solve the system of integral equations
\[ 
\begin{aligned}
\phi_1(x) = & 1+ \int_{-\infty}^x  u_1(y) \phi_2(y) dy,
\\
\phi_2(x) =& \int_{-\infty}^x e^{2iz(x-y)}   u_2(y)  \phi_1 dy.
\end{aligned}
\]   
for the renormalized functions
\[
(\phi_1,\phi_2) = e^{iz x} (\psi_{l,1},  \psi_{l,2}).
\]

\begin{lemma}\label{hsjost} Let ${\bf u}= (u_1,u_2) \in H^s$, $s>-\frac12$, and   $\im z >0$.
Then the left Jost function $\psi_l$ satisfies
\[   
(e^{iz x} \psi_l)' \in H^s,\qquad  e^{izx} \phi_{l,2} \in H^{s+1}, 
\] 
\[ \lim_{x\to -\infty} e^{izx} \psi_l = \left( \begin{matrix} 1 \\ 0 \end{matrix} \right), 
\] 
\[
\lim_{x\to \infty} e^{izx} \psi_{l,1} = T^{-1}(z). 
\] 
Moreover, 
\[
\Vert (e^{izx} \psi_{l,1})' \Vert_{H^s} + \Vert \psi_{l,2} \Vert_{H^{s+1}}
\le c \Vert (u_1,u_2) \Vert_{l^2DU^2} (\Vert u_1 \Vert_{H^s} + \Vert u_2 \Vert_{H^s}).
\] 
The map $(u_1,u_2,z) \to e^{izx} \psi_l $   is holomorphic in $z$, $u_1$ and $u_2$ with all derivatives bounded by
  \[ c( \Vert (u_1,u_2) \Vert_{l^2DU^2} )(1+ \Vert u_1 \Vert_{H^s} + \Vert u_2 \Vert_{H^s} ) \] 
for $z$ in a compact region in the upper half plane. 
  The differential of $e^{izx} \psi_l$ at ${\bf u}=0$ is given by
  \[ 
  \left( \begin{matrix}  0 \\  \int_{-\infty}^x e^{2iz(x-y)} u_2 (y) dy \end{matrix} \right),  
  \]
and the differential  of $e^{-izx} \psi_r$ by 
\[  \left( \begin{matrix}
   \int_{x}^\infty e^{-2iz(x-y)} u_1 (y) dy \\ 0 \end{matrix}  \right).  
  \]
  The map
  \[ 
  l^2 DU^2 \times l^2 DU^2 \ni (u_1,u_2,z) \to 1/T(z)= W(\psi_l,\psi_r)  \in \C 
  \]
  is holomorphic with derivatives bounded by  $C(\Vert u_1 \Vert_{l^2DU^2} , \Vert u_2 \Vert_{l^2DU^2} ) $ for $z$ in a compact domain of the upper half plane.
  The expansion of $T^{-1}$ at ${\bf u}=0$ is given by
  \begin{equation} \label{expansionofT}  T^{-1}(z) = 1 - \int_{x<y} e^{-2iz(x-y)} u_1(y)u_2(x) dy dx + O( \Vert (e^{2i\real zx} u_1,e^{-2i\real z x} u_2) \Vert_{l^2_{\im z} DU^2}^4) . \end{equation}  
   \end{lemma}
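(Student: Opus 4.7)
The plan is to reduce the Jost system to a Volterra-type fixed point equation for the renormalized component $\phi_2 = e^{izx}\psi_{l,2}$, solve it by a Neumann series, and then read off every assertion of the lemma from this explicit representation.

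Substituting the formula for $\phi_1$ into that for $\phi_2$ produces a linear integral equation
\[
\phi_2(x) = f(x) + K[\phi_2](x),\qquad f(x):=\int_{-\infty}^x e^{2iz(x-y)}u_2(y)\,dy,
\]
where $K$ is a double Volterra operator, bilinear in $(u_1,u_2)$, whose kernel carries the exponentially decaying weight $e^{2iz(x-y)}\mathbf{1}_{y<x}$. The main analytic task is to show that $f$ and $K$ act on $H^{s+1}$ with bounds
\[
\Vert f\Vert_{H^{s+1}}\lesssim \Vert u_2\Vert_{H^s},\qquad
\Vert K\Vert_{H^{s+1}\to H^{s+1}}\lesssim \Vert u_1\Vert_{l^2DU^2}\Vert u_2\Vert_{l^2DU^2}.
\]
Under the quantitative smallness assumption $\Vert {\bf u}\Vert_{l^2DU^2}\le 1$ these bounds give a convergent Neumann series; larger data are absorbed by iterating $K$ a finite number of times to exploit the extra exponential decay coming from $\im z > 0$.

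Once $\phi_2$ has been constructed in $H^{s+1}$, the identity $\phi_1'=u_1\phi_2$ delivers the $H^s$ bound on $(e^{izx}\psi_l)'$, and the limits at $\pm\infty$ can be read off the integral equation, with the limit at $+\infty$ being the definition of $T^{-1}(z)$. Holomorphy in $(u_1,u_2,z)$ is obtained termwise: each Neumann summand is polynomial in $(u_1,u_2)$, the exponential kernel $e^{2iz\cdot}$ is entire in $z$, and uniform convergence on bounded subsets of parameter space promotes termwise holomorphy to joint holomorphy with the stated derivative bounds. The differentials at ${\bf u}=0$ come from retaining only the linear term in the Neumann series, which immediately gives the stated formula for $d(e^{izx}\psi_l)$ and, by the symmetric argument at $+\infty$, for $d(e^{-izx}\psi_r)$. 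For the expansion \eqref{expansionofT} of $T^{-1}$ I would use the Wronskian identity $T^{-1}=W(\psi_l,\psi_r)$ at any convenient point and expand both Jost functions through quadratic order; the first-order contribution vanishes by the structural observation that $\delta\psi_l$ sits in the second component while $\delta\psi_r$ sits in the first, so the surviving quadratic piece reproduces the stated double integral, with remainder controlled by the quartic part of the Neumann series.

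The hard part is the bilinear bound on $K$ in $H^{s+1}$ when $s$ is close to $-\tfrac12$, since a naive $H^s\times H^s \to H^{s+1}$ estimate loses below the scaling-critical index $-\tfrac12$. This is precisely where the $l^2DU^2$ framework developed in \cite{MR3874652} is indispensable: the two ``outer'' factors of $u_1,u_2$ are placed in $l^2DU^2$ (which contains $H^s$ for $s>-\tfrac12$ and is scaling critical), while a single factor is tracked in $H^s$ to recover the regularity. Once this bookkeeping is in place, all remaining assertions follow from the standard machinery of uniformly convergent holomorphic Neumann series.
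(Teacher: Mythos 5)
Your proposal follows essentially the same route as the paper, which itself only records this lemma as a summary of \cite{MR3874652}: one renormalizes to $(\phi_1,\phi_2)=e^{izx}\psi_l$, solves the resulting Volterra integral system by a Neumann series whose convergence rests on the bilinear $l^2DU^2$ estimates of that reference, and then reads off the limits, the holomorphy, the differentials at ${\bf u}=0$, and the quadratic-plus-quartic expansion of $T^{-1}=W(\psi_l,\psi_r)$ termwise (the paper keeps the coupled $2\times2$ system rather than substituting to get a single equation for $\phi_2$, but this is cosmetic). The only point I would adjust is the large-data step: what lets you sum the series for ${\bf u}$ merely bounded in $l^2DU^2$ is the Volterra (ordered-simplex) structure and the divisibility of the $L^2$-based norms over finitely many intervals where the local norm is small, rather than the exponential decay from $\im z>0$.
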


\subsection{The spectrum of the Lax operator, 
wave functions  and eigenfunctions} 

We consider the scattering transform for the focusing nonlinear
Schr\"odinger equation. The first equation of the zero curvature
formulation is
\begin{equation} \label{eigenl}    \psi_x = \left(\begin{matrix} -iz & u  \\ -\bar u & iz \end{matrix} \right)\psi . \end{equation}               
Here we take $u\in l^2 DU^2$.  
Its transmission  
coefficient is a meromorphic function $T$ in $\C \setminus \R$,  with  singularities (poles) at
eigenvalues $z$ of the Lax operator
\begin{equation} \label{lax} 
\psi \to \mathcal L\psi= \mathcal L(u) \psi=  i\left( \begin{matrix} \partial  & -u \\ -\bar u &  -\partial \end{matrix} \right) \psi, 
\end{equation}  
or equivalently, if there is an $L^2$ function $\psi$ which satisfies
\eqref{eigen}. The operator $\mathcal L$ is not selfadjoint, however it satisfies the conjugation relation
\begin{equation}\label{M0} 
\mathcal L^* = M_0 \mathcal L M_0^{-1}, \qquad M_0 = \left(\begin{matrix} 1  & 0 \\ 0 & -1 \end{matrix} \right). 
\end{equation} 
Hence if $z$ is an eigenvalue for $L$ with eigenfunction $\phi$, then it is also an eigenvalue for 
$\mathcal L^*$ with eigenfunction $M_0 \phi$. 

On the other hand,  by conjugation it follows that $\bar z$ 
is an eigenvalue for both $\mathcal L$ and $\mathcal L^*$, with eigenfunctions
$M \bar \phi$, respectively $M_0 M \bar \phi$, where 
\begin{equation} \label{M} 
M = \left(\begin{matrix} 0 & 1 \\ -1 & 0 \end{matrix} \right) 
\end{equation} 
We recall that if  $\Vert u \Vert_{l^2DU^2}$ is small then the eigenvalues all satisfy 
\[  
\im z \le \varepsilon \langle \real z \rangle, 
\]
see Corollary 5.11 of \cite{MR3874652}, or,  by \eqref{expansionofT} and
\[ \Vert e^{-i\real  z x}  u \Vert_{l^2_{\im  z}DU^2} \lesssim \frac{\langle \real z \rangle}{\im z} \Vert u \Vert_{l^2_{\im z} DU^2} \lesssim  \frac{\langle \real z \rangle}{\im z}\langle  1/\im z \rangle^{1/2}  \Vert u \Vert_{l^2_{1} DU^2}.
\]

For $z$ in the upper half-space we seek to describe solutions to
$(\mathcal{L}(u)-z) \phi = 0$, which form a two dimensional vector space; these are
called {\em wave functions}.  For this it is convenient to use the
left and right Jost functions $\psi_l$, $\psi_r$ which decay
exponentially at $-\infty$, respectively $+\infty$.  The transmission coefficient is defined to be the meromorphic function in the upper half plane given by the inverse of their Wronskian,
\[
T(z) = (\det( \psi_l, \psi_r) )^{-1}. 
\] 
Then we
distinguish two scenarios:

\begin{itemize} 
\item $z$ is not an eigenvalue. Then $\psi_l$ and $\psi_r$ are linearly independent,
 and form a basis in the space of solutions.
\item $z$ is an eigenvalue. Then $\psi_l$ and $\psi_r$ are linearly dependent,
and both are  eigenfunctions.
\end{itemize}

In the first case we want to parametrize the wave functions which are
unbounded as $x\to \pm \infty$, up to the multiplication by a complex
number. We parametrize them by  
\begin{equation} \label{wavepara} 
 \psi = T(z) \left( e^{-i (\beta_0 +\beta_1 z) }   \psi_l +  e^{i (\beta_0+\beta_1  z)   }   \psi_r  \right), 
\end{equation}
where $\beta_0 \in \R/\pi \Z$ and $\beta_1  \in \R$. 

We can also relate this notation to our notation $\kappa$ for scattering parameters
for solitons, by setting
\begin{equation}\label{def-xt}
i (\beta_0 + \beta_1 z)  =  \kappa =   \im z x_0 +i \theta. 
\end{equation}
Here   $ x_0 \in \mathbb{R}$ and $\theta \in \R / \pi \Z$ can be thought of as the center 
point and the phase associated to $\psi$, and $\kappa$ will be naturally interpreted 
later on as a scattering parameter in the context of the \Backlund transform.

Moving  $u$ along the NLS flow corresponds to moving $\psi$ along the $\mathcal P$ flow.
  It is not difficult to determine the dependence on time of 
  the unbounded wave function parameters $x_0$ and $\theta$
when we evolve wave functions along the $\mathcal P$ flow. We recall that
the leading part of $\mathcal{P}$ is $\left( \begin{matrix}
    2i \partial_x^2 & 0 \\ 0 & -2i\partial_x^2 \end{matrix} \right) $
and hence, for the solution to
\[   
\psi_t =  \mathcal{P} \psi, 
\] 
the leading term near $-\infty$ is 
\[
e^{2itz^2+  i (\beta_0+ \beta_1 z)}   \left( \begin{array}{c} 0 \\ 
e^{izx} \end{array}\right) = 
   e^{\im z (x_0 -4t\real z) + i(\theta +2 t (\real^2 z-\im^2 z)) } 
   \left( \begin{array}{c} 0 \\ 
e^{izx} \end{array}\right),     
\] 
and on the right it is 
\[ 
e^{-2iz^2t - i (\beta_0 + \beta_1 z) }    \left( \begin{array}{c}  
e^{-izx}\\ 0  \end{array}\right) =  e^{- \im z (x_0-4t\real z) - i(\theta+2 t (\real^2 z-\im^2 z)) } 
   \left( \begin{array}{c}  e^{-izx}  \\ 0 
\end{array}\right).  
\]  
Thus 
\begin{equation} \label{t-dep}   
x_0(t) = x_0 - 4 \real  z t  , \qquad \theta(t) = \theta + 2(|\real z|^2 -|\im z|^2)   t . 
\end{equation} 
or with the complex notation, as for the pure soliton, 
\begin{equation}\label{t-dep-c}   
  \kappa(t) = i (\beta_0 + \beta_1 z + 2t z^2). 
\end{equation}

A similar computation can be carried out for the mKdV flow, as well as for all of the other
commuting flows.

Moreover, suppressing the time dependence for the rest of this section  and setting $t=0$, if $\zeta$ is neither real nor an eigenvalue  then the inverse  of $\mathcal{L}(u)-\zeta$ is given by 
\begin{equation}\label{inverse-B} 
\begin{split}  
  (\mathcal{L}(u)-\zeta)^{-1}f(x)  = & T(\zeta)^{-1} 
\Big(  \psi_r(x)     \int_{-\infty}^x -\psi_{l,2}(y)f_1(y)+ \psi_{l,1}(y)f_2(y) \,  dy  
\\  & + \psi_l(x) \int_x^\infty   \psi_{r,2}(y)   f_1(y)- \psi_{r,1}(y) f_2(y)   \,  dy\Big) . 
\\  = &  T(\zeta)^{-1} \left( \psi_l(x) \int_x^\infty M \psi_r \cdot f dy - \psi_r(x) 
\int_{-\infty}^x M \psi_l \cdot f dy \right). 
\end{split} 
\end{equation} 

Similarly we normalize eigenfunctions $\psi$ so that  
\begin{equation} \label{eigenpara}
  \psi = - e^{-i(\beta_0 + \beta_1 z)}    \psi_l = e^{i(\beta_0+\beta_1 z)}  \psi_r. 
\end{equation}  
Together with  \eqref{def-xt}, this allows one to understand $\kappa$, respectively $x_0$ and $\theta$ as scattering parameters, and we interpret heuristically   ``$u$ contains a soliton with scale $ \lambda$, modulation $\xi$, center $x_0$ and phase $ \theta$"  as the statement that the Lax operator has an eigenvalue $z= -\xi/2 + i \lambda$ with scattering parameter $ \kappa$ given by $x_0$ and $ \theta$ through \eqref{def-xt}.     
This will become more clear when we discuss
the \Backlund transform later on.

The multiplicity of eigenvalues is discussed next:

\begin{lemma} Suppose that $\zeta$ is an eigenvalue for $\mathcal
  L(u)$. Then the geometric multiplicity of $\zeta$ is $1$. Let $\psi$
  be a $\zeta$ eigenfunction. Then the algebraic multiplicity of
  $\zeta$ is $1$ if and only if
\begin{equation} \label{nondeg}  \int \psi_1  \psi_2 \, dx \ne 0. \end{equation}  
Further, we have
\begin{equation}\label{nondeg+}
 2i \int \psi_1  \psi_2 dx = \frac{d}{dz} T^{-1}(z).
\end{equation}
 \end{lemma}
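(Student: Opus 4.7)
The geometric multiplicity statement will follow from the first-order structure: $(\mathcal{L}(u) - \zeta)\phi = 0$ is equivalent to the system \eqref{eigen} with $u_1 = u$, $u_2 = \bar u$, whose solution space is two-dimensional. The subspace of solutions decaying at $-\infty$ is one-dimensional, spanned by the left Jost function $\psi_l$, and likewise the subspace decaying at $+\infty$ is spanned by $\psi_r$. Any eigenfunction must lie in both subspaces simultaneously, hence is a multiple of $\psi_l$ (equivalently, of $\psi_r$), giving geometric multiplicity one.

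For the algebraic multiplicity, my plan is to invoke the resolvent formula \eqref{inverse-B}: the order of the pole of $(\mathcal{L}-z)^{-1}$ at $\zeta$ equals the order of the pole of the scalar prefactor $T(z)$, equivalently the order of the zero of $T^{-1}(z) = W(\psi_l, \psi_r)(z)$ at $\zeta$. Therefore the algebraic multiplicity equals $1$ precisely when $\tfrac{d}{dz} T^{-1}(\zeta) \neq 0$, so establishing the identity \eqref{nondeg+} will automatically deliver the characterization \eqref{nondeg}.

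To compute $\tfrac{d}{dz} T^{-1}(\zeta)$ I will vary the Jost problem in $z$. Writing $\dot\psi_l = \partial_z \psi_l$ and $B = \partial_z A = \diag(-i, i)$, where $A$ denotes the traceless coefficient matrix in \eqref{eigen}, the relation $\psi_{l,x} = A\psi_l$ differentiates to $\dot\psi_{l,x} = A\dot\psi_l + B\psi_l$. Using tracelessness of $A$, this yields the $x$-derivative identity
\[
\frac{d}{dx} W(\dot\psi_l, \psi_r) = W(B\psi_l, \psi_r) = -i(\psi_{l,1}\psi_{r,2} + \psi_{l,2}\psi_{r,1}).
\]
At $z=\zeta$ the eigenfunction normalization \eqref{eigenpara}, namely $\psi_l = -e^{i(\beta_0+\beta_1\zeta)}\psi$ and $\psi_r = e^{-i(\beta_0+\beta_1\zeta)}\psi$, reduces the right-hand side to $2i\,\psi_1 \psi_2$. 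Integrating in $x$ over $\R$ then produces
\[
W(\dot\psi_l, \psi_r)(+\infty) - W(\dot\psi_l, \psi_r)(-\infty) = 2i\int_{\R} \psi_1\psi_2\, dx.
\]
The boundary term at $-\infty$ should vanish, since $\psi_r(\zeta)$ decays exponentially there (like $e^{-i\zeta x}$), while $\dot\psi_l$ grows at most polynomially in $x$ relative to this decay. For the boundary term at $+\infty$, the expansion $\psi_l(z,x) \sim T^{-1}(z) e^{-izx}(1,0) + $ (decaying) together with $T^{-1}(\zeta)=0$ gives $\dot\psi_l \sim (T^{-1})'(\zeta) e^{-i\zeta x}(1,0)$ plus decaying terms, and pairing with $\psi_r \sim e^{i\zeta x}(0,1)$ yields exactly $(T^{-1})'(\zeta)$. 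This proves \eqref{nondeg+}.

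The main technical obstacle I anticipate is making the asymptotics at $\pm\infty$ fully rigorous at the $l^2 DU^2$ (or $H^s$) regularity level, where $u$ is not assumed to decay pointwise. I expect the estimates of Lemma~\ref{hsjost}, extended to the $z$-derivatives of the Jost functions via the holomorphicity in $z$ of $(u,z)\mapsto e^{izx}\psi_l$, together with the meromorphic structure of $T^{-1}(z)$ near $\zeta$, to provide the quantitative decay needed to justify the boundary evaluations above as genuine limits.
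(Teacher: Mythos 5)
Your argument is correct, and for the identity \eqref{nondeg+} it is essentially the paper's own computation: the paper also differentiates the Jost system in $z$ and integrates $\partial_x$ of a Wronskian pairing $\partial_z\psi_l$ with an eigenfunction, reading off the boundary contributions at $\pm\infty$ exactly as you do (the paper works with $W(\psi_l,\partial_z\psi_l)$ rather than your $W(\partial_z\psi_l,\psi_r)$, which at the eigenvalue differ only by the proportionality constant between $\psi_l$ and $\psi_r$; your choice has the advantage of producing the symmetric combination $\psi_{l,1}\psi_{r,2}+\psi_{l,2}\psi_{r,1}$ directly and of making transparent that \eqref{nondeg+} holds for the eigenfunction normalized as in \eqref{eigenpara}, a normalization issue one must track since the left side is quadratic in $\psi$). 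The genuinely different step is your derivation of the multiplicity criterion \eqref{nondeg}: you obtain it as a corollary of \eqref{nondeg+} by identifying the algebraic multiplicity with the pole order of the resolvent and hence, via the Green's function representation \eqref{inverse-B}, with the vanishing order of $W(\psi_l,\psi_r)=T^{-1}$ at $\zeta$; since the geometric multiplicity is $1$ there is a single Jordan block and this identification is legitimate, though it should be stated (pole order of the resolvent equals the largest Jordan block, not the algebraic multiplicity in general). The paper instead argues directly and more elementarily: simplicity fails iff $(\mathcal L(u)-\zeta)\phi=\psi$ is solvable, and by the Fredholm alternative this is governed by the pairing of $\psi$ against the adjoint eigenfunction $M M_0\bar\psi$ of $\mathcal L^*$ at $\bar\zeta$, which is precisely $\int\psi_1\psi_2\,dx$. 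Your route buys a cleaner logical structure (\eqref{nondeg} becomes a corollary of \eqref{nondeg+} plus the standard spectral dictionary), at the cost of invoking the resolvent machinery; the paper's Fredholm argument is self-contained and makes no reference to $T$ at all. Both are sound, and your boundary asymptotics at $\pm\infty$, including the vanishing of the term at $-\infty$ and the extraction of $(T^{-1})'(\zeta)$ at $+\infty$ using $T^{-1}(\zeta)=0$, match the paper's.
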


\begin{proof} If the geometric multiplicity were $2$ then all
  solutions to \eqref{eigen} were bounded, and hence would decay
  exponentially at $\pm \infty$. This contradicts the fact that near $
  \infty$ there is one characteristic exponent with positive real
  part.  The eigenvalue is simple if the equation 
\[ 
\mathcal{L}(u)\phi  -\zeta \phi = \psi 
\] 
is not solvable in $L^2$. Since, by the Fredholm alternative,  
\[  
(\mathcal{L}^* -\bar \zeta) MM_0 \bar \psi = 0, 
\]  
the above equation  is not solvable iff 
\[ 
  \int \psi_1 \psi_2 dx \ne 0. 
\] 
To verify \eqref{nondeg+} we differentiate the system \eqref{eigen} with respect to the parameter $z$
where $\psi = \psi_l$ is the left Jost function. Denoting $\tilde \psi = \dfrac{d}{dz} \psi_l$, it solves the system
\[
 \tilde \psi_x = \left(\begin{matrix} -iz & u  \\ -\bar u & iz \end{matrix} \right)\tilde \psi - iM_0 \psi_l,
\]
with initial and terminal data
\[
\tilde \psi(-\infty) = -ixe^{-izx} \left( \begin{matrix} 1 \\ 0 \end{matrix}\right), \qquad \tilde \psi(\infty) = e^{-izx} 
\left(\begin{matrix} \partial_z T^{-1}(z) \\ 0 \end{matrix}\right),
\]
since $z$ is an eigenvalue.  We recall that 
\[
T(z)^{-1} = W(\psi_l,\psi_r) 
\] 
Then the relation \eqref{nondeg+} is obtained from 
\[
\lim_{x\to -\infty} W(\psi_l,\tilde \psi)= 0, \quad \lim_{x\to \infty} W(\psi_l,\tilde \psi) = \partial_z T^{-1}(z),
\] 
and 
\[ 
\partial_x W(\psi_l(x), \tilde \psi(x)) = 2i (\psi_l)_1 (\psi_l)_2 
\]  
by the fundamental theorem of calculus. 
\end{proof}

Unbounded wave functions will play a crucial role also in the case when $z$ is an 
eigenvalue. 
Suppose now that $\phi$ is an eigenfunction to the eigenvalue $z$ of $L(u)$. If the wave function $\psi$ to the same eigenvalue $z$ is unbounded on one side then the same is true on the other side, and $\phi$ and $\psi$ are a fundamental
system. We may normalize $\psi$ so that 
\[ \psi \sim e^{-\im z x_0- i \theta} e^{-izx} \left(\begin{matrix} 1\\ 0\end{matrix}\right)  \] 
as $x \to \infty$ and 
\[ \psi \sim e^{\im z x_0+i \theta} e^{izx} \left(\begin{matrix} 0\\ 1\end{matrix} \right) \] 
as $ x\to  -\infty$. In contrast to the previous case (when $z$ is not an eigenvalue), here
$x_0$ and $\theta$ are uniquely determined, and we have the same normalization for all unbounded wave functions. With this convention 
\begin{equation}  W(\phi,\psi) = 1, \label{wronski1} \end{equation}  
and $x_0$ and $\theta$ are the same for both. 


\begin{lemma} \label{l:double}
Suppose that $z$ is an eigenvalue with $\phi$ an eigenfunction and $\psi$ an unbounded wave function to the eigenvalue $z$. Then 
the limit
\[ 
\lim_{\varepsilon\to 0 } \int_{-\infty}^\infty e^{-\varepsilon (x-x_0)^2} (\phi_1\psi_2+\phi_2\psi_1)\, dx 
\] 
exists. The general unbounded wave function is given by 
\[
\psi+ \zeta  \phi. 
\] 
If $z$ is simple then there is a unique wave function so that the limit is $0$.
The limit  defines a bijection between unbounded wave
functions and $\C$. If the eigenvalue has higher multiplicity then it
does not depend on the unbounded wave function. 
\end{lemma}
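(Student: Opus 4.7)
The strategy is to first establish the pointwise asymptotics at $\pm\infty$ of the integrand $F(x) := \phi_1(x)\psi_2(x) + \phi_2(x)\psi_1(x)$, then use symmetry of the Gaussian about $x_0$ to get convergence of the regularized integral, and finally exploit the fact that replacing $\psi$ by $\psi+\zeta\phi$ modifies $F$ by the explicit integrable term $2\zeta\phi_1\phi_2$.

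\textbf{Asymptotics of $F$.} Since $\phi$ is an eigenfunction at $z$ with $\im z>0$ and decays at both ends, its asymptotics are dictated by the decaying mode of the free diagonal system: $\phi\sim c_+ e^{izx}\bigl(\begin{smallmatrix}0\\1\end{smallmatrix}\bigr)$ as $x\to+\infty$ and $\phi\sim c_- e^{-izx}\bigl(\begin{smallmatrix}1\\0\end{smallmatrix}\bigr)$ as $x\to-\infty$, with subleading corrections of an additional exponential order. Feeding these into the Wronskian normalization \eqref{wronski1} pins down $c_+ = -e^{\im z\, x_0+i\theta}$ and $c_- = e^{-\im z\, x_0-i\theta}$. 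A direct asymptotic multiplication then gives $F(x)\to -1$ as $x\to+\infty$ and $F(x)\to +1$ as $x\to-\infty$, with the error being exponentially small at both ends because the subleading terms in $\phi$ and $\psi$ are exponentially small relative to the leading modes. Setting $G(x):=F(x)+\sgn(x-x_0)$ one obtains $G\in L^1(\R)$.

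\textbf{Existence of the limit.} Since $x\mapsto\sgn(x-x_0)$ is odd about $x_0$ and $e^{-\varepsilon(x-x_0)^2}$ is even about $x_0$, we have $\int e^{-\varepsilon(x-x_0)^2}\sgn(x-x_0)\,dx=0$ for every $\varepsilon>0$. Hence
\[
\int_{-\infty}^{\infty} e^{-\varepsilon(x-x_0)^2}\,F(x)\,dx = \int_{-\infty}^{\infty} e^{-\varepsilon(x-x_0)^2}\,G(x)\,dx,
\]
and dominated convergence sends the right side to $\int G(x)\,dx$ as $\varepsilon\to 0$. This proves the first claim. For the second claim, any wave function at $z$ is of the form $\alpha\psi+\beta\phi$; since $\phi$ decays at $+\infty$, the leading asymptotic at $+\infty$ is $\alpha e^{-\im z\, x_0-i\theta}e^{-izx}\bigl(\begin{smallmatrix}1\\0\end{smallmatrix}\bigr)$, which matches the normalization of $\psi$ iff $\alpha=1$. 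Thus the normalized unbounded wave functions are precisely $\{\psi+\zeta\phi:\zeta\in\C\}$.

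\textbf{Dependence on $\zeta$.} For $\psi':=\psi+\zeta\phi$, the integrand expands as $\phi_1\psi_2' + \phi_2\psi_1' = F + 2\zeta\,\phi_1\phi_2$. The eigenfunction $\phi$ decays exponentially at both ends, so $\phi_1\phi_2\in L^1(\R)$ and the added Gaussian factor is handled by dominated convergence. Therefore the limit associated to $\psi'$ equals the limit for $\psi$ plus $2\zeta\int \phi_1\phi_2\,dx$, which is an affine function of $\zeta$. By the previous lemma, the coefficient $2\int\phi_1\phi_2\,dx$ is nonzero precisely when $z$ is simple: in that case the affine map $\zeta\mapsto(\text{limit})$ is a bijection $\C\to\C$, giving both the unique $\zeta$ for which the limit vanishes and the bijection in claim four; in the higher multiplicity case the coefficient vanishes and the limit is independent of the choice of unbounded wave function, which is claim five. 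The only mildly technical point is the quantitative exponential decay in the asymptotics of $F$ in Step 1; this can be obtained cleanly either via Volterra iteration of the normalized Jost integral equations (using $\im z>0$ to separate modes) or, alternatively, from the identity $F' = 2(u\phi_2\psi_2-\bar u\,\phi_1\psi_1)$, since at $\pm\infty$ each of the products $\phi_2\psi_2$ and $\phi_1\psi_1$ involves the product of one leading mode and one subleading mode and so decays by two exponential factors, placing $F'$ in $L^1$.
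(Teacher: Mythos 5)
Your proof is correct and follows essentially the same route as the paper: the identity $W(\phi,\psi)=\phi_1\psi_2-\phi_2\psi_1=1$ turns $F+\sgn(x-x_0)$ into $2\phi_1\psi_2$ on $(x_0,\infty)$ and $2\phi_2\psi_1$ on $(-\infty,x_0)$, which are exactly the two integrable half-line pieces the paper extracts (citing Lemma~\ref{hsjost}), while the symmetric Gaussian annihilates the $\sgn$ term. Your treatment of the remaining claims — the affine dependence $\zeta\mapsto L_0+2\zeta\int\phi_1\phi_2\,dx$ combined with the criterion \eqref{nondeg} from the preceding lemma — is the intended argument, which the paper's written proof leaves implicit.
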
 

As a consequence we obtain a natural parametrization of unbounded wave functions in the case of a simple eigenvalue.

\begin{proof} By Lemma \ref{hsjost}  the limits 
\[ \lim_{\varepsilon\to 0} \int_{-\infty}^{x_0}  e^{-\varepsilon (x-x_0)^2} 
\phi_2 \psi_1 dx \] 
and 
\[ \lim_{\varepsilon\to 0} \int_{x_0}^\infty  e^{-\varepsilon (x-x_0)^2} 
\phi_1 \psi_2 dx \]
exist. Since also $W(\phi,\psi) = \phi_1\psi_2-\phi_2 \psi_1 = 1$ we obtain 
\[ 
 \lim_{\varepsilon \to 0 } \int_{\R} e^{-\varepsilon(x-x_0)^2} \phi_1\psi_2+\phi_2\psi_1 dx =   2\lim_{\varepsilon\to 0} 
\left[\int_{-\infty}^{x_0}e^{-\varepsilon(x-x_0)^2}  \phi_2 \psi_1 dx 
+ \int_{x_0}^\infty e^{-\varepsilon(x-x_0)^2}  \phi_1 \psi_2 dx\right] . \]    
\end{proof} 

Finding a natural parametrization of unbounded $z$ waves is important in the sequel. We will obtain implicitly a natural parametrization also for higher multiplicity.

\subsection{The  intertwining operator}

The main tool in understanding the \Backlund transform is the intertwining operator $\mathcal{D}(u)$. Given $u \in H^s(\R)$ with $s>-1/2$ we recall that $\mathcal{L}(u)$ is the associated Lax operator. Let $z$ be a point in the upper half plane, and $\psi$ an unbounded $z$ wave. Then $\tilde \psi= \left( \begin{matrix} \bar \psi_2 \\ -\bar \psi_1  \end{matrix}\right) $ is a $\bar z $ wave. The intertwining operator is the unique operator 
of the form 
\begin{equation} \label{intertwining} 
\mathcal{D}(u) = \mathcal{L}(u) + A(x) 
\end{equation}  
where $A: \R \to \C^{2\times 2}$ is chosen so that $\mathcal D$ annihilates $\psi$ and $\tilde \psi$. The matrix $A$ is uniquely determined by this requirement. It turns out that there is a unique function $ v \in H^s(\R)$ so that
the intertwining relation
\[
\mathcal{L}(v) \mathcal{D}(u) = \mathcal{D}(u) \mathcal{L}(u) 
\] 
holds. The map 
\[
(u,z,\psi) \to v 
\]
is called the \emph{\Backlund transform}. The construction is remarkable. It can be iterated,  it gives useful formulas for the addition of multiple solitons, it works for multiple eigenvalues and it can be inverted by an intertwining operator based on eigenfunctions instead of unbounded $z$ waves. 
 
 We want to trace the dependence of multiple \Backlund transforms on the data. For that it turns out to be useful to relax the relation between $u$ and $\bar u$, $z$ and $\bar z$, and $\psi$ and $\tilde \psi$: we consider a Lax operator of the form
 \[ 
 i \left( \begin{matrix} \partial &- u_1 \\ - u_2 & -\partial \end{matrix}\right), 
 \]
 two different values   $z_1, z_2\in \C \backslash \R$, and associated $z_j$ waves 
 $\psi_1$ and $\psi_2$.  We define the intertwining operators  - this time on intervals - by the requirement that the intertwining operator is of the form \eqref{intertwining} and it has both $\psi_j$'s in its null space. 
 
 The crucial benefit of this extension  is that the iterated \Backlund transform is easily seen to be invariant under exchanging any set of indices, which immediately implies a regular dependence of the iterated \Backlund transform  on the elementary symmetric polynomials of the $z_j$ in the NLS/mKdV case.

 \bigskip

  We consider a pair of function ${\bf u} =(u_1,u_2)$ and the corresponding Lax operator
\begin{equation} \label{laxbfu} 
  \psi \to \mathcal L\psi= \mathcal L({ \bf u}) \psi=  i\left( \begin{matrix} \partial  & -u_1 \\ -u_2  &  -\partial \end{matrix} \right) \psi.
\end{equation}  
We define $z$ waves in the same fashion as for the Lax operator in the remaining part of this section. 
\begin{definition} We denote the Wronskian by $W( ., .)$. 
  Let $\zeta_1\ne \zeta_2 \in \C \backslash \R $ and  let
  $\psi_j$ be  $\zeta_j$- wave functions associated to $\bf u$ and $I \subset \R$ an open set so that $ W(\psi_1,\psi_2) \ne 0$. 
 We define
  the intertwining operator on $I$ by 
\begin{equation}\label{D-def}
D\psi= D({\bf u}, \bzeta, \bpsi)\psi  = \big(\mathcal L({\bf u}) - \zeta_2\big) \psi  + (\zeta_2-\zeta_1)\frac{W(\psi_2, \psi)}{W(\psi_2,\psi_1)} \psi_1.  
\end{equation}
\end{definition}

It is not hard to determine the kernel of this operator if $I$ is an interval. 
\begin{lemma}\label{first}  
Let  $\bf u$, $\bzeta$, $\bpsi$ and $I$ as above. Then 
\begin{equation}\label{nullspace}  D({\bf u},\bzeta, \bpsi) \psi_j = 0. \end{equation}
The operator $\mathcal{D}$ is symmetric under exchanging the indices,
\begin{equation} \label{conjugate} D({\bf u},(\zeta_2,\zeta_1) , (\psi_2,\psi_1) ) = D(\bf u,\bzeta, \bpsi). \end{equation}
\end{lemma}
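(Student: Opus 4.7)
Both parts are essentially algebraic, and the plan is to reduce them to the two-dimensional linear algebra of the Wronskian.

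For the null space property \eqref{nullspace}, I would simply substitute $\psi = \psi_j$ into the definition \eqref{D-def}. For $\psi = \psi_2$, the first term vanishes because $\mathcal{L}(\mathbf{u})\psi_2 = \zeta_2 \psi_2$, and the second term vanishes because $W(\psi_2,\psi_2) = 0$ by the antisymmetry of the Wronskian of $\C^2$-valued functions. For $\psi = \psi_1$, the first term gives $(\zeta_1 - \zeta_2)\psi_1$, while the second term equals $(\zeta_2 - \zeta_1)\psi_1$ since $W(\psi_2,\psi_1)/W(\psi_2,\psi_1) = 1$; these cancel.

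For the symmetry \eqref{conjugate}, I would form the difference of the two expressions applied to an arbitrary vector-valued $\psi$ on $I$. The $\mathcal L(\mathbf{u})\psi$ terms drop out, and after dividing by $\zeta_1 - \zeta_2$ the identity to check reduces to
\[
\psi \;=\; \frac{W(\psi_2,\psi)}{W(\psi_2,\psi_1)}\,\psi_1 \;+\; \frac{W(\psi_1,\psi)}{W(\psi_1,\psi_2)}\,\psi_2,
\]
where I have used $W(\psi_2,\psi_1) = -W(\psi_1,\psi_2)$ to reconcile the signs. This is just the Cramer-rule basis expansion of $\psi(x) \in \C^2$ in the basis $\{\psi_1(x),\psi_2(x)\}$, which is a basis by the assumption $W(\psi_1,\psi_2)\neq 0$ on $I$: indeed, writing $\psi = a\psi_1 + b\psi_2$ and applying $W(\psi_2,\cdot)$ (resp.\ $W(\psi_1,\cdot)$) to both sides identifies $a$ and $b$ with the claimed coefficients.

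There is no real obstacle here; the only thing to watch is the bookkeeping of Wronskian antisymmetry, which explains the seemingly asymmetric placement of indices in \eqref{D-def}. Once these two identities are verified pointwise on $I$, both conclusions of the lemma follow.
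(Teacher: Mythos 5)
Your proof is correct and follows essentially the same route as the paper: the symmetry \eqref{conjugate} reduces, after cancelling the $\mathcal L(\mathbf u)\psi$ terms and dividing by $\zeta_1-\zeta_2$, to the two-dimensional Wronskian expansion $W(\psi_1,\psi_2)\psi + W(\psi_2,\psi)\psi_1 + W(\psi,\psi_1)\psi_2 = 0$, which the paper invokes as a trilinear identity and you derive equivalently via Cramer's rule in the basis $\{\psi_1,\psi_2\}$. The only cosmetic difference is that you check $D\psi_2=0$ directly (first term killed by $\mathcal L\psi_2=\zeta_2\psi_2$, second by $W(\psi_2,\psi_2)=0$), whereas the paper deduces it from the symmetry; both are fine.
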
 

\begin{proof}
  It is easy to see that $\psi_1$ is in the null space. Assuming \eqref{conjugate} we can argue in the same way for $\psi_2$. We turn to the proof of  \eqref{conjugate} and  use  the trilinear algebraic identity
  \begin{equation} \label{sumform} W(\psi_1, \psi_2)\psi_3 + W(\psi_2, \psi_3) \psi_1     + W(\psi_3,\psi_1) \psi_2 = 0.
  \end{equation}
It implies 
  \[ \zeta_2 W(\psi_1,\psi_2)\psi + (\zeta_2-\zeta_1)W(\psi_2, \psi) \psi_1 = - \zeta_1{W(\psi_1,\psi_2)} \psi  + (\zeta_2-\zeta_1)W(\psi_1, \psi) \psi_2. 
\] 
 We divide by $W(\psi_1,\psi_2)$ to obtain \eqref{conjugate}. 
\end{proof}

The next construction   is a crucial piece of the puzzle. We search for a function $ {\bf v} = B({\bf u} , \bzeta, \bpsi)$ so that
\begin{equation} \label{intertwining2}  \mathcal{L}({\bf v}) D({\bf u},  \bzeta,  \bpsi) = D({\bf u},  \bzeta,  \bpsi) \mathcal{L}({\bf u}). 
\end{equation} 
  Both sides are second order operators with identical second order terms. We rewrite both sides of \eqref{intertwining2} as 
\[ (\mathcal{L}({\bf u}))^2 + A_1 \mathcal{L}({\bf u})   + A_0 =
(\mathcal{L}({\bf u}))^2 + B_1 \mathcal{L}({\bf u}) + B_0. \]
where
\[
\begin{split} 
     A_1 -  B_1     
\,  &  =
  i\left( \begin{matrix} 0 & u_1-v_1 \\ u_2-v_2  & 0  \end{matrix} \right)  + \frac{\zeta_2-\zeta_1}{W(\psi_2,\psi_1)}
  \left[ \left( \begin{matrix} 1 & 0 \\ 0 & -1 \end{matrix} \right),   \left(\begin{matrix} -\psi_{2,2} \psi_{1,1} & \psi_{2,1} \psi_{1,1} \\
        -\psi_{2,2} \psi_{1,2} & \psi_{2,1} \psi_{1,2} \end{matrix} \right) \right]\left( \begin{matrix} 1 & 0 \\ 0 & -1 \end{matrix} \right) 
\\ &=  i\left( \begin{matrix} 0 & u_1-v_1- 2i\frac{\zeta_2-\zeta_1}{W(\psi_2,\psi_1) } \psi_{2,1} \psi_{1,1}    \\ u_2-v_2+2i\frac{\zeta_2-\zeta_1}{W(\psi_2,\psi_1) } \psi_{2,2} \psi_{1,2}  & 0  \end{matrix} \right)
\end{split}\]

 Thus   $A_1=B_1$ is equivalent to 
  \begin{equation} \label{defbaecklund}   {\bf v} : = B({\bf u}, \bzeta,  \bpsi):=  {\bf u} + i\frac{2(\zeta_2-\zeta_1)}{W(\psi_2,\psi_1)} \left( \begin{matrix} \psi_{1,1} \psi_{2,1} \\ -  \psi_{1,2} \psi_{2,2} \end{matrix} \right). 
   \end{equation} 
With this choice we see,   using  Lemma \ref{first},  that $\psi_j $, $j=1,2$  are in the null space of the both  sides. 
  Thus $A_0=B_0$, and we have proved the intertwining relation  \eqref{intertwining}.
  This computation motivates the following:
  
  \begin{definition} 
  We define the \Backlund operator $B$ by \eqref{defbaecklund}.
    \end{definition} 

It also leads to the next result:

\begin{theorem}\label{th_commuting} 
  a)
$D(\bf u, \bzeta,  \bpsi)$ maps $z$ waves of $\mathcal{L}(\bf u)$ to $z$ waves of $\mathcal{L}(B(\bf u,\bzeta, \bpsi))$. 
 
 b)  We have 
 \begin{equation}\label{Lveigen}
   \mathcal{L}(B({\bf u}, \bzeta, \bpsi))\frac{1}{W(\psi_2,\psi_1)}\psi_1  =  \zeta_2 \frac{1}{W(\psi_2,\psi_1)}\psi_1 .
   \end{equation}
c)  For all functions ${\bf u}$, pairwise disjoint $\zeta_j$  and $\zeta_j$-waves $\psi_j$, $j=1,2,3,4$,  the commutation relation  
\begin{equation}\label{commuting} \begin{split} 
  & D\Big(B({\bf u}, (\zeta_1,\zeta_2)  , (\psi_1,\psi_2)) , (\zeta_3,\zeta_4), D({\bf u}, (\zeta_1,\zeta_2) ,( \psi_1,\psi_2))(\psi_3,\psi_4)
    \Big) D({\bf u}, (\zeta_1,\zeta_2)_,(\psi_1,\psi_2))\hspace{-11cm} 
   \\ &  = 
D\Big(B({\bf u}, (\zeta_1,\zeta_4)  , (\psi_1,\psi_4)) , (\zeta_3,\zeta_2)  ,
D({\bf u}, (\zeta_1,\zeta_4) ,( \psi_1,\psi_4) )(\psi_3,\psi_2)\Big) D({\bf u}, (\zeta_1,\zeta_4)_,(\psi_1,\psi_4))
\end{split} 
\end{equation}
holds, and hence the iterated \Backlund transform is symmetric in all indices. 
\end{theorem}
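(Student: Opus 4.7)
Part (a) is immediate from the intertwining identity \eqref{intertwining2}: if $\phi$ is a $z$-wave of $\mathcal{L}(\mathbf{u})$, then
$\mathcal{L}(B(\mathbf{u},\bzeta,\bpsi))\,D\phi = D\,\mathcal{L}(\mathbf{u})\phi = z\,D\phi$,
so $D\phi$ is a $z$-wave of the new Lax operator. For part (b) I would argue by direct computation: set $W := W(\psi_2,\psi_1)$ and $\phi := \psi_1/W$, and decompose $\mathcal{L}(\mathbf{v}) = \mathcal{L}(\mathbf{u}) + (\mathcal{L}(\mathbf{v}) - \mathcal{L}(\mathbf{u}))$. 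The product rule together with $\mathcal{L}(\mathbf{u})\psi_1 = \zeta_1\psi_1$ yields $\mathcal{L}(\mathbf{u})\phi = \zeta_1\phi - (iW'/W^{2})M_0\psi_1$, and a short calculation using the eigenvalue equations for $\psi_1,\psi_2$ gives $W' = i(\zeta_1-\zeta_2)(\psi_{2,1}\psi_{1,2} + \psi_{2,2}\psi_{1,1})$. Adding the zeroth-order contribution $(\mathcal{L}(\mathbf{v}) - \mathcal{L}(\mathbf{u}))\phi$ read off from the explicit formula \eqref{defbaecklund}, and applying the defining identity $W = \psi_{2,1}\psi_{1,2} - \psi_{2,2}\psi_{1,1}$ to collapse matching terms in each component, one finds $\mathcal{L}(\mathbf{v})\phi = \zeta_2\phi$.

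For part (c), both sides of \eqref{commuting} are compositions of two first-order operators of the form $iM_0\partial + C(x)$, and hence are second-order linear ODE operators on $\C^{2}$-valued functions with identical leading symbol $(iM_0)^{2}\partial^{2} = -\partial^{2}I$. The next step is to show that both sides annihilate each of $\psi_1,\psi_2,\psi_3,\psi_4$: on the LHS the inner factor $D(\mathbf{u},(\zeta_1,\zeta_2),(\psi_1,\psi_2))$ kills $\psi_1,\psi_2$ directly by Lemma \ref{first}, while $\psi_3,\psi_4$ are mapped by this same inner factor to $\zeta_3,\zeta_4$-waves of $\mathcal{L}(B(\mathbf{u},(\zeta_1,\zeta_2),(\psi_1,\psi_2)))$ that appear as the third argument of the outer factor, so the outer factor kills them in turn; the RHS annihilates the same four $\psi_j$ with the roles of $\psi_2$ and $\psi_4$ exchanged. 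Since the $\zeta_j$ are pairwise distinct, the four $\psi_j$ are linearly independent as $\C^{2}$-valued functions --- applying $\prod_{k\ne j}(\mathcal{L}(\mathbf{u})-\zeta_k)$ to any linear dependence isolates $c_j\psi_j = 0$ --- and the kernel of a second-order $2\times 2$ ODE system with invertible leading coefficient has dimension exactly four. Both sides therefore share a full four-dimensional kernel and the same leading term; since such an operator is determined by its leading coefficient together with its kernel, we conclude LHS $=$ RHS. The symmetry in all four indices then follows by iterating this pairwise exchange.

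The main obstacle I expect is the final uniqueness step: at each point $x$ only two of the $\psi_j(x)$ sit in $\C^{2}$, so the functional independence does not immediately give a pointwise basis for the first- and zeroth-order matrix coefficients of the difference LHS $-$ RHS to be solved against. The way around it is to use the Lax equations $\mathcal{L}(\mathbf{u})\psi_j = \zeta_j\psi_j$ to re-express each $\psi_j'$ as $\psi_j$ multiplied by a matrix affine in $\zeta_j$; the vanishing of the difference on $\psi_j$ then becomes an interpolation condition in the variable $\zeta_j$, and distinctness of the four $\zeta_j$ forces the unknown coefficients to be zero at generic $x$. Continuity in $x$ extends the conclusion to the exceptional set and completes the argument.
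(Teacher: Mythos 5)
Your proposal is correct, and parts (a) and (c) follow essentially the same route as the paper: (a) is read off from the intertwining relation \eqref{intertwining2}, and (c) is the paper's one-line argument (both sides are second-order operators with the same leading part and a common four-dimensional null space spanned by $\psi_1,\dots,\psi_4$, hence coincide). Where you genuinely diverge is part (b). The paper avoids all computation by picking an auxiliary $\zeta_2$-wave $\phi$ linearly independent of $\psi_2$, observing from \eqref{D-def} that $D\phi=(\zeta_2-\zeta_1)\frac{W(\psi_2,\phi)}{W(\psi_2,\psi_1)}\psi_1$ with $W(\psi_2,\phi)$ a nonzero constant, and then letting the intertwining relation do the work: $\mathcal{L}(\mathbf{v})D\phi=D\mathcal{L}(\mathbf{u})\phi=\zeta_2 D\phi$. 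Your direct computation (product rule on $\psi_1/W$, the Wronskian derivative $W'=i(\zeta_1-\zeta_2)(\psi_{2,1}\psi_{1,2}+\psi_{2,2}\psi_{1,1})$, plus the zeroth-order term from \eqref{defbaecklund}) checks out in both components and has the merit of pinning down the eigenvalue as $\zeta_2$ unambiguously, in agreement with \eqref{Lveigen}; the paper's route is shorter but needs the auxiliary wave. Regarding the ``obstacle'' you raise in (c): you are right that pointwise linear independence of the $\psi_j(x)$ in $\C^2$ is not enough, but the clean resolution is simpler than your $\zeta$-interpolation workaround. The difference of the two sides is a first-order operator $A_1\partial+A_0$ annihilating $\psi_1,\dots,\psi_4$; since these four functions are linearly independent elements of the four-dimensional kernel of the (nondegenerate, second-order) left-hand side, they form a fundamental system for it, so the associated $4\times 4$ Wronskian matrix with columns $(\psi_j,\psi_j')$ is invertible at every point by Abel's formula, forcing $A_1=A_0=0$ everywhere with no exceptional set to worry about. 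Your interpolation argument can be made to work on the set where the pairwise Wronskians are nonzero (which is where the operators are defined anyway), but it requires extra genericity bookkeeping that the fundamental-system argument renders unnecessary.
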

\begin{proof}
  Part a) is an immediate consequence of   \eqref{intertwining2}.
  To see  Part b) let $\phi$ be a $\zeta_2$ wave. Then by the definition of the intertwining operator \eqref{D-def} 
 \[ 
   D({\bf u}, \bzeta, \bpsi) \phi =  (\zeta_2-\zeta_1)\frac{W(\psi_2,\phi)}{W(\psi_2,\psi_1)} \psi_1,
 \] 
   where $W(\psi_2,\phi)$ is constant and zero iff $\phi$ is a multiple of $ \psi_2$. We choose $ \phi$ linearly independent from $\psi_2$. Then the right hand side does not vanish. By the intertwining property,
   \[ \begin{split} 
   \mathcal{L}(B({\bf u}, \bzeta, \bpsi)) \frac{1}{W(\psi_2,\psi_1)} \psi_1
   \, &  =  \frac{1}{(\zeta_2-\zeta_1)W(\psi_2,\phi) }   \mathcal{L}(B({\bf u}, \bzeta, \bpsi)) D({\bf u}, \bzeta, \bpsi) \phi
\\ & = \frac{1}{(\zeta_2-\zeta_1)W(\psi_2,\phi) } D({\bf u}, \bzeta, \bpsi) \mathcal{L}(u)  \phi
\\ & = \zeta_1 \frac1{W(\psi_2,\psi_1)} \psi_1.
\end{split} 
\]

   To prove the commutation relation \eqref{commuting} we
  observe that both sides are second order operators with the same
  leading part. All the  $\psi_j$ are in the null space, and hence they are the
  same. 
\end{proof} 

We can now obtain the following inversion result by a simple direct computation:

\begin{lemma}\label{intertwining_symmetry} 
Assume that 
\[
\bf v = B({\bf u},\bzeta,\bpsi),
\]
and let
\[ \tilde \psi_2 = \frac1{W(\psi_2,\psi_1)} \psi_1, \quad \tilde \psi_1 = \frac1{W(\psi_1,\psi_2)} \psi_2.  \]
Then 
  \begin{equation} \label{inversebacklund} 
\bf u = B({\bf v},\bzeta ,  \tilde \bpsi) 
\end{equation} 
and
\begin{equation}\label{inverse}   D({\bf v},  \bzeta, \tilde  \bpsi  )
  D({\bf u}, \bzeta ,\bpsi) = (\mathcal{L}({\bf u}) - \zeta_1)(\mathcal{L}({\bf u} )-\zeta_2).
  \end{equation} 
  Moreover,
  \begin{equation}\label{utov} 
    D({\bf v} , \bzeta,  \tilde \bpsi)\phi 
    = (\mathcal{L}({\bf u})  -\zeta_2) \phi  -2(\zeta_2-\zeta_1)\frac{W(\psi_2,\phi)}{W(\psi_2,\psi_1)} \psi_1 .
    \end{equation} 
\end{lemma}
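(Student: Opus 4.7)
The proof assembles several algebraic identities that are essentially consequences of Theorem~\ref{th_commuting} and the Plücker identity \eqref{sumform}, together with careful bookkeeping of Wronskians.

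The first step is to set up the wave functions for $\mathcal L({\bf v})$. By Theorem~\ref{th_commuting}(b), $\tilde\psi_2 = \psi_1/W(\psi_2,\psi_1)$ is a $\zeta_2$-wave of $\mathcal L({\bf v})$; by the symmetry \eqref{conjugate} applied to Theorem~\ref{th_commuting}(b), $\tilde\psi_1 = \psi_2/W(\psi_1,\psi_2)$ is a $\zeta_1$-wave of $\mathcal L({\bf v})$. A direct bilinear computation then gives the key Wronskian identity $W(\tilde\psi_2,\tilde\psi_1) = 1/W(\psi_2,\psi_1)$, so that $D({\bf v},\bzeta,\tilde\bpsi)$ is defined on the same interval as $D({\bf u},\bzeta,\bpsi)$.

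For the inverse \Backlund formula \eqref{inversebacklund}, I would substitute $\tilde\psi_j$ and $W(\tilde\psi_2,\tilde\psi_1)$ directly into \eqref{defbaecklund}. The products $\tilde\psi_{1,k}\tilde\psi_{2,k}$ produce factors of $1/W(\psi_2,\psi_1)^2$ that combine with the $W(\psi_2,\psi_1)$ coming from $W(\tilde\psi_2,\tilde\psi_1)^{-1}$ to cancel down to a single factor of $1/W(\psi_2,\psi_1)$; tracking signs shows the resulting correction is exactly $-({\bf v}-{\bf u})$, which gives ${\bf u}=B({\bf v},\bzeta,\tilde\bpsi)$.

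To prove \eqref{utov}, expand $D({\bf v},\bzeta,\tilde\bpsi)\phi$ from the definition \eqref{D-def}. Substituting $W(\tilde\psi_2,\phi) = W(\psi_1,\phi)/W(\psi_2,\psi_1)$ and the value of $W(\tilde\psi_2,\tilde\psi_1)$ produces a term proportional to $W(\psi_1,\phi)\psi_2$. The Plücker identity \eqref{sumform} applied to $(\psi_1,\psi_2,\phi)$ converts this into contributions proportional to $\phi$ and to $W(\psi_2,\phi)\psi_1$. Using the explicit zeroth-order matrix identity for $\mathcal L({\bf v})-\mathcal L({\bf u})$ read off from \eqref{defbaecklund} in step~2 to write $\mathcal L({\bf v})\phi = \mathcal L({\bf u})\phi + (\mathcal L({\bf v})-\mathcal L({\bf u}))\phi$, the $\phi$-terms recombine and the coefficient on $W(\psi_2,\phi)\psi_1/W(\psi_2,\psi_1)$ ends up being exactly $-2(\zeta_2-\zeta_1)$, giving \eqref{utov}. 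The factor of $2$ is the delicate point: it is the algebraic fingerprint of combining the Plücker identity with the specific combinatorics of $\mathcal L({\bf v})-\mathcal L({\bf u})$.

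Finally, for the factorization \eqref{inverse}, the cleanest route is via intertwining. From \eqref{inversebacklund} and \eqref{intertwining2} we have both $\mathcal L({\bf u})D({\bf v},\bzeta,\tilde\bpsi) = D({\bf v},\bzeta,\tilde\bpsi)\mathcal L({\bf v})$ and $\mathcal L({\bf v})D({\bf u},\bzeta,\bpsi) = D({\bf u},\bzeta,\bpsi)\mathcal L({\bf u})$. Composing these shows the product $D({\bf v},\bzeta,\tilde\bpsi)\,D({\bf u},\bzeta,\bpsi)$ commutes with $\mathcal L({\bf u})$. Moreover the product annihilates both $\psi_1$ and $\psi_2$ (since $D({\bf u},\bzeta,\bpsi)$ does, by Lemma~\ref{first}), and its leading symbol matches that of $\mathcal L({\bf u})^2$. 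A second-order matrix differential operator with these three properties is uniquely determined and must equal $(\mathcal L({\bf u})-\zeta_1)(\mathcal L({\bf u})-\zeta_2)$; alternatively one can verify \eqref{inverse} by a direct computation applying \eqref{utov} to $D({\bf u},\bzeta,\bpsi)\phi = (\mathcal L({\bf u})-\zeta_2)\phi+(\zeta_2-\zeta_1)W(\psi_2,\phi)\psi_1/W(\psi_2,\psi_1)$ and expanding using the product rule and $\mathcal L({\bf u})\psi_1 = \zeta_1\psi_1$. The main obstacle is the sign and coefficient tracking in step~3; once \eqref{utov} is in hand, the remaining assertions are essentially formal consequences.
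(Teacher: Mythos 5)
Your overall route matches the paper's (very terse) argument: \eqref{inversebacklund} by identifying $\tilde\psi_j$ as $\zeta_j$-waves for $\mathcal L({\bf v})$ via \eqref{Lveigen} and the symmetry \eqref{conjugate} and then substituting into \eqref{defbaecklund} (your Wronskian bookkeeping $W(\tilde\psi_2,\tilde\psi_1)=1/W(\psi_2,\psi_1)$ and the resulting cancellation to $-({\bf v}-{\bf u})$ are correct), and \eqref{utov} via the Pl\"ucker identity \eqref{sumform}. But there are two places where your argument does not close. First, for \eqref{inverse} your primary uniqueness claim is insufficient: a second-order $2\times 2$ system with prescribed leading symbol has a \emph{four}-dimensional kernel, so matching the symbol and checking that the composition annihilates only $\psi_1,\psi_2$ (two dimensions) does not pin it down, and "commutes with $\mathcal L({\bf u})$" does not obviously repair this without a separate classification of first-order operators commuting with $\mathcal L({\bf u})$. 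The paper's argument instead exhibits the full four-dimensional kernel: by \eqref{conjugate}, $D({\bf u},\bzeta,\bpsi)$ maps the entire two-dimensional space of $\zeta_1$-waves of $\mathcal L({\bf u})$ into $\mathrm{span}(\psi_2)$ and the $\zeta_2$-waves into $\mathrm{span}(\psi_1)$, both of which lie in the kernel of $D({\bf v},\bzeta,\tilde\bpsi)$; you need this step.

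Second, for \eqref{utov} you defer exactly the computation where the content lies, and asserting that it "ends up being exactly $-2(\zeta_2-\zeta_1)$" is not something you are entitled to without doing it. A direct consistency check shows the issue: since $\tilde\psi_2=\psi_1/W(\psi_2,\psi_1)$ lies in the kernel of $D({\bf v},\bzeta,\tilde\bpsi)$ and $W(\psi_2,\psi_1)$ is $x$-dependent with $\partial_x W(\psi_2,\psi_1)=i(\zeta_1-\zeta_2)(\psi_{1,1}\psi_{2,2}+\psi_{1,2}\psi_{2,1})$, one gets
\[
D({\bf v},\bzeta,\tilde\bpsi)\psi_1=(\zeta_2-\zeta_1)\,\frac{\psi_{1,1}\psi_{2,2}+\psi_{1,2}\psi_{2,1}}{W(\psi_2,\psi_1)}\,M_0\psi_1,
\]
whereas the right-hand side of \eqref{utov} evaluated at $\phi=\psi_1$ gives $3(\zeta_1-\zeta_2)\psi_1$; these are not proportional in general. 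So the "delicate coefficient tracking" you postpone does not in fact land on the displayed formula: carrying out your own outline (definition \eqref{D-def}, Pl\"ucker, and the zeroth-order formula for $\mathcal L({\bf v})-\mathcal L({\bf u})$) produces an additional term proportional to $(\psi_{1,1}\psi_{2,2}+\psi_{1,2}\psi_{2,1})M_0\phi$ rather than a doubled $W(\psi_2,\phi)\psi_1$ term. You must either exhibit the computation and the resulting corrected identity, or explain why the stated coefficient is attainable; as written the step would fail.
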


\begin{proof}
  The identity \eqref{inversebacklund} is  a consequence of \eqref{Lveigen} in Theorem \ref{th_commuting} and of the definition of $D({\bf u},\bzeta, \bpsi)$. 
Both sides of \eqref{inverse} map $z$ waves of $\mathcal{L}({\bf u}) $ to $z$ waves of the same operator  $\mathcal{L}({\bf u})$. The 
kernel of the right hand side is spanned by the $\zeta_1$ and $\zeta_2$ waves of $\mathcal{L}({\bf u}) $. Since every $\zeta_j$ wave is mapped by $D({\bf u}, \bzeta,\bpsi)$ into the null space of $D({\bf v},\tilde \bzeta)$ they also span the null space of the left hand side.  This implies the formula \eqref{inverse}. 
Finally \eqref{utov} is a consequence of \eqref{sumform}.  
\end{proof}

We may iterate the B\"acklund transform as follows. Let $u_1, u_2 \in H^s(\R)$, $s >-\frac12$, $\zeta_{j1}, \zeta_{j2}$, $j= \overline{1,N}$, pairwise disjoint complex numbers,
and associated wave functions $\psi_{j1}, \psi_{j2}$ for $\L({\bf u})$.  On a set where $W(\psi_{11}, \psi_{12})\ne 0$ we apply the corresponding \Backlund transform for 
${\bf u}$ via \eqref{defbaecklund}, as well as transform the other wave functions by  
\[ 
\psi_{j1}^1 = D \psi_{j1} , \qquad \psi_{j2}^1= D \psi_{j2} 
\]
for $ j \ge 2$. Then we repeat the process $N$ times.

By Theorem \ref{th_commuting}, the iterated \Backlund transforms  are symmetric in all the indices - of
course on a set where all the Wronskians are nonzero.
 So it is natural to
seek a direct description for them.  To achieve that we start with the
$N \times N$ matrix $M$ with complex entries
\begin{equation} \label{Mjk}  
M_{jk} = \frac{i W( \psi_{2k} , \psi_{2j-1} )}{\zeta_{2k}  - \zeta_{2j-1}}.
\end{equation} 
We define the map
\[ Q(\phi_1,\phi_2) (\psi) = W(\phi_2,\psi) \phi_1. 
\]

We assume that $M$ is invertible and denote 
 $m = M^{-1}$. Then we have the following:

\begin{theorem} \label{t:dress2}
The following properties hold for the iterated \Backlund transform:

\begin{enumerate}[label=(\alph*)] 
\item \label{casea2} 
The operator $D^N = D^N({\bf u},\bzeta,\bpsi)$ is given by
\begin{equation}\label{Dn2}
D^N  =\left( I + \sum_{j,k=1}^N m_{kj}   Q(\psi_{2j-1}, \psi_{2k})  (\L({\bf u})  - \zeta_{2k}) ^{-1} \right)  \prod_{k=1}^N  (\L({\bf u}) -\zeta_{2k} ). 
\end{equation}
\item  \label{caseb2} 
  The output function $ {\bf v}= \Dress(u,\bzeta,\bpsi)$ is given by
\begin{equation}\label{Dn-state2}
  {\bf v}  = {\bf u}  + 2 \sum_{kj} m_{kj}\left( \begin{matrix}
    \psi_{2j-1,1}   \psi_{2k,1} \\ -\psi_{2j-1,2} \psi_{2k,2}\end{matrix}  \right). 
\end{equation}
\item  \label{caseb2a} 
In particular, the image of a $z$-wave $\psi$ for ${\bf u}$   is a $z$-wave $D^N \psi$ for ${\bf v} $ where
\begin{equation}\label{Dn-wave2}
D^N\psi = \prod_{\ell=1}^N (z-\zeta_{2\ell}) \left( \phi + \sum_{j,k=1}^N \frac{1}{z - \zeta_{2k} }  m_{kj}  W(\psi_{2k},\phi)   \psi_{2j+1} \right),
\end{equation}
provided $ z $ is not equal to one of the $\zeta_{2j}$ - otherwise we swap the odd and even indices. 

\item \label{casec2} 
  The functions 
\begin{equation}\label{eigen-m2}
\phi_{2k} = \sum_{j=1}^N m_{jk} \psi_{2j-1}
\end{equation}
are $z_{2k}$ waves for $\L({\bf v})$, and similarly with odd and even indices swapped.  We obtain the concise formula for the iterated \Backlund transform
\begin{equation} \label{conciseinverse} 
\mathbf{v} = \mathbf{u} +  \sum_{k=1}^N \left( \begin{matrix} \phi_{2k,1} \psi_{2k,1} \\  -\phi_{2k-1,2}  \psi_{2k-1,2}    \end{matrix} \right).  
 \end{equation} 

\end{enumerate} 
\end{theorem}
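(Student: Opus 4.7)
I would prove parts \ref{casea2}–\ref{casec2} together by induction on $N$, with the operator identity \eqref{Dn2} as the backbone and the other statements extracted from it. The base case $N=1$ is essentially unpacking definitions: after noting that $Q(\psi_1,\psi_2)$ annihilates $\psi_2$ (since $W(\psi_2,\psi_2)=0$), the composition $(\L(\mathbf{u})-\zeta_2)^{-1}(\L(\mathbf{u})-\zeta_2)$ may be treated as the identity when sandwiched by $Q(\psi_1,\psi_2)$, so \eqref{Dn2} collapses to \eqref{D-def}; formula \eqref{Dn-state2} is then \eqref{defbaecklund}; \eqref{Dn-wave2} is the action of $D$ on a $z$-wave using $\L\psi=z\psi$; and \ref{casec2} is \eqref{Lveigen} combined with Lemma \ref{intertwining_symmetry}.

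For the inductive step, assume the theorem for $N-1$ with indices $1,\dots,2N-2$, producing an intermediate state $\mathbf{v}^{N-1}$ and an operator $D^{N-1}$. Set $\hat\psi_\ell = D^{N-1}\psi_\ell$ for $\ell=2N-1,2N$, which by part \ref{caseb2a} of the inductive hypothesis are explicitly computable $\zeta_\ell$-waves for $\L(\mathbf{v}^{N-1})$. The operator $D^N$ is then the one-step \Backlund composition $D^1(\mathbf{v}^{N-1},(\zeta_{2N-1},\zeta_{2N}),(\hat\psi_{2N-1},\hat\psi_{2N}))\circ D^{N-1}$. The bulk of the work is to recognize this composition as the RHS of \eqref{Dn2} with the full $N\times N$ matrix $M$. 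The key algebraic ingredient is a Schur-complement identity: bordering the $(N-1)\times(N-1)$ matrix $M^{(N-1)}$ by the new row/column computed from the transformed Wronskians $W(\hat\psi_{2N},\hat\psi_{2\ell-1})$ and $W(\hat\psi_{2k},\hat\psi_{2N-1})$ precisely implements the block inversion formula and reproduces the full inverse $m=M^{-1}$. Here the symmetry of the iterated \Backlund transform in its indices (Theorem \ref{th_commuting}\ref{casea2}/\eqref{commuting}) is essential, since it lets us swap the last pair into any position without changing the output.

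Granted \ref{casea2}, part \ref{caseb2} is read off from the subleading term of $D^N$ by comparison with the single-step formula \eqref{defbaecklund}: the rank-two perturbation of the $N$-th order differential operator isolates exactly the quadratic-in-$\psi$ correction on the right-hand side of \eqref{Dn-state2}. Part \ref{caseb2a} is then a direct computation: plug a $z$-wave $\psi$ into \eqref{Dn2}, use $(\L(\mathbf{u})-\zeta_{2k})^{-1}\psi=(z-\zeta_{2k})^{-1}\psi$, and collect terms. For part \ref{casec2}, I would invoke Lemma \ref{intertwining_symmetry} in iterated form: the inverse iterated \Backlund transform from $\mathbf{v}$ back to $\mathbf{u}$ is built from the eigenfunctions of $\L(\mathbf{v})$, and the candidate eigenfunctions are precisely the linear combinations $\phi_{2k}=\sum_j m_{jk}\psi_{2j-1}$ forced by the vanishing condition $D^N\phi=0$ viewed from the $\mathbf{v}$-side (together with the intertwining \eqref{intertwining2}). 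The concise formula \eqref{conciseinverse} is then obtained by applying \eqref{Dn-state2} to the $\phi_{2k}$'s and $\phi_{2k-1}$'s, using $\sum_{j,k} m_{kj}\psi_{2j-1}\otimes\psi_{2k}=\sum_k\phi_{2k}\otimes\psi_{2k}$.

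The main obstacle I anticipate is the bookkeeping in the inductive step, specifically the Schur-complement verification: one must show that the single rank-one correction contributed by the $N$-th \Backlund transform exactly accounts for the difference between inverting the $(N-1)\times(N-1)$ block and inverting the full $N\times N$ matrix $M$. A cleaner parallel approach I would keep in reserve is the \emph{uniqueness} route: verify directly that the operator defined by the right-hand side of \eqref{Dn2} has null space containing all $2N$ wave functions $\psi_\ell$, has leading symbol $(-i\partial)^N$, and intertwines $\L(\mathbf{u})$ with an operator of the form $\L(\mathbf{v})$ for $\mathbf{v}$ given by \eqref{Dn-state2}. Since the iterated \Backlund transform is characterized up to lower-order corrections by these three properties (compare the argument in Lemma \ref{intertwining_symmetry} and the uniqueness step leading to \eqref{intertwining2}), this pins down $D^N$ without induction, at the cost of a somewhat heavier direct matrix calculation using $Mm=I$.
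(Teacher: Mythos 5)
Your primary route (induction on $N$, composing one more \Backlund transform onto $D^{N-1}$ and verifying a Schur-complement/bordered-inversion identity for $M$) is genuinely different from what the paper does, and the alternative you ``keep in reserve'' is in fact the paper's actual proof. The paper establishes \eqref{Dn2} by a uniqueness argument: the composed operator $D^N$ annihilates all $2N$ wave functions $\psi_j$ because, by the symmetry \eqref{commuting}, any pair $(\psi_{2\ell-1},\psi_{2\ell})$ can be moved to the first slot; the right-hand side of \eqref{Dn2} annihilates the same functions by a short direct computation using $mM=I$ and $W(\psi_{2\ell},\psi_{2\ell})=0$; and two $N$-th order operators with the same leading coefficient and the same $2N$-dimensional kernel coincide. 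Then \eqref{Dn-state2} is extracted by matching the coefficient of $\L(\mathbf{u})^N$ in the intertwining relation $\L(\mathbf{v})D^N=D^N\L(\mathbf{u})$, \eqref{Dn-wave2} follows by substituting $(\L(\mathbf{u})-\zeta_{2k})^{-1}\psi=(z-\zeta_{2k})^{-1}\psi$, and \eqref{eigen-m2} comes from evaluating $D^N$ on a $\zeta_{2\ell}$-wave independent of $\psi_{2\ell}$ --- all exactly as you sketch for parts (b)--(d). The trade-off is the one you anticipate, but it cuts the other way: the ``heavier direct matrix calculation'' in the uniqueness route is only a few lines, whereas your inductive step requires tracking how the Wronskians $W(D^{N-1}\psi_{2k},D^{N-1}\psi_{2j-1})$ transform under the intermediate operator before the block-inversion identity can even be set up; that bookkeeping, which you correctly flag as the main obstacle, is precisely what the paper's argument avoids. (Minor caveat: your $N=1$ sanity check that \eqref{Dn2} ``collapses to \eqref{D-def}'' silently absorbs a factor of $i$ coming from the normalization $M_{jk}=iW(\psi_{2k},\psi_{2j-1})/(\zeta_{2k}-\zeta_{2j-1})$; this is a normalization wrinkle already present in the paper's formulas rather than an error in your reasoning.)
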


\begin{proof} \ \ref{casea2} We begin with the product formula, where
  we remark that the operator $D^N$ is an order $N$ nondegenerate
  differential operator acting on $2$ vectors, therefore it admits a
  system of $2N$ fundamental solutions, and is uniquely determined by
  such a system. The iterated \Backlund transform is another $N$-th order operator with the same coefficient of the leading term. The null space of $D({\bf u}, (\zeta_{2l-1},\zeta_{2l}), {\psi_{2l-1},\psi_{2l}}) $ is spanned by $\psi_{2l-1}$ and $\psi_{2l}$.
  Therefore by the
  iteration relation \eqref{commuting} it follows that the functions
  $\psi_j$ , $1\le j \le 2N $ form a fundamental system for $D^N$.
  Hence, it remains to show that the expression in \eqref{Dn2}
  vanishes when applied to $\psi_j$. 

Indeed, we have
\[
\begin{split}
D^N \psi_{2\ell-1} = & \  
\prod_{k=1}^N (z_{2\ell-1}- z_{2k})
\Big( \psi_{2\ell-1}  + \sum_{k,j=1}^N \frac{1}{z_{2\ell-1} - z_{2k}} m_{kj}
  W( \psi_{2k} , \psi_{2\ell-1})  \psi_{2j-1} \Big) 
\\
= & \  
 \prod_{k=1}^N (z_{2\ell-1}-z_{2k} ) \left( \psi_{2\ell-1}  - \sum_{k,j=1}^N m_{kj}  M_{\ell k}   \psi_{2j-1} \right) = 0,  
\end{split}
\] 
and   
\begin{equation}  \label{nullspace2} 
\begin{split} 
  D^N \psi_{2\ell} = &  \prod_{k\ne \ell }  (z_{2k}-z_{2\ell})\big[  (\mathcal{L}({\bf u}) - z_{2\ell})\psi_{2\ell}  +   \sum_{j=1}^N m_{j\ell }  Q( \psi_{2\ell}, \psi_{2j-1})    \psi_{2\ell}\Big]
  \\   = & \prod_{k \neq \ell}  (z_{2k}-z_{2\ell})\sum_{j=1}^N m_{j\ell} W( \psi_{2\ell} ,\psi_{2\ell}) \psi_{2j-1} = 0
\end{split}
  \end{equation} 
since the Wronskian vanishes.

\bigskip \ref{caseb2} Next we verify the formula \eqref{Dn-state2}. For
this we use the intertwining relation
\[
\L({\bf v}) D^N = D^N \L({\bf u}),
\]
where for $D^N$ we use \eqref{Dn2}. The expression on the right admits an expansion in terms of powers of $\L({\bf u}) $,
\[
D^N \L({\bf u})  = \L({\bf u})^{N+1}  + \left ( \sum_{k=1}^N - z_{2k}  + \sum_{j,k=1}^N m_{kj}  Q(\psi_{2k}, \psi_{2j-1})  \right) \L({\bf u})^N + \cdots
\] 
so we compute a similar expansion on the left,
\[
\L({\bf v})  D^N  = \L({\bf u})  D^N + \left(\begin{matrix} 0 & u-v \\ \bar u - \bar v  & 0  \end{matrix}\right) D^N.
\]
We use the expression for $D^N$, commute and identify the coefficients of $\L({\bf u})^N$.
This yields
\[
  \Big[\left( \begin{matrix} 1 & 0 \\ 0 & -1 \end{matrix} \right) , \sum_{j,k=1}^N m_{kj} \left( \begin{matrix} -\psi_{2k,2} \psi_{2j-1,1} & \psi_{2k,1} \psi_{2j-1,1} \\ -\psi_{2k,2} \psi_{2j-1,2} & \psi_{2k,1}  \psi_{2j-1,2} \end{matrix} \right) \Big] =  
\left(\begin{matrix} 0 & u_1- v_1 \\ u_2 - v_2  & 0  \end{matrix}\right),
\]
which leads to the desired formula \eqref{Dn-state2}.

\bigskip \ref{caseb2a} The image of a $z$-wave $\psi$ for ${\bf u}$   is a $z$-wave $D^N \psi$ for ${\bf v} $ by iterated application of Theorem~\ref{th_commuting} (a).
The formula \eqref{Dn-wave2} is a direct consequence of \eqref{Dn2}.

\bigskip

\ref{casec2}  Here we consider the eigenfunction formula \eqref{eigen-m2}.
Let $ \phi$ be a $\zeta_{2\ell} $ wave. Then $\phi$ is mapped to a $\zeta_{2\ell} $ wave. As in \eqref{nullspace} we get 
\[   D^N \phi   = W(\psi_{2\ell}, \phi)   \prod_{k \neq \ell}  (z_{2k}-z_{2\ell})\sum_{j=1}^N m_{j\ell}  \psi_{2j-1}. \] 
Formula \eqref{eigen-m2} follows since the Wronskian is constant, and we may swap the odd and even indices. 

\end{proof}

\subsection{The intertwining operator for NLS} 
Our main interest is in the NLS equation, where
\[
u_1= u, \quad u_2 = \bar u , \quad  z_1=\zeta, \quad z_2= \bar \zeta , \quad \psi_1=\phi, \quad \psi_2= M \bar \phi = \left(  \begin{matrix} -\bar \phi_2 \\ \bar \phi_1 \end{matrix} \right),
\]
and $ \phi$ is a $\zeta$-wave. 
Then the Wronskian is always nonzero,
\[ 
W(\psi_1,\psi_2) = \det\left( \begin{matrix}  \phi_1 &  -\bar \phi_2\\ \phi_2 & \bar \phi_1 \end{matrix} \right) = |\phi|^2, 
\]  
so all the formulas in the previous subsection apply on the full real line.
The intertwining operator becomes 
\begin{equation}\label{D-def2}
D\psi= D(u,\zeta,\phi)\psi  = \Big(\mathcal L(u) -\bar \zeta -2i \im \zeta \frac{\phi  \phi^*}{|\phi|^2} 
\Big) \psi. 
\end{equation}
and the \Backlund transform becomes
\begin{equation}\label{v-def}
v = B(u,\zeta,\phi) :=u +4 \im \zeta \frac{\phi_1 \bar \phi_2}{|\phi|^2}. 
\end{equation}
The wave function $ \phi $ and $ M \bar \phi$ are in the null space of $D(u,\zeta,\psi)$, and
\[ 
D(u,\bar \zeta, M \bar \phi) = D(u,\zeta, \psi) , \qquad B(u,\zeta, \phi) = B(u, \bar \zeta,\bar \phi), 
\]
which can be written out as 
  \begin{equation}\label{utov2} 
    D(B(u,\zeta,\phi),\bar \zeta, \frac{\phi}{|\phi|^2} )
    = \mathcal{L}(u) -\bar \zeta -2i\im \zeta \frac{M\phi(M\phi)^*}{|\phi|^2}.
    \end{equation}
The intertwing  relation \eqref{intertwining2} becomes 
\begin{equation}\label{intertwining3}    \mathcal{L}(B(u,\zeta,\phi)) D(u,\zeta,\phi) = D(u,\zeta, \phi) \mathcal{L}(u) \end{equation} 
and we obtain   
  \begin{equation}\label{Lveigen2}
    \mathcal{L}(B(u,\zeta,\phi))\frac{\phi}{|\phi|^2} = \bar \zeta \frac{\phi}{|\phi|^2}.     \end{equation}

If 
\[
v = B(u,\zeta,\phi),
\]
then we have 
  \begin{equation} \label{inversebacklund2} 
u = B(v,\bar \zeta ,   \dfrac{1}{|\phi|^2} \phi) = B(v, \zeta, \dfrac{1}{|\phi|^2} M \bar \phi).
\end{equation}

We will use the intertwining operator $D$ in two cases:
\begin{itemize}
\item when $\phi$ is a wave function which is unbounded at both ends.
\item when $\phi$ is an eigenfunction.
\end{itemize}
The remaining case when $\phi= \psi_l$ is not an eigenfunction is also
of interest, but not relevant here.

We begin our discussion with the first case.  Let $\psi$ be a wave function for $u$, at the spectral parameter $\zeta$, and  which is unbounded at $\pm \infty$.
We normalize it so that 
\[ 
\lim_{x\to -\infty}  e^{-i\zeta x} \psi_2(x) = e^{\kappa}, 
\]
\[ 
\lim_{x\to \infty}  e^{i\zeta x} \psi_1(x) = e^{-\kappa}. 
\]
for a unique choice
\[ 
\kappa \in \mathbb{C} \backslash (\pi i \Z). 
\]  
We recall that $\kappa$ is uniquely determined if $z$ is an eigenvalue, but 
can be chosen arbitrarily otherwise. 
 
We know that the Lax operator for $v = B(u,\zeta,\psi)$ has an eigenvalue at $\zeta$ with associated eigenfunction 
\[ 
\phi = \frac{1}{|\psi|^2} \left( \begin{matrix} -\overline{\psi_2}\\[1mm] \overline{\psi_1} \end{matrix} \right). 
\]
Then a brief calculation shows that 
\begin{equation}  \label{kappa-backlund}
\phi =  -e^{-\kappa}  \psi_l = e^{\kappa} \psi_r 
\end{equation} 
where $\phi_l$ and $\phi_r$ are the left resp. right Jost function for $v$.

\begin{remark}
This property is what allows us to identify our use of $\kappa$ as a notation for a scattering parameter, in the first section, to the current use of $\kappa$ as a parameter
for the unbounded eigenfunctions.
\end{remark}

It will often be convenient to use the alternative notation 
\[
e^{\kappa} = e^{i (\beta_0+  \beta_1\zeta )},  
\] 
with $ \beta_0, \beta_1 \in \R$.

   \begin{lemma}   
  Let  $\psi_l(u)$, $\psi_l(v)$,  $\psi_r(u)$ and $\psi_r(v)$
  are left resp. right Jost functions for $u$ resp. $v$ to the spectral parameter $z$  and let $ \phi$ be an unbounded $\zeta$ wave. Then
  \begin{equation}
\label{Luwavemap} 
D(u,\zeta,\phi)\psi_l(u)= (z -  \bar \zeta) \psi_l(v),
\qquad  
D(u,\zeta,\phi)\psi_r(u)=
 (z -  \bar \zeta) \psi_r(v),
\end{equation} 
  and, if the Jost functions at $\zeta$ are unbounded (resp. $ \zeta$ is not a pole for $T$, equivalently $\zeta$ is not an eigenvalue), and
\[
\phi =  T(\zeta)^{-1} \Big( e^{-i(\beta_0+\beta_1 \zeta)} \psi_l(\zeta)  + e^{i(\beta_0+\beta_1 \zeta)} \psi_r(\zeta) \Big) ,
\]

   \begin{equation} \frac1{|\phi|^2} \left( \begin{matrix} -\overline{\phi_1} \\[1mm]
      \overline{\phi_2}\end{matrix} \right)   = -  e^{-i(\beta_0+ \beta_1 \zeta)}   \psi_l(\zeta,v) =  e^{i(\beta_0+ \beta_1 \zeta)}  \psi_r(\zeta, v).   
      \end{equation}
Moreover
\begin{equation}\label{inverse2}   D(B(u,\zeta,\phi), \bar \zeta, \frac{\phi}{|\phi|^2} ) D(u, \zeta ,\phi) = (\mathcal{L}(u) - \zeta)(\mathcal{L}(u)-\bar \zeta).
  \end{equation}

  \end{lemma}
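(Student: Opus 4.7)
The plan is to prove the three claims in turn, each by combining the intertwining relation $\mathcal L(v)\, D(u,\zeta,\phi) = D(u,\zeta,\phi)\,\mathcal L(u)$ from \eqref{intertwining3} with an asymptotic matching argument at one endpoint.

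For \eqref{Luwavemap}, Theorem~\ref{th_commuting}(a) already gives that $D\psi_l(u)$ and $D\psi_r(u)$ are $z$-waves for $\mathcal L(v)$, so matching the leading coefficient at a single endpoint identifies them. At $x\to -\infty$ the leading contribution to $D\psi_l(u)$ is $(\mathcal L(u)-\bar\zeta)\psi_l(u) = (z-\bar\zeta)\psi_l(u)$, while the rank-one correction $2i\im\zeta\,\phi\phi^* \psi_l(u)/|\phi|^2$ is easily seen to be subdominant using $|\phi|^2 \sim e^{-2x\im\zeta}$ (driven by the $\psi_r(u)$ piece of $\phi$). Simultaneously $v-u = 4\im\zeta\,\phi_1\bar\phi_2/|\phi|^2 = O(e^{2x\im\zeta})$ at $-\infty$, so $\psi_l(v)$ has the same leading term $(e^{-izx},0)^T$ as $\psi_l(u)$, yielding the factor $(z-\bar\zeta)$. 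The $\psi_r$ identity is obtained symmetrically at $x\to +\infty$.

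For the eigenfunction identity, the quantity on the left is (up to a transposition of indices $1,2$) the function $M\bar\phi/|\phi|^2$, which by the $M$-conjugation symmetry applied to \eqref{Lveigen2} is a $\zeta$-eigenfunction of $\mathcal L(v)$. Since the pole at $\zeta$ of $T(\cdot,v) = T(\cdot,u)(z-\bar\zeta)/(z-\zeta)$ is simple, this eigenfunction is a common scalar multiple of both $\psi_l(\zeta,v)$ and $\psi_r(\zeta,v)$. The two constants are pinned down by plugging in the given representation of $\phi$ in terms of $\psi_l(\zeta,u)$ and $\psi_r(\zeta,u)$ and reading off the leading asymptotics: at $-\infty$ only the $\psi_r(u)$ piece contributes to the dominant behavior of $\phi_2$, producing the factor $-e^{-i(\beta_0+\beta_1\zeta)}$; at $+\infty$ the roles of $\psi_l(u)$ and $\psi_r(u)$ swap and the dominant behavior of $\phi_1$ produces the factor $e^{i(\beta_0+\beta_1\zeta)}$.

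The operator identity \eqref{inverse2} is a specialization of \eqref{inverse} in Lemma~\ref{intertwining_symmetry}. Taking $\zeta_1=\zeta$, $\zeta_2=\bar\zeta$, $\psi_1=\phi$, $\psi_2 = M\bar\phi$, the Wronskian becomes $W(\psi_2,\psi_1) = |\phi|^2$ and the transformed waves $\tilde\psi_j$ reduce to $\phi/|\phi|^2$ and $M\bar\phi/|\phi|^2$ up to signs. After invoking the symmetry \eqref{conjugate} of Lemma~\ref{first} to reorder the two pairs, the operator $D(v,(\bar\zeta,\zeta),\tilde\bpsi)$ is recognized as the NLS-specific operator $D(v,\bar\zeta,\phi/|\phi|^2)$ in the notation of \eqref{D-def2}, and \eqref{inverse} becomes \eqref{inverse2}.

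The main technical obstacle I expect lies in the bookkeeping for the eigenfunction identity: the precise prefactors $\mp e^{\mp i(\beta_0+\beta_1\zeta)}$ emerge only after the transmission and reflection coefficients of $u$ cancel cleanly between the numerators $\bar\phi_1,\bar\phi_2$ and the denominator $|\phi|^2$, and this cancellation must be verified directly from the Jost normalizations $\lim_{x\to-\infty}e^{-i\zeta x}\phi_2 = e^{i(\beta_0+\beta_1\zeta)}$ and $\lim_{x\to+\infty}e^{i\zeta x}\phi_1 = e^{-i(\beta_0+\beta_1\zeta)}$ together with the $T(\zeta)^{-1}$ factor appearing in the representation of $\phi$.
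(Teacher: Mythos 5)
Your proposal is correct and follows essentially the same route as the paper: the identities \eqref{Luwavemap} and the eigenfunction formula are obtained by asymptotic matching at $\pm\infty$, using that the projection $\phi\phi^*/|\phi|^2$ degenerates to a diagonal matrix there so that $D(u,\zeta,\phi)$ acts as $\mathcal L(u)-\bar\zeta$ on the dominant component, while \eqref{inverse2} is the NLS specialization of the general inversion identity (the paper quotes \eqref{utov2}, you quote \eqref{inverse}, which carry the same content via Lemma~\ref{intertwining_symmetry}). The extra details you supply -- the decay $v-u=O(e^{2\im\zeta\, x})$ at $-\infty$, the identification of the constants from the Jost normalizations, and the use of the symmetry \eqref{conjugate} to reorder the pairs -- are all accurate.
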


\begin{proof} 
Since the operator $D(u,\zeta,\psi)$ becomes $\mathcal L(u)-\bar \zeta$ at infinity, we have 
\[  
\lim_{x\to \infty} e^{izx} (D(u,\zeta,\phi) \psi_l(u))_1  =   (z-\bar \zeta)   T(z,u)^{-1},  
\] 
which, together with the same calculation for the right Jost functions,  implies \eqref{Luwavemap}. The second formula is an immediate consequence.
For the last formula we use \eqref{utov2}.  
\end{proof}

If $\zeta$ is an eigenvalue, then
$\phi_l$ and $\phi_r$ coincide up to a
constant as above, see \eqref{eigenpara}. Hence to characterize the
normalized eigenfunction $\phi$ we can use the properties
in the above Lemma for $\psi_l$ for $x < x_0$, and for $\psi_r$ for $x
> x_0$.   We combine Lemma \ref{hsjost} with the previous constructions.

\begin{proposition}\label{p:wave}  
\begin{enumerate} 
\item  If $\phi$ is unbounded as $x\to \pm \infty$ then 
$D(u,\zeta, \phi) : H^{s+1} \to H^s$ is injective and has closed range of codimension $2$,
with orthogonal complement spanned by $|\phi|^{-2} M_0\phi$ and $ |\phi|^{-2} MM_0\bar \phi$
(which are the $\zeta$, respectively $\bar \zeta$ eigenfunctions of $L(v)^*$). 
Further, $|\phi|^{-2} M \bar \phi$ is a $\zeta$ eigenfunction for $\mathcal L(v)$,
and $|\phi|^{-2} \phi$ is a $\bar \zeta$ eigenfunction for $\mathcal L(v)$, and
\[
(z-\zeta)T(u,z)  = (z-\bar \zeta) T(v,z).
\]
\item  If $\phi$ is an eigenfunction then $D(u,\zeta,\phi) :H^{s+1}\to H^s$ is surjective, with null space  spanned by 
$\phi$ and $M \bar \phi$.  Moreover, 
\[
(z-\bar \zeta) T(u,z) = (z-\zeta) T(v,z). 
\] 

\item \label{waveana} If $\zeta $ is in the resolvent set of $\mathcal L(u)$  and $\phi$
is a $\zeta$ wave function for $\mathcal L(u)$ as in  \eqref{wavepara}
then the maps 
\[
\zeta \times (x_0,\theta) \times u \to D(u,\zeta, \phi) \in L(H^{s+1}, H^s)
\] 
and 
\[
H^s\ni u \to B(u,\zeta, \phi) -u \in H^{s+1} 
\]
are  analytic and separately holomorphic as functions of $\zeta$, $\bar \zeta$, $u$ and $\bar u$, as discussed in the beginning of this section. 
 They and their  derivatives are uniformly bounded on the set 
\[ \{ \delta \langle \real \zeta \rangle < \im \zeta \} \times \R \times (\R/ \pi \Z) \times 
(H^s\cap \{ u : \Vert u \Vert_{l^2DU^2} < \delta/C\}), \]
for some $C>0$. 
\item If $ \zeta$ is an simple eigenvalue of $\mathcal{L}(v) $ with eigenfunction $\phi$ 
  then the maps
\[ v \to \zeta( \mathcal{L}(v)) , \] 
\[   v \to D(v,\zeta, \phi)  \in L(H^{s+1},H^s), \] 
and 
\[ H^s \ni v \to B(v,\zeta,\phi)-v \in H^{s+1} \] 
are  analytic and holomorphic as functions of $\zeta$, $\bar \zeta$, $u$ and $\bar u$.
\end{enumerate}

\end{proposition}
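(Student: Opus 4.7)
The plan is to deploy three structural identities already proved: the intertwining relation \eqref{intertwining3}, the factorization \eqref{inverse2}, and the conjugation symmetry $\mathcal{L}^* = M_0 \mathcal{L} M_0^{-1}$. On smooth functions the kernel of $D = D(u,\zeta,\phi)$ is always two-dimensional, spanned by $\phi$ and $M\bar\phi$; the split between parts (1) and (2) reduces to whether these null functions actually belong to $H^{s+1}$.

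For part (1), $\phi$ and $M\bar\phi$ grow exponentially at both ends so neither lies in $H^{s+1}$, yielding injectivity of $D$ immediately. To identify the cokernel I would pass to $D^*$ using the $M_0$ conjugation and verify directly that the decaying functions $|\phi|^{-2}M_0\phi$ and $|\phi|^{-2}MM_0\bar\phi$ (which are reciprocals of growing objects) lie in its null space; this gives a closed range of codimension two. The eigenfunction identifications for $\mathcal{L}(v)$ at $\bar\zeta$ and $\zeta$ are exactly \eqref{Lveigen2} together with its partner obtained from the symmetry \eqref{conjugate} after swapping $(\zeta,\phi)$ with $(\bar\zeta, M\bar\phi)$.

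For part (2), now $\phi$ and $M\bar\phi$ decay exponentially and lie in $H^{s+1}$, so they span the two-dimensional null space. Surjectivity I would obtain from the factorization \eqref{inverse2}: the reciprocal wave $\phi/|\phi|^2$ is an unbounded wave function for $\mathcal{L}(v)$ at $\bar\zeta$, so part (1) applied to $v$ makes $D(v, \bar\zeta, \phi/|\phi|^2)$ injective with codimension-two range, and matching this with the two-dimensional cokernel of $(\mathcal{L}(u)-\zeta)(\mathcal{L}(u)-\bar\zeta)$ (which comes precisely from the two eigenfunctions of $\mathcal{L}(u)^*$) forces the middle factor $D(u,\zeta,\phi)$ to be surjective. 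Both transmission coefficient identities then fall out by applying $D$ to the left Jost function through \eqref{Luwavemap} and comparing the $x \to \pm\infty$ asymptotics.

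For the analyticity statements (3) and (4), I would lean on Lemma \ref{hsjost} throughout. In part (3), the wave function $\phi$ is the explicit linear combination \eqref{wavepara} of $\psi_l$ and $\psi_r$, parametrized by $\kappa = i(\beta_0 + \beta_1\zeta)$, so the formulas \eqref{D-def2} and \eqref{v-def} become rational expressions in jointly holomorphic data whose only denominator is $|\phi|^2$. In part (4), standard holomorphic perturbation theory for the simple eigenvalue $\zeta$ of $\mathcal{L}(v)$ produces an analytic dependence of $\zeta$ and of a normalized eigenfunction $\phi$ on $v$, with $|\phi|^{-2}$ controlled through the nondegeneracy \eqref{nondeg+}. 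The main obstacle I anticipate is the \emph{uniformity} claim in (3): one must combine the $l^2 DU^2$ smallness of $u$ with the restriction $\delta\langle\real\zeta\rangle < \im\zeta$ to bound the $e^{\pm\kappa}$ normalization factors and to keep $|\phi|^{-2}$ away from infinity uniformly across the stated parameter range.
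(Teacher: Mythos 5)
Your overall strategy is viable but genuinely different from the paper's for the Fredholm statements (1) and (2). The paper argues by a direct ODE analysis of $D(u,\zeta,\phi)\psi=f$: it computes $\lim_{x\to\pm\infty}\phi\phi^*/|\phi|^2$, observes that in case (1) the system decouples at each end so that one component must be integrated from $-\infty$ and the other from $+\infty$, and the two resulting solutions must match at $x=0$ --- this single shooting argument yields injectivity, \emph{closedness} of the range, and the exact codimension $2$ all at once; in case (2) the limits are reversed, so every choice of data at $x=0$ produces an $H^{s+1}$ solution, giving surjectivity and the two-dimensional kernel. Your route (kernel elements are unbounded, hence injectivity; adjoint computation for the cokernel; the factorization \eqref{inverse2} for surjectivity) is cleaner algebraically, and your adjoint computation is in fact also how the paper identifies the orthogonal complement of the range.

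However, as written your argument has two concrete soft spots. First, in part (1), exhibiting $|\phi|^{-2}M_0\phi$ and $|\phi|^{-2}MM_0\bar\phi$ in $\ker D^*$ shows only that the codimension of the closure of the range is at least $2$; it does not show that the range is closed, nor that the codimension is exactly $2$. Some semi-Fredholm input is required, and this is precisely what the paper's integration-from-$\pm\infty$ argument supplies for free. Second, in part (2) your surjectivity argument via the factorization $D(v,\bar\zeta,\phi/|\phi|^2)\,D(u,\zeta,\phi)=(\mathcal L(u)-\zeta)(\mathcal L(u)-\bar\zeta)$ needs the range of the right-hand side to have codimension exactly $2$ in $H^s$; this requires knowing that $\mathcal L(u)-\zeta$ is Fredholm of index $0$ off the essential spectrum and tracking what happens when the algebraic multiplicity of $\zeta$ exceeds one (the proposition's part (2) does not assume simplicity). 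Both points are fixable with standard arguments, but they must be supplied; the paper's elementary ODE proof avoids them entirely. Your treatment of parts (3) and (4) --- explicit rational formulas in the Jost functions via Lemma \ref{hsjost}, with the denominator $|\phi|^2$ controlled through the normalization and the nondegeneracy \eqref{nondeg+} --- is consistent with what the paper intends, and you correctly identify the uniformity over the stated parameter set as the main remaining work in (3).
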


\begin{proof} 
To prove the claims we suppose that $ f \in H^s$ and we study solutions to 
\begin{equation} \label{inversion}   D(u,\zeta, \phi)  \psi  = f. \end{equation}  
Let $ \phi$ be an unbounded wave function. Then 
\[
\lim_{x\to - \infty} \frac{\phi \bar \phi}{|\phi|^2}= \left( \begin{matrix} 0 & 0 \\ 0 & 1 \end{matrix} \right),  
\] 
and the equation at $-\infty $ becomes 
\[ 
\left( \begin{matrix} \partial + iz  & 0 \\ 0 & -\partial+ i\bar z  \end{matrix}\right)  \psi  = f. 
\] 
Hence we obtain the unique solution (if it exists) by integration from $-\infty$. 
Similarly  
\[ 
\lim_{x\to + \infty} \frac{\phi \bar \phi}{|\phi|^2}=
\left( \begin{matrix} 1 & 0 \\ 0 & 0 \end{matrix} \right) \] and we
find the solution (if it exists) solving from $\infty$.  Both
solutions have to coincide at $x=0$, which shows that we can solve
\eqref{inversion} on a set of $f$ of codimension $2$. 

By the previous lemma we know that $|\phi|^{-2} M\bar \phi$ is a $\zeta$ eigenfunction
for $\mathcal{L}(v)$. Then by symmetries, $|\phi|^{-2}  \phi$ is a $\bar \zeta$ eigenfunction
for $L(v)$, and  $|\phi|^{-2} MM_0\bar \phi$, respectively $|\phi|^{-2}  M_0 \phi$
are eigenfunctions for $\mathcal{L}(v)^*$ associated to the eigenvalues $z$, respectively $\bar z$.

To identify the co-kernel we compute the adjoint
\[
D^* = \mathcal L^*(u)  - \zeta + 2i \im z \frac{ \phi \bar\phi^T}{|\phi|^2} =  \mathcal L^*(v) 
 -  \zeta + 2i \im z \frac{  M_0\phi (M_0\phi)^* }{|\phi|^2}.
\]
Inserting the two eigenfunctions above for $\L(v)^*$  in this formula yields the desired 
basis for the kernel of $D^*$.

If $ \phi$ is an eigenfunction then 
\[ 
\lim_{x\to - \infty} \frac{\phi \bar \phi}{|\phi|^2}=
\left( \begin{matrix} 1 & 0 \\ 0 & 0 \end{matrix} \right), 
\]
\[
\lim_{x\to \infty} \frac{\phi \bar \phi}{|\phi|^2}=
\left( \begin{matrix} 0 & 0 \\ 0 & 1 \end{matrix} \right), 
\] 
and every solution to the initial value problem \eqref{inversion} with
prescribed initial data $ \psi(0)= \psi_0$ is in $H^{s+1}$. We obtain
the null space by choosing $f=0$. It is an easy verification that $\phi$ and $M\phi$ 
span the kernel of $D(u,\zeta,\phi)$.
\end{proof}

Of course we can relax the connection and use $\mathbf{u}$ and two nonreal numbers $z_1$ and $z_2$. The crucial additional condition is that the Wronskian
\[     \psi_{11} \psi_{22} - \psi_{12} \psi_{21} \]
does not vanish. This is certainly true if

\[ \Vert u_j \Vert_{l^2(DU^2)} +  \Vert u_2 \Vert_{l^2(DU^2)} < \delta,
\quad |z_1-\bar z_2| < \delta \text{  for some positive number } \delta.\]

\subsection{The \Backlund transform associated to holomorphic  families of wave functions}

Here we introduce a key generalization of the previous discussion
of the \Backlund transform, which will be critical later in the context of
iterated \Backlund transforms. Precisely, starting with an initial 
state $u$ and some $z$ with $\im z > 0$, instead of single 
wave functions we consider a holomorphic family $\psi = \psi(x,\zeta)$ 
of $\zeta$-wave functions, for $\zeta$ near $z$, or more generally 
for $\zeta$ in an open subset of the half-plane and $x $ in an interval. 

In the NLS case $ u_2 = \bar u_1$ we consider holomorphic families of unbounded wave functions on $\R$. Away from the spectrum of $\mathcal{L}(u)$, the next lemma identifies such unbounded wave functions with a holomorphic function $\alpha$ which relate it to the left, respectively the right Jost functions:
\begin{lemma}
\label{l:all-alpha} Let $u \in H^s$, $s>-\frac12$ and $U$ an open set whose  closure is compact in the upper half plane, without eigenvalues for $\L(u)$. Let
  $\alpha$ be a holomorphic function on $U$. Then there exists a
  unique holomorphic family of unbounded wave functions $\psi(u,z)$
  with
  \[ 
  \lim_{x\to -\infty} e^{-izx} \psi_2(x,z) = e^{i\alpha(z)}, 
  \]
  \[
  \lim_{x\to \infty} e^{izx} \psi_1(x,z) = e^{-i\alpha(z)}. 
  \]
The same is true in the case $\bfu=(u_1,u_2)$ without assuming $ u_2 = \bar u_1$. 
\end{lemma}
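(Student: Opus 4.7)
\emph{Approach.} The plan is to directly extend the parametrization \eqref{wavepara} by allowing the linear phase $\beta_0+\beta_1 z$ to be replaced by a general holomorphic $\alpha(z)$. Explicitly, I would set
\[
\psi(x,z) := T(z,u)\bigl( e^{-i\alpha(z)}\,\psi_l(x,z) + e^{i\alpha(z)}\,\psi_r(x,z)\bigr).
\]
Since $U$ is a precompact subset of the upper half-plane avoiding the eigenvalues of $\L(u)$, the transmission coefficient $T(z,u)=W(\psi_l,\psi_r)^{-1}$ is holomorphic and nonvanishing on $U$; by Lemma~\ref{hsjost} the Jost functions $\psi_l(\cdot,z)$ and $\psi_r(\cdot,z)$ are holomorphic in $z$ as well. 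Hence $\psi(\cdot,z)$ is holomorphic in $z\in U$, and for each fixed $z$ it is a $z$-wave function as a linear combination of two solutions of $(\L(u)-z)\psi=0$.

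\emph{Asymptotics and uniqueness.} To check the prescribed limits I would use the asymptotic normalization of the Jost functions recalled in Section~\ref{s:scattering}. As $x\to -\infty$, $\psi_{l,2}(x,z)$ decays exponentially while $\psi_{r,2}(x,z)\sim T^{-1}(z)e^{izx}$, so the $T$'s cancel and
\[
e^{-izx}\psi_2(x,z)\longrightarrow T(z)\cdot e^{i\alpha(z)}\cdot T^{-1}(z)=e^{i\alpha(z)}.
\]
The $+\infty$ limit for $\psi_1$ is symmetric, using $\psi_{r,1}(x,z)\to 0$ and $\psi_{l,1}(x,z)\sim T^{-1}(z)e^{-izx}$. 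For uniqueness, the space of $z$-waves is two-dimensional, spanned by $\psi_l$ and $\psi_r$; writing any other candidate as $\tilde\psi=c_l(z)\psi_l+c_r(z)\psi_r$, the $-\infty$ condition on $\tilde\psi_2$ isolates $c_r(z)=T(z)e^{i\alpha(z)}$ (the decay of $\psi_{l,2}$ kills its contribution), and the $+\infty$ condition on $\tilde\psi_1$ isolates $c_l(z)=T(z)e^{-i\alpha(z)}$, so $\tilde\psi=\psi$.

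\emph{The general case and the main point.} For $\bfu=(u_1,u_2)$ without the constraint $u_2=\bar u_1$, the construction applies verbatim: Lemma~\ref{hsjost} is already stated in this generality, so the Jost functions for the generalized operator \eqref{laxbfu} enjoy the same asymptotic description and the same holomorphic dependence on $z$, and $T^{-1}(z)=W(\psi_l,\psi_r)$ is nonvanishing on $U$ precisely by the no-eigenvalues hypothesis. There is no serious obstacle in the argument; the one point deserving care is the bookkeeping at $\pm\infty$, since for $\im z>0$ each Jost function has one exponentially dominant and one exponentially subdominant component, and one must identify which component contributes to each of the two prescribed limits. Once this is sorted out, the lemma is an immediate rephrasing of \eqref{wavepara} in which the linear scattering parameter $\beta_0+\beta_1 z$ is replaced by an arbitrary holomorphic $\alpha(z)$, with holomorphic $z$-dependence inherited from Lemma~\ref{hsjost}.
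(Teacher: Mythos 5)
Your proposal is correct and is essentially identical to the paper's proof, which simply sets $\psi(x,z) = T(z)\bigl(e^{-i\alpha(z)}\psi_l(x,z) + e^{i\alpha(z)}\psi_r(x,z)\bigr)$ and remarks that uniqueness is easy to see. You have merely spelled out the asymptotic bookkeeping at $\pm\infty$ and the uniqueness argument via the basis $\{\psi_l,\psi_r\}$, both of which are accurate.
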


\begin{proof}
  We define
  \[ 
  \psi(x,z) = T(z) ( e^{-i\alpha(z)} \psi_l(x,z) + e^{i\alpha(z)} \psi_r(x,z) ). 
  \]
Uniqueness is easy to see. 
  \end{proof}

By contrast, at  eigenvalues of $\mathcal{L}(u)$,  holomorphic families of unbounded wave functions $\psi(u,z)$
the Taylor expansions are uniquely determined up to an order given by the multiplicity.

\begin{lemma} 
\label{l:find-alpha}
Let $\zeta$ be a zero of $T^{-1}$ of multiplicity $N$. Then there exist 
$\alpha_j $, $0 \le j < N$  so that for every holomorphic family of unbounded wave functions near $z = \zeta$,  and $\alpha$ defined  as above, 
 we must have 
 \[ 
\alpha_0 - \alpha(\zeta) \in 2\pi \Z, 
\]
\[ \alpha_j = \alpha^{(j)}(\zeta), \qquad 1 \leq j \leq N-1, 
\] 
for $1 \le j \ <N$. Conversely, given $\alpha_j$ there exists a unique polynomial 
$\alpha$ of degree at most $N-1$ satisfying the above conditions,
along with an associated family of holomorphic unbounded wave functions.
  \end{lemma}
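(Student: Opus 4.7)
The plan is to recast the holomorphicity of $\psi(x,z)$ at the eigenvalue $\zeta$ as a finite list of order-of-vanishing conditions on $\alpha$, and then solve these inductively in $k = 0, 1, \ldots, N-1$.

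On any punctured disk around $\zeta$ avoiding eigenvalues, every holomorphic family of unbounded wave functions admits (by Lemma \ref{l:all-alpha}) the Jost decomposition
\[ \psi(x,z) = T(z)\bigl(e^{-i\alpha(z)}\psi_l(x,z) + e^{i\alpha(z)}\psi_r(x,z)\bigr) \]
with $\alpha$ holomorphic. Since $T$ has a pole of order exactly $N$ at $\zeta$, the family $\psi$ extends holomorphically across $\zeta$ precisely when
\[ g(x,z) := e^{-i\alpha(z)}\psi_l(x,z) + e^{i\alpha(z)}\psi_r(x,z) \]
vanishes to order $N$ at $\zeta$ uniformly in $x$. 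Writing $G_k(x) := \partial_z^k g(x,\zeta)$, this becomes the system $G_k \equiv 0$ for $0 \le k < N$.

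The next step is an inductive determination of the Taylor jet of $\alpha$. Differentiating the identity $\mathcal L(u)\, g = z\, g$ (valid pointwise in $z$, since $\psi_l(\cdot, z), \psi_r(\cdot, z)$ are $z$-waves) $k$ times and evaluating at $\zeta$ yields the recursion
\[ (\mathcal L(u) - \zeta)\, G_k = k\, G_{k-1}. \]
Assume inductively that $\alpha^{(j)}(\zeta)$ has been fixed for $j < k$ so that $G_0 = \cdots = G_{k-1} = 0$. Then $G_k$ solves $(\mathcal L(u) - \zeta) G_k = 0$ pointwise. The crucial additional point is that $G_k \in L^2(\R)$: indeed $G_k$ is a polynomial expression in the jets $\partial_z^j \psi_l(\cdot,\zeta),\ \partial_z^j\psi_r(\cdot,\zeta)$ with $j \le k < N$, and each such jet decays exponentially at both ends of $\R$. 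At $-\infty$ this is immediate from the Jost normalization and $\im\zeta > 0$; at $+\infty$ the potentially growing $\binom{1}{0}$-component of $\partial_z^j \psi_l(\cdot,\zeta)$ has coefficient $\sum_{m=0}^{j}\binom{j}{m}(T^{-1})^{(m)}(\zeta)(-ix)^{j-m}$, which vanishes identically because $(T^{-1})^{(m)}(\zeta) = 0$ for all $m < N$; the argument for $\psi_r$ is symmetric. By the geometric-multiplicity-one result already proved in this section, the $L^2$ kernel of $\mathcal L(u) - \zeta$ is one-dimensional and spanned by the eigenfunction $\phi$, so $G_k = \lambda_k \phi$ for a scalar $\lambda_k$. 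A Leibniz expansion separates out the highest-order contribution: $\lambda_k$ is affine in $\alpha^{(k)}(\zeta)$ with coefficient proportional to $e^{i\alpha(\zeta)}\psi_r(\cdot,\zeta) - e^{-i\alpha(\zeta)}\psi_l(\cdot,\zeta)$. Combined with the $k=0$ condition $e^{-i\alpha(\zeta)}\psi_l(\cdot,\zeta) + e^{i\alpha(\zeta)}\psi_r(\cdot,\zeta) = 0$, this bracket reduces to $2e^{i\alpha(\zeta)}\psi_r(\cdot,\zeta)$, a nonzero multiple of $\phi$. Hence $\alpha^{(k)}(\zeta)$ is uniquely determined by $\lambda_k = 0$. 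At $k=0$, the condition $G_0 = 0$ becomes $e^{2i\alpha(\zeta)} = -\psi_l(\cdot,\zeta)/\psi_r(\cdot,\zeta)$, which pins down $\alpha(\zeta)$ up to the asserted congruence.

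For the converse, given an admissible tuple $(\alpha_j)_{0 \le j < N}$ produced by the above procedure, I take the unique degree-$\le N-1$ polynomial
\[ \alpha(z) := \sum_{j=0}^{N-1} \frac{\alpha_j}{j!}(z-\zeta)^j. \]
Since the $\alpha_j$'s satisfy the vanishing conditions on $G_k$ for $0 \le k < N$ by construction, the expression $T(z)\, g(x,z)$ is holomorphic at $\zeta$ and gives the associated holomorphic family of unbounded wave functions. The main technical obstacle in this plan is the $L^2$ regularity claim for the Jost derivatives $\partial_z^j \psi_l(\cdot,\zeta),\ \partial_z^j \psi_r(\cdot,\zeta)$ when $j < N$, which is the precise analytic bridge from the order of the zero of $T^{-1}$ at $\zeta$ to the generalized-eigenfunction structure of $\mathcal L(u)$ at $\zeta$; once this is in place, every remaining step is linear algebra on the one-dimensional kernel.
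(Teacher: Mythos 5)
Your argument is sound in outline but follows a genuinely different route from the paper. You characterize holomorphy of $\psi = T\,g$ at $\zeta$ by the $N$ vanishing conditions $G_k=\partial_z^k g(\cdot,\zeta)=0$, and then solve for the jet of $\alpha$ inductively using the identity $(\mathcal L(u)-\zeta)G_k = k\,G_{k-1}$, the one-dimensionality of the $L^2$ kernel, and the observation that $G_k$ is affine in $\alpha^{(k)}(\zeta)$ with coefficient $i\alpha^{(k)}(\zeta)\bigl(e^{i\alpha(\zeta)}\psi_r - e^{-i\alpha(\zeta)}\psi_l\bigr) = 2i\alpha^{(k)}(\zeta)e^{i\alpha(\zeta)}\psi_r \neq 0$ once $G_0=0$ is imposed. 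That linear algebra is correct, and it has the merit of producing the $\alpha_j$ by an explicit recursion tied to the Jordan-block structure of $\mathcal L(u)$ at $\zeta$. The paper instead never analyzes a single family at the eigenvalue: it constructs one reference family by solving the ODE with fixed Cauchy data at $x=0$, observes that any other family must be of the form $\psi + \mu(z)\psi_l$ with $\mu$ holomorphic (after normalizing the leading coefficient to $1$), and reads off from the asymptotics that the induced change in $\alpha$ is controlled by $T^{-1}(z)\mu(z) = O((z-\zeta)^N)$; the converse is then solved for $\mu$ explicitly. That comparison argument buys a complete proof with no spectral analysis at all.

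The one genuine gap in your version is exactly the step you flag: the claim that $\partial_z^j\psi_l(\cdot,\zeta)$ decays at $+\infty$ (and $\partial_z^j\psi_r(\cdot,\zeta)$ at $-\infty$) for $j<N$, which is what puts $G_k$ in $L^2$ and lets you invoke geometric simplicity. Your coefficient computation $\sum_m \binom{j}{m}(T^{-1})^{(m)}(\zeta)(-ix)^{j-m}$ treats the growing branch at $+\infty$ as exactly $e^{-izx}\binom{1}{0}$, but the Jost asymptotics in Lemma~\ref{hsjost} only give $\psi_l = T^{-1}(z)e^{-izx}\binom{1}{0} + o(1)e^{\im z\, x}$, and the error term is not a priori differentiable in $z$ with preserved decay. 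To make this rigorous you would need a holomorphic fundamental system $\{\chi,\psi_r\}$ near $+\infty$ with $\chi$ the canonical growing solution, write $\psi_l = T^{-1}(z)\chi + \beta(z)\psi_r$, and differentiate that identity; this is the standard construction of generalized eigenfunctions at a higher-order pole of $T$, and it is not supplied by any lemma in the paper. Since the whole induction rests on $G_k\in L^2$, this is the load-bearing analytic input of your proof, whereas the paper's comparison argument requires nothing beyond the already-established Jost theory. Separately, note that your base case determines $\alpha(\zeta)$ only modulo $\pi$ (from $e^{2i\alpha(\zeta)}=-\psi_l/\psi_r$), which matches the $\kappa\in\C/\pi i\Z$ convention used throughout the paper rather than the $2\pi\Z$ written in the lemma statement; this is a harmless discrepancy but worth recording.
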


\begin{proof}
  Let $\psi_0= \psi_l(\zeta)$ be the $\zeta$ eigenfunction for the
  eigenvalue $\zeta$ of multiplicity $N$. We choose $\psi(0)$
  linearly independent of $\psi_0(0)$. There exists a unique solution to
  \[ 
\psi_x = \left( \begin{matrix} -i\zeta  & u \\ -\bar u & i\zeta \end{matrix} \right) \psi 
\] 
with these initial data. The Wronskian satisfies
$W(\psi_0(0),\psi(0))\ne 0$ and it is constant. Since $\psi_0$ decays
exponentially as $x \to \pm \infty$, it follows that $\psi$ is
unbounded as $x\to \pm \infty$. We obtain a holomorphic family of unbounded wave functions
near $z = \zeta$
by solving
\[
  \psi_x = \left( \begin{matrix} -iz  & u \\ -\bar u & i z  \end{matrix} \right) \psi 
\] 
with the same initial data for $\psi(0,z)$. After multiplication by a
holomorphic function we may assume that
\[
 \lim_{x\to \infty} e^{izx} \psi_1(x,z)=: e^{- i\alpha(z)} 
\]
and 
\[
 \lim_{x\to \red{-} \infty} e^{-izx} \psi_2(x,z)= e^{i\alpha(z) } 
\]
for some holomorphic function $\alpha$ (we chose this normalization instead of the initial condition).

Let $\tilde \psi$ be another  holomorphic family of wave functions near $z = \zeta$. Then
we can represent it as 
\[   
\tilde \psi(0,z) = \lambda(z) \psi(0,z) + \mu(z) \psi_l(0,z) 
\] 
Both sides are solutions and hence this relation holds for all $x$. In
particular, if $\tilde \psi(.,\zeta)$ is unbounded then
$\lambda(\zeta)\ne 0$. Choosing a smaller neighborhood if necessary we divide by $\lambda(z)$ and, by an abuse of notation we obtain
$\lambda(z)=1$ and
\[
  \tilde \psi(x,z) = \psi(x,z) + \mu(z) \psi_l(x,z). 
\]  
Clearly each choice of $\mu$ gives a holomorphic family of unbounded wave functions
near $\zeta$. Now
\[ 
\lim_{x\to \infty} e^{izx} (\psi_1(x,z)+ \mu (z)\psi_{l,1}(x,z) ) =
e^{- i\alpha(z)}  + T^{-1}(z)\mu(z)  
\]
and 
\[ 
\lim_{x\to -\infty} e^{-izx} (\psi_2(x,z)+ \mu(z)\psi_{l,2}(x,z) ) =
\lim_{x\to -\infty} e^{-izx} \psi_l(x,z) = e^{i\alpha(z)}. 
\]
Hence the defining function $\tilde \alpha$ for $\tilde \psi$ is given by
\begin{equation}\label{alpha-vs-talpha}
\tilde \alpha(z) = \alpha(z) + \frac12 \ln (  1+ e^{-\alpha(z)} T^{-1}(z)\mu(z) ).
\end{equation}
where the logarithm exists in a neighborhood  of $\zeta$ since $T^{-1}(\zeta)=0$ by an abuse of notation. Here $T^{-1}$ vanishes of order $N$ at $\zeta$.
Then $\alpha(\zeta)-\tilde \alpha(\zeta) \in 2\pi   \Z$, and   for $0< j < N$ we must have 
\[
\tilde \alpha^{(j)}(\zeta)= \alpha^{(j)}(\zeta). 
\]  
\medskip

Conversely, let $ \hat \alpha$ be a holomorphic function with $\hat \alpha^{(j)}(\zeta) = \alpha^{(j)}(\zeta) $ for $0\le j < N$.
Then
\[ \hat \mu(z) =   \exp(\hat \alpha(z)-\alpha(z) )(1+e^{-\alpha(z)}T^{-1}(z) )^{-2}  \]
yields $\hat \alpha$. 
\end{proof} 

We further remark that the above class of functions $\alpha$ express the order $N$ matching 
between $\psi_l$ and $\psi_r$ at the pole. The Wronskian relation
\[
W(\psi_l,\psi_r) = T^{-1}
\]
shows that at $\zeta$ as in the lemma the vectors $\psi_l$ and $\psi_r$ agree exactly to order $N$
up to a multiplicative factor.  Away from the pole we must have  
\[
\psi(z) =  T^{-1}(z) ( e^{-i\alpha(z)} \psi_l(z) + e^{i\alpha(z)} \psi_r(z)).
\]
So this multiplicative factor is exactly determined by $\alpha$,
\[
e^{-i\alpha(z)} \psi_l(z) = - e^{i\alpha(z)} \psi_r(z) + O((z-\zeta)^N).
\]

\bigskip

The interesting feature of working with a holomorphic family of
unbounded wave functions $\psi(u,z)$ is that we can propagate it
across any associated \Backlund transform in a way that carries full
information. Again we consider the general case, but we also specialize to the NLS case. 

\begin{lemma} \label{l:hol-B}
  Let $I$ be an interval, $ \zeta_1,\zeta_2  \in \C \backslash \R$
  with $\zeta_1 \neq \zeta_2$, $\psi_2$ a $\zeta_2$ wave and $\psi(x,z)$ a holomorphic family of wave functions for $x \in I$ and $z$ in a neighborhood of $\zeta_1$, $\psi_1= \psi(.,\zeta_1)$. 
  Then
  
\begin{equation}  \label{def:psiv}
\psi_v(x,z)= D(u,\zeta_1,\zeta_2,\psi_1,\psi_2) \left\{ \begin{array}{rl} \dfrac{\psi(.,z)-\psi(.,\zeta_1)}{z-\zeta_{1}}  \qquad z \neq \zeta_1
\\ \partial_z \psi(.,z)\qquad  z = \zeta_1 \end{array}\right.   
\end{equation}

is a holomorphic family of wave functions for
 \begin{equation} \label{def:v}
v = B(u,\zeta_1,\zeta_2, \psi_1,\psi_2). 
 \end{equation}
 Let 
\[ 
\phi_1(x)= \frac1{W(\psi_1,\psi_2)} \psi_2\qquad 
\phi_2(x)= \frac1{W(\psi_1,\psi_2)} \psi_1. 
\]
Then
\begin{equation}\label{re:psi,u}
 \psi(.,z) = \frac{1}{z-\zeta_{2}} D(v,\zeta_1,\zeta_2, \phi_1,\phi_2) \psi_v(.,z), \qquad u = B(v,\zeta_1,\zeta_2,\phi_1,\phi_2).
\end{equation}

\end{lemma}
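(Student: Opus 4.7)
\medskip

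\noindent\emph{Proof proposal.} The plan has three parts: establish that $\psi_v(\cdot,z)$ is a holomorphic family of $z$-waves for $\mathcal L(v)$, identify the pair $(\phi_1,\phi_2)$ with the inverse B\"acklund data from Lemma~\ref{intertwining_symmetry}, and then compose the two intertwining operators to recover $\psi$.

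First I would address \eqref{def:psiv}. Holomorphy in $z$ is automatic: the divided difference $(\psi(\cdot,z)-\psi(\cdot,\zeta_1))/(z-\zeta_1)$ extends holomorphically through $z=\zeta_1$ to $\partial_z\psi(\cdot,\zeta_1)$ by a standard Taylor expansion argument, and $D=D(u,\bzeta,\bpsi)$ does not depend on $z$, so $\psi_v(\cdot,z)$ is a holomorphic family. To show $\mathcal L(v)\psi_v(\cdot,z)=z\psi_v(\cdot,z)$ I would invoke the intertwining relation $\mathcal L(v)D=D\mathcal L(u)$ from \eqref{intertwining2}. A direct computation using $\mathcal L(u)\psi(\cdot,z)=z\psi(\cdot,z)$ and $\mathcal L(u)\psi_1=\zeta_1\psi_1$ gives, for $z\neq \zeta_1$,
\[
(\mathcal L(u)-z)\,\frac{\psi(\cdot,z)-\psi_1}{z-\zeta_1}=\frac{(z-\zeta_1)\psi_1}{z-\zeta_1}=\psi_1,
\]
so $D(\mathcal L(u)-z)(\cdot)=D\psi_1=0$ by Lemma~\ref{first}, i.e.\ $(\mathcal L(v)-z)\psi_v=0$. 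At $z=\zeta_1$ the same computation is carried out by differentiating $(\mathcal L(u)-z)\psi(\cdot,z)=0$ in $z$: we obtain $(\mathcal L(u)-\zeta_1)\partial_z\psi|_{\zeta_1}=\psi_1$, and applying $D$ again kills the right-hand side.

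Next I would prove \eqref{re:psi,u}. The point is that the vectors $\phi_1=\psi_2/W(\psi_1,\psi_2)$ and $\phi_2=\psi_1/W(\psi_1,\psi_2)$ coincide, up to independent scalar multiples, with the functions $(\tilde\psi_1,\tilde\psi_2)$ of Lemma~\ref{intertwining_symmetry}. Since both $D(\cdot,\bzeta,\cdot,\cdot)$ and $B(\cdot,\bzeta,\cdot,\cdot)$ are invariant under independent rescaling of their last two arguments (inspect the formulas \eqref{D-def} and \eqref{defbaecklund}), Lemma~\ref{intertwining_symmetry} immediately yields $u=B(v,\bzeta,\phi_1,\phi_2)$ together with the factorization
\[
D(v,\bzeta,\phi_1,\phi_2)\,D(u,\bzeta,\psi_1,\psi_2)=(\mathcal L(u)-\zeta_1)(\mathcal L(u)-\zeta_2).
\]

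The last step is a telescoping computation. Applying this identity to the divided difference I get
\[
D(v,\bzeta,\phi_1,\phi_2)\,\psi_v(\cdot,z)=(\mathcal L(u)-\zeta_1)(\mathcal L(u)-\zeta_2)\,\frac{\psi(\cdot,z)-\psi_1}{z-\zeta_1}.
\]
Using $\mathcal L(u)\psi(\cdot,z)=z\psi$ and $\mathcal L(u)\psi_1=\zeta_1\psi_1$, the first factor collapses the divided difference to $\psi(\cdot,z)$ (as in the computation above), and the second factor produces $(z-\zeta_2)\psi(\cdot,z)$, giving \eqref{re:psi,u} away from $z=\zeta_2$ and by holomorphic continuation everywhere.

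The main obstacle I anticipate is purely bookkeeping: keeping track of the removable singularity at $z=\zeta_1$ in the divided difference (so that all the identities above extend by continuity/holomorphy), and being careful about the sign ambiguity $W(\psi_2,\psi_1)=-W(\psi_1,\psi_2)$ when matching $(\phi_1,\phi_2)$ to the $(\tilde\psi_1,\tilde\psi_2)$ of Lemma~\ref{intertwining_symmetry}. Both are handled cleanly by the scale-invariance of $D$ and $B$ in their wave-function arguments, which is the real reason the inversion formula takes the symmetric form stated.
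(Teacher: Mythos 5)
Your proof is correct and follows essentially the same route as the paper: the wave-function property of $\psi_v$ comes from the intertwining relation together with $D\psi_1=0$ (the paper cites Theorem~\ref{th_commuting} and continuity at $z=\zeta_1$, you compute it directly), and the inversion is the factorization identity \eqref{inverse} from Lemma~\ref{intertwining_symmetry} applied to the divided difference. Your explicit handling of the sign/normalization mismatch between $(\phi_1,\phi_2)$ and the $(\tilde\psi_1,\tilde\psi_2)$ of that lemma, via the scale-invariance of $D$ and $B$, is a correct detail the paper leaves implicit.
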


\begin{proof}
  It is obvious that $\psi_v(x,z)$ is holomorphic in $z$. By Theorem
  \ref{th_commuting} $\psi_v(x,z)$ is an unbounded wave function for
  $v$ if $z \ne \zeta_{1}$. By continuity the same is true for
  $z=\zeta_{1}$. A direct calculation shows that $\alpha$ does not
  change. The final assertion about inversion follows by
  \eqref{inverse}.
\end{proof}

 If $I=\R$, $u_2=\bar u_1$, $\zeta_1= \bar \zeta_1$, $\psi_2 =
 \left(\begin{matrix} \bar \psi_2 \\ - \bar \psi_1 \end{matrix}
 \right) $ and if $ \psi$ is an unbounded family parametrized by the
 same holomorphic function $\alpha(z)$ then $\psi_v$ is also
 parametrized by $ \alpha(z)$.

Conversely, we can start from $v$ and $\psi_v$ and recover $u$ and $\psi$:

\begin{lemma}\label{l:hol-B-re}
Let $\psi_v$ be a holomorphic unbounded family of wave functions for $v$, and $\zeta$
an eigenvalue for $\L(v)$. Then with $u$ and $\psi$ defined  by \eqref{re:psi,u}, the 
relations \eqref{def:psiv}, \eqref{def:v} hold.

\end{lemma}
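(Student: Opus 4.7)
My plan is to reduce the claim to the inversion identity \eqref{inverse} from Lemma \ref{intertwining_symmetry} applied to the family $\psi_v(\cdot,z)$. Set $\tilde\phi_1 := \phi_2/W(\phi_1,\phi_2)$ and $\tilde\phi_2 := \phi_1/W(\phi_2,\phi_1)$, which are the wave functions appearing in the inverse \Backlund construction, and take $\psi_1 := \psi(\cdot,\zeta_1)$ together with $\psi_2 := \tilde\phi_2$. Because the operator $D(v,\zeta_1,\zeta_2,\phi_1,\phi_2)$ is independent of $z$ and the factor $1/(z-\zeta_2)$ is holomorphic near $\zeta_1$, the function $\psi(\cdot,z)$ defined in \eqref{re:psi,u} is a holomorphic family in $z$ near $\zeta_1$, and it is a $z$-wave of $\mathcal L(u)$ by the intertwining property of Theorem \ref{th_commuting}(a).

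Applying Lemma \ref{intertwining_symmetry} to the relation $u = B(v,\zeta_1,\zeta_2,\phi_1,\phi_2)$ produces both $v = B(u,\zeta_1,\zeta_2,\tilde\phi_1,\tilde\phi_2)$ and
\[
D(u,\zeta_1,\zeta_2,\tilde\phi_1,\tilde\phi_2)\, D(v,\zeta_1,\zeta_2,\phi_1,\phi_2) = (\mathcal L(v)-\zeta_1)(\mathcal L(v)-\zeta_2).
\]
Applied to $\psi_v(\cdot,z)$, the right-hand side equals $(z-\zeta_1)(z-\zeta_2)\psi_v(\cdot,z)$ because $\psi_v(\cdot,z)$ is a $z$-wave of $\mathcal L(v)$, while the left-hand side equals $(z-\zeta_2)\, D(u,\zeta_1,\zeta_2,\tilde\phi_1,\tilde\phi_2)\, \psi(\cdot,z)$ by the defining formula for $\psi$. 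Cancelling $(z-\zeta_2)$ yields the key identity
\begin{equation}\label{eq:keyidentity}
D(u,\zeta_1,\zeta_2,\tilde\phi_1,\tilde\phi_2)\, \psi(\cdot,z) = (z-\zeta_1)\, \psi_v(\cdot,z).
\end{equation}

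Setting $z=\zeta_1$ in \eqref{eq:keyidentity} shows that $\psi(\cdot,\zeta_1)$ lies in $\ker D(u,\zeta_1,\zeta_2,\tilde\phi_1,\tilde\phi_2)$, which by Lemma \ref{first} is spanned by $\tilde\phi_1$ and $\tilde\phi_2$. As $\psi(\cdot,\zeta_1)$ is a $\zeta_1$-wave of $\mathcal L(u)$ while $\tilde\phi_2$ is a $\zeta_2$-wave and $\zeta_1\neq\zeta_2$, one concludes $\psi_1 = c\,\tilde\phi_1$ for some constant $c$. Both $B$ and $D$ are invariant under independent rescaling of their two wave-function arguments, hence
\[
B(u,\zeta_1,\zeta_2,\psi_1,\psi_2) = B(u,\zeta_1,\zeta_2,\tilde\phi_1,\tilde\phi_2) = v,
\]
which is \eqref{def:v}, and likewise $D(u,\zeta_1,\zeta_2,\psi_1,\psi_2) = D(u,\zeta_1,\zeta_2,\tilde\phi_1,\tilde\phi_2)$. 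Substituting this into \eqref{eq:keyidentity}, observing that $D(u,\zeta_1,\zeta_2,\psi_1,\psi_2)\,\psi(\cdot,\zeta_1)=0$, and dividing by $z-\zeta_1$ — with the limit $\partial_z\psi(\cdot,\zeta_1)$ taken at $z=\zeta_1$ — produces exactly \eqref{def:psiv}.

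The main obstacle is the identification $\psi(\cdot,\zeta_1)\in\mathrm{span}(\tilde\phi_1)$. A priori $\psi(\cdot,\zeta_1)$ is a generic $\zeta_1$-wave of $\mathcal L(u)$ built from $\psi_v(\cdot,\zeta_1)$ and $\phi_1$, and the two-dimensional space of $\zeta_1$-waves contains many elements not proportional to $\tilde\phi_1$. It is the combination of \eqref{eq:keyidentity} with the kernel description of $D(u,\zeta_1,\zeta_2,\tilde\phi_1,\tilde\phi_2)$, together with the separation of eigenvalues, that collapses the $\zeta_1$-waves in $\ker D$ down to the single line through $\tilde\phi_1$; once this is secured, the scaling invariance of the \Backlund construction absorbs the constant $c$ and the remainder of the verification is purely algebraic.
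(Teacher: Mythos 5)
Your proof is correct and follows exactly the route the paper intends: the lemma is stated in the paper without an explicit proof, implicitly deferring to the inversion identity \eqref{inverse} of Lemma \ref{intertwining_symmetry}, which is precisely your key tool, and the one genuinely nontrivial step --- identifying $\psi(\cdot,\zeta_1)$ as a multiple of $\tilde\phi_1$ from the kernel of $D(u,\zeta_1,\zeta_2,\tilde\phi_1,\tilde\phi_2)$ together with $\zeta_1\neq\zeta_2$ --- is handled correctly. The only cosmetic addition worth making is a one-line remark that $\psi(\cdot,\zeta_1)\neq 0$ (since $\psi_v(\cdot,\zeta_1)$ is unbounded and hence not in the span of $\phi_1,\phi_2$), so the proportionality constant is nonzero and the rescaling invariance of $B$ and $D$ can indeed be invoked.
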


\section{The soliton addition and removal  maps}
\label{s:dress}

In the previous section we have shown how to add one soliton to an
existing state by applying a \Backlund transform with respect to an
unbounded wave function, and, in reverse, how to remove a soliton by
applying a \Backlund transform with respect to an eigenfunction.

The B\"acklund transforms can be iterated to add multiple solitons. Theorem \ref{t:dress2} provides compact formulas provided the matrix $M_{jk}$ is invertible. But this is always true for the focusing case, see Lemma \ref{posdef} below. 

 Our aim in 
this section is to spell out the results of the last section for adding and subtracting mutiple solitons, and to provide algebraic proofs for the properties of the matrices $m$ and $M$. Finally Lemma  \ref{l:trace} will provide a sharp estimate of the uniform  norm of multiple pure solitons.

To keep the analysis simple, for the computations in this section we
only consider the case of distinct eigenvalues (spectral
parameters). The soliton addition map $\Dress$ will add $N$ prescribed
solitons to a given state, and the soliton removal map $\Undress$ will
remove $n$ existing solitons from a given state.

Given an open subset $U$ with compact closure of the complex upper
half-space we define the nondegenerate phase space for $N$-solitons as
\[
\Phase^{N,0}_U = \{ \bfs = (\bfz,\bfk) \in   U^N \times (\C/i\pi \Z)^N; \ z_i \neq z_j\},
\]
where
\[
\bfz = (z_1, \cdots, z_N), \qquad \bfk= (\kappa_1,\cdots,\kappa_N).
\]

\subsection{ The soliton addition map}

We will view the soliton addition map $\Dress$ as a map 
\[
\Dress: H^s \times \Phase_U^{N,0}  \to H^s.
\]
 We denote the output by 
\[
H^s \ni u
\to  v = \Dress(u,(\bfz,\bfk)) \in H^s.
\]
To define it we impose some natural restrictions, namely that the
$z_k$ are not poles for $T_u$. These will be satisfied for instance if
the spectral parameters $\bfz$ are localized in a compact subset of
the upper half-space and $u$ is sufficiently small in $H^s$.

To describe it we start with the $N$ distinct spectral data $\bfz =
(z_1,\cdots,z_N)$ in $U$ and corresponding scattering data $\bfk =
(\kappa_1,\cdots,\kappa_N)$. We denote $\bfs = (\bfz,\bfk)$ the
corresponding element of $\Phase_U^{N,0}$.  We consider the associated
left and right $z$-waves $\psi_{k,l}$ and $\psi_{k,r}$, and use them
to define the unbounded wave functions $(z_j, \psi_j)$ for $v$ which
have spectral parameters $\kappa_j$ (see \eqref{wavepara} and \eqref{def-xt}),
\[
\psi_j = e^{-\kappa_j} \psi_{j,l} + e^{\kappa_j} \psi_{j,r}.
\]

We inductively apply $n$ \Backlund transforms as follows,
\[
\psi_j^{(k+1)} = D(u^{(k)},\psi_{k}^{(k)},z_k) \psi_j^{(k)}, \qquad u^{(k+1)} = B(u^{(k)},\psi_{k}^{(k)},z_k) 
\]
where we initialize 
\[
\psi_{k}^{(1)} = \psi_k, \qquad  u^{(1)} = v
\]
Then we define the soliton addition map as
\[
 \Dress(v,\bfs):= u^{(N+1)}
\]
We will also denote the iterated \Backlund transform as 
\begin{equation}\label{iteration}  
B^N(u,\bfz,\bfk)  = \prod_{k=1}^N D(u^{(k)},\psi_{k}^{(k)},z_k). 
\end{equation} 

We specialize the formulas of Theorem \ref{t:dress2}. 
We start with the
symmetric matrix $M$ with complex entries
\[
M_{jk} = \frac{i \psi_k^{*} \psi_j}{\bar z_k - z_j}
\]
\begin{lemma}\label{posdef}  Suppose that the imaginary part of $z_j$ is positive, that the
  $z_j$ are pairwise disjoint and that the $\psi_j\in \C^2$ are nonzero.
  Then $M_{jk}$ is positive definite.
\end{lemma}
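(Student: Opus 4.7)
The plan is to realize $M$ (up to a complex conjugation) as a Gram matrix in $L^2([0,\infty);\C^2)$, from which positive semidefiniteness is automatic, and then use the distinctness of the $z_j$ to upgrade to strict positivity.

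First I would record that $M$ is Hermitian: from $\overline{\psi_k^*\psi_j} = \psi_j^*\psi_k$ and $\overline{\bar z_k - z_j} = -(\bar z_j - z_k)$ one checks $\overline{M_{jk}} = M_{kj}$, so $c^* M c$ is automatically real for every $c\in \C^N$. The core input is the integral identity
\[
\frac{i}{\bar z_k - z_j} \;=\; \int_0^\infty e^{i(\bar z_k - z_j)x}\,dx,
\]
valid precisely because $\im(\bar z_k - z_j) = -\im z_k - \im z_j < 0$ under the hypothesis that all $\im z_j > 0$. Inserting this into $M_{jk}$ and swapping the sum with the integral, the integrand factors as a perfect square: with
\[
w(x) \;=\; \sum_{j=1}^N \bar c_j\, e^{-iz_j x}\,\psi_j \;\in\; \C^2,
\]
a short computation (regrouping $\bar c_j c_k \psi_k^*\psi_j\, e^{i\bar z_k x} e^{-iz_j x}$ as the scalar $w(x)^* w(x)$) gives $c^* M c = \int_0^\infty |w(x)|^2\,dx \ge 0$. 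So $M$ is at least positive semidefinite.

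For strict positivity it remains to show that $w\equiv 0$ on $[0,\infty)$ forces $c=0$. Writing $w$ componentwise produces two scalar linear combinations of the exponentials $\{e^{-iz_j x}\}_{j=1}^N$, and since the $z_j$ are pairwise distinct these exponentials are linearly independent as functions on any interval. Hence $w\equiv 0$ forces $\bar c_j \psi_j = 0$ for every $j$, and together with the hypothesis $\psi_j\ne 0$ this yields $c=0$. No real obstacle arises in this argument; the only point requiring some care is the conjugation bookkeeping in the Gram representation (whether $w$ involves $c_j$ or $\bar c_j$), but either convention leads to the same conclusion since $c\mapsto \bar c$ is a bijection on $\C^N$.
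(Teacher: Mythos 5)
Your strategy --- realizing the matrix as a Gram matrix of exponentially weighted vectors on $[0,\infty)$ and using distinctness of the $z_j$ for strict positivity --- is exactly the paper's proof, and the linear-independence step at the end is fine. The problem is the key integral identity. For $w\in\C$ one has $\int_0^\infty e^{iwx}\,dx=\tfrac{i}{w}$ \emph{only when} $\im w>0$, since $|e^{iwx}|=e^{-x\im w}$. You apply this with $w=\bar z_k-z_j$, whose imaginary part is $-\im z_k-\im z_j<0$, so your integrand has modulus $e^{(\im z_j+\im z_k)x}$ and the integral diverges; correspondingly, your functions $e^{-iz_jx}\psi_j$ grow exponentially and $w(x)\notin L^2([0,\infty);\C^2)$. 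The identity you cite as ``valid precisely because $\im(\bar z_k-z_j)<0$'' is in fact invalidated by that very inequality, so the step ``swap the sum with the integral'' manipulates a divergent expression and the argument collapses.

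The convergent version of your computation uses $\int_0^\infty e^{i(z_j-\bar z_k)x}\,dx=\tfrac{i}{z_j-\bar z_k}$ (legitimate since $\im(z_j-\bar z_k)=\im z_j+\im z_k>0$) and the genuinely square-integrable functions $\Psi_j(x)=e^{iz_jx}\psi_j$; their Gram matrix has entries $\tfrac{i\,\psi_k^*\psi_j}{z_j-\bar z_k}$, and that matrix is positive definite by your (correct) independence argument. Note that this differs by a sign from the displayed $M_{jk}=\tfrac{i\,\psi_k^*\psi_j}{\bar z_k-z_j}$, whose diagonal entries are $M_{jj}=-|\psi_j|^2/(2\im z_j)<0$, so the matrix as literally printed cannot be positive definite --- there is a sign slip in the displayed formula, and the paper's own one-line proof (``$M$ is the Gramian of $\Psi_j(t)=e^{iz_jt}\psi_j$'') tacitly uses the denominator $z_j-\bar z_k$. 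Your sign error in the exponent formally reproduces the printed formula, but only by way of a divergent integral; to have a proof you must flip the exponent and then either state the result for the Gram-sign convention or record that the printed $M$ is negative definite.
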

\begin{proof} We define the nonzero  functions
  \[  \Psi_j(t) = e^{iz_jt } \psi_j \in L^2((0,\infty);\C^2). \]
which are linearly independent since $z_j$ are distinct.
  Then $M$ is their Gramian matrix.
  \end{proof} 
  We denote by $m$ the inverse matrix  $m = M^{-1}$. 
Then the formulas in Theorem \ref{t:dress2} take the following form: 
\begin{enumerate} 
\item  
The iterated intertwining operator is given by 
\begin{equation}\label{Dn}
D^N(\bfz, \bfk)  =\left( I + \sum_{j,k=1}^N m_{kj}  \psi_{j} \psi_k^* (\L_v - \bar z_k)^{-1} \right)  \prod_{l=1}^N  (\L_v-\bar z_l). 
\end{equation}
The image of a $z$-wave $\psi$ for $v$   is a $z$-wave $D^N \psi$ for $u$ where
\begin{equation}\label{Dn-wave}
D^N = \prod_{l=1}^N (z-\bar z_l) \left( I + \sum_{j,k=1}^N \frac{1}{z - \bar z_k} m_{kj}  \psi_{j}  \psi_{k}^* \right). 
\end{equation}
\item   The output function $ v= \Dress(u,\bfz,\bfk)$ is given by
\begin{equation}\label{Dn-state}
v = u +  2m_{kj} \psi_j^1   \bar \psi_k^2.
\end{equation}
\item \label{casec} 
  The functions 
\begin{equation}\label{eigen-m}
\phi_j = m_{jk} \psi_k
\end{equation}
are $\bar z_j$ eigenfunctions for $\L_v$ with scattering parameters $\kappa_j$.
Moreover,
\[ 
v = u +   \sum_{j=1}^N  \phi_j^1 \bar \psi_j^2 .    
\]      
\end{enumerate} 

We can represent the iterated \Backlund transform as follows:
\begin{equation}
  \begin{split} 
D^N = & \sum_{j=1}^N \Big(1 - \sum_{k=1}^N  \phi_k \psi_k^* (\L_u -\bar z_k)^{-1}\Big) \prod_{\ell=1}^n (\L_u-\bar z_\ell)
\\ =  & \prod_{l=1}^N (\L_v-\bar z_\ell)\Big(1 -\sum_{k=1}^N (\L_v-\bar z_k)^{-1} \phi_k \psi_k^*\Big)
\\ = & \sum_{j=1}^N \Big(1 - \sum_{k=1}^N  M\bar \phi_k M \psi_k^t (\L_u -z_k)^{-1}\Big) \prod_{\ell=1}^n (\L_u-z_\ell).
\end{split} 
\end{equation}

\subsection{ The soliton removal map}

We will view the soliton removal map $\Undress$ as a map 
\[
\Undress: \V_U^{N,0}  \to H^s \times \Phase_U^{N,0}. 
\]
 We denote the output by 
\[
H^s \ni v \to   \Undress(v) = (u,\bfz,\bfk) \in \Phase_U^{N,0}  \times H^s.   
\]
To define it we again impose some natural  restrictions on $v \in H^s$, namely 
we select an open subset $U$ inside the upper half-space with compact closure,
and assume that the transmission coefficient $T_v(z)$ has exactly 
$N$ simple poles $z_j$ in $U$.

Now the spectral parameters $\bfz$ are defined as the poles of $T_v$ within $K$.
These will be simple eigenvalues of $\L_v$; then their conjugates $\bar{\bfz}$ will also 
be simple eigenvalues of $\L_v$.

We denote by $(\phi_1, \cdots, \phi_N)$ a corresponding set of
eigenfunctions for $ \bfz$. The scattering parameters $ \bfk$ will be
determined by the relations
\[
\phi_j = -e^{- \kappa_j} \psi_{j,l} = e^{\kappa_j} \psi_{j,r},
\]
comparing the left and right wave functions to the eigenfunction.

Then we define the rest of the soliton removal map $\Undress$ exactly as we have previously 
defined the soliton addition map $\Dress$,  but starting
from $v$ and the $\bfz$ eigenfunctions $(\phi_1, \cdots, \phi_N)$.
To describe the soliton removal map  we start with the symmetric matrix $m$ with entries
\[
m_{jk} = \frac{i \phi_k^* \phi_j}{ \bar z_j - z_k}.
\]
We denote $M = m^{-1}$. Then we have the following formulas for the removal map

\begin{enumerate}
  \item 
The operator $D^N$ is given by
\begin{equation}\label{Dn+}
D^N =\left( I + \sum_{j,k=1}^N M_{kj}  \phi_{j}  \phi_{k}^* (\L_v - \bar z_k)^{-1} \right)  
\prod_{\ell=1}^N  (\L_v - \bar z_\ell). 
\end{equation}
In particular the image of a $z$-wave $\psi$ for $v$   is a $z$-wave $D^n \psi$ for $u$ where
\begin{equation}\label{Dn-wave+}
D^n = \prod_{\ell=1}^n  (z- z_\ell) \Big( I - \sum_{j,k=1}^n \frac{1}{z -  \bar z_k} M_{kj}  \phi_{j}  \phi_{k}^* \Big). 
\end{equation}
\item   The output function $u = \Undress(v,\bfs,\bfk)$ is given by
\begin{equation}\label{Dn-state+}
u = v -   M_{jk} \phi_k^1  \bar \phi_j^2.
\end{equation}
\item  The functions
\begin{equation}
\psi_j = M_{jk} \phi_k
\end{equation}
are $z_j$ wave functions for $\L_u$ with scattering parameters $\kappa_j$.
\end{enumerate}

\subsection{Connecting the two maps}

Here we briefly discuss the relation between the soliton addition and removal maps
in the context of isolated eigenvalues. It follows from the corresponding result for 
single simple eigenvalues that the maps are inverses.

\begin{theorem}
We have 
\begin{equation}
  \Undress \circ \Dress = Id
\end{equation}
for a finite number $N$ of simple eigenvalues. 
\end{theorem}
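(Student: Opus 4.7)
The plan is to argue by induction on $N$, reducing to the single-eigenvalue case already handled by Lemma~\ref{intertwining_symmetry}. Fix $u \in H^s$ and $\bfs = (\bfz, \bfk) \in \Phase^{N,0}_U$, and write $v = \Dress(u, \bfs)$. The base case $N = 1$ is essentially \eqref{inversebacklund2} combined with \eqref{kappa-backlund}: if $v = B(u, z_1, \psi_1)$ with $\psi_1$ an unbounded $z_1$-wave of scattering parameter $\kappa_1$, then Proposition~\ref{p:wave}(1) produces the $z_1$-eigenfunction $\phi_1 = |\psi_1|^{-2} M\bar\psi_1$ for $\mathcal{L}(v)$; by \eqref{kappa-backlund} its Jost-function matching parameter is precisely $\kappa_1$; and by \eqref{inversebacklund2}, $u = B(v, \bar z_1, |\phi_1|^{-2}\phi_1)$, which is exactly how the single-step $\Undress(v)$ acts. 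Thus $\Undress(v) = (u, z_1, \kappa_1)$.

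For the inductive step, let $u^{(1)} = u, \ldots, u^{(N+1)} = v$ denote the intermediate states in the iteration \eqref{iteration} defining $\Dress(u, \bfs)$, so that $v = B(u^{(N)}, z_N, \psi_N^{(N)})$. The transformation law \eqref{Luwavemap} for left and right Jost functions under a \Backlund transform implies that each intertwining operator rescales $\psi_l$ and $\psi_r$ of the underlying state by the same scalar factor $(z - \bar z_k)$. Consequently the iterated wave function $\psi_N^{(N)}$ is still an unbounded $z_N$-wave for $\mathcal{L}(u^{(N)})$ with the \emph{same} scattering parameter $\kappa_N$ as originally prescribed. The base case applied to this final step then shows that the single inverse \Backlund transform at $z_N$, using the $z_N$-eigenfunction of $\mathcal{L}(v)$, returns $u^{(N)}$ and correctly reads off $\kappa_N$.

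By the symmetry of the iterated \Backlund transform under permutation of its arguments (Theorem~\ref{th_commuting}), the full $\Undress(v)$ is independent of the order in which the $N$ individual inverse transforms are carried out, so we may arrange to perform the $z_N$-removal first. What remains is an $(N-1)$-fold $\Undress$ applied to $u^{(N)}$, which by the inductive hypothesis recovers $u$ together with the remaining spectral and scattering data $(z_1, \ldots, z_{N-1}, \kappa_1, \ldots, \kappa_{N-1})$, completing the induction. The chief technical obstacle is the careful bookkeeping of scattering parameters through the iteration: one must verify that the $\kappa_j$ extracted by $\Undress$ from the Jost-function ratios of the eigenfunctions of $\mathcal{L}(v)$ agree with those originally fed into $\Dress$, which is handled by iteratively applying \eqref{kappa-backlund} together with the scalar covariance \eqref{Luwavemap} of the Jost functions. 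A secondary point is confirming that the eigenvalues of $\mathcal{L}(v)$ in $U$ are precisely the simple points $\{z_1, \ldots, z_N\}$ and nothing else; this follows from iterating the transmission coefficient identity in Proposition~\ref{p:wave}(1), using the assumption built into $\Phase^{N,0}_U$ that $u$ has no eigenvalues in $U$ and the $z_j$ are distinct.
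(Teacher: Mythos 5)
Your proposal is correct and follows essentially the same route as the paper, which simply asserts that the statement "follows from the corresponding result for single simple eigenvalues"; your induction via the base case \eqref{inversebacklund2}--\eqref{kappa-backlund}, the scalar covariance \eqref{Luwavemap} of the Jost functions (which preserves the $\kappa_j$ through the iteration), and the permutation symmetry of Theorem~\ref{th_commuting} is exactly the bookkeeping the paper leaves implicit. The only point worth flagging is the paper's sign/conjugation conventions (the new eigenfunction sits at $\bar\zeta$ per \eqref{Lveigen2}, with $|\phi|^{-2}M\bar\phi$ the $\zeta$-eigenfunction), but this does not affect the validity of your argument.
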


 The soliton addition and removal
operations are symmetric with the roles of $\phi$ and $\psi$
essentially reversed.  This is a consequence of the construction by
iterative \Backlund transforms, but it is also a consequence of a
purely algebraic relation.

\begin{lemma}
For nonzero  $\psi_j \in \C^2$ nonzero define 
\[
M_{jk} = \frac{i \psi_k^{*} \psi_j}{\bar z_k - z_j}, \qquad m = M^{-1}, \qquad \phi_j = m_{jk} \psi_k.
\]
Then we have the converse relation
\[
m_{jk} = \frac{i \phi_k^{*} \phi_j}{z_k - \bar  z_j}.
\]
\end{lemma}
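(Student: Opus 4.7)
The plan is a direct algebraic computation, built around the observation that the defining relation for $M$ is a Sylvester-type commutator identity, which gets inverted in a clean way under left and right multiplication by $m$.

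First I would establish that $M$ is Hermitian. This follows immediately from the identity $\overline{\psi_j^*\psi_k}=\psi_k^*\psi_j$ and the sign flip $\overline{\bar z_j-z_k}=-(\bar z_k-z_j)$, which together give $\overline{M_{kj}}=M_{jk}$. Consequently $m=M^{-1}$ is Hermitian as well, so $\bar m=m^T$. This symmetry will be what matches the index pattern on the right-hand side of the target formula, where $m_{jk}$ appears paired with $\phi_k^*\phi_j$ (indices in opposite orders).

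Next I would repackage the defining relation as a matrix equation. Introduce the Gram matrix $G$ with entries $G_{jk}=\psi_j^*\psi_k$ (so that $\psi_k^*\psi_j=\bar G_{jk}$) and the diagonal matrix $Z=\diag(z_1,\dots,z_N)$. Then the definition of $M$ reads
\[
M\bar Z - Z M = i\bar G.
\]
Multiplying this Sylvester identity on both sides by $m$ yields the dual relation
\[
m\bar G m = i(mZ-\bar Z m),
\]
which, upon complex conjugation and using $\bar m=m^T$, becomes
\[
\bar m G \bar m = -i(Z\bar m - \bar m\bar Z).
\]
The $(k,j)$ entry of the right-hand side is exactly $-i(z_k-\bar z_j)\bar m_{kj} = -i(z_k-\bar z_j)m_{jk}$, which is a rearrangement of the target formula.

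It remains to identify the $(k,j)$ entry of $\bar m G\bar m$ with $\phi_k^*\phi_j$. Assembling the $\psi_j$'s as columns of a $2\times N$ matrix $\Psi$, the definition $\phi_j=m_{jl}\psi_l$ gives $\Phi=\Psi m^T$, and hence $\Phi^*\Phi=\bar m\, G\, m^T=\bar m G\bar m$, whose $(k,j)$ entry is $\phi_k^*\phi_j$. Combining the two computations produces the claimed formula.

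I do not expect any genuine obstacle here; the work is purely algebraic and the only subtlety is bookkeeping the transpose/conjugate asymmetry, which is precisely what the Hermiticity of $M$ is tailored to resolve. The conceptual content is the observation that conjugating the Sylvester equation $M\bar Z-ZM=i\bar G$ by $m$ swaps the roles of $M\leftrightarrow m$ and $\bar G\leftrightarrow m\bar G m=\Phi^*\Phi\,$-up-to-conjugation, which is exactly the symmetry asserted by the lemma.
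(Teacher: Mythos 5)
Your route is genuinely different from the paper's. The paper introduces the $2\times 2$ meromorphic matrices $\chi(z)=I+\sum_k \frac{i\phi_k\psi_k^*}{z-\bar z_k}$ and $\chi^+(z)=I-\sum_k\frac{i\psi_k\phi_k^*}{z-z_k}$, verifies $\chi\chi^+=I$ by a partial-fraction computation, deduces $\chi^+\chi=I$, and reads off the claimed identity by matching residues at the poles $z=\bar z_k$. You instead encode the definition of $M$ as the Sylvester equation $M\bar Z-ZM=i\bar G$ with $G$ the Gram matrix, conjugate by $m$ to get $m\bar G m=i(mZ-\bar Zm)$, and identify $\bar m G\bar m$ with the Gram matrix of the $\phi$'s via $\Phi=\Psi m^T$ and the Hermiticity of $m$. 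This is a clean, purely finite-dimensional argument that avoids the meromorphic bookkeeping entirely, and all of its structural steps (Hermiticity of $M$, the Sylvester identity, $\Phi^*\Phi=\bar m G\bar m$) check out.

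There is, however, one sign slip: conjugating $m\bar Gm=i(mZ-\bar Zm)$ entrywise gives
\[
\bar m G\bar m \;=\; -i\bigl(\bar m\bar Z-Z\bar m\bigr)\;=\;+i\bigl(Z\bar m-\bar m\bar Z\bigr),
\]
whereas you wrote $-i(Z\bar m-\bar m\bar Z)$; you swapped the two terms inside the parentheses without flipping the sign. Carried through correctly, your method yields $\phi_k^*\phi_j=i(z_k-\bar z_j)m_{jk}$, i.e.
\[
m_{jk}=\frac{i\,\phi_k^*\phi_j}{\bar z_j-z_k},
\]
with denominator $\bar z_j-z_k$ rather than $z_k-\bar z_j$. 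This is in fact exactly what the paper's own residue computation produces (its displayed identity $\phi_k=i\sum_j\frac{\phi_j^*\phi_k}{\bar z_k-z_j}\psi_j$ gives $m_{kj}=i\phi_j^*\phi_k/(\bar z_k-z_j)$), and it agrees with the formula $m_{jk}=i\phi_k^*\phi_j/(\bar z_j-z_k)$ used in the soliton-removal subsection. So the displayed statement of the lemma appears to carry a sign typo, and your conjugation slip happens to reproduce that mistyped form. You should fix the conjugation step and note that the resulting identity is the one with $\bar z_j-z_k$ in the denominator.
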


\begin{proof}
Let $z$ be a complex number, different from the $z_k$ and $\bar z_k$ and considering the $2\times 2$ matrices
\[
\chi = 1 + \sum_{k=1}^n \frac{i \phi_k \psi_k^*}{z- \bar z_k},
\]
\[
\chi^+ = 1 - \sum_{k=1}^n \frac{i \psi_k \phi_k^*}{z- z_k},
\]
we compute 
\[
\begin{split}
  \chi \chi^+ = & \  I + \sum_{k=1}^n  \frac{i \phi_k \psi_k^*}{z- \bar z_k} -
  \sum_{k=1}^n  \frac{i \psi_k \phi_k^*}{z- z_k}
+ \sum_{k,j=1}^n \frac{ \phi_k \psi_k^*}{z- \bar z_k} \frac{ \psi_j \phi_j^*}{z-  z_j}
\\ = & \  I + \sum_{k=1}^n  \frac{i \phi_k \psi_k^*}{z- z_k} -  \sum_{k=1}^n \frac{i \psi_k \phi_k^*}{z- z_k}
+ \sum_{j,k=1}^n  \phi_k \psi_k^* \psi_j \phi_j^* \frac{1}{\bar z_k -  z_j} \left( \frac{1}{z- \bar z_k} - \frac{1}{z-z_j}\right)
\\ = & \  I + \sum_{k=1}^n \frac{i \phi_k \psi_k^*}{z- z_k} -  \sum_{k=1}^n \frac{i \psi_k \phi_k^*}{z- z_k}
+ \sum_{j,k=1}^n  M_{jk} \phi_k \phi_j^*  \left( \frac{1}{z- z_j} - \frac{1}{z-\bar z_k}\right)
\\ = & \  I + \sum_{k=1}^n \frac{i \phi_k \psi_k^*}{z- z_k} -  \sum_{k=1}^n \frac{i \psi_k \phi_k^*}{z- z_k}
+ \sum_{j=1}^n   \frac1{z-z_j}  \psi_j  \phi_j^*
- \sum_{k=1}^n \frac1{z-z_k} \phi_k \psi_k^* 
\\ = & \  I.
\end{split}
\]
This implies that $\chi^+ \chi = 1$ which yields
\[
\sum_{k=1}^n \frac{i \phi_k \psi_k^*}{z- \bar z_k} -  \sum_{k=1}^n \frac{i \psi_k \phi_k^*}{z- z_k} =  
-\sum_{j,k=1}^n  \frac{ \psi_j \phi_j^*}{z-  z_j} \frac{ \phi_k \psi_k^*}{z- \bar z_k} = 
-\sum_{j,k=1}^n \psi_j \phi_j^* \phi_k \psi_k^*  \frac{1}{\bar z_k -  z_j} \left( \frac{1}{z- \bar z_k} - \frac{1}{z-z_j}\right).
\]
Identifying the residues  we obtain the relations
\[
\phi_k \psi_k^* = i\sum_{j=1}^n   \frac{  \phi_j^* \phi_k}{  \bar z_k -  z_j} \psi_j  \psi_k^*, 
\]
or equivalently
\[
\phi_k = i\sum_{j=1}^n   \frac{  \phi_j^* \phi_k}{  \bar z_k -  z_j} \psi_j, 
\]
which leads to the desired conclusion.

\end{proof}

For later use we include here another algebraic relation related
to the soliton addition/removal transforms.  Precisely, consider the
Hermitian $2 \times 2$ positive definite matrix
\[
A = \sum_{j,k=1}^nm_{kj}  \psi_j  \psi_k^*
\]
where we note that the difference between $u$ and $v$ is one of the
off-diagonal entries of this matrix.  Rather than trying to bound that
particular entry, we produce a bound for the entire matrix, via its
trace.

\begin{lemma}\label{l:trace}
We have 
\begin{equation}
Tr A = 2 \sum_{j=1}^n \im z_j.
\end{equation}
\end{lemma}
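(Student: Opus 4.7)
The plan is to reduce the computation to a short algebraic identity by combining the cyclic property of the trace with the defining partial-fraction formula for $M$ and the inverse relation $mM=Mm=I$.

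First I would apply cyclicity of the trace to the rank-one operator $\psi_j\psi_k^*$, which turns the matrix outer product into the scalar inner product:
\[
\tr(\psi_j\psi_k^*) = \psi_k^*\psi_j, \qquad\text{so}\qquad \tr A = \sum_{j,k=1}^n m_{kj}\,\psi_k^*\psi_j.
\]
Next I would read the defining relation \eqref{Mjk} in reverse to express the scalar inner product in terms of $M$:
\[
\psi_k^*\psi_j = \tfrac{1}{i}(\bar z_k - z_j)\,M_{jk} = i(z_j-\bar z_k)\,M_{jk}.
\]
Substituting back, the trace splits additively in $z_j$ and $\bar z_k$:
\[
\tr A = i\sum_{j,k} m_{kj}M_{jk}\,z_j \;-\; i\sum_{j,k}m_{kj}M_{jk}\,\bar z_k.
\]

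The crucial step is now to recognize that in each term the remaining double sum collapses via a diagonal entry of an identity matrix. For the first sum, summing first in $k$ gives $\sum_k M_{jk}m_{kj} = (Mm)_{jj} = 1$; for the second sum, summing first in $j$ gives $\sum_j m_{kj}M_{jk} = (mM)_{kk} = 1$. This is the key mechanism: the weighted matrix contractions that arise from the partial-fraction factors $(\bar z_k-z_j)^{-1}$ are exactly the diagonal entries of $mM$ and $Mm$. What remains is
\[
\tr A = i\sum_{j=1}^n z_j - i\sum_{k=1}^n \bar z_k = 2\sum_{j=1}^n \im z_j,
\]
which is the claimed identity.

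There is really no essential obstacle: the entire argument is three lines of matrix algebra, and its content is that the telescoping inherent in the inverse matrix $m=M^{-1}$ is perfectly matched to the linear numerators $\bar z_k - z_j$ that appear in the definition of $M$. The only point requiring care is tracking the Hermitian symmetry of $M$ (and hence of $m$) so that both contractions $(mM)_{kk}$ and $(Mm)_{jj}$ are legitimately equal to $1$ and no stray conjugation is introduced when regrouping the sums over $j$ and $k$.
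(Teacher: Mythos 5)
Your argument is correct and is essentially the paper's own proof: the paper packages your two contractions into the single identity $\tr A = \tr\big((\diag(z_j)M - M\diag(\bar z_k))M^{-1}\big) = \tr\big(\diag(z_j)-\diag(\bar z_k)\big)$, which is precisely your splitting of $\psi_k^*\psi_j = i(z_j-\bar z_k)M_{jk}$ followed by the collapses $(Mm)_{jj}=(mM)_{kk}=1$ (and, as you note, no symmetry of $M$ is actually needed for these). The only caveat, which you share with the paper's own write-up, is the final arithmetic: literally, $i\sum_j(z_j-\bar z_j)=-2\sum_j\im z_j$, so a sign must be absorbed into the convention for $M$ (whose formula, as written, is the negative of the Gramian used in Lemma~\ref{posdef}); this is bookkeeping, not a gap.
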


As a consequence, we obtain a uniform bound for $|u-v|$.

\begin{proof}
This lemma seems to have little to do with the context of our problem. 
Writing the trace of $A$ in the form
\[
\tr A = \sum_{j,k=1}^n m_{kj}  \psi_k^*   \psi_j,   
\]
this becomes a statement which only involves the (complex) dot
products of $\psi_j$ and $\psi_k$.  We consider first the case when
$\psi_j$ take values in $\C^n$ assuming that they are linearly
independent. The statement of the lemma follows then by continuity.

To prove the above trace property we  represent the Gram matrix as
\[
( \psi_k^*  \psi_j ) = \diag (z_j) M - M \diag (\bar z_k).
\]
Thus our trace becomes
\[
\begin{split} 
  \tr A = &  \sum_{j,k=1}^n m_{kj} \psi_k^*\psi_j 
\\  
= & \tr_{\C^n}  (  (\diag (z_j) M - M \diag (\bar z_k)) M^{-1} ) = \tr_{\C^n}   (\diag (z_j)-  \diag (\bar z_k))
\\ = &  
  2 \sum_{j=1}^n  \im z_j.
  \end{split} 
\]
We remark that here the $\psi$'s can be in an arbitrary Hilbert space, therefore  the Gram matrix $  \psi_j  \bar \psi_k^t  $ can be any arbitrary symmetric non-negative matrix.
 \end{proof}

\section{The extended soliton addition and removal 
 maps} \label{s:extended}

So far, we have only considered the iterated \Backlund transform
corresponding to isolated eigenvalues, which can be viewed as a smooth
map
\[
\Dress: H^s \times \Phase_U^{N,0} \to H^s, \qquad u \times \bfs \to v.
\]
restricted to states $u$ with no eigenvalues at $\bfz$. The problem
with this setting is that when we endow $\Phase_U^{N,0}$ with the obvious
smooth structure derived from $(\Phase_1)^N$, the soliton addition  map does not
admit a smooth extension to the diagonal with multiple eigenvalues.

Our contention here is that this does not reflect an inherent lack of
smoothness for the soliton addition map at multiple eigenvalues, but rather the
fact that we are using the wrong smooth structure on $\Phase_U^{N,0}$. To
rectify that, our first step is to consider the iterated \Backlund
transform associated to holomorphic families of unbounded wave
functions.  Precisely, we start with
\begin{itemize}
\item A compact set $U$ in the upper half-space,
\item  A state $u \in H^s$ with no eigenvalues in $U$
\item A holomorphic family of unbounded wave functions $\psi(z)$ in a neighbourhood of $U$, associated to $u$, also with associated $\alpha$ as in Lemma~\ref{l:all-alpha}. 
\end{itemize}

Let $z_j\in U$ be pairwise disjoint for $1 \le j \le J$. Let $n_j \ge
1$ for $1 \le j \le J$ and $N= \sum n_j$. By an iterated \Backlund
transform corresponding to the holomorphic unbounded wave function $\psi$,
we add $N$ solitons at $z_j$ with corresponding multiplicities $n_j$. 
By an iterated application of
Lemma~\ref{l:hol-B}, the result only depends on the $z_j$ and
\begin{equation}  
(\partial^k \alpha(z_j))_{1\le j \le J,\  0\le k < n_j}. 
\end{equation} 
We obtain an associated soliton addition map
\[
\Dresso_{\bfz,\psi} : H^s \times U^N \to H^s.
\]

A-priori this map, as a function of spectral parameters $\bfz$, is
smooth, indeed real analytic, as well as symmetric. However the eigenvalues $\bfz$ of the Lax operator are only determined up to permutations, and they 
do  not depend smoothly on the Lax operator in the case of multiplicities.
Instead the elementary symmetric polynomials
\[
s_j = \sum_{k=1}^N z_k^j , \qquad 1 \le j \le N  
\]
depend smoothly on the potential, and we will see that the soliton addition map
is invariant under such a permutation and depends smoothly on the elementary symmetric polynomials. Secondly, we will parametrize the holomorphic wave function $\alpha$
recorded at $\bfz$ via $2N$ real variables $\beta_0, \cdots, \beta_{2N-1}$, which turn out to be naturally associated to the first $2N$ commuting flows.
These enhancements are  the topic of this section.

We seek to study
further its regularity properties as well as its parametrization.  For
this we consider separately the spectral parameters $\bfz$ and the
unbounded wave functions.

\subsection{The spectral data and the characteristic
 polynomial}
The spectral data $\bfz$ for the soliton addition map can be encoded 
  the characteristic polynomial 
\[
P_{\bfz}(z) = \prod_{j = 1}^n (z-z_j) = z^N + \sum_{k = 1}^{N} (-1)^k s_k z^{N-k}
\]
where $\bfs = \{s_k: 0 \le k \le N\}$ are the elementary symmetric polynomials in $\bfz$ with  $s_0=1$.

Since the soliton addition map is symmetric as a function of $\bfz$, it is
natural to seek to view it as a smooth function of $s_k$'s, rather
than separately in each individual eigenvalues. Because of this, on
the space of spectral data $\bfz$ we will not use the product
topology. 

Instead, we will denote the space of spectral data by
$\C^N_{sym}$, and interpret it as the space of unordered $N$-uples of
complex numbers with the smooth topology defined by the elementary
symmetric polynomials $s_k$.

The correspondence between the two topologies
is continuous but not smooth.

\begin{lemma} \label{l:z-vs-s}
  A) Let $\mathbf{z} \in \C^N$. Then
  \[ |\mathbf{s}(\mathbf{z})| \le (1+|\mathbf{z}|)^N \]  
  and
  \[ |\mathbf{z}| \le  \sqrt{2} |\mathbf{s}(\mathbf{z})|. \]  
For all $\mathbf{s}$ there exists $\mathbf{z}$ with $\mathbf{s}= \mathbf{s}(\mathbf{z})$.
  
  B) Let $\mathbf{z}, \mathbf{w} \in \C^N$. Then
  \[ |\mathbf{s}(\mathbf{z}) - \mathbf{s}(\mathbf{w}) | \le  c_N (1+ |\mathbf{z}|+\mathbf{w}|)^{N-1}  |\mathbf{z}-\mathbf{w} |. \]
  C) Let $\mathbf{s}, \mathbf{\sigma} \in \C^N$. Then there exist $\mathbf{z}$ and $\mathbf{w} $  in $\C^N$ with $\mathbf{s} = \mathbf{s}(\mathbf{z}) $,
  $\mathbf{\sigma}= \mathbf{s}(\mathbf{w}) $  and
  \[ |\mathbf{z}-\mathbf{w} | \le  C(|\mathbf{s}|,|\mathbf{\sigma} | )  |\mathbf{s}-\mathbf{\sigma} |^{\frac1N}.    \] 
\end{lemma}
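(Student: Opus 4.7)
The plan is to handle the three parts in sequence, relying on classical facts about the correspondence between the roots of a monic polynomial and its coefficients (i.e.\ the symmetric functions of its roots).

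For part (A), the upper bound $|\mathbf{s}(\mathbf{z})|\le(1+|\mathbf{z}|)^N$ follows by writing each $s_k$ as a sum of $\binom{N}{k}$ monomials of degree $k$, estimating each by $|\mathbf{z}|^k$, and summing via the binomial theorem. The reverse inequality $|\mathbf{z}|\le\sqrt{2}|\mathbf{s}(\mathbf{z})|$ is a Cauchy root estimate: any root $z_j$ of $P_{\mathbf{z}}$ satisfies $|z_j|^N\le\sum_{k=1}^N|s_k||z_j|^{N-k}$, so either $|z_j|\le 1$ or one can divide through and bound $|z_j|\lesssim|\mathbf{s}|$ via a Cauchy--Schwarz step; squaring and summing over $j$ then gives the claimed constant. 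Surjectivity is the fundamental theorem of algebra applied to $P_{\mathbf{s}}$.

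Part (B) follows from the mean value inequality applied to the polynomial map $\mathbf{s}:\C^N\to\C^N$: on the segment joining $\mathbf{z}$ to $\mathbf{w}$ each partial $\partial s_k/\partial z_j$ is an elementary symmetric polynomial in $N-1$ variables of degree $k-1$, hence bounded by $c_N(1+|\mathbf{z}|+|\mathbf{w}|)^{k-1}\le c_N(1+|\mathbf{z}|+|\mathbf{w}|)^{N-1}$, and the estimate follows by integration.

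Part (C) is the genuine obstacle: it is the sharp $\tfrac{1}{N}$-H\"older continuity of the root map, the exponent being forced by the coalescence of roots, as in the model case $\zeta^N$ vs.\ $\zeta^N-\varepsilon$. Here is the approach I would follow. Use (A) to fix $\mathbf{z},\mathbf{w}$ with $\mathbf{s}(\mathbf{z})=\mathbf{s}$, $\mathbf{s}(\mathbf{w})=\boldsymbol{\sigma}$ and $|\mathbf{z}|,|\mathbf{w}|\le R:=C(|\mathbf{s}|,|\boldsymbol{\sigma}|)$. Part (B), applied to the polynomials themselves on the disk of radius $R$, controls $|P_{\mathbf{s}}-P_{\boldsymbol{\sigma}}|$ there by $c_N R^{N-1}|\mathbf{s}-\boldsymbol{\sigma}|$. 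For each root $w_j$ of $P_{\boldsymbol{\sigma}}$, evaluating the difference yields
\[ \prod_{k=1}^N|w_j-z_k|=|P_{\mathbf{s}}(w_j)|=|(P_{\mathbf{s}}-P_{\boldsymbol{\sigma}})(w_j)|\le c_N R^{N-1}|\mathbf{s}-\boldsymbol{\sigma}|, \]
and since every factor is at least $\min_k|w_j-z_k|$, one obtains the pointwise bound $\min_k|w_j-z_k|\le C|\mathbf{s}-\boldsymbol{\sigma}|^{1/N}$. The remaining and genuinely subtle step is to upgrade this into a bijection of the two root multisets with the same bound. For this I would form the bipartite graph connecting each $w_j$ to every $z_k$ lying within distance $C|\mathbf{s}-\boldsymbol{\sigma}|^{1/N}$, and verify Hall's condition by a counting argument: if $m$ of the $w_j$'s cluster inside a small disk, then at least $m$ of the $z_k$'s (counted with multiplicity) must also cluster near that disk, since otherwise $P_{\mathbf{s}}$ would have fewer than $m$ zeros in a region where $P_{\boldsymbol{\sigma}}$ has $m$, contradicting the argument principle combined with the uniform smallness of $P_{\mathbf{s}}-P_{\boldsymbol{\sigma}}$ on the disk boundary via Rouch\'e. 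The resulting perfect matching delivers the required permutation and finishes the proof.
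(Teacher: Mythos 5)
Your proposal is correct in substance and considerably more detailed than the paper's own proof, which disposes of (A) by the $\ell^1$-to-$\ell^2$ comparison plus Gerschgorin's theorem applied to the companion matrix, calls (B) ``an immediate calculation,'' and for (C) says only that one ``studies the dependence of roots on the coefficients of a polynomial.'' Your upper bound in (A) and your mean-value argument in (B) match the paper's intent exactly. For (C) you actually supply the argument the paper omits: the factorization $\prod_k|w_j-z_k|=|(P_{\mathbf{s}}-P_{\boldsymbol{\sigma}})(w_j)|$ giving the pointwise $|\mathbf{s}-\boldsymbol{\sigma}|^{1/N}$ bound, followed by a Hall/Rouch\'e matching to upgrade ``every root of one polynomial is near some root of the other'' to a genuine pairing. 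This is the standard Ostrowski-type proof of the $\frac1N$-H\"older continuity of the root map and is exactly what the paper's one-line remark gestures at. The one detail to pin down is the Rouch\'e step: run the homotopy $P_t=(1-t)P_{\boldsymbol{\sigma}}+tP_{\mathbf{s}}$ on the boundary of each connected component of the union $\bigcup_j D(w_j,\delta)$, with $\delta$ a suitable multiple of $|\mathbf{s}-\boldsymbol{\sigma}|^{1/N}$, rather than on a single disk around a cluster, so that the needed lower bound on $|P_t|$ along the boundary is actually available.

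One caveat, which applies equally to the paper: the second inequality of (A), $|\mathbf{z}|\le\sqrt2\,|\mathbf{s}(\mathbf{z})|$, is not literally true, and your sketch (``either $|z_j|\le1$ or divide through'') breaks down precisely where it fails. Take $N=3$ and $\mathbf{z}=(\epsilon,\epsilon\omega,\epsilon\omega^2)$ with $\omega$ a primitive cube root of unity: then $s_1=s_2=0$ and $s_3=\epsilon^3$ (the same happens for power sums), so $|\mathbf{s}|=\epsilon^3$ while $|\mathbf{z}|=\sqrt3\,\epsilon$, which violates the inequality badly for small $\epsilon$. What Gerschgorin, or the Cauchy--Fujiwara bound $|z_j|\le 2\max_k|s_k|^{1/k}$, actually yields is an estimate of the form $|\mathbf{z}|\le C_N\max\bigl(|\mathbf{s}|,|\mathbf{s}|^{1/N}\bigr)$, equivalently $|\mathbf{z}|\le C_N(1+|\mathbf{s}|)$, which is all that is used elsewhere in the paper. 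You should state and prove that corrected version rather than try to force the constant $\sqrt2$.
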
 
\begin{proof} The first inequality in A) is immediate with the $l^1$ norm instead of the $L^2$ norm, which implies the bound in the $l^2$ norm. The 
  components of $\mathbf{z}$ are the roots of
  \[ \sum_{n=0}^N s_{N-n} z^n = 0. \] which are contained in the open
  disc with the given radius by the theorem of Gerschgorin.  Part B is an immediate calculation. For
  Part C we study the dependence of roots on the coefficients of a
  polynomial.
\end{proof}

\subsection{The scattering data and  holomorphic families of
unbounded wave functions}

We have seen that a holomorphic family of unbounded wave functions $\psi$ can
be uniquely described (up to a multiplicative constant) via a
holomorphic function $\alpha(z)$; because of this, we will identify the notations
  $\Dresso_{\bfz,\psi}$ and  $\Dresso_{\bfz,\alpha}$.
In the case of distinct eigenvalues
$\bfz$, the associated soliton addition map depends only of
$\kappa_j = i\alpha(z_j)$. Suppose now that we have multiple eigenvalues $z_k$
with multiplicity $n_k$. In view of Lemma~\ref{l:hol-B}, the
associated soliton addition map may depend on
\[
\partial^j \alpha(z_j), \qquad j = 0,n_k-1.
\]
Thus we can use the equivalence relation
\begin{definition}
For two holomorphic functions $\alpha$ and $\tilde \alpha$ we say that 
\[
\alpha = \tilde \alpha \ \ ( \mod P_{\bfz} )
\]
if there exists a holomorphic function $q$ so that
\[
\alpha(z) - \tilde \alpha(z) = q(z) P_\bfz(z).
\]
\end{definition}

Then we can rephrase the above discussion as 
\begin{lemma}
Assume that $\alpha = \tilde \alpha \ \ (\mod P_{\bfz})$. 
Then $\Dresso_{\bfz,\alpha} = \Dresso_{\bfz,\tilde \alpha}$.
 \end{lemma}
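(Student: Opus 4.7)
The plan is to reduce the statement to a local fact about Taylor coefficients and then invoke the iterative construction of $\Dresso_{\bfz,\alpha}$. Write the spectral data as distinct points $\zeta_1,\dots,\zeta_J$ with multiplicities $n_1,\dots,n_J$, so that $P_\bfz(z)=\prod_{\ell=1}^J(z-\zeta_\ell)^{n_\ell}$. If $\alpha-\tilde\alpha=q\, P_\bfz$ for some holomorphic $q$, then Leibniz gives $\partial^k(\alpha-\tilde\alpha)(\zeta_\ell)=0$ for $0\le k< n_\ell$ and all $\ell$. Conversely, given the matching Taylor data, the function $\alpha-\tilde\alpha$ is divisible by $P_\bfz$ in the ring of holomorphic functions on a neighborhood of $U$. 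So the stated equivalence on $\alpha$ is equivalent to the matching of the finite package of Taylor coefficients $\bigl(\partial^k\alpha(\zeta_\ell)\bigr)_{0\le k<n_\ell,\ 1\le\ell\le J}$.

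Next, I recall from the paragraph immediately preceding the lemma that by the iterated application of Lemma~\ref{l:hol-B}, the output $\Dresso_{\bfz,\psi}(u)$ depends on the holomorphic family $\psi$ (equivalently, on $\alpha$) only through the same finite package $\bigl(\partial^k\alpha(\zeta_\ell)\bigr)_{0\le k<n_\ell}$. Combining these two facts gives the lemma immediately. However, I will verify the dependence claim concretely by induction on $N=\sum n_\ell$. The base case $N=1$ is just the observation that $\Dress$ depends on $\psi(\cdot,\zeta_1)$, which by Lemma~\ref{l:all-alpha} is determined by $\alpha(\zeta_1)\pmod{2\pi i}$. For the inductive step, perform a single B\"acklund transform at some $\zeta_\ell$ using $\psi(\cdot,\zeta_\ell)$ (or, if $\zeta_\ell$ is an already-added eigenvalue, using the appropriate derivative $\partial_z\psi(\cdot,\zeta_\ell)$ as dictated by \eqref{def:psiv}). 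By Lemma~\ref{l:hol-B}, the resulting state $v$ and the propagated holomorphic family $\psi_v$ are still parametrized by the same function $\alpha$, while the remaining spectral task has total multiplicity $N-1$ with $\zeta_\ell$ now of remaining multiplicity $n_\ell-1$. The inductive hypothesis then applies.

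Tracking the Taylor orders used along the induction shows that the complete iteration consumes from $\alpha$ exactly the coefficients $\partial^k\alpha(\zeta_\ell)$ for $0\le k<n_\ell$: the first use at $\zeta_\ell$ needs $\alpha(\zeta_\ell)$, the second needs $\alpha'(\zeta_\ell)$, and so on up to the $n_\ell$-th use needing $\alpha^{(n_\ell-1)}(\zeta_\ell)$. Since $\alpha$ and $\tilde\alpha$ agree through these orders at every $\zeta_\ell$, the iterated intertwiners and hence the output states coincide, proving $\Dresso_{\bfz,\alpha}=\Dresso_{\bfz,\tilde\alpha}$.

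The main obstacle is bookkeeping: one must verify that the order of additions does not matter (guaranteed by the symmetry in Theorem~\ref{th_commuting}(c) and \eqref{commuting}), so that no matter which ordering of the $N$ B\"acklund transforms is chosen, the Taylor coefficients consumed are exactly the ones listed above. With the symmetry in hand, the induction is clean and the lemma follows.
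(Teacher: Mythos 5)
Your proof is correct and follows essentially the same route the paper takes: the paper states this lemma as a direct rephrasing of the preceding observation that, by iterated application of Lemma~\ref{l:hol-B}, the output of $\Dresso_{\bfz,\psi}$ depends on the holomorphic family only through the jet $(\partial^k\alpha(\zeta_\ell))_{0\le k<n_\ell}$, and your Leibniz/divisibility argument identifying the congruence $\alpha\equiv\tilde\alpha \ (\mathrm{mod}\ P_{\bfz})$ with equality of exactly that jet is the intended (elementary) other half. Your explicit induction via \eqref{def:psiv}, together with the order-independence from Theorem~\ref{th_commuting}(c), just makes precise what the paper leaves implicit.
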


This equivalence relation will allow us to replace the holomorphic function 
$\alpha$ by an equivalent polynomial with degree at most $N-1$.

\begin{lemma} \label{l:find-r} Let $P_{\bfz}$ be the characteristic
  polynomial and $\alpha$ a holomorphic function in a neighbourhood of
  $\bfz$. Then there exists an unique polynomial (remainder)
\[
\tilde \alpha(z) = \sum_{k = 0}^{N-1} \alpha_j z^j
\]
so that 
\[
\tilde \alpha = \alpha \ \ (\mod P_{\bfz}).
\]
Furthermore, $\tilde \alpha$ depends holomorphically on the symmetric polynomials $\bfs$.
\end{lemma}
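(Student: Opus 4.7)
The plan is to write $\tilde\alpha$ explicitly via a Hermite-type contour integral, from which existence, the polynomial degree bound, the congruence, uniqueness, and holomorphic dependence on $\bfs$ all follow at once. Fix a positively oriented contour $\Gamma$, contained in the domain of holomorphy of $\alpha$ and encircling all the roots $z_j$ of $P_{\bfz}$; such a $\Gamma$ can be chosen uniformly for $\bfs$ varying in a small neighborhood of any given configuration, even at coalescing roots. Define
\[
\tilde\alpha(z) := \frac{1}{2\pi i}\oint_\Gamma \alpha(w)\,\frac{P_{\bfz}(w) - P_{\bfz}(z)}{(w-z)\,P_{\bfz}(w)}\, dw.
\]

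The first step is to observe that the kernel $K(w,z):=\dfrac{P_{\bfz}(w)-P_{\bfz}(z)}{w-z}$ is a genuine polynomial in $(w,z)$, of degree at most $N-1$ in $z$, whose coefficients are polynomials in $w$ and in the elementary symmetric functions $\bfs$ (this is just the identity $(w^m-z^m)/(w-z)=\sum_{k=0}^{m-1}w^{m-1-k}z^k$ applied term-by-term to $P_{\bfz}$). Hence $\tilde\alpha(z)$ is manifestly a polynomial in $z$ of degree at most $N-1$.

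Next, to verify the congruence $\tilde\alpha \equiv \alpha \pmod{P_{\bfz}}$, I subtract the defining formula from Cauchy's integral formula $\alpha(z) = (2\pi i)^{-1}\oint_\Gamma \alpha(w)(w-z)^{-1}\, dw$, valid for $z$ enclosed by $\Gamma$:
\[
\alpha(z) - \tilde\alpha(z) = \frac{P_{\bfz}(z)}{2\pi i}\oint_\Gamma \frac{\alpha(w)}{(w-z)\,P_{\bfz}(w)}\, dw = P_{\bfz}(z)\, q(z),
\]
where $q$ is holomorphic in the region enclosed by $\Gamma$. Uniqueness then follows immediately: any two candidate remainders of degree $<N$ would differ by a multiple of $P_{\bfz}$ that is itself a polynomial of degree less than $\deg P_{\bfz}=N$, forcing the difference to vanish.

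Finally, holomorphy of $\tilde\alpha$ in $\bfs$ is built into the formula: the integrand depends holomorphically on $\bfs$ through the coefficients of $P_{\bfz}$, uniformly on $\Gamma$, so differentiation in $\bfs$ passes under the integral sign. The only minor obstacle is the choice of contour: one must keep $\Gamma$ inside the common domain of holomorphy of $\alpha$ while the roots move with $\bfs$, but this is trivially achieved by fixing $\Gamma$ once and for all on a small neighborhood of $\bfs_0$. Everything else is essentially a bookkeeping consequence of Cauchy's theorem.
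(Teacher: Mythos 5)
Your proof is correct, and it takes a more constructive route than the paper. The paper characterizes $\tilde\alpha$ implicitly: it observes that the congruence forces the $N$ moment identities $\int_\gamma z^j\,\alpha(z)/P(z)\,dz=\int_\gamma z^j\,\tilde\alpha(z)/P(z)\,dz$ for $j=0,\dots,N-1$, notes that these determine the first $N$ coefficients of the expansion of $\tilde\alpha/P$ at infinity and hence $\tilde\alpha$ itself, and reads off holomorphy in $\bfs$ from the left-hand sides. You instead exhibit $\tilde\alpha$ in closed form via the Hermite interpolation kernel $\bigl(P_\bfz(w)-P_\bfz(z)\bigr)/\bigl((w-z)P_\bfz(w)\bigr)$, from which the degree bound, the congruence (with an explicitly holomorphic quotient $q$), uniqueness, and holomorphy in the coefficients of $P_\bfz$ (equivalently in $\bfs$) all drop out at once. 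Both arguments hinge on the same underlying point — that a contour integral around $\bfz$ sees the roots only through the coefficients of the characteristic polynomial, which is what removes the apparent singularity at coalescing roots — but your version makes existence transparent where the paper's is somewhat terse, at the cost of introducing the explicit kernel. One small wording caveat: in the uniqueness step the difference of two remainders is $qP_\bfz$ with $q$ a priori only holomorphic, not polynomial; the correct phrasing is that a polynomial of degree $<N$ vanishing at the roots of $P_\bfz$ to their full multiplicities must be zero, which is clearly what you intend.
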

\begin{proof}
We consider a contour $\gamma$ around the zeroes $\bfz$ of $P_{\bfz}$. We must have 
\[
\int_{\gamma} z^j \frac{\alpha(z) -\tilde \alpha(z)}{P(z)}\, dz = 0, \qquad j \geq 0,
\]
but only the first $N$ such relations are independent.
This yields 
\[
\int_{\gamma} z^j \frac{\alpha(z)}{P(z)}\, dz = \int_{\gamma} z^j \frac{\tilde \alpha(z)}{P(z)}\, dz, \qquad j = 0,N-1.
\]
The left hand side is determined by $\alpha$, and depends holomorphically on 
the symmetric polynomials in $\bfz$ (which are the coefficients of $P_{\bfz}$).
This  in turn uniquely determines
the first $N$ coefficients in the Taylor series for $\dfrac{\tilde \alpha(z)}{P(z)}$ at infinity,
which in turn uniquely determines $\tilde \alpha(z)$.
\end{proof}

The $N-1$ degree polynomial $\tilde \alpha$ can be viewed as our generalized scattering
 parameter, and is identified via its (complex) coefficients $\balpha = (\alpha_0, \cdots,\alpha^{N-1})$,
\[
\alpha(z) = \sum_{j = 0}^{N-1} \alpha_j z^j. 
\] 
This is endowed with the smooth topology of $\C^N$.

However, there is also an equivalent alternative choice, which we will
give preference to in this paper. Precisely, instead of working with 
complex polynomials of degree $N-1$, it is sometimes more convenient 
to work with real polynomials of degree $2N-1$. 

If two real polynomials are equal modulo $P_{\bfz}$ then they are also equal modulo 
$P_{\bar{\bfz}}$ so they must\footnote{Here we recall that all the $z_j$'s in $\bfz$
are in the upper halfspace, so $P_\bfz$ and $P_{\bar{\bfz}}$ have no common roots.}
be equal modulo $P_{\bfz} P_{\bar{\bfz}}$. Thus the 
above lemma concerning $\alpha$ is replaced by 

\begin{lemma} \label{l:find-r-beta} Let $P_{\bfz}$ be the characteristic
  polynomial and $\alpha$ a holomorphic function in a neighbourhood of
  $\bfz$. Then there exists an unique real polynomial 
\[
\beta(z) = \sum_{k = 0}^{2N-1} \beta_j z^j
\]
so that 
\[
\beta = \alpha \ \ (\!\!\!\mod P_{\bfz})
\]
Furthermore, $\bbeta:= (\beta_0,\cdots,\beta_{2N-1})$
depends analytically on the symmetric polynomials $\bfs$.
\end{lemma}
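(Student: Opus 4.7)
The plan is to first apply Lemma \ref{l:find-r} to replace $\alpha$ by its unique complex polynomial representative $\tilde{\alpha}$ of degree at most $N-1$ with $\tilde\alpha\equiv\alpha\pmod{P_{\bfz}}$, which depends holomorphically on $\bfs$. Finding a real $\beta$ of degree $<2N$ with $\beta\equiv\alpha\pmod{P_{\bfz}}$ is then equivalent to finding a real $\beta$ of degree $<2N$ with $\beta\equiv\tilde\alpha\pmod{P_{\bfz}}$. Introduce the involution $f(z)\mapsto f^{\#}(z):=\overline{f(\bar z)}$ on $\C[z]$, which conjugates the coefficients and fixes exactly the real polynomials; note $P_{\bfz}^{\#}=P_{\bar\bfz}$. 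If $\beta$ is real and satisfies $\beta\equiv\tilde\alpha\pmod{P_{\bfz}}$, applying $\#$ yields the companion congruence, so $\beta$ must simultaneously satisfy the pair
\begin{equation*}
\beta\equiv\tilde\alpha\pmod{P_{\bfz}},\qquad \beta\equiv\tilde\alpha^{\#}\pmod{P_{\bar\bfz}}.
\end{equation*}

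Since every $z_j$ lies in the open upper half-plane, $P_{\bfz}$ and $P_{\bar\bfz}$ have disjoint root sets and are therefore coprime in $\C[z]$. The Chinese Remainder Theorem then produces a unique polynomial $\beta\in\C[z]$ of degree $<2N$ solving the pair. To see that this $\beta$ is automatically real, apply $\#$ to both congruences: since $\tilde\alpha^{\#\#}=\tilde\alpha$ and $P_{\bfz}^{\#}=P_{\bar\bfz}$, the system is carried onto itself with $\beta$ replaced by $\beta^{\#}$, and uniqueness forces $\beta^{\#}=\beta$. This establishes existence. For uniqueness within the class of real polynomials of degree $<2N$, the difference $\Delta$ of two candidates is a real polynomial divisible by $P_{\bfz}$; reality gives $\Delta=\Delta^{\#}$, so $P_{\bar\bfz}$ divides $\Delta$ as well, hence so does $P_{\bfz}P_{\bar\bfz}$, which has degree $2N>\deg\Delta$, forcing $\Delta=0$.

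For the analytic dependence on $\bfs$, write the above pair of congruences as a linear system of $2N$ equations in the $2N$ (a priori complex) coefficients of $\beta$. The matrix entries are polynomial expressions in the coefficients of $P_{\bfz}$ and $P_{\bar\bfz}$, hence depend holomorphically on $\bfs$ and $\bar{\bfs}$; the right-hand sides depend holomorphically on $\bfs$ through $\tilde\alpha$ and antiholomorphically through $\tilde\alpha^{\#}$. The determinant of the system equals, up to a nonzero scalar, the resultant $\mathrm{Res}(P_{\bfz},P_{\bar\bfz})=\prod_{j,k}(z_j-\bar z_k)$, which never vanishes since $\im z_j>0$. Cramer's rule therefore expresses $\bbeta$ as a ratio of quantities that are jointly holomorphic in $(\bfs,\bar\bfs)$, with nonvanishing denominator, yielding the required real-analytic dependence on $\bfs$.

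No substantive obstacle is anticipated. The only conceptual point is the packaging of the reality constraint as a second congruence paired with the first under the involution $\#$; once this is done, both existence with real coefficients and the analytic parametrization drop out of the CRT and Cramer's rule respectively.
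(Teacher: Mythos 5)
Your proof is correct and follows essentially the same route the paper sketches: reality of $\beta$ is encoded as the companion congruence modulo $P_{\bar{\bfz}}$, the coprimality of $P_{\bfz}$ and $P_{\bar{\bfz}}$ (all $z_j$ in the open upper half-plane) plus the Chinese Remainder Theorem give existence and uniqueness in degree $<2N$, and the nonvanishing resultant $\prod_{j,k}(z_j-\bar z_k)$ yields real-analytic dependence on $\bfs$. The paper only records the uniqueness observation explicitly and invokes the CRT for the rest, so your write-up is a faithful, fully detailed version of the intended argument.
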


There is a (real) linear one-to one connection between $\balpha$ and $\bbeta$,
which is analytic in the symmetric polynomials $\bfs$. Thus the topologies determined by 
the $\balpha$, respectively the $\bbeta$ representations of the scattering 
parameters are equivalent.

\subsection{The smooth soliton parameters  }

Based on the previous discussion, it is natural to define the phase
space $\Phase_U^N$ associated to an open set $U$  with compact closure in the open upper half-space as
\[
 \Phase_U^{N} = \{( \bfz, \bbeta); \bfz \in \C^N_{sym},\ \bfz \subset U, \ \ \bbeta \in \R^{2N} \} 
\]
with the smooth topology given by the symmetric polynomials $\bfs$ for $\bfz$ and
the smooth topology in $\R^{2N}$  for $\bbeta$.  

It is easily seen that we have a smooth  embedding of the $N$ soliton set with pairwise different eigenvalues 
\[
\Phase_U^{N,0} \subset \Phase_U^N,
\]
which is provided by the matching
\[
\kappa_j = i\bbeta(z_j).
\]
Thus, one can view $\Phase_U^N$ as the completion of $\Phase_U^{N,0}$
with respect to the above topology.  Our contention is that this is
the correct smooth parametrization for extending the soliton addition and
removal maps as smooth inverse maps to spectral parameters $\bfz$
with multiplicity.

Another symmetry which is readily seen at the level of $\kappa_j$ is that $\kappa_j$ are only 
uniquely determined modulo $\pi i$. At the level of $\bbeta$, this yields the equivalence relation,
denoted by $A$, defined
\[
\bbeta_1 \equiv \bbeta_2 \quad \text{iff}  \quad \bbeta_1(z_j) = \bbeta_2(z_j) \ (\mod \pi i).
\] 
This relations  is now $\bfz$ dependent.  There are two interesting observations to make:

\begin{itemize}
\item This is a discrete relation, uniformly in $\bfz \in U$. Thus the local smooth topologies 
on $\Phase_U^N$ and $\Phase_U^N/A$ coincide.

\item The dimension of the symmetry lattice depends on the multiplicities in $\bfz$.
This corresponds to some periods approaching infinity as eigenvalues collapse.

\end{itemize}

In this section we carry out the first step of the analysis, and show that

\begin{proposition}\label{p:trace}
The soliton addition map 
\[
(u, (\bfz,\bbeta)) \to v = \Dress(u,\bfz,\bbeta) := \Dresso_{\bfz,\bbeta}(u)  
\]
is one to one in a suitable setting, and commutes with every flow of the NLS hierarchy whenever this flow is a continuous extension of the flow on Schwartz functions. 
Moreover we have the energy relation (trace formula)
\begin{equation}\label{trace}
E_s(v) =  E_s(u) + 2 \sum_{k=1}^N m_k \Xi_s(2z_k)  
\end{equation}
and in particular 
\begin{equation}\label{trace-L2}
\Vert v \Vert_{L^2}^2 = \Vert u \Vert_{L^2}^2 + 2 \sum_{k=1}^N  \im z_k. 
\end{equation}
\end{proposition}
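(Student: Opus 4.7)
The plan is to derive all three assertions from the transformation law of the transmission coefficient under a single Bäcklund transform, iterated through the extended framework of Section~\ref{s:extended}.

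\textbf{Step 1: transformation of $T$ and trace formula.} For a single elementary Bäcklund transform based on an unbounded $\zeta$-wave, Proposition~\ref{p:wave}(1) gives $(z-\zeta)T(u,z)=(z-\bar\zeta)T(v,z)$. Iterating along the construction of $\Dress$ and extending to multiple eigenvalues via the holomorphic wave-function families of Lemmas~\ref{l:hol-B}--\ref{l:hol-B-re} (the derivative-limit expression in \eqref{def:psiv} is exactly what makes the iteration work at a coalescing spectral parameter), I obtain
\[
T(v,z) = \prod_{k=1}^N \frac{z-\bar z_k}{z-z_k}\, T(u,z),
\]
where the $z_k$ are listed with multiplicities. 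Inserting this into the representation \eqref{deff++} for $E_s$, the real-axis integral is unaffected because $\real\ln\frac{\xi/2-\bar z_k}{\xi/2-z_k}=0$ for $\xi\in\R$, while the pole sum gains the contribution $2\sum_k m_k\,\Xi_s(2z_k)$, yielding \eqref{trace}. Formula \eqref{trace-L2} then follows by specialization to $s=0$, using $\Xi_0(z)=\im z$ and the fact that $E_0$ reduces to the $L^2$ norm squared.

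\textbf{Step 2: injectivity.} The inversion identity \eqref{inversebacklund2} of Lemma~\ref{intertwining_symmetry} and its multi-soliton counterpart, the removal map $\Undress$ of Section~\ref{s:dress}, provide an explicit inverse to each elementary Bäcklund transform by swapping unbounded wave functions with eigenfunctions. The construction extends to the high-multiplicity regime through the holomorphic-family recovery in Lemma~\ref{l:hol-B-re}: starting from $v$ together with the holomorphic family $\psi_v$, one reconstructs $u$ and the original $\psi$ via $u=B(v,\zeta_1,\zeta_2,\phi_1,\phi_2)$. Iterating in $j$ yields a smooth left and right inverse to $\Dress(\cdot,\bfs)$ on its natural domain, showing bijectivity.

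\textbf{Step 3: commutation with the hierarchy.} The intertwining identity $\mathcal L(v)D=D\mathcal L(u)$ is algebraic in $\mathcal L$, hence for every polynomial flow the time-evolved operators $\mathcal P_n$ of the hierarchy satisfy $\mathcal P_n(v)D=D\mathcal P_n(u)$, once the wave function $\psi$ is propagated by $\psi_t=\mathcal P_n\psi$. As in \eqref{t-dep-c} generalized to the $n$-th flow, this corresponds to $\dot\kappa=i2^{n-1}\zeta^n$; translated into the extended parameters, $\bbeta(t)$ evolves as the coefficients of $\beta(z)+it2^{n-1}z^n$ taken modulo $P_{\bfz}$. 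Consequently, if $u(t)$ solves the $n$-th flow, then $v(t)=\Dress(u(t),\bfz,\bbeta(t))$ solves it as well. The relation is first checked for Schwartz data, where each flow is classically well-posed, and extends to any $H^s$ where the flow is continuous by density, exploiting the continuity of $\Dress$ in $u$ provided by Proposition~\ref{p:wave}\ref{waveana}.

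\textbf{Main obstacle.} The delicate point is Step~3: one must propagate a \emph{holomorphic family} $\psi(x,z)$ (not a single wave function) under each flow $\mathcal P_n$ in a way compatible with the equivalence $\alpha\equiv\tilde\alpha\pmod{P_{\bfz}}$ that defines the extended phase space, and ensure that the induced motion on $\bbeta$ is well defined independently of the representative chosen. Once this is settled, the remaining steps are essentially algebraic consequences of the transformation of $T$ established in Step~1.
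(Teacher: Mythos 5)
Your proposal follows essentially the same route as the paper: injectivity/surjectivity via the holomorphic-family removal construction of Lemmas~\ref{l:hol-B}--\ref{l:hol-B-re} and the uniqueness statement of Lemma~\ref{l:find-alpha}, and the trace formula as a consequence of the Blaschke-factor transformation law for $T$ inserted into \eqref{deff++} (which the paper compresses into ``an immediate consequence of the trace formula''). Your explicit spelling-out of Step 1 and your flagged obstacle in Step 3 (propagating the holomorphic family compatibly with the $\bmod\ P_{\bfz}$ equivalence) are consistent with, and somewhat more detailed than, the paper's own argument.
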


Here for the flow of $\bbeta$ we use the induced linear maps determined by the relations \eqref{nflow-S1} where for $\kappa$ we use the $\bbeta$ representation
$\kappa = i \sum \beta_n z^n$. This is also explicitely spelled out later in \eqref{phase-flow}.

\begin{proof}
  The parameters $\bbeta$ yield a unique well-defined holomorphic
  family of unbounded wave functions $\psi_u = \psi_{u}(z,\bbeta)$
  associated to $u$. Then by iteratively applying $N$ \Backlund transforms
  to the pair $(u,\psi_u)$ corresponding to the eigenvalues $\bfz$, we
  obtain the pair $(v,\psi_v)$ where $\psi_v$ is another
  unbounded wave function with the same parameter $\bbeta$.  By
  Lemma~\ref{l:find-alpha}, it follows that $\balpha$ and thus $\bbeta$ is uniquely
  determined by $v$ modulo $\pi \Z$.  This proves injectivity.

Conversely,  let $v$ be a potential with eigenvalues $\bfz$, possibly with multiplicities. 
Then by Lemma~\ref{l:find-alpha}  applied to each eigenvalue
there exists a unique polynomial $\alpha$ of degree at most $N-1$ 
which generates a family of holomorphic wave functions $\psi_v$ associated to $v$.
Following Lemma~\ref{l:hol-B-re} we successively remove poles while propagating back the 
unbounded family of wave functions, until after $N$ steps we obtain a pair $(u,\psi_u)$
without any eigenvalues for $\L(u)$ in $U$. Then, by Lemmas~\ref{l:hol-B},~\ref{l:hol-B-re},
$(v,\psi_v)$  is the image of $(u,\psi_u)$ through the iterated \Backlund transformation.
The assertion on the norms is an immediate consequence of the trace formula \eqref{deff++}.
\end{proof}

\section{The regularity of soliton addition and removal} \label{s:dress+}

With the setting of the previous section in place, we return to the
question of the regularity of the soliton addition map with
multiplicities. As a consequence we obtain a precise description of the
pure soliton manifolds and the structure of the phase space and its
dynamics.

\subsection{ The results}

Our goal here is to study the regularity properties of the soliton
addition and the soliton removal maps.  We begin by describing our
setup, which requires the following elements:

\begin{itemize}
\item An open subset $U$ of the upper half-space with compact closure
  in the open upper half plane.

\item The set of $N$ tuples $\bfz$ of complex numbers in $U$, up to
  permutations.  We consider it as an analytic manifold with the
  analytic structure given by the elementary symmetric polynomials
  $\bfs$ in $N$ variables, and use the notation $\C^N_{sym}$. Each $\bfz$
 can also be identified with its characteristic polynomial
\[
P_\bfz(z) = \prod_{k = 1}^N  (z - z_k).
\]
and also, equivalently, with the real polynomial $P_\bfz P_{\bar{\bfz}}$.

 \item The associated soliton phase space $\Phase^N_U$ defined by 
\[
\Phase^N_U = \{ (\bfz,\bbeta) \subset \C^N_{sym} \times \R^{2N}, \, \bfz \subset U \}/A.
\] 
We identify $\bbeta$ with the polynomial 
\[ 
\bbeta(z) =  \sum_{j=0}^{2N-1} \beta_j z^j. 
\] 
The set $A$ is a discrete equivalence relation: We  identify
$\bbeta_1$ and $\bbeta_2$ if for $z_j \in \bfz$ we have
\begin{equation}  \bbeta_1(z_j) - \bbeta_2(z_j) \in \pi  \Z
  \text{ and } \bbeta_2^{(m)} (z_j)= \bbeta_1^{(m)}(z_j) \text{ for }
  1\le m < m_j = \text{ the multiplicity of $z_j$. }
\end{equation}
The set $\Phase^N_U$ carries the natural analytic structure defined by
the analytic structure of the $\bfz$ and the Euclidean structure of
the $\bbeta$.  The $n$-th flow acts on $\Phase^N_U$ by 
\begin{equation}\label{phase-flow}
\dot \bfz = 0, \quad \dot  \bbeta(z) =  2^{n-1}  z^n \ (\mod\ P_\bfz P_{\bar{\bfz}}). 
\end{equation}
This can be viewed as Hamiltonian flows on the phase space
endowed with the symplectic form
\[
\omega = \sum_{k=0}^{2N-1} \beta_k \wedge d \sum_{j=1}^{N}  \im z_j^{k+1},
\]
generated by the Hamiltonians 
\[
H_n(\bfz,\bbeta) =  \frac{2^n}{n+1} \im \sum_{j=1}^{N}  z_j^{n+1}.
\]

\item The $N$ pure soliton set $\M^N_U$ of pure $N$ solitons with
  spectral parameters in $U$. There is a natural map
  \[ 
\M^N_U \ni u \to \bfz, 
\]
 whose fibers are denoted by $M_{\bfz }$. 
  \item The space $\V^0_U \subset H^s$ of states with no spectral parameters in $U$,
\[  
\V^0_U= \{ u \in H^s: \sigma( \mathcal{L}(u)) \cap U = \emptyset \}.  
\]

\item The space $\V_U^N \subset H^s$ of $N$  soliton states with
  spectral parameters in $U$,
\[  
\V_U^N = \{ u \in H^s: \#\sigma(\mathcal{L}(u)) \cap U) = N\}. 
\]
There is the obvious natural map 
\[
\V_N \ni u \to \bfz \in \C^N_{sym},
\]
which is easily seen to be analytic.

\item The $m$-th Hamiltonian  of the hierarchy is a sum
  \[ H_m(u) = \sum_{j=1}^{m/2} H_{m,j} (u), \]
  with
  \[
  H_{2m,2}(u) = \int  |u^{(m)}|^2 dx, 
  \]
  \[
  H_{2m+1,2}(u) = \frac1{i} \int u^{(m)}\partial_x
  \overline{u^{(m)}} dx. 
  \] 
  The second index is half of the
  homogeneity in $u$. In $H_{m,j}$ there are $m+2-2j$ derivatives
  distributed over the $2j$ terms.  The $m$-th Hamiltonian defines a
  flow on Schwartz space, and in particular on pure solitons $\M^N_U$.
  
  $H_2$ is the Hamiltonian of the Schr\"odinger equation and $H_3$ is
  the Hamiltonian of mKdV.
\end{itemize}

In this context we consider the soliton addition map
\[
\Dress: \V_U^0 \times \Phase^N_U \to \V_U^N,
\]
and the soliton removal map
\[
\Undress: V_U^N \to V_U^0  \times \Phase^N_U.
\]

In the previous section we have seen that these maps are inverse maps.
Here we study their regularity.  We begin with the soliton addition map.

\begin{theorem} \label{t:dress-reg} a) Let $s > -1/2$. The soliton addition  map
\[  
\V_U^0 \times \Phase^N_U  \ni (u, \bfz,\bbeta) \to   v=\Dress(u, \bfz,\bbeta) \in \V_U^N 
\]
of adding $N$ solitons is smooth, uniformly on compact sets in $\bbeta$, for $\bfz \in K^N$ for a compact subset $K \subset U$.

b) The  soliton addition  map  $\Dress$ is   uniformly smooth globally  in $u$ and $\bbeta$,  for  $u$ restricted to a  bounded set in $H^s$ and $\bfz$ restricted as above. 

\end{theorem}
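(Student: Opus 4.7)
The plan is to reduce to contour integration against a holomorphic family of wave functions, which makes the dependence on the symmetric data $\bfs(\bfz)$ and on $\bbeta$ manifestly smooth, including at the diagonal where eigenvalues coincide.

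First, I would fix a compact $K \subset U$ containing all admissible $\bfz$ and an open neighborhood $U' \subset U$ of $K$ on which $T(u,\cdot)$ is holomorphic (which follows from $u \in \V_U^0$), and pick a smooth contour $\gamma \subset U' \setminus K$ enclosing $K$. Given $(u,\bbeta)$, Lemma~\ref{l:all-alpha} applied with $\alpha = \bbeta$ produces a unique holomorphic family
\[
\psi(x,z;u,\bbeta) = T(u,z)\bigl(e^{-i\bbeta(z)}\psi_l(x,z;u) + e^{i\bbeta(z)}\psi_r(x,z;u)\bigr), \qquad z \in U',
\]
of unbounded $z$-wave functions for $\mathcal{L}(u)$. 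By Lemma~\ref{hsjost}, this family is jointly analytic in $(u,\bbeta)$ into an appropriate space of holomorphic wave-function families on $U'$ with $H^{s+1}$-type regularity in $x$, with bounds uniform for $u$ in bounded subsets of $H^s$ and $\bbeta$ in compact sets.

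The second step is to express $v = \Dress(u,\bfz,\bbeta)$ using only $\psi|_\gamma$ and the characteristic polynomial $P_\bfz$. For distinct eigenvalues, Theorem~\ref{t:dress2} yields explicit formulas through the Gram matrix $M_{jk}$, which however degenerates on the diagonal $z_j = z_k$. The key observation, already built into Lemma~\ref{l:hol-B}, is that the iterated \Backlund transform is determined by the Hermite interpolation data of $\psi$ at the roots of $P_\bfz$. Any such datum is representable as a contour integral over $\gamma$: for $f$ holomorphic inside $\gamma$,
\[
\sum_j \frac{f(z_j)}{P'_\bfz(z_j)} = \frac{1}{2\pi i}\oint_\gamma \frac{f(z)}{P_\bfz(z)}\,dz,
\]
and more generally, double contour integrals over $\gamma$ and $\bar\gamma$ against kernels built from $P_\bfz$ and $P_{\bar\bfz}$ reproduce the combinations $M^{-1}\psi\otimes\psi^*$ appearing in \eqref{Dn2}--\eqref{Dn-state2}. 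On these contours the denominators $P_\bfz(z) P_{\bar\bfz}(\bar z)$ are bounded away from zero, and the integrand is jointly analytic in $(u,\bbeta,\bfs)$; hence the representation extends smoothly to the diagonal and depends analytically on the symmetric polynomials $\bfs$, uniformly for $\bfz \in K^N$.

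Combining the two steps, $\Dress$ is a composition of smooth maps, which proves Part (a); uniformity on compact subsets in $\bbeta$ is immediate from the uniform bound on $e^{\pm i\bbeta(z)}$ for $z \in \gamma$ and $\bbeta$ in a compact set. For Part (b), the remaining obstacle is that $e^{\pm i\bbeta(z)}$ grows exponentially on $\gamma$ as $\|\bbeta\| \to \infty$. Since the \Backlund transform at each spectral point is invariant under multiplication of $\psi(\cdot,z)$ by a nonvanishing holomorphic scalar, I would split $\gamma$ into finitely many arcs on which $\sgn\,\im\bbeta(z)$ is constant and, on each arc, extract the larger of the two factors $e^{\pm i\bbeta(z)}$ as an overall scalar. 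This produces a renormalized holomorphic family $\tilde\psi$ which is uniformly bounded on $\gamma$ and depends uniformly smoothly on $(u,\bbeta)$ for $u$ in a bounded subset of $H^s$ and $\bbeta \in \R^{2N}$; substituting $\tilde\psi$ into the contour integral representation yields the uniform smoothness claimed in Part (b). The main obstacle throughout is the confluence of eigenvalues: the naive formulas of Theorem~\ref{t:dress2} are singular at the diagonal, and showing that these singularities are removable relies on the commutation relation \eqref{commuting}, which guarantees full symmetry in all indices (hence dependence only through $\bfs$), combined with the contour-integral rewriting that replaces pointwise evaluations by holomorphic integrals against $1/(P_\bfz P_{\bar\bfz})$.
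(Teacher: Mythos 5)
Your first step (constructing the holomorphic family $\psi(x,z;u,\bbeta)$ via Lemma~\ref{l:all-alpha} and Lemma~\ref{hsjost}) matches the paper's setup. The genuine gap is in your second step, which is precisely where the difficulty of the theorem lives. The residue identity $\sum_j f(z_j)/P'_{\bfz}(z_j) = \frac{1}{2\pi i}\oint_\gamma f(z)/P_{\bfz}(z)\,dz$ handles quantities that are \emph{linear} in the Hermite interpolation data of a holomorphic function, and for those the smooth dependence on $\bfs$ is indeed immediate. But the Bäcklund output $v-u = 2\sum_{j,k}m_{kj}\,\psi_{j}^1\bar\psi_{k}^2$ involves the \emph{inverse} of the Gram matrix $M_{jk} = i\psi^*(z_k)\psi(z_j)/(\bar z_k - z_j)$, which is a rational, not linear, function of that data. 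As $z_j \to z_k$ the rows of $M$ coalesce, $\det M \to 0$, and $M^{-1}$ blows up; the assertion that ``double contour integrals over $\gamma$ and $\bar\gamma$ against kernels built from $P_{\bfz}$ and $P_{\bar{\bfz}}$ reproduce the combinations $M^{-1}\psi\otimes\psi^*$'' is exactly the statement that this singularity is removable, and you have not proved it. The paper's resolution (Lemma~\ref{l:zhol}) writes $\psi_1^* M^{-1}\psi_2 = G/H$ where both the cofactor combination $G$ and $\det M = H$ carry the same antisymmetry in $\bfz$ and, separately, in $\bar\bfz$, hence both factor through the Vandermonde products $\prod_{j\ne k}(z_j-z_k)\prod_{j\ne k}(\bar z_j - \bar z_k)$; after cancellation the quotient is symmetric and holomorphic in the separated variables, hence a function of $\bfs$, and the denominator is bounded below via positivity of the Gram matrix. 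No contour integral produces this cancellation for you; you would have to reconstruct essentially this argument (or the paper's alternative route: analyticity of the iterated construction across the diagonal via the divided-difference propagation of Lemma~\ref{l:hol-B}, plus symmetry, plus Lemma~\ref{elementarysymmetric}).

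Two further points. For part (b), factoring the dominant exponential out of $\psi = e^{-i\bbeta(z)}\psi_l + e^{i\bbeta(z)}\psi_r$ arcwise leaves a \emph{sum} of two terms of very different sizes inside the Gram matrix, and the paper has to split $M = M^1 + M^2$ according to the two components, use $m \le m^1$, $m \le m^2$ as quadratic forms, and solve a Wronskian system to decompose the test vectors; your single scalar renormalization does not by itself control the derivatives in $\bbeta$ and $u$ uniformly as $|\bbeta|\to\infty$. Finally, your proposal only produces pointwise (or locally uniform) regularity of $x \mapsto v(x)-u(x)$; the claim that the map lands smoothly in $H^s(\R)$ requires the $\ell^2$ summation over unit intervals and the comparison with a shifted pure soliton $Q_{\bfz,\tilde\bbeta(x)}$ carried out in the paper, and this step is absent from your argument.
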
 

This result provides the proper context  to study  the pure $N$-soliton set $\M_U^N$,
which, by Proposition~\ref{p:trace},  can be described as
\begin{equation}
\label{pure-N}
\M_U^N = \Dress(0,\Phase_U^N).
\end{equation}
For this set we will prove Theorem~\ref{uniformSol}, which we restate 
for convenience in the following:

\begin{theorem}\label{t:pure}  Let $ s > -\frac12$, $U$ and $N$ as
  above. Then the pure $N$ soliton set $\M_U^N$ is a uniformly smooth $4N$ dimensional
  Riemannian submanifold of $H^s$. 
\end{theorem}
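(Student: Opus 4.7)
The plan is to exhibit $\M_U^N$ as the embedded image of a uniformly smooth immersion from the $4N$-dimensional phase space $\Phase_U^N$. By \eqref{pure-N}, the natural parametrization is
\[
\Psi : \Phase_U^N \to H^s, \qquad \Psi(\bfz, \bbeta) := \Dress(0, \bfz, \bbeta),
\]
whose image is $\M_U^N$. Smoothness of $\Psi$ and uniform bounds on all its derivatives in $\bbeta$ globally, with $\bfz$ restricted to a compact subset $K^N \subset U^N$, follow from Theorem~\ref{t:dress-reg}(b) applied with the fixed base point $u=0$; uniform regularity in $\bfz$ is provided by part (a) of the same theorem, combined with compactness of $K^N$. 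Injectivity of $\Psi$ modulo the discrete equivalence relation $A$ is provided by Proposition~\ref{p:trace}.

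Next I would verify that $d\Psi$ is an immersion of rank $4N$, uniformly. I split the $4N$ real directions of $T\Phase_U^N$ into two groups: the $2N$ coefficients $\beta_0, \ldots, \beta_{2N-1}$ of the polynomial $\bbeta$, and the $2N$ real coordinates coming from the elementary symmetric polynomials $\bfs$ in $\bfz$. By the commutation of $\Dress$ with every flow of the NLS hierarchy, together with the flow law \eqref{phase-flow}, differentiation in $\beta_j$ produces the Hamiltonian vector field of $H_j$ evaluated at $\Psi(\bfz,\bbeta)$, up to a dyadic scaling. Its pairing with $\partial_{s_k}\Psi$ via the ambient symplectic form
\[
\omega(v_1, v_2) = 2\im \int v_1 \overline{v_2}\, dx
\]
therefore reduces, by the definition of a Hamiltonian vector field, to $\partial_{s_k}(H_j\circ \Psi)$. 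The trace formula \eqref{trace} forces $H_j\circ\Psi$ to depend only on $\bfs$ through an explicit holomorphic expression in the $z_i$, whose Jacobian with respect to $\bfs$ is of Vandermonde type and is bounded below uniformly for $\bfz \in K^N$. This shows that the $\omega$-pairing between $\bbeta$- and $\bfs$-tangent directions is uniformly nondegenerate, so in particular the $4N$ tangent vectors are linearly independent with a uniformly bounded pseudo-inverse.

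The \emph{main obstacle} is obtaining this uniform lower bound in the regime $|\bbeta| \to \infty$, where the smooth parametrization of the phase space ceases to be uniform. Large polynomial coefficients correspond to solitons widely separated in physical space or acquiring large phase/velocity parameters. The structural analysis in Section~\ref{s:structure} identifies such multisolitons as exponentially close to superpositions of well-separated lower-rank pure multisolitons; correspondingly the tangent vectors $\partial_{\beta_j}\Psi$ and $\partial_{s_k}\Psi$ split approximately into contributions localized on the separate bumps, making the symplectic Gram matrix approximately block diagonal. On each such block the parametrization is by fewer $(\bfz, \bbeta)$ parameters with the corresponding $\bbeta$-components bounded, and on the single-bump case $\M_1$ is manifestly smooth. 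The base case of a bounded-$\bbeta$ regime is handled by compactness together with the uniform smoothness of $\Psi$ from Theorem~\ref{t:dress-reg}(b), and an induction on $N$ completes the estimate.

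With the uniform immersion property and injectivity modulo $A$ in place, and using the energy identity $E_s \circ \Psi = 2\sum_k m_k \Xi_s(2z_k)$ from \eqref{trace} to control $\|\Psi(\bfz,\bbeta)\|_{H^s}$ uniformly in $\bbeta$ (ruling out accumulation in the image as $|\bbeta|\to\infty$), the map $\Psi$ descends to a uniform smooth embedding from $\Phase_U^N/A$ onto $\M_U^N$. This realizes $\M_U^N$ as a uniformly smooth $4N$-dimensional submanifold of $H^s$, with the Riemannian structure induced from the ambient $H^s$ inner product and the symplectic structure induced from $\omega$.
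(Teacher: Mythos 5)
Your proposal takes a genuinely different route from the paper. You try to realize $\M_U^N$ as the image of a uniform immersion $\Psi(\bfz,\bbeta)=\Dress(0,\bfz,\bbeta)$ and to verify nondegeneracy of $d\Psi$ by pairing the $\bbeta$-directions (Hamiltonian vector fields of the first $2N$ flows) against the $\bfs$-directions through the ambient symplectic form and the trace formula. The paper never attempts to make this parametrization uniform. Instead it introduces the residual energy $F(v)=E_0(v)-2\sum_k \im z_k$ on $\V_U^N$, shows that $\M_U^N=\{v\in\V_U^N:\, DF(v)=0\}$, proves that $D^2F$ is uniformly positive definite on the range of $D_u\Dress(0,\bfz,\bbeta)$ — a closed subspace of codimension $4N$ — and concludes by the implicit function theorem. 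The decisive advantage is that $F$ is an intrinsically defined, uniformly smooth functional on $H^s$ built from the conserved energies, so its uniformity is immune to the degeneration of the $(\bfz,\bbeta)$ chart.

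The gap in your argument is precisely the uniformity as $|\bbeta|\to\infty$, and your proposed repair is not available. The paper states that the $(\bfz,\bbeta)$ parametrization is \emph{not} uniform, and Theorem~\ref{t:dress-reg} only gives global-in-$\bbeta$ uniform bounds for the $u$- and $\bbeta$-derivatives (part (b)); the $\bfs$-derivatives are controlled only for $\bbeta$ in compact sets (part (a)). Indeed $\partial_{\bfs}\Psi$ grows linearly in $|\bbeta|$ (perturbing an eigenvalue translates a distant bump by an amount proportional to $\bbeta'(z_j)$), and the induced metric on the parameters is logarithmically distorted, cf.\ \eqref{good-metric} in the two-soliton case study; so the ``uniformly bounded pseudo-inverse'' you assert does not exist in these coordinates. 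Your fallback — decomposing into well-separated bumps and inducting on $N$ — runs into exactly the open problems of Sections~\ref{s:structure} and~\ref{s:dress+}: the paper explicitly notes that the correspondence $\Gamma$ between $\M^N$ and $\prod\M^{N_j}$ is not known to be uniformly smooth, because that would require uniform regularity of the soliton \emph{removal} map, which is left as a conjecture. So this route cannot be closed with the tools established in the paper.

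Two smaller points. First, nondegeneracy of the off-diagonal block $\omega(\partial_{\beta_j}\Psi,\partial_{s_k}\Psi)$ by itself does not yield linear independence of all $4N$ tangent vectors; you also need that the $\bbeta$-directions span an isotropic subspace (true, since $H_j\circ\Psi$ depends only on $\bfz$), and even then you only get qualitative independence — a quantitative lower bound would additionally require uniform \emph{upper} bounds on the tangent vectors, which fail for the $\bfs$-directions. Second, even granting a (non-uniform) injective immersion, uniform smoothness of $\M_U^N$ as a submanifold requires charts of uniform size, which a degenerating parametrization does not provide; this is exactly what the implicit-function-theorem formulation via $\{DF=0\}$ delivers.
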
   

The pure $N$ solitons belong to all $H^s$ spaces, and using a different $s$ 
will yield an equivalent Riemannian structure in the metric sense. By construction
there exists a smooth natural  diffeomorphism $\Phase_U^N \to \M^N_U$,
which commutes with the first $2N$ flows. This gives $\Phase_U^N$ a smooth
Riemannian structure.

It is also natural to consider the foliation of $\M^N_U$ relative to the spectral parameter 
$\bfz$. It is not difficult to see that this is a smooth foliation, as we show later 
that $\bfz$ is a smooth nondegenerate function on the entire space $\V_U^N$.
We conjecture the following:

\begin{conjecture}
 The fibers $\M_{\bfz}$ provide a uniformly smooth foliation of $\M^N_U$.
\end{conjecture}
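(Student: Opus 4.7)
\textbf{Proof plan for Theorem \ref{t:pure}.} The plan is to realize $\M_U^N$ as the injective image of $\Phase_U^N$ under $\Dress(0,\cdot,\cdot)$, an identification supplied by \eqref{pure-N}, and to transfer the smoothness and uniformity properties of the soliton addition map onto the resulting submanifold. The $4N$-dimensional count then matches: $2N$ real dimensions come from the symmetric polynomials $\bfs(\bfz)$, and $2N$ real dimensions come from $\bbeta \in \R^{2N}$, with the discrete quotient by $A$ already built into $\Phase_U^N$.

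First I would establish local smoothness and the immersion property. Local smoothness at each $(\bfz,\bbeta) \in \Phase_U^N$ follows from Theorem \ref{t:dress-reg}(a) applied at $u=0$. Injectivity of the parametrization is provided by Proposition \ref{p:trace}. For injectivity of the differential I argue separately in the two groups of directions: the $\bbeta$-variations are generated by the $2N$ commuting flows $\Phi_0, \ldots, \Phi_{2N-1}$ through \eqref{phase-flow}, and their linear independence on a pure $N$-soliton is a consequence of the uniqueness in Lemma \ref{l:find-r-beta} and the explicit formula \eqref{Dn-state}; the $\bfz$-variations are transverse because the trace formula \eqref{trace} shows that the conserved energies $\{E_s(v)\}_{s>-1/2}$ determine $\bfz$ analytically from $v$, so any non-trivial $\bfz$-perturbation produces a non-trivial change in some $E_s$, which cannot be matched by a tangent vector to the $\bbeta$-flow orbits (these preserving $\bfz$).

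For uniform smoothness of the parametrization, I would apply Theorem \ref{t:dress-reg}(b) with $u=0$: since $\{0\}$ is a bounded subset of $H^s$ and $\bfz$ is constrained to a compact subset $K^N \subset U^N$, the theorem delivers uniform smoothness of $\Dress(0,\cdot,\cdot)$ on $K^N \times \R^{2N}/A$, globally in $\bbeta$. The induced Riemannian structure on $\M_U^N$ from the ambient $H^s$ then inherits these uniform bounds, once we have a uniform lower bound on the differential.

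The main obstacle is precisely producing that uniform lower bound, equivalently, a uniform transversality estimate as $\bbeta \to \infty$, where the solitons separate in physical space and the naive parametrization deteriorates. To overcome it I would invoke the structural description developed in Section~\ref{s:structure}: when the soliton centers split into spatially separated clusters at minimal distance $d$, the $N$-soliton is exponentially close (of order $e^{-cd}$) to a sum $\sum_j v^{(j)}$ of lower-rank multisolitons, one per cluster. The $\bbeta$-tangent vectors then decouple into sub-cluster tangent vectors up to $O(e^{-cd})$ corrections, and each sub-cluster differential is uniformly non-degenerate by induction on $N$ together with Theorem~\ref{t:dress-reg}(b) applied to the cluster. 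Combining this with the analytic reconstruction of $\bfz$ from the conserved energies gives the required uniform lower bound on the full differential, and thus completes the passage from uniform smoothness of the parametrization to uniform smoothness of the submanifold $\M_U^N$.
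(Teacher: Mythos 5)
There is a fundamental mismatch here: the statement you were asked to prove is the \emph{conjecture} that the fibers $\M_{\bfz}$ (fixed spectral parameters) form a uniformly smooth foliation of $\M^N_U$, but your proposal is explicitly a proof plan for Theorem~\ref{t:pure}, the uniform smoothness of the total space $\M^N_U$. These are different assertions, and the paper itself proves the latter while deliberately leaving the former open as a conjecture. Establishing that $\M^N_U$ is a uniformly smooth $4N$-dimensional submanifold does not address the conjecture at all: the conjecture concerns how that manifold decomposes into the $2N$-dimensional leaves $\M_{\bfz}$, and in particular whether this decomposition remains uniformly smooth as eigenvalues collide. That is precisely where the difficulty lies --- the paper points out that the topology of the fiber changes with multiplicity (circle factors degenerate into lines as $z_j \to z_k$), so that the flow-map parametrization of a single fiber cannot be uniform near the diagonal. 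Your argument never engages with the fiber structure: knowing that $\bfz$ (via the symmetric polynomials) is a uniformly smooth nondegenerate function on $\V^N_U$ gives a smooth foliation, but uniformity of the foliation requires uniformly smooth local trivializations of the leaves, which is exactly the open point.

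Separately, even judged as a proof of Theorem~\ref{t:pure}, your route has a gap that the paper's own argument is designed to avoid. You propose to transfer uniform smoothness from the parametrization $\Phase^N_U \ni (\bfz,\bbeta) \mapsto \Dress(0,\bfz,\bbeta)$ together with a uniform lower bound on its differential obtained from the cluster decomposition of Section~\ref{s:structure}. But the paper states explicitly that this parametrization is \emph{not} uniform as $\bbeta \to \infty$ (the Euclidean metric on $\bbeta$ does not control the soliton separation, which grows only logarithmically in $\bbeta$), so no uniform two-sided bound on the differential in the $\bbeta$-coordinates is available. The paper circumvents this entirely by characterizing $\M^N_U$ intrinsically as $\{v \in \V^N_U : DF(v)=0\}$ for the soliton-free energy $F(v) = E_0(v) - 2\sum_k \im z_k$, proving uniform nondegeneracy of $D^2F$ transversally via the trace formula, and applying the implicit function theorem; this gives uniform smoothness of the manifold without needing the parametrization to be uniform. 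Your cluster-decomposition idea is a reasonable heuristic for why the leaves should decouple at large separation, but it would need to be converted into an intrinsic (coordinate-free) nondegeneracy statement of the paper's type to close the argument, and even then it would establish only Theorem~\ref{t:pure}, not the foliation conjecture.
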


The global structure of $\M_{\bfz}$ depends on the multiplicities of
the spectrum. If all eigenvalues are simple then it is diffeomorphic
to $ (\R \times \mathbf{S}^1)^N$.  Moreover the induced
diffeomorphisms obtained by choosing a function in ${\M}_{\bfz}$ and flowing 
it with the first $2N$ flows are uniformly
smooth.

The topology of the fiber is different if there are multiplicities:
Some of the $S^1$ components became real lines as the spectral
parameters approach multiplicities. As a consequence the smooth
structure defined by the flow maps cannot give a uniformly smooth
parametrization as we approach multiple eigenvalues.

All the complications above occur already for the case of two solitons, which  we will study the two
soliton case in depth in Section \ref{doubleeigenvalue}. The two
soliton manifold with a double eigenvalue $z_0$ is diffeomorphic to
\[ \R^3 \times \mathbb{S}^1. \] 
For instance if $n = 2$, $z_0=i$ and $|\beta_2|+|\beta_3| \gg 1$  then the   soliton distance is about
\[
R+i \theta \approx \log(-2(\beta_2+i\beta_3)).
\] 
Here $R$ denotes the distance between bump locations, and $\theta$ is
the phase shift between the two bumps. Hence $(\beta_2,\beta_3)$ with the Euclidean topology 
cannot uniformly describe the soliton distance. 

\bigskip
We continue with the properties of soliton removal  map:

\begin{theorem} \label{t:undress-reg} a) The soliton removal  map
\[  
\V_U^N \ni v \to  \Undress(v) = (u, \bfz,\bbeta) \in \V_U^0  \times \Phase^N_U 
\]
of removing $N$ solitons is smooth, uniformly on compact sets in
$\bbeta$ and $\bfz$ in a compact subset of $U^N$.  The map commutes with the flows in the same sense as for
the soliton addition map.
\end{theorem}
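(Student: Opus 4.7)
The plan is to deduce this from Theorem~\ref{t:dress-reg} and Proposition~\ref{p:trace} by the inverse function theorem. By Proposition~\ref{p:trace}, $\Dress$ is a bijection from $\V_U^0 \times \Phase^N_U$ onto $\V_U^N$ with two sided inverse $\Undress$, and by Theorem~\ref{t:dress-reg}, $\Dress$ is smooth uniformly on compact subsets of $\bbeta$ for $\bfz$ in a compact subset of $U^N$. The entire statement therefore reduces to showing that $d\Dress$ admits a bounded inverse at every point, uniformly on these same compact parameter sets; the flow commutation for $\Undress$ is then automatic from that of $\Dress$, since applying $\Undress$ to both sides of $\Dress(\Phi_n u, \Phi_n(\bfz,\bbeta)) = \Phi_n \Dress(u,\bfz,\bbeta)$ gives the analogous identity for $\Undress$.

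The heart of the argument is the construction of the inverse of $d\Dress$. The source tangent space at $(u,\bfz,\bbeta)$ decomposes as $H^s \oplus T_\bfs\C^N_{sym} \oplus \R^{2N}$, with the middle factor a copy of $\C^N$; the target tangent space at $v=\Dress(u,\bfz,\bbeta)$ is $H^s$, since $\V_U^N$ is open in $H^s$ once $\bfz$ is restricted to a compact subset of $U$. Given $\dot v \in H^s$, I would recover $(\dot u, \dot \bfs, \dot\bbeta)$ in three steps. First, by the argument principle the power sums of the eigenvalues are expressed as contour integrals
\[
p_k(v) := \sum_{j=1}^N z_j^k = \frac{1}{2\pi i}\oint_{\partial U} z^k\, \partial_z \log T^{-1}(v,z)\, dz,
\]
which are smooth in $v$ by Lemma~\ref{hsjost}; since $\bfs$ is recovered from $(p_1,\ldots,p_N)$ by Newton's identities, $\dot\bfs$ is read off from $\dot v$ smoothly. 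Second, $\dot\bbeta$ is recovered from the variation of the Taylor data $(\partial_z^j \alpha(z_k))_{0 \leq j < n_k}$ of the unique holomorphic family of unbounded wave functions attached to $(v,\bfz,\bbeta)$ via Lemma~\ref{l:find-alpha}; these Taylor coefficients are expressed through Jost functions and their $z$-derivatives at $\bfz$ and depend smoothly on $v$, again by Lemma~\ref{hsjost}. Third, $\dot u$ is read off as the residual by differentiating the explicit iterated \Backlund formula \eqref{Dn-state+} in $(v,\bfs,\bbeta)$ and substituting the already determined $(\dot\bfs,\dot\bbeta)$.

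Each of the three steps is smooth and uniformly bounded on the stated compact parameter sets, yielding a bounded one sided inverse for $d\Dress$; combined with the surjectivity from Proposition~\ref{p:trace} this gives a two sided bounded inverse, and the inverse function theorem delivers smoothness of $\Undress$ with the claimed uniformity. The main obstacle is the case of multiple eigenvalues, where individual $z_j$'s are not smooth functions of $v$ near the diagonal and a naive per-eigenvalue extraction would break down. The residue/power sum formulation in step one is precisely what makes smoothness in $\bfs \in \C^N_{sym}$ manifest and uniform through coalescence; the polynomial parametrization of $\bbeta$ modulo $P_\bfz P_{\bar\bfz}$ together with the discrete equivalence $A$ (Lemma~\ref{l:find-r-beta}) plays the analogous role in step two.
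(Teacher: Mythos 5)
Your overall route (inverse function theorem applied to the smooth bijection $\Dress$) differs in packaging from the paper, which constructs $\Undress$ directly: it recovers $\bfs$ from contour integrals of $\partial_z T^{-1}/T^{-1}$ (Lemma~\ref{zeroes1}), recovers $\bbeta$ from the ratio of the left and right Jost functions at a well-chosen point combined with the Chinese remainder reduction modulo $P_\bfz P_{\bar\bfz}$, and then recovers $u$ by running the removal \Backlund transform, with the multiple-eigenvalue case obtained as a limit via the key regularity Lemma~\ref{l:zhol}. Your steps one and two are in substance the same as the paper's first two steps, and they are fine.

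The genuine gap is in your third step, which is exactly where the paper has to work hardest. The formula \eqref{Dn-state+} reads $u = v - M_{jk}\phi^1_k\bar\phi^2_j$ with $M = m^{-1}$ and $m_{jk} = i\phi_k^*\phi_j/(\bar z_j - z_k)$ built from the individual eigenfunctions. As eigenvalues coalesce the eigenfunctions become collinear, $m$ degenerates, $M$ blows up, and the individual $\phi_j$ are themselves not smooth functions of $v$ near the diagonal. So ``differentiating the explicit iterated \Backlund formula \eqref{Dn-state+}'' does not produce a bounded linear map on the compact sets in question: the whole point of the paper is that only the assembled combination extends smoothly through the diagonal, and this is precisely the content of Lemma~\ref{l:zhol} (applied to the holomorphic family of unbounded wave functions, after separating $\bfz$ from $\bar\bfz$ and exploiting the antisymmetry of numerator and denominator). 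You invoke the analogues of this mechanism for steps one and two but are silent about it for step three, which is where it is indispensable. If instead you intend to recover $\dot u$ by inverting $D_u\Dress$ applied to $\dot v - D_\bfs\Dress\,\dot\bfs - D_\bbeta\Dress\,\dot\bbeta$, you need a uniform lower bound for $D_u\Dress$ at general $u \in \V^0_U$; the paper only establishes such a bound at $u=0$ (estimate \eqref{lowerbound}) and in an $\varepsilon$-neighborhood of $\M_N$ (Theorem~\ref{t:ureg}), so this too is not available without further argument. A secondary omission: the passage from pointwise/local bounds to uniform $H^s$ bounds (square-summability over unit intervals and the behavior near $x = \pm\infty$), which the paper treats explicitly, is asserted but not addressed.
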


As a corollary of these two results, we have:

\begin{theorem}  \label{t:diffeomorphism} The soliton addition  map
\[  
\Dress: \V_U^0 \times \Phase^N_U  \to \V_U^N 
\]
is a local diffeomorphism with respect to the smooth structure of $H^s$ for all $s > -\frac12$.
\end{theorem}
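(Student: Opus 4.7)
The plan is to combine Theorems \ref{t:dress-reg} and \ref{t:undress-reg} with the inversion identity established in Proposition \ref{p:trace}, reducing the theorem to routine bookkeeping once these are in hand. First I would verify that $\V_U^0$ and $\V_U^N$ are open subsets of $H^s$: by Lemma \ref{hsjost} the map $u \mapsto T^{-1}(z,u)$ is analytic on $H^s$ for $z$ in a compact subdomain of the upper half-plane, so provided $\partial U$ avoids the eigenvalues of $\mathcal L(u_0)$ for a reference $u_0$, the argument principle
\[
  u \mapsto \frac{1}{2\pi i} \oint_{\partial U} \frac{\partial_z T^{-1}(z,u)}{T^{-1}(z,u)}\, dz \in \Z_{\ge 0}
\]
is locally constant and counts the number of eigenvalues of $\mathcal L(u)$ inside $U$ with multiplicity. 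Hence both $\V_U^0$ and $\V_U^N$ are open in $H^s$, and inherit the smooth structure of $H^s$.

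Second, Theorem \ref{t:dress-reg} gives smoothness of
\[
  \Dress : \V_U^0 \times \Phase^N_U \to H^s,
\]
and by the construction via iterated \Backlund transforms the image lies in $\V_U^N$. Theorem \ref{t:undress-reg} gives smoothness of the inverse direction $\Undress : \V_U^N \to \V_U^0 \times \Phase^N_U$. Proposition \ref{p:trace}, together with the holomorphic propagation of families of unbounded wave functions through the \Backlund transform developed in Lemmas \ref{l:hol-B} and \ref{l:hol-B-re}, shows that $\Dress$ and $\Undress$ are two-sided inverses of one another, so $\Undress \circ \Dress = \Id$ and $\Dress \circ \Undress = \Id$. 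Differentiating these identities and applying the chain rule shows that at every point the differential $d\Dress$ is an isomorphism with inverse $d\Undress$, which yields the local diffeomorphism conclusion.

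The main subtlety (rather than obstacle) is to match the analytic structure on $\Phase^N_U$, defined via the elementary symmetric polynomials $\bfs$ in the spectral parameters together with the polynomial coefficients $\bbeta$ and the discrete identification $A$, to the $H^s$ structure on $\V_U^N$. Since $A$ is a discrete equivalence relation which is locally trivial on compacts in $U^N$, the quotient $\Phase^N_U$ is a genuine analytic manifold of real dimension $4N$ near any point, and smoothness of both $\Dress$ and $\Undress$ with respect to these structures has already been established. Consequently the two smooth structures on $\V_U^N$, the one transferred from $\Phase^N_U$ via $\Dress$ and the intrinsic one from $H^s$, are compatible, completing the proof.
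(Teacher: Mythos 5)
Your proposal is correct and follows essentially the same route as the paper, which presents this theorem precisely as a corollary of Theorems \ref{t:dress-reg} and \ref{t:undress-reg} together with the inverse relationship established via Proposition \ref{p:trace}. The additional bookkeeping you supply (openness of $\V_U^0$ and $\V_U^N$ via the argument principle for $T^{-1}$, and the chain-rule argument showing $d\Dress$ is invertible) is exactly the implicit content the paper leaves to the reader.
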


A natural question to ask here is whether the soliton addition and removal maps are 
uniformly smooth globally, for $u$ restricted to a bounded set. For this to be meaningful, 
one has to use the phase space $\Phase_U^N$ endowed with the Riemannian metric
induced from the pure $N$-soliton manifold $\M_U^N$. We conjecture the following:

\begin{conjecture}
Identifying $\Phase_U^N$ and $\M_U^N$, the soliton addition and removal maps are  uniformly smooth globally, 
\[
\Dress: \V_U^0 \times \M_U^N \to \V_U^N, \qquad \Undress: \V_U^N \to 
\V_U^0 \times \M_U^N
\]
for $u$ restricted to a bounded set in $H^s$ and the spectral parameters $\bfz$ restricted to a compact subset of $U$.
\end{conjecture}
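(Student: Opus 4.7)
The plan is to reduce the global uniformity conjecture to the local uniformity already established in Theorems~\ref{t:dress-reg} and \ref{t:undress-reg} by exploiting the promised ``sum of lower rank multisolitons'' description of well-separated multisolitons from Section~\ref{s:structure}. The key point is that the $(\bfz,\bbeta)$ coordinates on $\Phase^N_U$ are not uniform precisely when solitons separate, so one must work directly in the Riemannian structure of $\M^N_U$ (pulled back via $\Dress(0,\cdot)$) and decouple the problem into cluster contributions that each live in a bounded region of their own phase space.

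First, I would set up the cluster decomposition. Given a point $\Dress(0,\bfz,\bbeta) \in \M^N_U$ in the large-$\bbeta$ regime, the structure theorem of Section~\ref{s:structure} should provide, for any threshold $R\gg 1$, a partition $\{1,\dots,N\} = \bigsqcup_{j} I_j$ with translations $x_j\in\R$, polynomials $\bbeta^{(j)}$ representing bounded scattering parameters, and lower-rank multisolitons $Q^{(j)} = \Dress(0,\bfz^{(j)}, \bbeta^{(j)})(\cdot-x_j)$ with $|x_j-x_k|\gtrsim R$ for $j\ne k$, such that $\Dress(0,\bfz,\bbeta) = \sum_j Q^{(j)} + O(e^{-cR})$ in every $H^s$, $s>-\tfrac12$. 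This decomposition should be uniformly smooth in the local cluster parameters $(\bfz^{(j)},\bbeta^{(j)})$ and $x_j$, because the Riemannian metric on $\M^N_U$ splits, up to an exponentially small term, as an orthogonal sum of the Riemannian metrics on the cluster manifolds $\M^{|I_j|}_U$ and the translation factors.

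Second, I would lift this cluster decomposition to the soliton addition map. The iterated \Backlund transform \eqref{iteration} is built from the Gramian matrix $M$ in Lemma~\ref{posdef}, whose entries $M_{jk}= i\psi_k^*\psi_j/(\bar z_k - z_j)$ factor between clusters because the wave functions $\psi_j$ localized near $x_j$ are exponentially small near $x_k$. Consequently the block form of $M$ is nearly block-diagonal, $m=M^{-1}$ is uniformly close to a block-diagonal inverse, and the addition formula \eqref{Dn-state} splits as $\Dress(u,\bfz,\bbeta) = u + \sum_j (\Dress(u,\bfz^{(j)},\bbeta^{(j)})(\cdot-x_j) - u(\cdot-x_j)) + O(e^{-cR})$. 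Inside each cluster the parameters are bounded, so Theorem~\ref{t:dress-reg} gives uniform smooth dependence; summing the clusters with the exponential separation as a convergence factor produces the global uniform estimate. The smoothness of $\Undress$ then follows from the inverse function theorem, using a global version of the uniform transversality in Theorem~\ref{t:ureg}, together with the fact that the soliton-count map $v\mapsto \bfz$ is already known to be uniformly smooth.

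The main obstacle, and the reason this remains a conjecture, is the \emph{transition regime} where clusters are in the process of merging or splitting. Near such configurations a small perturbation can change the partition $\{I_j\}$, the cluster parameters $\bbeta^{(j)}$ diverge in a coordinated way, and the linear change of variables between $\bbeta$ and the natural Riemannian coordinates on $\M^N_U$ becomes singular. Controlling this transition requires multi-scale bounds on the Jost functions and on $\det M$ that are uniform across all merging scenarios and across the diagonals $z_j=z_k$ simultaneously; the partial results in Theorem~\ref{t:dress-reg}\,(a) and (b) capture only the fixed-cluster and fixed-$(u,\bbeta)$ regimes and fall short of this joint uniformity. A successful attack would likely combine the cluster decomposition above with a direct analysis of the soliton addition map along analytic families of wave functions (Lemma~\ref{l:hol-B}), using the holomorphic extension in the spectral variable to absorb the blow-up of $\bbeta$ into a uniformly controlled motion along the flows \eqref{phase-flow}.
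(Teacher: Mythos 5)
The statement you are addressing is stated in the paper as an open \emph{conjecture}; the paper offers no proof of it, and indeed explains why its own methods fall short (the $(\bfz,\bbeta)$ parametrization degenerates as $\bbeta\to\infty$, i.e.\ as solitons separate). Your text is therefore not a proof to be checked against the paper's argument but a strategy outline, and you say so yourself in your final paragraph: the ``main obstacle'' you describe is exactly the content of the conjecture. What you have written is consistent with the paper's partial results --- the cluster decomposition of Theorem~\ref{t:struct}, the local uniformity of Theorems~\ref{t:dress-reg} and~\ref{t:undress-reg}, and the closeness of $\M^N$ to $\sum \M^{N_j}$ as smooth manifolds --- but it does not close the argument, and the paper explicitly warns at the end of Section~\ref{s:structure} that the $O(e^{-cR})$ closeness of the two manifolds does \emph{not} imply that the gluing maps $\Gamma$, $\tilde\Gamma$ are uniformly smooth, precisely because a uniform regularity statement for the soliton removal map is missing.

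Concretely, the unjustified steps in your sketch are the ones that carry all the difficulty. (i) The assertion that the Gramian $M$ is ``nearly block-diagonal'' with a uniformly controlled inverse across \emph{all} merging and splitting scenarios, simultaneously with the diagonals $z_j=z_k$, is not a consequence of Lemma~\ref{l:zhol} or Lemma~\ref{l:trace}; the wave functions $\psi_j$ are unbounded at both ends, so the off-diagonal entries are not simply exponentially small, and the required lower bound on $\det M$ uniform in the separation parameter $R$ and in $|z_j-z_k|\to 0$ is exactly the missing multi-scale estimate. (ii) The decomposition $\Dress(u,\bfz,\bbeta)=u+\sum_j(\cdots)+O(e^{-cR})$ at the level of the \emph{map} (not just of the pure solitons) would already be a uniform regularity statement of the kind being conjectured. (iii) Your appeal to ``a global version of the uniform transversality in Theorem~\ref{t:ureg}'' to invert the addition map is circular: that theorem is only proved in an $\varepsilon$-neighborhood of $\M_N$, and its global extension is not available. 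So the proposal correctly identifies the shape of a possible attack and the obstruction, but it does not constitute a proof, and no proof exists in the paper to compare it with.
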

\bigskip

The remainder of this section contains  proofs of these results, after a preliminary discussion of symmetric functions.

\subsection{Symmetric functions and elementary symmetric polynomials}

\begin{lemma}\label{elementarysymmetric}  Let $U \subset \C$ and $V \subset X$ be open,
where $X$ is a Banach space. Let
$f: U^N\times V\to \C$ be a continuous ($C^k$, $C^\infty$, analytic) function, so that for every $ x\in V$
  \begin{enumerate}
  \item $U^N \ni \bfz \to f(\bfz,x) $ is invariant under permutations.
  \item $U^N \ni \bfz \to p(\bfz,x)$ is holomorphic.
  \end{enumerate}
  Let $ \bfs(U^N) $ be the (open) range under the map to the first $N$ elementary symmetric functions. Then there exists a continuous ($C^k$, $C^\infty$, analytic) function $\tilde f:\bfs(U^N) \times V \to \C$ so that for every $x \in X$
  \begin{enumerate}
  \item $\bfs(U^N) \ni \bfs \to \tilde f(\bfs,x) $ is holomorphic.
  \item $ \tilde f( \bfs(\bfz)) = f(\bfz,x) $.
  \end{enumerate}
  \end{lemma}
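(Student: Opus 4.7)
The plan is to define $\tilde f(\bfs, x)$ as the common value $f(\bfz, x)$ for any preimage $\bfz$ with $\bfs(\bfz) = \bfs$; this is well-defined by the permutation invariance of $f$ in its first $N$ arguments. Thus the entire substance of the lemma lies in transferring the regularity from $f$ to $\tilde f$, which I would split into three independent pieces: joint continuity, holomorphicity in $\bfs$ for fixed $x$, and regularity in $x$ for fixed $\bfs$ (plus joint differentiability in the higher cases).

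For joint continuity, Lemma~\ref{l:z-vs-s}(C) is exactly the tool required: if $\bfs_n \to \bfs_0$ in $\bfs(U^N)$ and $x_n \to x_0$ in $V$, then preimages may be chosen so that $\bfz_n \to \bfz_0$, and continuity of $f$ gives $\tilde f(\bfs_n, x_n) = f(\bfz_n, x_n) \to f(\bfz_0, x_0) = \tilde f(\bfs_0, x_0)$. For holomorphicity in $\bfs$, let $\Delta \subset U^N$ denote the diagonal where $\bfz$ has repeated entries and $\Sigma = \bfs(\Delta) \subset \bfs(U^N)$ the associated discriminant hypersurface. Off $\Sigma$, the symmetric polynomial map is a local biholomorphism, so $\tilde f(\,\cdot\,, x)$ is manifestly holomorphic there. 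Continuity across $\Sigma$ combined with Riemann's removable singularity theorem for analytic subvarieties of complex codimension one then promotes this to holomorphicity on all of $\bfs(U^N)$.

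For the $x$-regularity, the key observation is that every $x$-partial derivative (or Fr\'echet derivative in the Banach setting) $D_x^\alpha f$ is again permutation-symmetric in $\bfz$, holomorphic in $\bfz$, and continuous jointly in $(\bfz, x)$. Applying the previous two steps to each $D_x^\alpha f$ produces continuous holomorphic-in-$\bfs$ descents $g_\alpha(\bfs, x)$. Using any continuous local section $\bfs \mapsto \bfz(\bfs)$ provided by Lemma~\ref{l:z-vs-s}(C), we have $\tilde f(\bfs, x) = f(\bfz(\bfs), x)$, and differentiating in $x$ identifies $g_\alpha$ with $D_x^\alpha \tilde f$, transferring the $C^k$ and $C^\infty$ regularity. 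I expect the analytic case to be the main obstacle: joint analyticity at a branch point $\bfs_0 \in \Sigma$ requires expanding $f$ as a joint power series around a preimage $\bfz_0$, symmetrizing in $\bfz$ after grouping the equal components of $\bfz_0$ according to their multiplicities, and then invoking the fundamental theorem of symmetric polynomials (applied block by block) to rewrite each symmetrized monomial as a convergent power series in $\bfs - \bfs_0$ and $x - x_0$ on a suitable polydisk.
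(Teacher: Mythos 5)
Your proposal is correct, and for the central step it takes a genuinely different route from the paper. The paper obtains holomorphy in $\bfs$ by hand: off the diagonal the map $\bfz \to \bfs$ is locally biholomorphic; at a fully degenerate point it expands $f$ in a Taylor series, writes the symmetric partial sums as polynomials $q_M(\bfs)$ via the fundamental theorem of symmetric polynomials, and passes to a uniform limit; and for clustered configurations it shows the cluster-wise elementary symmetric polynomials depend holomorphically on the global ones through contour integrals of $z^l f'/f$. You instead get the holomorphic descent in one stroke from joint continuity plus the Riemann extension theorem across the discriminant hypersurface $\Sigma$, which is cleaner, avoids verifying convergence of the $q_M$, and sidesteps the clustering reduction entirely; you are also more explicit than the paper about transferring the $x$-regularity, by descending each $D_x^\alpha f$ and identifying it with $D_x^\alpha \tilde f$ (the paper dismisses this with one sentence). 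The one loose end is in your analytic case, where you revert to essentially the paper's power-series symmetrization: after symmetrizing block by block you land on a power series in the \emph{block-wise} elementary symmetric polynomials, and you still need these to be analytic functions of the global $\bfs$ near $\bfs_0$ before "a convergent power series in $\bfs - \bfs_0$" is justified. That is exactly what the paper's contour-integral argument supplies, so it is a standard fix rather than a genuine gap, but it should be said.
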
 
  \begin{proof}
    Let $U^{N,0} \subset U^{N}$ be the set with pairwise disjoint complex numbers. The map
    \[ U^0 \ni \bfz \to \bfs \]
    is locally biholomorphic, in which case the claim is trivial. 

  Let $\mathbf z \in U^N$ be a point where all
  the variables are identical.  The Taylor series for $f$ at the special
  point converges uniformly in a neighborhood. The partial sums up to
  degree $M$ are symmetric polynomials $f_M$ which can be written as
  \[ 
  f_M= q_M( {\bf s}). 
  \]
  The $q_M$'s converge uniformly in a
  neighborhood of $\bfs(\bfz)$ since the same is true for the $f_M$. The limit is a
  holomorphic function $\tilde f$ in a neighborhood of $\bfs(\bfz)$.

  If the $z$'s  are grouped into separated clusters, we can do the same with the elementary   symmetric functions for the clusters:  Let $U_j \subset \C$ be open sets with compact pairwise disjoint closures.  Let $ \{z_n\}_{n\le N}$ be $N$ points in the union of the $U_j$ and $(s_j)_{1\le j \le N}$ be the elementary symmetric polynomials. Let $N_j$ be the number of $z$'s in $U_j$. We claim that the map
  \[ 
  (s_n) \to (s^j_m)_{1\le j \le J, 1\le m \le N_j} 
  \]
  is holomorphic.
 To see that we note that an integration over a suitable contour yields
\[ 
\lambda^l_{m} := \sum_{1\le m \le N_j} (z^j_m)^l    =  \frac1{2\pi i }\int_{\gamma}   z^l  \frac{ f'(z)}{f(z)}  dz   
\]  
where $\{s_n^l\}$ and $\{\lambda_n^l\}$ are algebraically diffeomorphic.

   The inverse is given by 
  \[
  \sum_{n=0}^N (-1)^n s_n z^{N-n}  = \prod_{m=1}^M \sum_{n=0}^{N_m} (-1)^n s^n_m z^{N_m-n}. 
  \]
These definitions immediately carry over to the setting with $X$.

  \end{proof}

\subsection{A first proof of Theorem \ref{t:dress-reg}(a) }

Let $ u$ be as in the theorem, $u_1=u$ and $ u_2 = \bar u$ and $
\bfz_0 \in U$.
 
By Lemma \ref{hsjost}  there exists a neighborhood $ V \in H^s(\R; \C^2)$
of  $\bfu$ and a neighborhood
\[ V_{\bfz}  =  \{ z :  |z -z_{j,0})| < \varepsilon \} \subset    U \] 
such that the map
\[ V \times V_{\bfz} \ni ( v_1,v_2, \bfz) \to (e^{ - \real z x } \psi_l ,  e^{\real z x} \psi_r ) \in (L^\infty \cap D H^s \times H^{s+1}) \times ( H^{s+1} \times L^\infty \cap D H^s) \]
is analytic with uniformly bounded derivatives.  It is an immediate
consequence that the iterated \Backlund transform is  analytic
in the spectral parameters $\bfz$, $\bbeta$ and holomorphic  in
$u$, uniformly for bounded $\bfu$, $\bfz$ and $\bbeta$ in compact sets.  

This is weaker than the statement of Theorem \ref{t:dress-reg} since we claim smoothness in the elementary symmetric polynomials. To see this let
\[ \psi_1(x,z_1)  = e^{-i \alpha(z_1) } \psi_l(x,z_1) + e^{i \alpha(z_1) } \psi_r(x,z_1) \]
and for $ z \in \bar V_{\bfz} $ 
\[ \psi_2(x,z_2) =  \left( \begin{matrix} \overline{ \psi_1^2(x,\bar z} \\ -\overline{ \psi_1^1(x,\bar z) }  \end{matrix} \right). \]

The iterated \Backlund transform is now analytic in $(u_1,u_2)$, $\bfz_1=  \bfz$ and $\bfz_2=  \overline{\bfz}$.  Fix $(u_1,u_2)$, $ \bfz_1$ and $ \bfz_2$ pairwise disjoint.  The set where the Wronskian vanishes, 
\[ \left\{ x:   W(\psi_1(x, z_{1,j} ), \psi_2(x,z_{2,k}) \ne 0 \right\}, \]
is the complement of a finite set. By Theorem \ref{t:dress2} the iterated \Backlund transform is symmetric under symmetric permutations of the $\bfz_1$ and $ \bfz_2$ separately for these values of $x$, and by continuity for all $x$.

By Lemma \ref{elementarysymmetric} the iterated \Backlund transform is a holomorphic function of the elementary symmetric polynomials separately in $ \bfz_1$ and $\bfz_2$\ - more precisely we have to remove a small neighborhood of the set where some Wronskian vanish. The derivatives with respect to the elementary symmetric polynomials are bounded by the derivatives with respect to $\bfz_1$ reps. $\bfz_2$, hence we obtain a smooth dependence on the elementary symmetric polynomials.

\subsection{ The key regularity lemma}

Here we turn our attention to a second proof of Theorem~\ref{t:dress-reg}, where we aim to provide a more algebraic argument. 
This proof depends on a result on holomorphic
functions, which is virtually independent from the problem at hand. 
Let $U \subset \{ z \in \C: \im z >0\} $ be
an open subset with compact closure in the open upper half plane and $
W \subset X$ open. We consider functions $ \psi: U\times W
\to \C^N$, holomorphic in both arguments and 
such that
\[ |\psi(z_1,w_1)| \le 2 |\psi(z_2,w_2)| \]
for $|z_1-z_2| \ll 1$ and $\Vert w_1-w_2 \Vert_X \ll  1$. 
Using the Cauchy integral we deduce that
\begin{equation}  |\partial^\gamma_{z,w} \psi(z,w)| \le C_{|\gamma|}(1+ d^{-|\gamma|} )|\psi(z,w)| \end{equation} 
where $d$ is the distance from $(z,w)$ to the complement of $U\times W$. 
Let $\delta>0$, $\phi_1, \phi_2 : U\times W \to \C $ holomorphic with
\begin{equation}  
|\phi_1(z,w)| \le \delta |\psi(z,w)|\qquad |\phi_2(z,w)|\le \delta |\psi(z,w)|. 
\end{equation} 
Let $U_0\subset U^N$ the subset of pairwise disjoint tuples and
$V_0 = {\mathbf{s}}(U_0)$ resp $V = {\mathbf{s}}(U)$.
We define the Hermitian matrix $M$ by 
\[
M_{jk} = \frac{i \psi^*(z_k,w) \psi(z_j,w)}{\bar z_k -z_j},
\]
and denote by $m$ its inverse. Then we define the function
$g : U_0 \times W \to \C$ by 
\begin{equation}  
g(\mathbf{s},w) = \sum_{j,k=1}^N \phi^*_1(z_j,w) m_{jk} \phi_2(z_k,w), \end{equation} 
where $\mathbf{s}$ denotes the elementary symmetric functions.

\begin{lemma}\label{l:zhol}
  The function $g$ has a unique analytic extension to $U^N \times W$. Moreover
\[ 
|\partial^\gamma_{{\mathbf{s}},w} g(\mathbf{s},w)| \le c_{N,|\gamma|}  \delta(1+d^{-N|\gamma|}).  
\]   
  \end{lemma}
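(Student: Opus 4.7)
The plan is to reduce Lemma \ref{l:zhol} to three steps: first, a uniform bound $|g| \le C_N\,\delta^2$ on the open set $U_0 \times W$ where the $z_j$ are pairwise distinct; second, exploitation of the permutation symmetry of $g$ so that it descends to an analytic function of the elementary symmetric polynomials $\mathbf{s}$; third, removal of the apparent singularity along the discriminant locus via Riemann's extension theorem. The derivative estimates are then a standard Cauchy bound on polydiscs.

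For the boundedness, I would use the Hilbert space interpretation of $M$. Setting $\Psi_j(t) := \psi(z_j, w)\, e^{i z_j t}$ in $L^2((0,\infty);\C^N)$ (integrable thanks to $\im z_j > 0$), the matrix $M$ is, up to an overall sign, the Gram matrix of the $\Psi_j$, exactly as in Lemma \ref{posdef}; hence $M$ is nondegenerate on $U_0$. Via the variational identity
\[
\phi^* M^{-1} \phi = \pm \min \bigl\{ \|F\|_{L^2}^2 : \langle F, \Psi_j \rangle = \phi(z_j)\ \text{for all } j \bigr\},
\]
the problem reduces to producing, for any holomorphic $\phi$ with $|\phi| \le \delta\,|\psi|$, an interpolating element $F_\phi \in L^2((0,\infty);\C^N)$ with $\|F_\phi\| \le C\delta$, uniformly as $\bfz$ ranges over a compact subset of $U^N$. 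Such $F_\phi$ can be built by transporting the Cauchy integral representation of $\phi$ onto the half-line through the identity $\tfrac{-i}{\bar\zeta - z} = \int_0^\infty e^{i(z-\bar\zeta)t}\,dt$, integrated against $\psi(\zeta)\, e^{i\zeta t}$ along a contour in $U$ that separates $\bfz$ from $\partial U$. A single Cauchy--Schwarz then gives $|g|=|\phi_1^* M^{-1}\phi_2| \le C_N\,\delta^2$ uniformly on $U_0 \times W$, which is stronger than the required estimate for $\gamma = 0$.

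Given the boundedness, the $S_N$-symmetry of $g$ under permutations of $(z_1,\dots,z_N)$ is immediate: any such permutation conjugately reorders $M$ and the column vectors $(\phi_r(z_j))_j$, leaving $\phi_1^* M^{-1}\phi_2$ invariant. Applying Lemma \ref{elementarysymmetric} on $V_0 = \mathbf{s}(U_0)$ expresses $g$ as an analytic function of $(\mathbf{s},w)\in V_0\times W$. The complement of $V_0$ in $V=\mathbf{s}(U^N)$ is the zero set of the discriminant, a complex analytic hypersurface, and $g$ is bounded up to it; Riemann's removable singularity theorem then delivers the unique analytic extension of $g$ to $V\times W$. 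The derivative estimates follow from the Cauchy integral formula on small polydiscs in $(\mathbf{s},w)$-space combined with Lemma \ref{l:z-vs-s}(C): a ball of radius $d$ in the $\mathbf{s}$-coordinates pulls back to a region of diameter at most $C\,d^{1/N}$ in the $\bfz$-coordinates, and this H\"older $1/N$ loss accounts for the factor $d^{-N|\gamma|}$ instead of $d^{-|\gamma|}$.

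The main technical obstacle is the uniform construction of $F_\phi$, in particular as $\bfz$ approaches the discriminant or the boundary of $U^N$. An elegant alternative, which I would pursue in parallel, is to rewrite $g$ in the basis of divided differences of $\Psi$ and of the $\phi_r$ in the variables $z_1,\dots,z_N$: these divided differences extend analytically to the diagonal, and for $\psi$ pointwise nonzero the Gram matrix of the derivatives $\partial_z^k \Psi(z_0,\cdot)$ in $L^2((0,\infty);\C^N)$ remains positive definite there, since the factors $(it)^k e^{iz_0 t}$ give independence as $t$-functions. This would produce an explicit analytic formula for $g$ on all of $V\times W$ and bypass the Riemann extension step entirely.
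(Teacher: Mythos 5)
Your overall architecture --- a uniform bound for $g$ off the diagonal via the Gram-matrix/least-norm-interpolation reading of $\phi^* M^{-1}\phi$, symmetrization via Lemma~\ref{elementarysymmetric}, extension across the discriminant, and Cauchy estimates with the H\"older-$1/N$ loss from Lemma~\ref{l:z-vs-s} --- is reasonable, and your closing ``alternative'' via divided differences is in substance the paper's actual proof (dividing the Vandermonde factors out of the cofactor matrix and out of $\det M$ is exactly the passage to the divided-difference basis). But the primary route as written has two genuine gaps. First, the interpolant $F_\phi$ is not actually constructed. If you superpose $F_\phi(t)=\oint_\gamma a(\zeta)\,\psi(\zeta)e^{i\zeta t}\,d\zeta$, then pairing with $\Psi_j$ and doing the $t$-integral produces the kernel $-i/(\bar\zeta - z_j)$, which as a function of the contour variable $\zeta\in U$ is anti-holomorphic with its singularity at $\bar z_j$ in the lower half-plane; the contour integral is just a continuous superposition of columns of $M$, and the interpolation conditions $\langle F_\phi,\Psi_j\rangle=\phi(z_j)$ then force the density to carry poles at the $z_j$ with residues obtained by inverting $M$. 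The construction is circular: an interior contour integral evaluates an interpolant, it does not produce one. (A construction that does work is to Hermite-interpolate the scalar holomorphic function $\phi(z)(z+i)^N/(\psi(z)\cdot v)$, with $v$ a fixed vector making $\psi\cdot v$ bounded below near $\bfz$, by a degree-$(N-1)$ polynomial $p$ whose coefficients are $O(\delta)$ uniformly in the node configuration by the contour formula for divided differences, and take $G(z)=p(z)(z+i)^{-N}v$; but that is a different argument from the one you describe.)

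Second, and more seriously, Riemann's removable singularity theorem does not apply as you invoke it: $g$ is not holomorphic in $(\mathbf{s},w)$. It depends on $\bar z_k$, $\overline{\psi(z_k,w)}$ and $\overline{\phi_1(z_j,w)}$, so on $V_0\times W$ it is only real-analytic, and there is no removable-singularity theorem for bounded real-analytic functions (e.g.\ $x^2y^2/(x^2+y^2)$ is bounded and real-analytic off the origin yet has no real-analytic extension there). To use Riemann extension you must polarize, replacing $\bar\bfz,\bar w$ by independent holomorphic variables $\tilde\bfz,\tilde w$; but then you need local boundedness of the polarized function in a full complex neighborhood of the singular set, whereas your boundedness argument --- positivity of the Gram matrix and the variational principle --- lives only on the totally real diagonal $\tilde\bfz=\bar\bfz$, and boundedness on a totally real slice does not propagate (the function $1/(z-\tilde z)$ is bounded on the slice $\tilde z=\bar z$, $\im z\ge c$, yet blows up at the complex diagonal). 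The paper's proof is engineered precisely to avoid this: it polarizes, factors the antisymmetric products $\prod_{j\ne k}(z_j-z_k)\prod_{j\ne k}(\tilde z_j-\tilde z_k)$ out of both the numerator $\overline{\phi^2}\,C^T\phi^1$ and $\det M$ to obtain $g=G/H$ with $G,H$ holomorphic and symmetric, and only then uses positivity --- to bound $H$ from below on the diagonal via the Cauchy determinant, which is all that is needed. Your divided-difference alternative performs the same cancellation explicitly, keeps everything analytic through the collisions, and would close both gaps; I would promote it from a parenthetical to the proof itself.
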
 
Here the power of $d$ is controlled via the Cauchy integral and Lemma~\ref{l:z-vs-s}.
\begin{proof}

  At a given point $(\mathbf{z}_0, w_0)$ we divide both $\phi_j$ and $\psi$
  by $|\psi(z_0,w_0)|$ and we may assume that $|\psi(z_0,w_0)|=1$ in the sequel.

  We substitute $\tilde z_j$ for $\bar z_j$ and $\tilde w $ for $\bar  w$ and define
  \[  
M_{jk} = i\frac{ (\overline{\psi(\overline{\bar z_k},\overline{\tilde w} ) } \cdot 
 \psi(z_j,w)}{\tilde z_k-z_j}, 
\]
which is a holomorphic function of $\mathbf{z}, \widetilde
{\mathbf{z}},w$ and $\tilde w$.  Similarly we extend the function $g$.
Let $C$ be the cofactor matrix of $M$. Then we can write
\[
m = \frac{1}{\det M} C^T 
\]
and
\[
g(\bfs,\tilde \bfs,w,\tilde w) = \overline{ \phi^2 (\overline{\tilde z_k)}} m_{kj}\phi^1(z_j) =
 \frac{ \overline{\phi^2 (\overline{\tilde z_k},\tilde w)} C^T_{kj}  \phi^1(z_j,w)}{\det M}.
\]

Now we consider the symmetry properties of both the numerator and the
denominator.  Permuting two values $z_j$ and $z_k$ exchanges to rows
in $M$ and permuting two values $\tilde z_j$ and $\tilde z_k$
exchanges two columns. Under either operation $\det(M)$ changes
sign. Since
\[ M C^T= C^TM = \det(M) 1 \] we see that interchanging $z_j$ and
$z_k$ exchanges two rows in $C$ and the whole sign of $C$. As a
consequence both $\det M$ and
  \[ 
\overline{\phi^2 (\overline{\tilde z_k})} C^T_{kj}  \phi^1(z_j), 
\]
considered as holomorphic functions of $\widetilde{\mathbf{z}} $ and $\mathbf{z}$,
  are antisymmetric in the $z_j$. Then we can smoothly factor
\[
\overline{\phi^2 (\overline{\tilde z_k},\overline{\tilde  w}) } C^T_{kj}   \phi^1(z_j,w) =
G(\bfz,\tilde \bfz,w,  \tilde w )  \prod_{j\ne k}  (z_j - z_k) \prod_{j\ne k}  (\tilde  z_j - \tilde  z_k).
\]
The same applies for $\det M$, 
\[
\det M = H(\bfz, \widetilde{\bfz},w,\tilde w)  \prod_{j\ne k}  (z_j - z_k) \prod_{j\ne k}  (\tilde z_j - \tilde z_k). 
\]
Here the functions $G$ and $H$ are holomorphic functions in $\bfz$ and
$\tilde \bfz$, and separately symmetric in both $\bfz$ and $\tilde
\bfz$ (this is the reason we separated the variables $\bfz$ and $\bar
\bfz$ in the first place).  By Lemma \ref{elementarysymmetric}, every
symmetric holomorphic function is a holomorphic function in the
elementary symmetric polynomials.  Hence by a slight abuse of
notations we will write
\[
G(\bfz,\tilde \bfz,w,  \tilde w ) = G(\bfs,\tilde \bfs,w,  \tilde w ), \qquad 
H(\bfz,\tilde \bfz,w,  \tilde w ) = H(\bfs,\tilde \bfs,w,  \tilde w ).
\]
We still need to divide these two functions $G$ and $H$.
To do this we return to the diagonal $\tilde \bfz = \bar \bfz$, $\tilde w = \bar w$
and claim  that there we have  
\begin{equation}\label{M-pos}
H(\bfz  ,\overline{\bfz},w,\bar w) \gtrsim 1.
\end{equation}
But there we can take advantage of the positivity of $M$. We can write $M$
at $w=\bar z$  as the sum of $M$  positive 
matrices which correspond to the components of the $\phi$'s,
\[
M_{jk} = \sum_{m=1}^M  \frac{i \phi^m(z_j) \overline{\phi^m(z_k)}}{\bar z_k -  z_j} =: \sum_{m=1}^M  M^m_{jk}.
\]
Here, to insure nondegeneracy,  we first rotate the so that all  components
$\phi^m(z_j)$ are 
of comparable size, which we can do if $m \ge 2$.

Then
\[       M^m =  \left( \begin{matrix} \phi^m(z_1) & 0 & \dots & 0 \\
  0 & \phi^m(z_2) &\dots & 0 \\
  \vdots & \vdots & \ddots & \vdots \\
   0  & 0 & \dots & \phi^m(z_M) 
\end{matrix} \right)        \Big(\frac{i}{\bar z_k-z_j}\Big)_{jk} 
\left( \begin{matrix} \overline{\phi^m(z_1)} & 0 & \dots & 0 \\
  0 & \overline{\phi^m(z_2)} &\dots & 0 \\
  \vdots & \vdots & \ddots & \vdots \\
   0  & 0 & \dots & \overline{\phi^m(z_M)}  
          \end{matrix} \right) 
\] 
and
\[ \det M^m = \prod_{j=1}^n |\phi^m(z_j)|^2  \frac{ \prod_{j< k} (z_j-z_k) (\bar z_j-\bar z_k)}{\prod_{j,k} (\bar z_j-z_k )} \]   

 Since we have bound one of these these determinants from below, we can also bound 
from below the determinant of the sum, thereby proving our claim \eqref{M-pos}.

Now we write, again on the diagonal,
\[
g(\bfz,w) = \frac{G(\bfs,\bar \bfs,w,  \bar w )}{H(\bfs,\bar \bfs,w,  \bar w )}
\]
where both the numerator and denominator are smooth and the
denominator is bounded from below. The conclusion of the Lemma follows.

\end{proof}

\subsection{The pointwise regularity of soliton addition}

This is the first part of the second proof of  both part (a) and (b) of  Theorem~\ref{t:dress-reg}, where we consider the pointwise regularity of the soliton addition map.  We consider $u \in \V_U^0$ and 
$(\bfz,\bbeta) \in \Phase^N_U$, and let $v = \Dress(u,\bfz,\bbeta)$.
 For arbitrary fixed $x \in \R$ we study the dependence of $v(x)$ on $u,\bfz,\bbeta$.
Here $v(x)$ is obtained via the following steps:

\begin{enumerate}
\item  Since the transmission coefficient $T(u)$ has no poles in $U$, it follows that the 
left and right Jost  functions $\psi_l(u,z)$ and $\psi_r(u,z)$ are uniformly independent (the Wronskian is the inverse of $T(z)$),
analytic in $u$  and uniformly holomorphic in $z$. 

\item  Given $\bbeta \in \R^{2N}$, we produce a holomorphic family of wave functions by setting
\[
\psi(u,z,\bbeta) = e^{-i \bbeta(z)} \psi_l(u,z) + e^{i \bbeta(z)} \psi_r(u,z).
\]

\item  Based on our interpretation of the soliton addition map in terms of the holomorphic wave function, 
it follows that
\[
{\Dress}(u,\bfz,\bbeta) = \Dresso_{\bfz,\psi} (u). 
\]
The latter expression can be viewed as the outcome of $N$ \Backlund transforms, so 
it is analytic in the $z_j$'s, in $u$ and $\bar u$, and in $\alpha$ (via $\psi$).

\item  If the $z$'s are distinct, then we can use the results in Section~\ref{s:dress} to obtain 
an expression for $v-u$ as 
\[
v - u = \sum_{j,k=1}^N \psi^*_1(z_j,u) m_{jk} \psi_2(z_k,u), 
\]
where $m$ is the inverse of the matrix $M$ given by
\[
M_{jk} = \frac{i \psi^*(z_k,u) \psi(z_j,u)}{\bar z_k -z_j}.
\]

\item Now we use Lemma~\ref{l:zhol} to conclude that this expression
  has an uniformly smooth extension to the diagonal, as a function of
  $\bfs$, $\bbeta$ and $u$. 
\end{enumerate} 

By the above considerations we have defined a soliton addition map $\Dress$
on $\V_U^0 \times \Phase_U^N$ which is  smooth  for fixed $x$.
At this point we are still lacking uniformity both with respect to $x$, $u$, $\bfz$ and $\bbeta$.

We now consider the question of uniformity. To start with, we need to describe 
more accurately the Jost functions $\psi_l$, $\psi_r$.  Consider $\psi_l$ for instance.
Taking out the exponentials, we set $\tpsi_l = e^{iz x} \psi_l$,
for which we have the ode's 
\begin{equation} \label{eq:tpsil} 
 \left\{
 \begin{array}{rl} 
\dot \tpsi_l^1 = &  v \tpsi_l^2
\cr
\dot \tpsi_l^2 = &  2 i z \tpsi_l^2 + \bar v \tpsi_l^1
\end{array}
\right. .
\end{equation} 
The map
\[ 
H^s\ni u \to \tpsi_l^1 \in C_b(\R) 
\]
is uniformly smooth in $u$ (see Lemma \ref{hsjost}), and also uniformly holomorphic in $z$.
In a symmetric way  we set $\tpsi_r = e^{- iz x} \psi_r$ with similar properties.

With these notations, our holomorphic family of wave functions becomes
\[
\psi(u,z,\bbeta) = e^{-i( \bbeta(z)+ z x) } \tpsi_l(u,z) + e^{i (\bbeta(z)+zx)} \psi_r(u,z).
\]

To gain uniformity, we first assume that $\bbeta$ is in a compact set. Then the 
second term above is leading if $x <0$ while the first term is dominant when $x > 0$.
Suppose for instance that $x > 0$. Then we take out the second exponential factor, 
and redefine the holomorphic family of wave functions as
\[
\psi(u,z,\bbeta) =  \tpsi_l(u,z) + e^{2i(\bbeta(z)+2zx)} \psi_r(u,z).
  \]
Our choice of $x$ now insures that for this family we have uniform 
regularity at $x$  with respect to all parameters. Then, by Lemma~\ref{l:zhol},
we obtain the corresponding uniform regularity for $v(x)$, as stated in  part (a) 
of the theorem.

Next we move to part (b) of the theorem. We fix $\bfz$, and consider the question of uniform regularity 
in $u$ and $\bbeta$.  Differentiating $v-u$ we obtain a representation
\[
 \partial_u (v-u)(x) = \partial_u \psi_1^* m \psi_2 +  \psi_1^* m \partial_u \psi_1
-  \psi_1^* m   \partial_u M m  \psi_1,
\]
and similarly for the $\bbeta$ derivatives.  For higher derivatives
with respect to $\bbeta$ and $u$ we obtain a similar but longer
expansion but with more instances of $m$ separated by differentiated
$M$.  For each differentiated $\partial^k M$ (in either $u$ or
$\bbeta$) we can separate the variables $\bfz$ and $\bar \bfz$ (e.g. using
the exponential representation) and represent them as rapidly
convergent sums (integrals) of terms of the form
\[
\partial^{k_1} \psi_j g(z_j) \otimes \partial^{k_2} \psi^*_j g(\bar z_j). 
\]
By Cauchy-Schwartz, it remains to obtain a uniform bound for 
expressions of the form
\[
\partial^k \psi^*_j \overline{g( z_j)} \, m_{jn}\, \partial^k \psi_n {g( z_n)}.
\]

For the two components of $\psi$ we have the regularity
\[
\partial^k \left[e^{\bbeta(z)} \psi_l(u,z) \right]  =
e^{\bbeta(z)+i zx} f^k(z,u),
\]
where $f^k$ is uniformly holomorphic in $z$.

So we need to bound uniformly an expression of the form
\[
\overline{e^{\bbeta(z_j)+i z_jx} f(z_j)}  \,m_{jn}\,  e^{\bbeta(z_n)+i z_nx} f(z_n),
\]
where $f$ is holomorphic.

If we could simply discard the $\psi_r$ component of $\psi$, then we 
would  factor out the phase $ e^{\bbeta(z)+i zx}$ and then just apply 
Lemma~\ref{l:zhol}. As it is, we can still use each of the two components of $\psi$ 
to define its own non-negative matrix $M^1$, respectively $M^2$ so that $M = M^1+ M^2$.
correspondingly we get $m \leq m^1$ and $m \leq m^2$. Applying 
Lemma~\ref{l:zhol} to each of these components, it follows that we could 
bound $m$ on vectors of the form
\[
b_1(z) \psi_1, \qquad b_2(z) \psi_2,
\]
with $b_1$, $b_2$ holomorphic. It remains to see that we can obtain a representation
\[
e^{\bbeta(z)+i zx} f(z) = b_1(z) \psi^1 + b_2(z) \psi^2.
\]
Cancelling phases this is equivalent to
\[
f(z) = b_1(z) ( \tpsi_l^1 + e^{-2\bbeta(z)-2i zx} \tpsi_r^1) + b_2(z) ( \tpsi_l^2 
+ e^{-2\bbeta(z)-2i zx} \tpsi_r^2).
\]
Here we do not want $b_1$ or $b_2$ to depend on the exponentials,
for that would likely make them unbounded. So we strengthen the above relation to a system
\[
\begin{split}
b_1(z)  \tpsi_l^1  + b_2(z) \tpsi_l^2 = & \ f,
\\
b_1(z)  \tpsi_r^1  + b_2(z) \tpsi_r^2 = & \ 0,
\end{split}
\]
This is uniformly solvable since the Wronskian of $\tpsi_l$ and $\tpsi_r$ is 
constant and of size $O(1)$ for $z \in U$, because $u \in \V_U^N$.

\subsection{The \texorpdfstring{$H^s$}{} regularity of soliton addition}

Theorem \ref{t:dress-reg} claims uniform smoothness
for the soliton addition map  as a map to $H^s$ in two contexts,
corresponding to part (a) and part (b).
At this point we know that, in both contexts, for each $x$ the map 
\[
\V_U^0 \times \Phase_U^N   \ni (u, \bfs,\bbeta) \to v(x) - u(x) \in \C 
\]
is smooth, with appropriate uniformity statements. The next step is to
prove similar $H^s$ bounds for $u-v$ and its linearization.  To
achieve this, we divide and conquer.  We split the real axis into unit
intervals, and seek to understand the $H^s$ regularity within each
interval. For a reference point $x$, we study the $H^s$ regularity of
$u-v$ in the interval $I = (x-1,x)$. 

We follow the analysis in the previous subsection, but working on unit 
intervals instead of at a fixed point $x$. One can think of the construction
as having two stages:

\medskip
(i) From the data $u$ to the renormalized Jost functions $\tpsi_l$, $\tpsi_r$.

\medskip
(ii) From the renormalized Jost functions to $v-u$.

\medskip
As long as $u \in \V_U^0$ and $z \in U$, the map 
\[
H^s \ni u \to \tpsi_l, \tpsi_r \in H^{s+1}(I)
\]
 is holomorphic in $z$ and analytic in $u$. Then the same argument as in the previous 
subsection shows that the soliton addition map
\[
\V_U^0 \times \Phase_U^N \ni (u,\bfs,\bbeta) \to v -u \in H^{s+1}(I)
\]
is analytic, with the same uniformity statements as before.

The new difficulty we face here is in the transition from the local
$H^s$ regularity to the global $H^s$ regularity. For this we need to
gain the $\ell^2$ summation with respect to unit intervals.
We remark that this gain is not straightforward, i.e. it does not happen at the level of 
$\tpsi_l, \tpsi_r$. Instead, the best we can say is that we have the localized bounds
\[ 
\Vert \tpsi_l^2 \Vert_{H^{s+1}(x-1,x)} \le c \Vert  e^{\im z (y-x)} u(y) \Vert_{H^s(-\infty,x)} 
\]
and
\[ 
\Vert \tpsi_l^1(x)- \tpsi_l^1(.) \Vert_{H^{s+1}(x-1,x)} \le c\Vert e^{\im z (y-x)} u(y) \Vert_{H^s(-\infty, x) }, 
\]
with an implicit constant depending only on $\Vert u \Vert_{H^s}$.
Similar bounds will hold for the linearizations. Excluding finitely many intervals
where $u$ might concentrate, we can assume that we also have smallness,
\[
\Vert \tpsi_l^2 \Vert_{H^{s+1}(x-1,x)} + \Vert \tpsi_l^1(x)- \tpsi_l^1(.) \Vert_{H^{s+1}(x-1,x)} \ll 1
\]
This in turn gives pointwise smallness, and thus a bound from below
\[
|\tpsi_l^1(x)| \gtrsim 1.
\]
Similarly, we will have 
\[
|\tpsi_r^2(x)| \gtrsim 1.
\]
Hence, on the interval $(x-1,x)$ it is natural to compare the
renormalized Jost functions $\tpsi_l$ and $\tpsi_r$ with $
\tpsi_l^1(x) e_1$, respectively $\tpsi_r^2(x) e_2$.  Thus, within the
interval $I$ we arrive at a reference configuration which corresponds
to a pure soliton.  However, this is not the soliton with parameters
$(\bfz,\bbeta)$; that would correspond to having $\tpsi_l^1 = 1$ and
$\tpsi_r^1 = 1$. Instead $\bbeta$ is readjusted to
\[
\tilde \bbeta(x) = \bbeta + \frac12 \log(\tpsi_l^1(x)/\tpsi_r^2(x)).
\]
This also should be seen as a function of $z$ and $u$.

We denote by $Q_{\bfz,\bbeta}$ the pure soliton with parameters $(\bfz,\bbeta)$.
Then our analysis above allows us to  conclude that we have the localized bound
\[
\| v - u - Q_{\bfz,\tilde \bbeta(x)}\|_{H^{s+1}(I)} \lesssim \| \sech [\delta(y-x)] u(y) \|_{H^s_y}, \qquad 0 < \delta < \min \{ \im z_j\}.
\]
Similar bounds will also hold for the linearization.

To conclude, we need to show that the square summability in $I$ survives 
as we vary the soliton parameter $\tilde \bbeta$. The key property here 
is that $\tilde \bbeta$ does not vary much, $|\tilde \bbeta - \bbeta| \lesssim 1$.
Hence it suffices to verify the property
\[
\sum_{I} \sup_{|\tilde \bbeta - \bbeta| \lesssim 1} \| Q_{\bfz,\tilde \bbeta}\|_{H^{s+1}(I)}^2 \lesssim 1
\]
Here the value of $s$ is not important. But this is easy to see, as the pure $N$-solitons 
with spectral parameters in $U$ are 
uniformly bounded in all $H^s$ spaces, while the change in $\beta$ corresponds 
to the flow along the first $2N$ commuting flows, so the $\beta$ derivative 
of $Q_{\bfz,\bbeta}$ is also uniformly bounded in all $H^s$ norms. In effect  in the next section we prove that the $N$-solitons are exponentially decaying 
away from at most $N$ bumps, so the same applies to the localized norms in the above formula.

\subsection{ The multisoliton manifold}
\label{multisoliton} 
This subsection is devoted to the proof of Theorem \ref{uniformSol} resp. \ref{t:pure}.
We begin with the case $s=0$, where the notations are simpler.
The argument for other $H^s$ spaces with $s > -\frac12$ is similar,
and is outlined at the end of the section.

We recall that the family $\M_U^N$ of pure $N$-solitons can be described using 
the soliton addition map $\Dress$,
\[
\M_U^N = \{ \Dress(0,\bfs,\bbeta); \ (\bfs,\bbeta) \in \Phase_U^N \}.
\]
as a subset of the set of $N$-soliton states $\V_U^N$
\[
\V_U^N = \{ v = \Dress(u,\bfs,\bbeta); \ u \in \V_U^0, \ (\bfs,\bbeta) \in \Phase_U^N \}.
\]

  On $\V_U^N$ we  define the real valued map
\begin{equation}  
F : \V_U^N \ni v \to   F(v) = E_0(v) - 2\sum_{k}  \im z_k = 
  \Vert v \Vert_{L^2}^2 - \sum_{k} 2 \im z_k. 
\end{equation}
which gives the soliton free $L^2$ energy of $v$. 
In view of the trace formula \eqref{prop:trace}, the map $F$ is obviously
uniformly smooth, non-negative and it vanishes on pure solitons. Thus
also its derivative vanishes at pure $N$ solitons.
We claim that the following three properties hold:

\begin{enumerate}
\item the $N$-soliton manifold can be described as
\begin{equation} \label{solitonmanifold} 
  \M^N_U = \{ v \in \V_U^N: DF(v) = 0 \},
\end{equation}
\item  the Hessian of $F$ evaluated at $v = \Dress(0,\bfz,\bbeta) \in \M^N_U$ 
is nondegenerate, 
\begin{equation} \label{implicitfunction} 
 D^2F(v)[w,w] \ge C^{-1} \Vert w \Vert_{L^2}^2, \qquad w   \in \Ran(D_u \Dress(0,\bfz,\bbeta)), 
\end{equation} 
\item  the $u$ differential of $\Dress$ at $u = 0$ is nondegenerate,
\begin{equation} \label{lowerbound} 
  \Vert w \Vert_{L^2} \le C \Vert D_u D^N(0,\bfz,\bbeta)w\Vert_{L^2}, \qquad w \in L^2.
\end{equation} 
\end{enumerate}

We proceed to prove these three claims. Since $F$ vanishes
quadratically at pure solitons, we must have $DF(v)=0$ whenever $v$ is
a pure soliton. Now suppose that $ DF(v)=0$ with $v =
\Dress(u,\bfz,\bbeta)$. The trace identities imply that
\begin{equation}\label{F-dress}
F( \Dress(u,\bfz,\bbeta)) = \Vert u \Vert_{L^2}^2, 
\end{equation}
hence, differentiating in the $w$ direction,
\[ 
2\real \int u w\, dx  = D_u  (F \circ \Dress(u,\bfz,\bbeta))|_{u=0} (w)  =  DF(\Dress(u,\bfz, \beta)) 
D_u \Dress(u,\bfz,\beta) (w),   
\]
which vanishes for $w=u$ only if $u=0$, or, equivalently, if $D^N(u,\bfz,\bbeta)$ is a pure $N$ soliton. This implies the claim \eqref{solitonmanifold}.

Moreover, we can also calculate the Hessian in \eqref{F-dress} as
\[
\begin{split} 
2 \Vert w \Vert^2\ & = D^2_u (F \circ \Dress(u,\bfz, \bbeta )) [w,w] 
\\ & =   D^2_v F( \Dress(u,\bfz, \bbeta ))]  [D_u \Dress(u,\bfz,\bbeta)w,    
  D_u \Dress(u,\bfz,\bbeta)w   ]
\\ & \qquad + D_v F( \Dress(u,\bfz,\bbeta))     D^2_u \Dress(u,\bfz,\bbeta)[w,w].
\end{split} 
\]   
We evaluate this formula at pure $N$ solitons, using the fact that $DF$ vanishes there: 
\[ 
\begin{split}
2 \Vert w \Vert^2 = & \  D^2_v F \circ \Dress(0,\bfs,\bbeta) [D_u \Dress(0,\bfz,\bbeta)w,    
D_u \Dress(0,\bfz,\bbeta)w]
\\  = & \ 
\Vert   D_u \Dress(0,\bfz,\bbeta)w \Vert^2   -
2 D^2 \im s_1 (D_u \Dress(0,\bfz,\bbeta)w,D_u \Dress(0,\bfz,\bbeta)w).
\end{split}
\]   
The map to the elementary symmetric functions is given by  a nondegenerate contour 
integral of the transmission coefficient, and hence it is uniformly smooth. 
Thus we obtain \eqref{lowerbound}, By Theorem~\ref{t:dress-reg} the two norms in \eqref{lowerbound} must be equivalent, so  \eqref{implicitfunction} also follows.

This gives important information on the uniformly smooth maps
$v \to F(v)$ and $v \to DF(v)$. Let $R(\bfz,\bbeta)$ be the range of $D_u
D^N(0,\bfz,\bbeta)$, which by \eqref{lowerbound} is a closed subspace of $L^2$,
with codimension $4N$.  By \eqref{implicitfunction} $D^2F$ is positive 
definite on this subspace.  Thus $D^2F$ defines a linear map from $L^2$ to
$L^2$ with a $4N$ dimensional null space. The restriction to $R(\bfz ,
\bbeta) $ defines a uniformly invertible operator.
Then by  the implicit function theorem the set $\{v \in L^2;\  DF(v) = 0 \} $ is a
uniformly smooth $4N$-dimensional manifold, which concludes the proof of the theorem.

  \subsection{Regularity of soliton removal}
  Here we give the proof of Theorem~\ref{t:undress-reg}.

  \subsubsection{The spectrum} 
  Let $v \in \V^N_U$ be as above, or equivalently, assume that $\L(v)$
  has exactly $N$ eigenvalues (counting with multiplicity) in $U$.
  These are denoted by $\bfz = \{z_j\}$ and can be described as the
  poles of $T$, not necessarily distinct.
 We call  
\[
 P(z) = \prod_{j=1}^N (z-z_j) = \sum_{n=0}^N (-1)^n s_n z^{N-n} 
 \]
 the characteristic polynomial. 
 As seen in Lemma~\ref{l:z-vs-s}, the relation between the 
symmetric polynomials and the roots is H\"older continuous but not smooth.
The next lemma shows that these polynomials can be smoothly recovered from $T$,
which in turn depends smoothly on $v$ away from the poles.

\begin{lemma}\label{zeroes1} 
  Let $U \subset \C$ be open, $X$ be a complex Banach space, $W \subset X$
  open and
  \[ f: U \times W \to \C\]
  holomorphic. Let $K \subset U$ be compact, $w_0\in  W$ such that
  $f(z,w_0) \ne 0$ for $z \in \partial K$. Then there exists $\varepsilon>0$ so that $f(z,w)$ does not vanish for $z \in \partial K$, $|w-w_0|< \varepsilon$. The number of zeroes in $K$ of $f(.,w)$ is independent of $w$.  Let $(s_n(w))_{n\le N}$ be the elementary symmetric polynomials of the roots. Then
  \[ B_\varepsilon(w_0) \ni w \to s_n(w) \in \C \]
  is holomorphic.
\end{lemma}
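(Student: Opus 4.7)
The plan is a standard argument-principle / residue-calculus proof, but paying attention that the parameter space is an open subset of a Banach space rather than of $\C^n$; the key point is that ``holomorphic in $w$'' is preserved under contour integration in $z$.

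First, since $\partial K$ is compact and $f(\cdot,w_0)$ is nonvanishing and continuous on it, one has $|f(z,w_0)|\ge 2\delta$ on $\partial K$ for some $\delta>0$. Joint continuity of $f$ on the compact set $\partial K\times\{w_0\}$ together with the boundedness of $\partial_w f$ on a neighborhood (holomorphic functions on open sets in a Banach space are locally Lipschitz) yields an $\varepsilon>0$ with $|f(z,w)|\ge\delta$ for $z\in\partial K$, $|w-w_0|<\varepsilon$. Shrinking $\varepsilon$ further, a neighborhood $V$ of $\partial K$ in $U$ exists on which $f(\cdot,w)$ is also nonvanishing for all such $w$, which is what we need to differentiate $f$ in $z$ and form $f'/f$ on that neighborhood.

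Next, define for $n\ge 0$
\[
p_n(w) = \frac{1}{2\pi i}\int_{\partial K} z^n\, \frac{\partial_z f(z,w)}{f(z,w)}\,dz, \qquad |w-w_0|<\varepsilon.
\]
For each fixed $w$, the argument principle gives $p_0(w)=\#\{\text{zeros of }f(\cdot,w)\text{ in }K\}$ and more generally $p_n(w)=\sum_j z_j(w)^n$, the $n$-th power sum of the zeros counted with multiplicity. Since $(z,w)\mapsto z^n\,\partial_z f(z,w)/f(z,w)$ is jointly continuous on $\partial K\times B_\varepsilon(w_0)$ and holomorphic in $w$ on $B_\varepsilon(w_0)$ for each fixed $z\in\partial K$, the integral $p_n(w)$ is holomorphic in $w$: one verifies this directly by checking that $p_n$ has a local expansion given by interchanging the contour integral with the Taylor series of $f'(z,\cdot)/f(z,\cdot)$ at $w_0$ (uniform convergence on $\partial K$ permits the swap), or equivalently that $p_n$ is continuous and G\^ateaux-holomorphic. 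In particular $p_0$ is an integer-valued continuous function on the connected ball, hence constant, proving that the number of zeros in $K$ is independent of $w$; call this number $N$.

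Finally, the elementary symmetric polynomials $s_n(w)$, $1\le n\le N$, of the zeros are related to the power sums $p_1,\ldots,p_N$ by Newton's identities,
\[
n\, s_n = \sum_{k=1}^{n} (-1)^{k-1} s_{n-k}\, p_k,
\]
with $s_0=1$. Inductively each $s_n$ is a polynomial in $p_1,\ldots,p_n$, hence holomorphic in $w$ on $B_\varepsilon(w_0)$.

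The one step that needs a little care is verifying that a contour integral of a jointly continuous, separately $w$-holomorphic integrand is again $w$-holomorphic in the Banach-space sense; but this is standard (Morera plus Fubini, or direct expansion in the Taylor series of $\partial_w(\partial_z f/f)$ at $w_0$, which converges uniformly on $\partial K$ because $f'/f$ is bounded and holomorphic on a neighborhood of $\partial K\times\{w_0\}$). No substantial obstacle remains.
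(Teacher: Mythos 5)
Your proof is correct and follows essentially the same route as the paper: the power sums of the roots are expressed as contour integrals $\frac{1}{2\pi i}\int_{\partial K} z^n\,\partial_z f/f\,dz$, which are holomorphic in $w$, and the elementary symmetric polynomials are then recovered via Newton's identities. Your additional care with the Banach-space holomorphy of the parameter-dependent contour integral and the constancy of the integer-valued zero count fills in details the paper leaves implicit, but the argument is the same.
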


\begin{proof} We may assume that $\partial K $ is a union of closed
  nonintersecting positively oriented $C^1$ Jordan curves $\gamma$. Then
    \[ 
    \lambda_k:= \sum_{n=1}^N z_n^k  =    \frac{1}{2\pi i}
  \int_{\gamma} \zeta^k \frac{\partial_zf(\zeta, w)}{ f(\zeta, w)}  d\zeta.  
  \] 
   Then each $s_n$ can be written as a polynomial in $\lambda_k$ and vice versa.
  \end{proof}

\subsubsection{Regularity of soliton removal: Finding the scattering data} 

Next we consider the question of recovering $\bbeta$. For this we use 
the left and right Jost functions, and recall that 
$\bbeta(z_j)$ is the proportionality constant  between them at the poles,
and should be accurate to the order of the pole,
\[
\psi_{l}(x,z) + e^{2i \bbeta(z)}\psi_r(x, z) = O(z-z_j)^{m_j} \qquad \text{ for $z$ near $z_j$ and $x$ in a compact set}.
\]

  To define $\bbeta$ we fix some $x \in \R$ and compare $\psi_l(z)$ and  $\psi_r(z)$ at $x$. 
Each of these two values depends analytically on $z$ and also on $v$ for $v$
near $v_0$.
 
Hence, near each $z_j^0$ we find a ball $B_j$ where either we have
$|\Psi_l^1((z,x)| \ge \frac12 |\Psi_l ((z,x)|$ or $|\Psi_l^2((z,x)|
\ge \frac12 |\Psi_l ((z,x)|$. To fix the notations assume the former.

Here the size $r$ of each $B_j$ depends on the Lipschitz constant for 
$|\psi_l|^{-1} \psi_l$ in $z$ at the point $x$. These balls can overlap, and we identify them if 
the centers are much closer, i.e. 
\[
B_j = B_k \qquad \text{if} \ \ |z_j - z_k| \ll r
\]
and 
\[
2B_j \cap 2B_k = \emptyset  \qquad \text{if} \ \ |z_j - z_k| \gg  r
\]
We choose $r$ so that these are the only alternatives. The same will hold not only for $v_0$,
but also for $v$ in a neighbourhood.  Then we locally define the function $\bbeta_0$ as
\[
e^{2i\bbeta_0(z)} = -\frac{\psi^1_{l}(z)}{\psi_{r}^1(z)}.
\]
(or using the second component, or a linear combination, whichever
works for $v$ near some given state $v_0$.) This is holomorphic near
$z_j$, with a smooth local dependence on $v$. By the Chinese remainder theorem (see
  Lemmas~\ref{l:find-r},~\ref{l:find-r-beta}) there exists a unique
  real polynomial $\bbeta(z)$ of degree at most $2N-1$, so that
\[
\bbeta = \bbeta_0 \qquad (\mod P_\bfz P_{\bar{\bfz}}).
\]
This will define the scattering parameters $\bbeta$ for $v$,
in a manner that depends smoothly on $v \in H^s$. 

We go one step further, and also define a corresponding 
holomorphic family of unbounded wave functions by setting
\[
\psi(z) = T_v(z)(e^{-i \bbeta(z)} \psi_{l}(x,z) +  e^{i \bbeta(z)} \psi_{r}(x,z)).
\]
This will also depend smoothly in $H^{s+1}_{loc}$ on $v \in H^s$. This 
suffices for the local regularity, but we also need to investigate more carefully 
what happens near $\pm \infty$. Consider or instance a neighborhood
of $\infty$. We can localize $v$ there to $\tilde v$, which is now small in $H^s$.
Then we can write the wave function $\psi$ as a wave function for $\tilde v$,
with scattering parameter $\tilde \beta$ which depends smoothly on $v \in H^s$.

\subsubsection{Smooth soliton removal: Finding the
 background} 
  
Once the spectral and scattering parameters are smoothly recovered, we
can recover also $v$ in terms of $u$ following the removal
transformation. To see that we can start by density with the case of
distinct eigenvalues. In this case the iterated soliton removal maps
are smooth with respect to $u$ and the result is independent of the
order.

The case of multiple eigenvalues is obtained as a limit, using Lemma~\ref{l:zhol},
since the unbounded wave functions $\psi$ obtained 
above have a smooth dependence on $u$ and are holomorphic in $z$.
This yields pointwise bounds. The $H^s$ bounds can be dealt with exactly 
as in the case of the soliton addition map. This splits into two parts: (i) locally, which is exactly the same as before, and 
(ii) near infinity, where, as discussed above, this is 
identical to the corresponding argument for the soliton addition map.

\subsubsection{Soliton removal: Uniformity for $\bbeta$ 
in a compact set}

What changes here is that the left and right Jost functions have a single bump
which depends nicely on $v$. The location of the bump depends only on $\bbeta_1$.
Then we choose $x_0$ near this peak, which insures that $\bbeta$ depends 
uniformly smoothly on $v$, and also that $\psi$ has a similar dependence on $v$ 
away from the bump. The rest is similar to the soliton addition map.

\section{The structure of solitons}
\label{s:structure}

Single solitons can be seen as bump functions, with uniform
exponential decay away from the center of the bump. Here we
investigate the similar question for multisolitons. Precisely, we will
show that each $N$-soliton can be viewed as a collection of at most
$N$ unit sized bumps, with exponential decay in between and at
infinity.  Forthermore, each of these bumps has to be exponentially
close  to a lower dimensional soliton.

 Our main result concerning the  structure of $N$ multisolitons is as follows:

\begin{theorem}\label{t:struct}
a) The $N$ multisoliton solutions are functions with exactly $N$ bumps (possibly overlapping),
and exponential decay away from these bumps.

b) If bumps separate into $k$ groups at distance at least $R$, then the multisoliton can be 
approximately viewed as the sum of $k$ multisoliton solutions, with an accuracy 
of $O(e^{-cR})$.
\end{theorem}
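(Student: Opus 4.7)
The plan is to derive both parts of Theorem~\ref{t:struct} from the representation $Q = \Dress(0,\bfz,\bbeta)$ of pure $N$-solitons provided by \eqref{pure-N}, together with the explicit Jost functions at $u = 0$, namely $\psi_l(z,x) = e^{-izx}e_1$ and $\psi_r(z,x) = e^{izx}e_2$. These yield the holomorphic family of unbounded wave functions
\[
\psi(z,x) = e^{-izx - i\bbeta(z)}\, e_1 + e^{izx + i\bbeta(z)}\, e_2.
\]
In the simple-eigenvalue case, setting $\kappa_j = i\bbeta(z_j)$ and $\psi_j := \psi(z_j,\cdot)$, we have the identity $\psi_j^1(x)\,\psi_j^2(x) \equiv 1$ and $|\psi_j^\alpha(x)| = e^{(-1)^{\alpha}\im z_j(x-x_j)}$, with natural bump location $x_j := -\im\bbeta(z_j)/\im z_j$.

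For part (a), I will combine formula \eqref{Dn-state} with the trace identity of Lemma~\ref{l:trace}. By \eqref{Dn-state} the state $Q(x)$ is the off-diagonal entry of the $2\times 2$ Hermitian positive semidefinite matrix $A(x) = \sum_{j,k} m_{kj}(x)\,\psi_j(x)\,\psi_k^*(x)$, whose trace is the constant $2\sum_j \im z_j$. Cauchy--Schwarz for $A$ gives $|Q(x)|^2 \le A_{11}(x)\,A_{22}(x)$, so it suffices to prove that one of the diagonal entries decays exponentially at distance $\ge R$ from all bumps. For $x$ with $x - x_j \ge R$ for every $j$, one has $|\psi_j^2(x)| \lesssim e^{-\lambda R}$ with $\lambda := \min_j \im z_j$, and the asymptotic structure $M \approx D^*\tilde M D$ with $D = \diag(\psi_j^1(x))$ and $\tilde M$ the bounded (in the simple-eigenvalue case) Cauchy matrix $[i/(\bar z_k - z_j)]$ yields $A_{22}(x) \lesssim e^{-4\lambda R}$ and thus $|Q(x)| \lesssim e^{-2\lambda R}$; the symmetric case $x + R \le x_j$ for all $j$ follows identically. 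For $x$ sandwiched between well-separated clusters of bumps, the bound is established by combining both one-sided estimates via the cluster decomposition of part (b). The multiple-eigenvalue case is obtained by passage to the diagonal limit using the uniform smooth extension of Lemma~\ref{l:zhol}, which keeps the quantitative bounds intact. That the number of bumps is exactly $N$ follows from $\|Q\|_{L^2}^2 = 2\sum_j \im z_j$ by \eqref{trace-L2}.

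For part (b), I will exploit the commutativity of the iterated \Backlund transforms (Theorem~\ref{th_commuting}). Given a partition of bumps into clusters $I_1,\ldots,I_k$ with pairwise separation at least $R$, Lemma~\ref{l:find-r-beta} produces cluster-specific polynomial scattering data $\bbeta_\ell$ of degree $\le 2|I_\ell|-1$ that reproduce $\bbeta$ modulo $P_{\bfz_\ell}P_{\bar{\bfz_\ell}}$ on the spectrum of cluster $\ell$. Set $Q_\ell := \Dress(0, \bfz_\ell, \bbeta_\ell)$. Since the iterated \Backlund transform producing $Q$ is invariant under index permutations, apply it cluster by cluster: after processing clusters $1,\dots,\ell-1$, the partial state $v^{(\ell-1)}$ agrees with $Q_1 + \cdots + Q_{\ell-1}$ up to $O(e^{-cR})$ by induction. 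Part~(a) shows that $v^{(\ell-1)}$ is $O(e^{-cR})$ in a neighborhood of cluster $\ell$'s bumps, and the uniform smoothness in $u$ of $\Dress$ from Theorem~\ref{t:dress-reg}(b) then gives $v^{(\ell)} = \Dress(v^{(\ell-1)}, \bfz_\ell, \tilde\bbeta_\ell) = v^{(\ell-1)} + Q_\ell + O(e^{-cR})$ for a suitably shifted $\tilde\bbeta_\ell$. Summing over $\ell$ yields $\|Q - \sum_\ell Q_\ell\|_{H^s} \lesssim e^{-cR}$.

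The main obstacle will be uniformity: widely-separated bump configurations correspond to $|\bbeta| \sim R \cdot \max_j \im z_j$, which is precisely the regime outside the $\bbeta$-uniformity range of Theorem~\ref{t:dress-reg}(a). The remedy is to work in the translated frame of each individual cluster: using the reparametrization $\tilde\bbeta(x) = \bbeta + \tfrac12 \log(\tpsi_l^1(x)/\tpsi_r^2(x))$ already exploited in the $H^s$-regularity proof of Theorem~\ref{t:dress-reg}(b), one recenters the scattering data around each cluster so that the scattering parameters relevant to that single cluster become bounded. Combined with the exponential decay supplied by part~(a) and the uniform smoothness in $(u,\bbeta)$ provided by Theorem~\ref{t:dress-reg}(b), this should furnish the claimed $O(e^{-cR})$ bound in every $H^s$ with $s > -\tfrac12$.
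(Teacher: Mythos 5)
Your route is genuinely different from the paper's, which proves (a) by observing that $Q$ solves a $2N$-th order semilinear ODE $DH_R(Q)=0$ with $R=P_{\bfz}P_{\bar{\bfz}}$ and invoking the Hartman--Grobman theorem at the hyperbolic fixed point $y=0$, and proves (b) by cutting $Q$ at the minima between bumps, showing $T_Q^{-1}\approx\prod_j T_{Q_j}^{-1}$ on a contour, and applying the soliton removal map plus trace formulas to each piece. Your algebraic approach via the explicit formulas at $u=0$ is attractive, but as written it has two genuine gaps.

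First, in part (a) the Cauchy--Schwarz bound $|Q|^2\le A_{11}A_{22}$ combined with $\tr A = 2\sum_j\im z_j$ only gives decay to the left of all bumps or to the right of all bumps. Since $A_{11}+A_{22}$ is constant, for $x$ between two separated clusters the left cluster makes $A_{11}(x)$ of order one and the right cluster makes $A_{22}(x)$ of order one, so the product bound gives nothing there. You defer this case to the cluster decomposition of part (b), but your proof of (b) invokes part (a) to assert that $v^{(\ell-1)}$ is small near cluster $\ell$; as written the argument is circular. It can likely be repaired by an induction on clusters using only the one-sided estimates, but that induction is where the work lies and it is not supplied. Also, the claim that the number of bumps is exactly $N$ does not follow from $\|Q\|_{L^2}^2=2\sum_j\im z_j$ alone; one needs that each separated piece carries a definite amount of mass, which the paper obtains by showing that each piece must contain at least one eigenvalue of the Lax operator.

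Second, and more seriously, the key step of part (b), namely $\Dress(v^{(\ell-1)},\bfz_\ell,\tilde\bbeta_\ell)=v^{(\ell-1)}+Q_\ell+O(e^{-cR})$, does not follow from the uniform smoothness of $\Dress$ in $u$ from Theorem~\ref{t:dress-reg}(b). That smoothness is a global $H^s$ statement, bounding $\|\Dress(u_1,\cdot)-\Dress(u_2,\cdot)\|_{H^s}$ by $\|u_1-u_2\|_{H^s}$, and here $\|v^{(\ell-1)}\|_{H^s}$ is of unit size (it contains the previously added clusters); only its restriction near cluster $\ell$ is small. What you actually need is a localization estimate for the soliton addition map, of the type $\|v-u-Q_{\bfz,\tilde\bbeta(x)}\|_{H^{s+1}(I)}\lesssim\|\sech[\delta(y-x)]u(y)\|_{H^s_y}$, which appears inside the paper's proof of the $H^s$ regularity of soliton addition but is not what ``uniform smoothness in $u$'' asserts. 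Relatedly, the pieces cannot be taken to be $\Dress(0,\bfz_\ell,\bbeta_\ell)$ with $\bbeta_\ell$ the naive reduction of $\bbeta$ modulo $P_{\bfz_\ell}P_{\bar{\bfz}_\ell}$: interaction with the other clusters shifts the effective scattering parameters by an $O(1)$ amount (the Faddeev--Takhtajan shifts, which grow as spectral parameters coalesce), so the corrected $\tilde\bbeta_\ell$ must be read off from $Q$ itself --- which is precisely why the paper extracts the scattering data of the cut pieces $Q_j$ via the soliton removal map rather than prescribing it a priori.
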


\begin{proof}

a) Let $Q= Q_{\bfz,\bbeta}$ be an $N$-soliton and $P_\bfz$ its characteristic polynomial.
Denote by $R = P_{\bfz} P_{\bar \bfz}$, which has real coefficients.
Consider the action of the $n$-th flow on $Q$, or more precisely on $\bbeta$.
This gives
\[
\dot \bbeta = i 2^{n-1} z^{n} \qquad (\mod R).
\]
It follows that the first $2N$ flows are linearly dependent when acting on $Q$.

Precisely, to any real polynomial 
\[
R(z) = \sum_{j=0}^{2N} r_j (2z)^j
\]
we can associate the Hamiltonian
\[
H_R = \sum_{j=0}^{2N} r_j H_j.
\]
Then for $R = P_{\bfz} P_{\bar \bfz}$ defined above, we know that $Q$ is a steady state for the $H_R$
flow. This is equivalent to 
\[
DH_R(Q) = 0
\]
which shows that $Q$ solves a semilinear ODE of order $2N$. 
The linear part of this ODE is given by the operator $R(D_x) = R(\frac{1}{i} \partial_x)$. 
So we can rewrite this ODE in the form
\begin{equation}\label{ode-2N}
R(D_x) Q = N(Q^{(\leq 2N-1)})
\end{equation}
Equivalently, we can rewrite this as a first order system for 
the variables 
\[
y  = \{ D^j Q; j = 0,2N-1 \},
\]
namely 
\begin{equation}\label{ode-1}
\partial_x y = i A y +N(y),
\end{equation}
where the matrix $A$ has $R$ as a characteristic polynomial and $N$ 
is polynomial and contains quadratic and higher order terms.

The state $y = 0$ is a fixed point for the system \eqref{ode-1}, and the 
eigenvalues for the linearization around $y = 0$ are $\pm iz_k$, neither 
of which is on the imaginary axis. Hence $0$ is a hyperbolic fixed point
for this dynamical system. Hence, by the Hartman-Grobman theorem,
the dynamics around $y=0$ are well described by the corresponding linearized
 flow, up to a local H\"older continous homeomophism with H\"older continuous inverse.  

Now we are able to complete the qualitative description of the solitons.
We consider the localized mass of $Q$ in unit intervals $I_j = [j,j+1]$,
\[
M_j = \int_{I_j} |Q|^2\, dx,
\]
In intervals where $M_j$ is small, all of the Cauchy data of $Q$ must
be small so the Hartman-Grobman theorem applies. But the total mass is
finite, so there can be only finitely many intervals where $M_j$ is
large.  Outside this finite number of intervals, the soliton $Q$ must
follow the linearized dynamics and decay exponentially.  This
completes the proof of part (a) of the theorem, modulo the counting of
the bumps; we still need to show that, if $\epsilon$ is small enough,
then there are at most $N$ regions where $|Q| > \epsilon$. This will
follow as a corollary of the proof in (b).

b)  Denote by $R \gg 1$ the smallest gap between two bumps.
Then in between each two bumps, we will find a smallest value for $y$,
\[
|y(x_k) | \lesssim e^{-cR}, \qquad k = 1,K.
\]
Away from $x_k$, $y$ will grow exponentially. We use the $x_k$ as sharp cut points
for $Q$, splitting it on the intervals $I_k = (x_k, x_{k+1})$ where 
$x_0 = -\infty$ and $x_{K+1} = +\infty$,
\[
Q = Q_1 + \cdots + Q_{K+1},  \qquad Q_k = 1_{I_k} Q.
\]
On one hand, we have the obvious energy relation
\[
\| Q\|_{L^2}^2 = \sum \|Q_k\|_{L^2}^2.
\]

On the other hand, we investigate the relation between the transmission coefficients 
of $Q$ and those of $Q_j$. We work with $z$ away from the spectral parameters $\bfz$ 
of $Q$, say on a contour around $\bfz$. For such $z$, the renormalized Jost function
$\tpsi_l$ associated to $Q$ satisfies 
\[
|\tpsi_l| \gtrsim 1, \qquad \lim_{x \to \infty} \tpsi_l = T_Q^{-1}(z).
\]
Furthermore, around the points $x_k$ the coupling between the two
components of the $\tpsi_l$ equation \eqref{eq:tpsil}
is exponentially
small, therefore we also obtain the exponential smallness
\[
|\tpsi_l^2(x_k)| \lesssim e^{-cR}, \qquad |\tpsi_r^1(x_k)| \gtrsim 1.
\]

Next we consider the corresponding Jost function $\psi_{j,l}$ for $Q_j$. There the effective evolution is in $I_j$, with initial data
\[
\tpsi_{j,l}(x_j) = e_1,
\]
and terminal data
\[
\tpsi_{j,l}(x_{j+1}) = T_{Q_j}^{-1}(z) e_1 + c e_2.
\]
Now on the interval $I_j$ we compare $\tpsi_l$ and $\tpsi_{j,l}$,
which solve the same equation and have nearly collinear data.
It immediately follows that we must have the relation
\[
T_{Q_j}^{-1}(z) = \frac{\tpsi_l^2(x_{j+1})}{\tpsi_l^2(x_{j})} + O(e^{-cR}).
\]
Multiplying these relations, it follows that for $z$ on our curve $\gamma$ we have
\[
T_Q^{-1}(z) = \prod_{j = 0}^K  T_{Q_{j}}^{-1}(z)+ O(e^{-cR}).
\]
This implies that the product on the right must have the same number of zeroes
as the left hand side, call them $\tilde \bfz$, and further that the zeros of the left hand side $\bfz$ and 
$\tilde \bfz$ must be close,
\[
d(\bfz,\tilde \bfz) \lesssim e^{-cR},
\]
for some new uniform constant $c$. 

Applying the soliton removal map to $Q_j$ within the contour $\gamma$ we get 
\[
\bf B^{N_j}_- Q_j = (u_j,   \tilde \bfz_j, \bbeta_j),
\]
where $\tilde \bfz_j$ are the poles of $T_{Q_{j}}$, which represent a subset of $\tilde \bfz$.
By the trace formula for $Q_j$ we get
\[
\|Q_j\|_{L^2}^2 = \|u_j\|_{L^2}^2 + \im \tilde \bfz_j,
\]
while by the trace formula for $Q$,
\[
\|Q_j\|_{L^2}^2 = \im \bfz_j.
\]
Summing up in the first relation and comparing with the second,
we obtain 
\[
\sum_{j=0}^K \|u_j\|_{L^2}^2 \lesssim e^{-2cR}.
\]
A corollary of this is that each $Q_j$ must have at least an eigenvalue within $\gamma$,
or else it would have to have a very small $L^2$ norm. This implies that there can be at 
most $N$ such $Q_j$, which completes the proof in part (a).

Finally, we define the multi-solitons 
\begin{equation}\label{def-tQj}
 \tilde Q_j = \bf B^{N_j}_+(0,   \tilde \bfz_j, \bbeta_j)   
\end{equation}
By the uniform regularity of the soliton addition map, we have
\[
\| Q_j - \tilde Q_j\|_{L^2} \lesssim e^{-cR},
\]
so that 
\[
Q = \sum \tilde Q_j +  O_{L^2} (e^{-cR}),
\]
as desired. We note that the $L^2$ bound in the error can easily be
upgraded to any higher Sobolev norm by interpolation.
This concludes the proof of the theorem.
\end{proof}

An interesting question which emerges from the proof of the above theorem 
is whether one can lift the above correspondence to the level of the soliton 
manifolds. Above we have defined a map 
\begin{equation}\label{def-Gamma}
\M^N \ni Q \to  \Gamma(Q) :=  \{ \tilde Q_j \} \in \prod \M^{N_j}
\end{equation}
with the property that 
\[
\| Q - \sum \tilde Q_j \|_{H^s} \lesssim e^{-cR}.
\]

One could also argue in reverse fashion, namely start with the solitons 
$\tilde Q_j$ and sum them,
\[
v = \sum \tilde Q_j,
\]
Then the same argument as in the proof of the theorem shows that 
$v$ is a near soliton, in the sense that its residual energy is small.
Precisely, if 
\[
\bf B_-^N v = (u, \bfz,\bbeta).
\]
then we have 
\[
\| u\|_{H^s} \lesssim e^{-cR}, \qquad s > -\frac12.
\]
Hence by the mapping properties of the soliton addition,
it follows that the map
\begin{equation}\label{def-tGamma}
\times \M^{N_j} \ni \{\tilde Q_j\} \to \tilde \Gamma (\{\tilde Q_j\}) = Q := \bf B_+^N(0,\bfz,\bbeta) \in \M^N,
\end{equation}
is a near addition in the uniform norm,
\[
\| Q - \sum \tilde Q_j\|_{H^s} \lesssim e^{-cR}.
\]

If follows that the two manifolds $\M^N$ and $\sum \M^{N_j}$ are locally $O(e^{-cR})$ close.
Here the product can be interpreted as a smooth manifold via the addition map, since 
the manifolds $\M^{N_j}$ are locally uniformly transversal; this is because the elements in their tangent space are exponentially localized near the $O(R)$ separated points $x_j$.

But the two manifolds are also locally uniformly smooth; it follows that they must also be close
in any smooth topology:

\begin{theorem}
Let $Q$ be an $N$ soliton with $R$ separated bumps, and let $\tilde Q_j$ be as \eqref{def-tQj}. Then locally, near $Q$, respectively 
$\sum \tilde Q_j$, the manifolds $\M^N$ and $\sum \M^{N_j}$  are $O(e^{-cR})$ close as smooth manifolds.
\end{theorem}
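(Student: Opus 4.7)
\emph{Proof plan.} The strategy is to introduce a common smooth parametrization of both manifolds and to show that the two parametrizing maps into $H^s$ agree modulo an error which is $O(e^{-cR})$ not only in $H^s$ but in every $C^k$ norm. Once this is done the conclusion follows by interpreting both submanifolds as graphs of uniformly smooth maps over a common parameter base.

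First I would fix the common base. By Theorem~\ref{t:pure}, near $Q$ the manifold $\M^N$ is parametrized by $(\bfs,\bbeta) \in \Phase^N$, while near $\sum \tilde Q_j$ each factor $\M^{N_j}$ is parametrized by $(\bfs_j,\bbeta_j) \in \Phase^{N_j}$. The factorization $P_\bfz = \prod_j P_{\bfz_j}$, combined with the Chinese remainder argument in Lemma~\ref{l:find-r-beta} (applied to disjoint contours around each cluster of eigenvalues), produces a uniformly smooth diffeomorphism between $\Phase^N$ and $\prod_j \Phase^{N_j}$ in the $R$-separated regime. The uniform transversality of the $\M^{N_j}$---which holds because tangent vectors to $\M^{N_j}$ are exponentially concentrated near the $j$-th cluster of bumps, hence essentially orthogonal for $R$ large---implies that the addition map $\Psi: \prod_j \Phase^{N_j} \to H^s$ sending $(\bfs_j,\bbeta_j)_j$ to $\sum_j \tilde Q_j$ is a uniformly smooth immersion onto $\sum \M^{N_j}$. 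Let $\Psi': \prod_j \Phase^{N_j} \to H^s$ denote the composition of the above identification with the global soliton addition from $u=0$; it lands in $\M^N$ and is uniformly smooth by Theorems~\ref{t:dress-reg}, \ref{t:undress-reg}.

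Next I would upgrade the pointwise estimate $\|\Psi - \Psi'\|_{H^s} \lesssim e^{-cR}$, which follows from the discussion preceding the theorem, to the bound $\|\Psi - \Psi'\|_{C^k(\prod \Phase^{N_j}; H^s)} \lesssim_k e^{-cR}$ for every $k$. The map $\Psi'$ is built from: (i) locating the poles of the transmission coefficient of $v = \Psi(\cdot)$ inside a fixed contour; (ii) reading off the scattering polynomial $\bbeta$ by comparing left and right Jost functions at a base point; (iii) removing the $N$ solitons to obtain a residual $u$; and (iv) re-adding the solitons starting from $0$. Steps (i), (ii), (iv) are uniformly smooth by Section~\ref{s:dress+}, so the only source of the $O(e^{-cR})$ discrepancy lives in (iii), through the residual $\|u\|_{H^s} \lesssim e^{-cR}$. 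To propagate this bound to parameter derivatives, I would differentiate the Jost-function system \eqref{eq:tpsil} in $(\bfs_j,\bbeta_j)$. Because the Jost functions are uniformly holomorphic in the spectral parameters and analytic in $v$ (Proposition~\ref{p:wave}), and because $\partial_{(\bfs_j,\bbeta_j)} v$ is itself a smooth bump-localized perturbation, the homogeneous transfer matrix across a gap interval between clusters stays of size $O(e^{-cR})$ after any number of parameter differentiations. A Duhamel estimate then yields $\|\partial^\alpha u\|_{H^s} \lesssim_\alpha e^{-cR}$; composition with the uniformly smooth soliton addition map in step (iv) gives the claimed $C^k$ bound on $\Psi'-\Psi$.

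With $C^k$-closeness of $\Psi$ and $\Psi'$ in hand, the conclusion is immediate: both $\M^N$ and $\sum \M^{N_j}$ are images of $\prod_j \Phase^{N_j}$ under uniformly smooth immersions whose difference is $O(e^{-cR})$ in every $C^k$ norm, so the two manifolds are $O(e^{-cR})$-close as smooth submanifolds of $H^s$ near $Q$, respectively $\sum \tilde Q_j$. The main obstacle is the derivative estimate in Step~2: the $L^2$ argument of Theorem~\ref{t:struct} produced $e^{-cR}$ through the exponentially small Jost-function coupling across gap intervals, and to show that this smallness survives repeated differentiation in the spectral and scattering parameters---without being overwhelmed by the Jost functions' exponential growth inside the bumps---one really needs a Cauchy-type estimate on holomorphic families of wave functions in the spirit of Lemma~\ref{l:zhol}, rather than a direct ODE bound. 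This is the delicate technical step on which the whole argument rests.
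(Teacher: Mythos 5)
Your proposal takes a substantially harder route than the paper, and the key step does not go through with the tools available. The paper's proof is deliberately soft: it establishes only (a) that the two manifolds are $O(e^{-cR})$ close in the Hausdorff sense in $H^s$ (via the residual bound $\|u\|_{H^s}\lesssim e^{-cR}$ from the proof of Theorem~\ref{t:struct}), and (b) that both $\M^N$ and $\sum \M^{N_j}$ are \emph{locally uniformly smooth} submanifolds (Theorem~\ref{t:pure} for the former; uniform smoothness of each factor plus the exponential localization of tangent vectors, hence uniform transversality, for the latter). From these two facts alone it concludes closeness in every smooth topology, by the general principle that two uniformly $C^k$ manifolds which are $C^0$-close must be $C^j$-close for $j<k$. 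No comparison of parametrizations in $C^k$ is ever made.

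Your Step 2 — upgrading $\|\Psi-\Psi'\|_{H^s}\lesssim e^{-cR}$ to $\|\Psi-\Psi'\|_{C^k}\lesssim e^{-cR}$ — is precisely the stronger statement the paper explicitly declines to prove. The remark immediately following the theorem says that the closeness of the manifolds does \emph{not} imply that $\Gamma$ or $\tilde\Gamma$ are uniformly smooth near-identity maps, because "this would require a uniform regularity statement for the soliton removal map, which we do not have"; that uniform regularity is only a conjecture in Section~\ref{s:dress+}. Concretely, your map $\Psi'$ routes through the global parameters $(\bfz,\bbeta)$ of the combined $N$-soliton, and in the $R$-separated regime the recovered $\bbeta$ is necessarily of size $e^{cR}$ (the paper computes the bump separation as $\log|\beta_2+i\beta_3|$). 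Theorems~\ref{t:dress-reg} and \ref{t:undress-reg} give uniform smoothness only for $\bbeta$ in a compact set, and the paper states explicitly that the $(\bfz,\bbeta)$ parametrization "deteriorates as the solitons separate." So the claim that $\Psi'$ is uniformly smooth by those theorems is unjustified, and the proposed Jost-function/Duhamel differentiation does not address the actual obstruction, which is the degeneration of the global parametrization as $\bbeta\to\infty$, not the size of the cross-gap coupling. The correct fix is to abandon the $C^k$ comparison of parametrizations altogether and argue as the paper does, from Hausdorff closeness plus separate uniform smoothness of the two manifolds.
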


We note that this does not inply that either of the maps $\Gamma$, respectively $\tilde \Gamma$,
are uniformly smooth near identity maps between the two manifolds. This would require a uniform regularity statement for the soliton removal map, which we wo not have. Nevertheless, we conjecture that such a result should be true.

\section{The stability result}\label{s:stable}

Here we prove the stability result using the regularity of the soliton addition map.
We first restate the result in a more accurate form:

\begin{theorem}\label{t:stability}  
Let $s > -\dfrac12$, and $U$ a compact subset of the upper half-plane. There exist $\varepsilon_0>0$ and $C>0$ so that the following is true. 
Let $v$ be a pure $N$-soliton solution for either NLS or mKdV with initial 
data $v_0 \in \M_U^N$. If 
\begin{equation}
\| v_0 -w_0\|_{H^s} = \varepsilon \leq \varepsilon_0,
\end{equation}
then there exists another pure $N$-soliton solution $\tilde v$ so that 
\begin{equation}
\sup_{t \in \R} \| w(t) - \tilde v(t)\|_{H^s} \leq C \epsilon.
\end{equation}
b) Furthermore, this result is uniform with respect to all $N$-soliton solutions with spectral 
parameters in a compact subset of the open upper half-plane.    
\end{theorem}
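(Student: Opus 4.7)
The plan is to reduce everything to controlling a single soliton-free remainder via conservation of the fractional energy $E_s$, exploiting the commutation of the soliton addition and removal maps with the whole hierarchy of flows. Since $\Dress$ and $\Undress$ conjugate the flow on $H^s$ to the product of the Hamiltonian flow \eqref{phase-flow} on $\Phase_U^N$ and the NLS/mKdV evolution on the soliton-free component $\V_U^0$, the entire orbital stability question for a multisoliton background reduces to orbital stability of the zero state on $\V_U^0$ in $H^s$, a much more tractable problem.

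First, I would assume that the scattering parameters $\bbeta_0^*$ of $v_0$ lie in a fixed compact set (equivalently, that the bumps of $v_0$ are not too widely separated). By the continuity of the spectrum in $v$ (Lemma~\ref{zeroes1}) and the smallness $\|v_0-w_0\|_{H^s}=\varepsilon$, the state $w_0$ still lies in $\V_U^N$ for some slightly enlarged $U$, so $\Undress$ applies and yields a triple $(u_0,\bfz,\bbeta_0)$ with $w_0 = \Dress(u_0,\bfz,\bbeta_0)$. Applying Theorems~\ref{t:undress-reg} and~\ref{t:diffeomorphism}, the map $\Undress$ is a local diffeomorphism and uniformly smooth on compact $(\bbeta,\bfz)$, so
\[
\|u_0\|_{H^s} + |(\bfz,\bbeta_0)-(\bfz^*,\bbeta_0^*)| \lesssim \varepsilon,
\]
where $(\bfz^*,\bbeta_0^*)$ parametrize $v_0$. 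I take the target soliton to be $\tilde v_0 := \Dress(0,\bfz,\bbeta_0) \in \M_N$; Theorem~\ref{t:dress-reg}(b) gives $\|\tilde v_0 - v_0\|_{H^s} \lesssim \varepsilon$.

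Next, I would propagate in time using the commutation of the addition map with every flow of the hierarchy. This gives $w(t) = \Dress(u(t),\bfz,\bbeta(t))$ and $\tilde v(t) = \Dress(0,\bfz,\bbeta(t))$, where $u(t)$ is the NLS (resp.\ mKdV) evolution of $u_0$ and $\bbeta(t)$ evolves linearly on $\Phase_U^N$ via \eqref{phase-flow}; in particular $\bfz$ is invariant and $u(t)$ remains in $\V_U^0$ for all time because the spectrum of $\mathcal{L}$ is conserved. All that remains is to control $\|u(t)\|_{H^s}$. Because $u_0$ is small in $H^s$ and has no poles of $T$ in $U$, Theorem~\ref{energies}(3) together with the trace formula gives $|E_s(u_0) - \|u_0\|_{H^s}^2| \lesssim \|u_0\|_{H^s}^4$, hence $E_s(u_0)\lesssim \varepsilon^2$. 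A standard bootstrap based on conservation of $E_s$ and the two-sided comparison $E_s(u(t))\sim \|u(t)\|_{H^s}^2$ in the small-data regime then yields $\|u(t)\|_{H^s} \lesssim \varepsilon$ globally in $t$. Inserting this into the uniform smoothness of $\Dress$ in its first argument (Theorem~\ref{t:dress-reg}(b)) completes the bound $\|w(t)-\tilde v(t)\|_{H^s}\lesssim \varepsilon$ in this regime.

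The main obstacle is the uniformity when the scattering parameters $\bbeta_0^*$ are unbounded, i.e.\ when the bumps of $v_0$ are widely separated by some distance $R\gg 1$. In that regime the $(\bfz,\bbeta)$ parametrization of $\M_N$ is no longer uniform, so the direct application of Theorems~\ref{t:dress-reg}(b) and~\ref{t:undress-reg} breaks down. Here I would invoke the structure theorem~\ref{t:struct}: decompose $v_0 = \sum_j \tilde Q_j + O_{H^s}(e^{-cR})$ as a sum of lower-rank multisolitons $\tilde Q_j \in \M_{N_j}$ with scattering data in a fixed compact set, and correspondingly split $w_0 = \sum_j \tilde w_{0,j} + O_{H^s}(\varepsilon + e^{-cR})$ using the approximately orthogonal block structure of the tangent space to $\M_N$ near $v_0$. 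On each well-separated block the previous argument applies uniformly, and the blocks remain separated for at least a time proportional to $R$ divided by the maximal group velocity (controlled by the fixed compact set of spectral parameters). To iterate over longer time intervals, at each re-clustering event one reapplies $\Undress$ to regroup nearby bumps into an honest lower-rank multisoliton with compact scattering data and repeats the argument, using that the number of possible cluster configurations is finite (bounded by the partitions of $N$). Uniformity of constants then follows from the uniform regularity of $\M_N$ (Theorem~\ref{uniformSol}) and the uniform smoothness statements for $\Dress$ on each of the finitely many parametrization charts.
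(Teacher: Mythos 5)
Your core strategy in the compact-$\bbeta$ regime matches the paper's: remove the solitons from $w_0$, set $\tilde v_0 = \Dress(0,\bfz,\bbeta)$, use the commutation of $\Dress$ with the flows and conservation of $E_s$ to keep $u(t)$ small, and conclude via the regularity of the addition map. But your treatment of the case of unbounded scattering parameters is both based on a misreading and genuinely gappy, and that case is essential since the theorem places no restriction on how separated the bumps of $v_0$ are. First, Theorem~\ref{t:dress-reg}(b) asserts uniform smoothness of $\Dress$ globally in $u$ \emph{and} $\bbeta$ (only part (a) and the removal theorem are restricted to compact $\bbeta$), so the final step $\|w(t)-\tilde v(t)\|_{H^s}\lesssim\|u(t)\|_{H^s}$ does not break down for large $\bbeta$ and needs no clustering. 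What does break down is your derivation of $\|u_0\|_{H^s}\lesssim\varepsilon$ from the uniform smoothness of $\Undress$, which is only proved for compact $\bbeta$. The paper's fix is to bypass $\Undress$'s regularity entirely: by the trace formula, $E_s(u_0) = E_s(w_0) - 2\sum_k\Xi_s(2z_k) =: F_s(w_0)$, and this ``soliton-free energy'' is a parametrization-free, uniformly smooth, non-negative functional of $w_0$ (the poles $\bfz$ depend uniformly smoothly on $w_0$ via contour integrals of $T$) that vanishes to second order on the uniformly smooth manifold $\M_U^N$. Hence $E_s(u_0)=F_s(w_0)\lesssim\varepsilon^2$, and positive definiteness of $E_s$ for small data gives $\|u_0\|_{H^s}\lesssim\varepsilon$ uniformly, with no reference to $\bbeta$ at all.

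Your proposed workaround via Theorem~\ref{t:struct} and iterated re-clustering would not close even on its own terms. The splitting $w_0=\sum_j\tilde w_{0,j}+O(\varepsilon+e^{-cR})$ into blocks attached to the clusters of $v_0$ is not established anywhere (the ``approximately orthogonal block structure'' of the tangent space controls $\M_N$, not an arbitrary nearby $w_0$), and more seriously the re-clustering events are not finite in number: the two-soliton analysis of Section~\ref{doubleeigenvalue} exhibits quasiperiodic dynamics in which bumps approach and separate infinitely often, so an argument that reapplies $\Undress$ at each such event and loses a fixed constant each time cannot produce a bound uniform in $t$. The finiteness of the set of partitions of $N$ bounds the number of chart types, not the number of times the solution transitions between them.
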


\begin{proof}

We denote by $\bfz_0,\bbeta_0$ the spectral, respectively scattering parameters for $v_0$.
The transmission coefficient for $v_0$ is then given by 
\[
T_{v_0}(z) =  \prod_{k=1}^N \frac { z- \bar z_{k0}}{z - z_{k0}},
\]
and has poles at $\bfz_0$.

Away from the poles, the transmission coefficient depends smoothly on the input function.
Hence, if $\varepsilon_0$ is small enough (depending only on $U$), it follows that the transmission 
coefficent of $w_0$ has exactly $N$ poles $\bfz$ in a small neighbourhood of $U$,
and that $\bfz$ is close to $\bfz_0$,
\[
d(\bfz,\bfz_0) \lesssim \varepsilon   
\]
where the distance is measured using the symmetric polynomials.

We now apply the soliton removal map to $v_0$, denoting
\[
\Undress (w_0) = (u_0,\bfz,\bbeta),
\]
and define the initial data 
\[
\tilde v_0  = \Dress(0,\bfz,\bbeta). 
\] 

By the trace theorem, the $H^s$ energy of $w_0$ splits into
\[
E_s(w_0) = E_s(u_0) + \sum_{k=1}^N \Xi_s(z_k) =:F_s(w_0) + \sum_{k=1}^N \Xi_s(z_k)
\]
where $F_s(w_0)$ denotes the "no soliton energy" of $w_0$. This 
is uniformly smooth in the $H^s$ topology, see \cite{MR3874652} and also Theorem~\ref{energies}, and vanishes of second order on the $N$-soliton manifold $\M_U^N$. It follows 
that 
\[
F_s(w_0) \lesssim \epsilon^2 
\]
which reinterpreted in terms of $u_0$ shows that
\[
E_s(u_0) \lesssim \epsilon^2.
\]
Since $E_s$ is positive definite for small data, it follows that 
\[
\| u_0\|_{H^s} \lesssim \epsilon,
\] 
and by the uniform regularity of the soliton addition map,
\[
\| w_0 - \tilde v_0\|_{H^s} \lesssim \epsilon.
\]

The  $N$-soliton solution $\tilde v$ with initial data $\tilde v_0$ to either NLS or mKdV is given by
\[
\tilde v(t) =  \Dress(0,\bfz,\bbeta(t)),
\]
where the parameter $\beta(t)$ depends on whether we consider the NLS or mKdV flow.

Denote by $u$ the solution to NLS or mKdV with initial data $u_0$. Since $E_s$ is conserved, this remains small,
\[
\| u(t)\|_{H^s} \lesssim \varepsilon.
\]

On the other hand the soliton addition map commutes with the flows, so we must have
\[
w(t) = \Dress(u(t),\bfz,\beta(t)).
\]
Using our result on the uniform regularity of the soliton addition map
in Theorem~\ref{t:dress-reg}, it follows that
\[
\| w(t) - \tilde v(t)\|_{H^s} \lesssim \| u(t)\|_{H^s} \lesssim \varepsilon,
\]
which concludes the proof of our theorem.
\end{proof}

\section{Double eigenvalues}

\label{doubleeigenvalue} 
In this section w undertake a case study of double eigenvalues to gain some additional intuition and to provide some examples of multisoliton  dynamics. 

\subsection{The asymptotic shift due to interaction} 
We begin with the case of two different eigenvalues $z_1 \ne z_2$ and the $z_j$ waves for the Lax operator with trivial potential, 
\[ \psi_1 = \left( \begin{matrix} e^{\gamma_1 - i z_1x} \\ e^{-\gamma_1 + i z_1 x} \end{matrix} \right)  \] 
with $|\real \gamma_1| \lesssim 1$. We assume that $ \real \gamma_2$ is large and choose 
\[ \psi_2 = \left( \begin{matrix} 1 \\  e^{-2\gamma_2+ iz_2 x} \end{matrix} \right).\] 
In this regime it is convenient to apply the iterated \Backlund transform. The second intertwining operator (the one with respect to the index $2$) is 
\[  D_2= \left( \begin{matrix}  i\partial  - \bar z_2  &  0\\ 0 & - i\partial - \bar z_2  \end{matrix} \right)-2i \frac{\im z_2}{1+ e^{-2\real (\gamma_2- iz_2 x)}  } \left( \begin{matrix} 1 & e^{-\overline{ (\gamma_2-iz_2 x)}   } \\ e^{-(\gamma_2-iz_2x)} &  e^{-2 (\real \gamma_2 + \im z_2 x)} \end{matrix} \right) \] 

We apply the second  intertwining operator to $\psi_1$, 
\begin{equation} \label{expansion}   D_2 \psi_1 = \left( \begin{matrix} 
\displaystyle \Big[(z_1 -  z_2)+2i \im z_2 \frac{e^{-2 \real \gamma_2 -2 \im z_2 x}}{1+e^{-2\real \gamma_2 -2 \im z_2 x} }\Big]    e^{\gamma_1 -i z_1 x} -2i \im z_2   \frac{e^{- \overline{\gamma_2 -iz_2 x} - \gamma_1+ i z_1x} }{1+ e^{-2 \real \gamma_2-2\im z_2 x}} 
\\[2mm]  \displaystyle \Big[ (z_1-\bar z_2) -2i \im z_2 \frac{e^{-2(\real \gamma_2 + \im z_2 x)}}{1+ e^{-2(\real \gamma_2+ \im z_2 x)}}    \Big]
e^{-\gamma_1 + i z_1x} -2i \im z_2 \frac{e^{-(\gamma_2-iz_2 x)} e^{\gamma_1 -i z_1 x}}{1+ e^{-2(\real \gamma_2 + \im z_2 x)} }\end{matrix} \right)
\end{equation} 

 Without interaction the positions of the solitons would be
 the point $x_j$ where both components of $\psi_j$ have the same size, 
 \[  x_j = - \frac{\real \gamma_j}{\im z_j}. \] 
We assume without loss of generality $x_2 \le x_1$. We are interested in the case that the two  soliton function  will consist of two separated bumps. We define their position as the point where the amplitude has a local maximum, or, equivalently, where both components of $D_2 \psi_1$ have the same size. The  second soliton is far to the left of the first soliton if
    \[   \im z_2 e^{ -  \real \gamma_2 - \im z_2 x_1}  << |z_1-z_2|. \] 
Then 
\[ D_2 \Psi(x_1) =  \Big(1 + O\Big(\frac{\im z_2}{|z_1-z_2|} e^{-\real \gamma_2 - \im z_2 x_1}\Big)\Big)  \left( \begin{matrix} z_1-z_2 \\ z_1-\bar z_2 \end{matrix} \right) \] 
and due to the exponential factor $ e^{ \pm ( \gamma_1-iz_1 x)}$ 
\begin{equation} \label{asym}  y_1 = \frac{1}{2\im z_1}  \ln \frac{ |z_1-\bar z_2|}{|z_1- z_2|}+O\Big( \frac{\im z_2}{|z_1-z_2|} e^{-\real \gamma_2 - \im z_2 x_1}  \Big).  \end{equation}  
This gives the asymptotic shift due to the interaction when the solitons are well separated. it is not hard to work out the shape of the solitons in this case

\subsection{ An algebraic computation}\label{ss:w}
 In the sequel we seek for a more detailed understanding when the solitons are well separated  with a separation independent of the distance between the eigenvalues, a much more involved task.     
We consider again two  states $\psi_1$ and $\psi_2$  associated to eigenvalues
$z_1, z_2$ and consider the corresponding matrix $M$ 
\[
M = i \left( \begin{matrix} \dfrac{\psi_1^* \psi_1}{\bar z_1 -  z_1} 
& \dfrac{\psi_2^* \psi_1}{\bar z_2 -  z_1} \\ 
 \dfrac{\psi_1^* \psi_2}{\bar z_1 -  z_2} &  \dfrac{\psi_2^* \psi_2}{\bar z_2 -  z_2}  \end{matrix} \right). 
\]
Let $m$ be the inverse of $M$. We evaluate the expression
\begin{equation}\label{w-def}
   w = 2\bar \psi_j^2 m_{jk} \psi_k^1,
\end{equation}
which arises in the definition of the \Backlund transform
in \eqref{eigen-m}.   Our first
task is to compute the determinant of $M$,
\[
\begin{split}
\det M = & \  \prod_{i,j=1}^2  \frac{-1}{\bar z_i - z_j} 
(| z_1 - \bar z_2|^2 |\psi_1|^2 |\psi_2|^2 - 4 \im z_1 \im z_2 |\psi_1^* \psi_2|^2)
\\
= & \ \prod_{i,j=1}^2  \frac{-1}{\bar z_i - z_j} \Big(| z_1 - z_2|^2 |\psi_1|^2 |\psi_2|^2 + 
4 \im z_1 \im z_2 ( |\psi_1|^2 |\psi_2|^2 - |\psi_1^* \psi_2|^2\Big),
\end{split}
\]
where we have used
\begin{equation} \label{usefulid} 
|z_1-\bar z_2|^2 = |z_1-z_2|^2 +  4 \im z_1 \im z_2.      
\end{equation} 
We recall that we can choose
\[ 
\psi_j  
= \left( \begin{matrix} e^{\gamma_j} \\ e^{-\gamma_j}\end{matrix} \right), \qquad \gamma_j = -i \sum_{k=0}^3 \beta_k z_j^k    -i z_j x.
\] 
Then we can rewrite the expression  
\[
D=-\prod\limits_{i,j=1}^2  (\bar z_i -z_j) \det M
\]
 as
\begin{equation}
\label{determinant} 
\begin{split} 
D & \, =  4 |z_1-z_2|^2 \cosh( 2 \real \gamma_1) \cosh(2\real  \gamma_2) 
+ 16\im z_1 \im z_2 \left| \sinh(\gamma_1-\gamma_2)\right|^2 
\\ & =    2|z_1-z_2|^2 [|\cosh( \gamma_1+ \gamma_2)|^2 + |\sinh( \gamma_1+ \gamma_2)|^2
+ | \cosh (\gamma_1-\gamma_2)|^2]
\\ & \ \ \ 
+ 2 [|z_1+z_2|^2 - 2(z_1 z_2 + \bar z_1 \bar z_2)] \left| \sinh(\gamma_1-\gamma_2)\right|^2.
\end{split} 
\end{equation}  
We can read off important parts of the structure. At the right hand side of the first equality we see a sum of two terms, the first containing a factor $|z_1-z_2|^2$, and the second a factor $|\gamma_1-\gamma_2|^2$. This vanishes quadratically exactly when $z_1 = z_2$ and $ \gamma_1=\gamma_2$ modulo $i \pi$, or, equivalently, if  $\psi_1$ and $\psi_2$ are collinear.

On the right hand side of the second equality we consider 
$z_j$, $\gamma_j$ and $\bar z_j$ resp. $\bar \gamma_j$
as separated variables and see that exchanging 
$(z_1, \gamma_1)$ and $(z_2,\gamma_2)$ while keeping the complex conjugates changes the sign.

In the complement of the set where $\det M$ vanishes  we can write
\[
w = -\frac{2 A}{ D }
\]
where $A$ is given by  $4$ times
\[
 ( e^{-\bar \gamma_1}, e^{-\bar \gamma_2}) \! \left(\begin{matrix} \frac12 |z_2\!-\!\bar z_1|^2 \im z_1 (e^{\gamma_2+\bar \gamma_2}\!+\! e^{-\gamma_2-\bar \gamma_2}) &\hspace{-2mm}  - i  \im z_1 \im z_2  (\bar z_1\!-\!z_2)(e^{\gamma_1+\bar \gamma_2}\! +\! e^{-\gamma_1-\bar \gamma_2})  \\  - i \im z_1 \im z_2 (\bar z_2\!-\!z_1)(e^{ \gamma_2+\bar \gamma_1}\!+\! e^{-\gamma_2-\bar \gamma_1})  &  \frac12 |z_2\!-\!\bar z_1|^2 \im z_2  (e^{\gamma_1+\bar \gamma_1}\!+ \!e^{-\gamma_1-\bar \gamma_1})   \end{matrix} \right)\! \!\left( \begin{matrix} e^{\gamma_1} \\ e^{\gamma_2} \end{matrix} \right)\!\!. 
 \] 

We rewrite $A$ as follows: 
\begin{equation} \label{numerator}  \begin{split} 
 A= &  \  4|z_2 -  \bar z_1|^2 \left\{ \im z_1  ( \exp(2i \im \gamma_1) \cosh(\gamma_2+\bar \gamma_2)    
+  \im z_2  (\exp(2i \im z_2) \cosh( \gamma_1+\bar\gamma_1)  \right\} 
\\ &  
- 4i \im z_1 \im  z_2   \Big\{  (\bar z_1-z_2) ( \exp(\gamma_1 -\bar \gamma_1 +\gamma_2 +\bar \gamma_2)+ \exp(-\gamma_1-\bar \gamma_1 +\gamma_2-\bar \gamma_2)) \\ &\hspace{3cm} + (\bar z_2 - z_1)( \exp(\gamma_1+\bar  \gamma_1 + \gamma_2-\bar \gamma_2) + \exp(\gamma_1-\bar \gamma_1-\gamma_2-\bar \gamma_2) 
 \Big\}
 \\  = & \  4|z_1-z_2|^2 ( \im z_1  e^{ 2i\im \gamma_1} \cosh (2 \real \gamma_2) +  \im z_2 e^{2i \im \gamma_2} \cosh (2 \real \gamma_1) )
 \\ &  + 8i \im z_1 \im z_2 \Big\{ (z_1-z_2) e^{\gamma_1+\gamma_2} 
 \sinh(\bar \gamma_1 -\bar \gamma_2)
 + (\bar z_1-\bar z_2) e^{-\bar \gamma_1-\bar \gamma_2} 
 \sinh(\gamma_1 -\gamma_2)\Big\}
 \\ = & \  |z_1-z_2|^2 \Big( 
2\im(z_1+z_2)(e^{ 2i\im \gamma_1} \cosh (2 \real \gamma_2)+
e^{2i \im \gamma_2} \cosh (2 \real \gamma_1))
\\  & \qquad 
+ ((z_1-z_2)+(\bar z_1- \bar z_2) )(e^{\gamma_1-\bar \gamma_1} \cosh (\gamma_2+\bar \gamma_2)-
e^{\gamma_2-\bar \gamma_2} \cosh (\gamma_1+\bar \gamma_1))\Big)
 \\ &  + 8i \im z_1 \im z_2 \Big\{ (z_1-z_2) e^{\gamma_1+\gamma_2} 
 \sinh(\bar \gamma_1 -\bar \gamma_2)
 + (\bar z_1-\bar z_2) e^{-\bar \gamma_1-\bar \gamma_2} 
 \sinh(\gamma_1 -\gamma_2)\Big\}
 \\= & \ |z_1-z_2|^2 \Big\{ 2\im (z_1+z_2)
 \left( e^{\gamma_1+\gamma_2} \cosh(\bar \gamma_1 - \bar \gamma_2)
 +e^{-\bar \gamma_1 -\bar \gamma_2} \cosh(\gamma_1 - \gamma_2)\right)
 \\
 & \ \ \ \ \ \ 
  - i (z_1-z_2) e^{-\bar \gamma_1 -\bar \gamma_2}\sinh(\gamma_1-\gamma_2) - i (\bar z_1 - \bar z_2)
 e^{\gamma_1+\gamma_2} \sinh(\bar \gamma_1 - \bar \gamma_2)\Big\} 
 \\ &  + i  [|z_1+z_2|^2 - 2(z_1 z_2 + \bar z_1 \bar z_2)]
 \\ & \qquad \times \Big\{ (z_1-z_2) e^{\gamma_1+\gamma_2} 
 \sinh(\bar \gamma_1 -\bar \gamma_2)
 + (\bar z_1-\bar z_2) e^{-\bar \gamma_1-\bar \gamma_2} 
 \sinh(\gamma_1 -\gamma_2)\Big\}
   \end{split}
\end{equation}

Both $A$ and $D$ are smooth. It is an easy consequence that $w = -2AD^{-1}$ is smooth in the set $ \{ z_1\ne z_2\} \cap \{\gamma_1-\gamma_2 \notin i\pi \Z\}$ and that it vanishes if $\gamma_1-\gamma_2 \notin i\pi \Z$ but $z_1=z_2$. 

In order to resolve the apparent singularity at the zeroes 
of the denominator, we view $z_j$ and $\bar z_j$ as separate variables,
and similarly for $\gamma_j$ and $\bar \gamma_j$.
We first observe that both $A$ and $D$ are 
odd with respect to the separate symmetries 
\[
(z_1,\gamma_1) \leftrightarrow (z_2,\gamma_2),
\]
respectively
\[
(\bar z_1,\bar \gamma_1) \leftrightarrow (\bar z_2,\bar \gamma_2).
\]
Then their ratio is invariant under both separate exchanges.
To capture the cancellation allowed by this symmetry we introduce 
the auxiliary variables $\gamma,\alpha$
\begin{equation}\label{w-notation}
2\gamma = \gamma_1+ \gamma_2 \qquad \alpha = \frac{\sinh(\gamma_1-\gamma_2)}{z_1-z_2}, 
\end{equation}
and cancel a $|z_1-z_2|^2$ factor. We obtain 

\begin{lemma} 
With the notations in \eqref{w-notation}, the expression $w$ in \eqref{w-def}
can be represented in the nondegenerate form
\[
w  
= -\frac{2A_0}{D_0} 
\] 
   where 
\begin{equation}\label{A0} 
\begin{split}
 A_0 = & \  2\im (z_1+z_2)
 \left( e^{2\gamma} \cosh(\bar \gamma_1 - \bar \gamma_2)
 +e^{-2\bar \gamma} \cosh(\gamma_1 - \gamma_2)\right)
 \\
& 
  - i \alpha (z_1-z_2)^2 e^{-2\bar \gamma} - i \bar \alpha (\bar z_1 - \bar z_2)^2
 e^{2\gamma}  
 + i [|z_1+z_2|^2 - 2(z_1 z_2 + \bar z_1 \bar z_2)] \Big\{ \bar\alpha e^{2\gamma} 
 + \alpha e^{-2\bar \gamma} \Big\}
\end{split}
\end{equation} 
and 
\begin{equation} \label{D0} 
D_0 =  2\Big[|\cosh(2\gamma)|^2 + |\sinh( 2\gamma)|^2 + | \cosh (\gamma_1-\gamma_2)|^2\Big] 
+ 2\Big[|z_1+z_2|^2 - 2(z_1 z_2 + \bar z_1 \bar z_2)\Big] |\alpha|^2  
\end{equation} 
\end{lemma}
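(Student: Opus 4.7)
The plan is a direct algebraic reduction built on the formulas already derived for $A$ (equation \eqref{numerator}) and $D$ (equation \eqref{determinant}). The idea is to make the factor $|z_1-z_2|^2$ explicit in both numerator and denominator using the substitutions
\[
\sinh(\gamma_1-\gamma_2) = (z_1-z_2)\alpha, \qquad \sinh(\bar\gamma_1-\bar\gamma_2) = (\bar z_1-\bar z_2)\bar\alpha,
\]
and then cancel it. Note that $\alpha$ is regular at $z_1=z_2$ because $\gamma_1-\gamma_2$ is divisible by $z_1-z_2$ as a polynomial in the spectral variable, so the substitution is well-defined throughout.

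For the denominator this is immediate: in the second line of \eqref{determinant} the first bracket already carries a visible $|z_1-z_2|^2$, while the second contains $|\sinh(\gamma_1-\gamma_2)|^2 = |z_1-z_2|^2|\alpha|^2$. Factoring $|z_1-z_2|^2$ out and using $\gamma_1+\gamma_2=2\gamma$ yields exactly $D_0$ as in \eqref{D0}.

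For the numerator I would use the last form of $A$ in \eqref{numerator}. Inside the bracket multiplied by $|z_1-z_2|^2$, the two $\cosh$ terms directly produce the $2\im(z_1+z_2)(e^{2\gamma}\cosh(\bar\gamma_1-\bar\gamma_2) + e^{-2\bar\gamma}\cosh(\gamma_1-\gamma_2))$ piece of $A_0$, while the two $\sinh$ terms absorb one more power of $z_1-z_2$ (resp.\ $\bar z_1-\bar z_2$) into $\alpha$ (resp.\ $\bar\alpha$), producing the $-i(z_1-z_2)^2\alpha e^{-2\bar\gamma} - i(\bar z_1-\bar z_2)^2\bar\alpha e^{2\gamma}$ contributions. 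The other main bracket has the form $(z_1-z_2)e^{2\gamma}\sinh(\bar\gamma_1-\bar\gamma_2) + (\bar z_1-\bar z_2)e^{-2\bar\gamma}\sinh(\gamma_1-\gamma_2)$, and after both substitutions it collapses to $|z_1-z_2|^2(\bar\alpha e^{2\gamma} + \alpha e^{-2\bar\gamma})$. Pulling the common $|z_1-z_2|^2$ out of both main parts identifies the quotient with $A_0$, hence $w = -2A/D = -2A_0/D_0$.

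There is no genuine obstacle here; the expression is nondegenerate since $|\cosh(2\gamma)|^2 + |\sinh(2\gamma)|^2 = \cosh(4\real\gamma) \ge 1$ forces $D_0 \ge 2$, while the coefficient $|z_1+z_2|^2 - 2(z_1z_2+\bar z_1\bar z_2) = |z_1-z_2|^2 + 8\im z_1\im z_2$ is manifestly nonnegative. The only real risk in writing out the details is arithmetic sign-tracking, but the symmetry principle emphasized in Subsection~\ref{ss:w}—that both $A$ and $D$ are antisymmetric under $(z_1,\gamma_1)\leftrightarrow(z_2,\gamma_2)$ and separately under the conjugate swap—guarantees a priori that $(z_1-z_2)(\bar z_1-\bar z_2)$ divides both, serving as a built-in consistency check for the factorization.
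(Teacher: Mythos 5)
Your proposal is correct and is essentially the paper's own argument: the lemma is obtained exactly by substituting $\sinh(\gamma_1-\gamma_2)=(z_1-z_2)\alpha$, $\sinh(\bar\gamma_1-\bar\gamma_2)=(\bar z_1-\bar z_2)\bar\alpha$ and $\gamma_1+\gamma_2=2\gamma$ into the final forms of $A$ in \eqref{numerator} and $D$ in \eqref{determinant} and cancelling the common factor $|z_1-z_2|^2$, with the antisymmetry under the separate exchanges $(z_1,\gamma_1)\leftrightarrow(z_2,\gamma_2)$ and its conjugate serving as the justification that this factor must divide both. Your added observations (regularity of $\alpha$ on the diagonal since $z_1-z_2$ divides $\gamma_1-\gamma_2$, and $D_0\ge 2$ via $|\cosh(2\gamma)|^2+|\sinh(2\gamma)|^2=\cosh(4\real\gamma)$ together with $|z_1+z_2|^2-2(z_1z_2+\bar z_1\bar z_2)=|z_1-z_2|^2+8\im z_1\im z_2\ge 0$) are accurate and consistent with the bounds the paper records afterwards.
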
 
Assuming that $z_1,z_2$ are confined to a (small) compact subset of the upper half-plane, $s_1=z_1+z_2$, $s_2=z_1^2+z_2^2$
we interpret this expression as a zero homogeneous form 
\[
w = w(\mu,s_1,s_2)
\]
in the complex variables
\[
\mu = (\mu_1,\mu_2,\mu_3,\mu_4) = (\cosh(2\gamma), \sinh( 2\gamma), \cosh (\gamma_1-\gamma_2), \alpha),
\]
with smooth coefficients which are symmetric functions separately in $(z_1,z_2)$ and 
$(\bar z_1,\bar z_2)$, resp. smooth coefficients in $s_1$ and $s_2$.  

For the function $w$ we note the pointwise bound:
\begin{equation}\label{w-size}
|w| \lesssim \frac{(|\mu_1| + |\mu_2|)(|\mu_3| +|\mu_4|)}{|\mu|^2}  := w_0, 
\end{equation}
which in particular shows that for unbalanced $\mu$'s $w$ must be small:
\[
|w| \approx 1 \quad \implies  |\mu_1| + |\mu_2| \approx |\mu_3| +|\mu_4|.
\]
We also have similar bounds for the derivatives of $w$ with respect to $\mu$,
\begin{equation}\label{dw-size}
| \mu|^{|\alpha|} \Big| \partial_\mu^{\alpha}   w\Big| \lesssim  w_0 . 
\end{equation}

One might be tempted to parametrize $w$ as a function of 
$z_1,z_2,\gamma$ and $\alpha$, but $\cosh (\gamma_1-\gamma_2)$ can only be viewed
locally as a smooth function of $\alpha$ for $\gamma_1-\gamma_2$ away from $(\frac12 + \Z)\pi i$.
Thus it is better to think of these variables, together with $z_1+z_2$ and $z_1 z_2$ 
as functions on a smooth complex manifold $M$ of complex dimension $4$, which is 
the is cartesian product of the smooth Riemann surface 
\[ \{ (\mu_1, \mu_2): \mu_1^2 -\mu_2^2 =1\}  \] 
and the three dimensional complex manifold (recall that 
$(z_1-z_2)^2=  2s_2 -s_1^2 $)
\[ 
\{(\mu_3,\mu_4, s_1,s_2)\in \C^4 :   \ \mu_3^2 - (2s_2-s_1^2) \mu_4^2 = 1 \},
\]
which is smooth since $ \mu_3^2 - (2s_2-s_1^2) \mu_4^2 -1 $ 
is nondegenerate in a neighborhood of the manifold.

We remark that on $M$ we have the relations
\[
|\mu| \geq 1, \quad ||\mu_1| - |\mu_2|| \le  1 , \qquad \Big||\mu_3|-|z_1-z_2| |\mu_4| \Big| \le 1 
\]
Then we can bound
\[
| w_0 | \le \frac{(1+2|\mu_1|)(1+2|\mu_4|)}{|\mu|^2}
\]
if $|z_1-z_2| \le 1$,
which can only be large if $|\mu_1|+1 \approx |\mu_4|+1$.

The function $w$ above will describe the pointwise size of a soliton. 
Because of that, the next question we want to address is where is $w$ large. 
Heuristically we expect to have two regions of interest 

\begin{enumerate}[label=(\roman*)]
\item {\bf The one bump case} $|\mu_1|, |\alpha| \lesssim 1$
where the amplitude of $w$ could get as high as $2 \im(z_1+z_2)= 4\im z $,with $2 z = z_1 + z_2$.
This  value is attained when $\gamma_1 = \gamma_2 = \alpha = 0$.

\item  {\bf Separated bumps} $|\mu_1| \approx |\alpha| \gg 1$, where we have amplitudes closer to $2 \im z$ if $z_1-z_2 $ is small. 
\end{enumerate}

We are particularly interested in understanding this in the (near) degenerate case, when $z_1$ and $z_2$ are close but $\alpha$ is large 
and approximatively balances the $\cosh(4 \real \gamma)$ in the denominator, so that $w$ has size $O(1)$. Toward that goal, we denote
\begin{equation} \label{def-sigma} 
\sigma = (z_1 - z_2) \coth (\gamma_1-\gamma_2)
\end{equation} 
which is bounded when $\alpha$ is large, and has limit $\pm(z_1-z_2)$
as $\real(\gamma_1 - \gamma_2)$ goes to $\pm \infty$.
Then we can rewrite $D_0$ as 
\[
D_0 = 2 \cosh(4 \real \gamma) + 2[|z_1+z_2|^2 - 2 (z_1z_2+\bar z_1 \bar z_2) + \sigma^2] |\alpha|^2.
\]
Since 
\[
|\sigma|^2 = |z_1 - z_2|^2 + O(|\alpha|^{-2}),
\]
it follows that 
\[
D_0 = 2 \cosh(4 \real \gamma) + 2|z_1-\bar z_2|^2 |\alpha|^2 + O(1).
\]
On the other hand we can rewrite the expression $A_0$ as
\begin{equation}\label{A0+} 
A_0 = c_+ \bar\alpha e^{2\gamma} + c_- \alpha e^{-2\gamma},
\end{equation} 
where $c_+$ and $c_-$ are bounded, 
\[
\begin{split}
 c_+ = & \  i [|z_1+z_2|^2 - 2(z_1 z_2 + \bar z_1 \bar z_2)-(\bar z_1-\bar z_2)^2] + 2 \im(z_1+z_2) \bar \sigma
  \\
 c_- = & \  i [|z_1+z_2|^2 - 2(z_1 z_2 + \bar z_1 \bar z_2)-( z_1- z_2)^2] + 2 \im(z_1+z_2)  \sigma
 \end{split}
\]
Thus we get
\begin{equation}\label{w-app1}
w = -2 e^{2 i \im \gamma} 
\frac{ c_+ \bar\alpha e^{2\real \gamma} + c_- \alpha e^{-2 \real \gamma}}
{2 \cosh(4 \real \gamma) + 2|z_1-\bar z_2|^2 |\alpha|^2} 
+O(\frac{1}
{\cosh(4 \real \gamma) +  |\alpha|^2} ).
\end{equation}
With a slightly larger error we can further simplify this as
\begin{equation}\label{w-app2}
w = - \frac{8   i (\im z)^2 (\bar\alpha e^{2\gamma} + \alpha e^{-2\bar \gamma})} 
 {\cosh{4 \real \gamma}
+ 8 (\im z)^2 |\alpha|^2 } +{O}(|z_1-z_2|) + O( \alpha^{-1}).
\end{equation}
This has near maximum amplitude $2 \im z$ when
\begin{equation}  \label{nearmax} 
\cosh(2 \real \gamma) = 2 \im z |\alpha|,
\end{equation}
and phase
\[
\pi/2 \mp (\arg \alpha - 2 \im \gamma), 
\]
where the sign depends on the sign of $\real \gamma$.

 It is also interesting to check the asymptotic behavior of \eqref{w-app1} as $\real(\gamma_1-\gamma_2) \to \infty$.
 There we can approximate
\[
\cosh(\gamma_1 - \gamma_2) \approx \sinh(\gamma_1 - \gamma_2) \sgn \real(\gamma_1-\gamma_2).
\]
This yields
\[
w = \frac{-\bar c_-      \bar\alpha e^{2\gamma} +
c_+  \alpha e^{-2\bar \gamma}} 
 {\cosh{4 \real \gamma}
+ 8 (\im z)^2 |\alpha|^2 } +{O}(|z_1-z_2|^2) + O( \alpha^{-2}),
\]
where 
\[
c_{\pm} = (8 i (\im z)^2 \pm 4 \im z(z_1-z_2)   \sgn \real(\gamma_1-\gamma_2)). 
\]
This  gives factors of $\im z \im z_1$ respectively 
$\im z \im z_2$ at the numerator, which will select the different bump 
amplitudes $2\im z_1$, respectively $2 \im z_2$.
Compared with the prior computation we see the transition from the amplitude $2\im z$ for one bump solitons to the amplitude $2\im z_j$ as the distance tends to infinity.

\subsection{Two soliton states} 

Separated two solitons   are close to the sum  of two  $1$-solitons. We study the general pure two soliton solution and estimate the difference to the algebraic sum of two solitons, whenever the two centers are far apart. This analysis is new, nontrivial and interesting in the case of two close eigenvalues. Asymptotically the eigenvalue parameters of the two solitons are the poles of the transmission coefficients. But as soon as their distance is closer than $\ln (2+ \frac{\im(z_1+z_2)}{|z_1-z_2| })$, the interaction is visible
and we can see a transition regime via effective soliton parameters, which we describe. As a consequence we obtain a uniform parametrization of the two soliton manifold across multiplicities.

Following the pattern in the previous sections, we begin by 
considering $\gamma_1$, $\gamma_2$ of the form
\begin{equation} \label{gammaj} 
\gamma_j = i(\beta_0 + \beta_1 z_j + \beta_2 z_j^2 +\beta_3 z_j^3)
\end{equation} 
with real coefficients $\beta_k$. Then in terms of the elementary sysmmetric polynomials $s_1$
and $s_2$
\[ 2\gamma =   i(\beta_0 + \beta_1( z_1+z_2) + \beta_2 (z_1^2+z_2^2) + \beta_3 ( z_1^3 + z_2^3)) 
= i(\beta_0 + \beta_1 s_1 + \beta_2 s_2 + \beta_3 s_1( \frac32 s_2- \frac12s_1^2) 
\] 
\[ \gamma_1-\gamma_2 = i (\beta_1( z_1-z_2) +  \beta_2 (z_1^2 - z_2^2) + \beta_3( z_1^3 - z_2^3) ) 
= i (z_1-z_2)(\beta_1 + \beta_2 s_1 + \beta_3 (\frac12 s_2+\frac12 s_1^2 )) 
\] 
\[ 
\mu_1 = \cosh(2\gamma), \quad \mu_2 = \sinh(2\gamma), \quad  \mu_3 = \cosh(\gamma_1-\gamma_2) , \quad \alpha= \frac{\sinh(\gamma_1-\gamma_2)}{z_1-z_2}.  
\] 

Then we can view the above $w$ as 
\[
w = w(\bfs, \bbeta), \qquad \bfs = (z_1+z_2,z_1^2+ z^2_2), \quad \bbeta = (\beta_0,\beta_1,\beta_2,\beta_3),
\]
where for $\bfz$ we use the topology defined by the symmetric polynomials.
For this function we have

\begin{lemma}
For $z_1,z_2$ in a compact subset of the upper half-space, the 
function $w$ is a uniformly smooth function of $(\bfs,\bbeta)$. Furthermore,
we have the uniform bound
\begin{equation}
| \partial_{\bfs}^a \partial_{\bbeta}^b w | \lesssim |w_0|,    
\end{equation}
where
\[
w_0 = \frac{(1+|\alpha|)\cosh(2\real \gamma) }{\cosh( 4 \real \gamma) + |\alpha|^2 }.
\]
\end{lemma}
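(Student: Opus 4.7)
The plan is to factor the map $(\bfs, \bbeta) \to w$ through the intermediate variables $\mu = (\mu_1, \mu_2, \mu_3, \mu_4)$ introduced in Subsection \ref{ss:w}. Writing $w = w(\mu(\bfs, \bbeta), \bfs)$ and applying Faà di Bruno's formula, I would split the problem into (i) derivative estimates for $w$ as a function of $(\mu, \bfs)$ and (ii) smoothness with pointwise derivative bounds for the map $(\bfs, \bbeta) \to \mu$.

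For (ii), I would first verify that the map is uniformly smooth on the prescribed compact parameter range. Since $2\gamma = \gamma_1+\gamma_2$ is polynomial in $\bbeta$ with coefficients polynomial in $\bfs$, the smoothness of $\mu_1 = \cosh 2\gamma$ and $\mu_2 = \sinh 2\gamma$ is immediate. For $\mu_3$ and $\mu_4$, the key observation is that $\eta := \gamma_1-\gamma_2 = (z_1-z_2)\, q(\bfs, \bbeta)$ with $q$ polynomial in $\bbeta$ and symmetric in $\bfs$, so that $\mu_3 = \cosh\eta$ involves only even powers of $\eta$ --- hence depends polynomially on the symmetric combination $(z_1-z_2)^2 = 2s_2 - s_1^2$ --- and $\mu_4 = q \cdot (\sinh\eta)/\eta$ is similarly smooth. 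Then I would use the hyperbolic identities $\partial_{2\gamma}\mu_1 = \mu_2$, $\partial_\eta \mu_3 = (z_1-z_2)\mu_4$, $\partial_\eta \mu_4 = \mu_3$ together with the boundedness of $\partial_{\beta_k}(2\gamma)$ and of $R_k := -i(z_1-z_2)^{-1}\partial_{\beta_k}\eta$, plus the compactness of $\bfs$, to prove by induction that each derivative $\partial^\gamma_{\bfs, \bbeta}\mu_j$ is pointwise bounded by $C_\gamma |\mu|$.

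For (i), the bound \eqref{dw-size} already handles pure $\mu$-derivatives of $w$. For the $\bfs$-derivatives of $w(\mu, \bfs)$, I would repeat the argument that gave \eqref{dw-size}: the $\bfs$-dependence of $A_0$ and $D_0$ is through bounded smooth coefficients, so differentiation in $\bfs$ introduces no additional $|\mu|^{-1}$ factors, and the mixed bound $|\partial^P_\mu \partial^Q_\bfs w| \lesssim |w_0|/|\mu|^{|P|}$ should follow. Combining this with the bound $|\partial^B\mu| \lesssim |\mu|$ from step (ii), each term produced by Faà di Bruno is bounded by $|w_0|/|\mu|^{|P|} \cdot |\mu|^{|P|} = |w_0|$, yielding the desired estimate.

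The hard part will be the careful verification of the mixed-derivative bound uniformly in all parameters. It hinges on preserving the lower bound $D_0 \gtrsim |\mu|^2$ under $\bfs$-differentiation of the quotient $-2A_0/D_0$; since $A_0$ and $D_0$ are explicit low-degree polynomial expressions in $(\mu, \bar\mu, \bfs, \bar\bfs)$, this amounts to a Leibniz and quotient rule computation whose structural form mirrors the proof of \eqref{dw-size}, combined with the nondegeneracy of the Gram matrix $M$ from Lemma \ref{posdef} that underlies the lower bound on $D_0$.
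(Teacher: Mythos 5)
Your proposal is correct and follows essentially the same route as the paper: both reduce the claim to the chain rule through the variables $\mu$, combining the bounds \eqref{w-size}, \eqref{dw-size} with the key estimate $|\partial^a_{\bfs,\bbeta}\mu_j|\lesssim|\mu|$. The only (minor) divergence is in how that last estimate is obtained — you derive it by induction from the explicit hyperbolic derivative identities and the factorization $\gamma_1-\gamma_2=(z_1-z_2)q(\bfs,\bbeta)$, whereas the paper observes that $\mu_3$ and $\mu_4=\alpha$ are even analytic functions of $z_1,z_2$, hence holomorphic in $(s_1,s_2)$, and invokes Cauchy's integral formula; your explicit attention to the direct $\bfs$-dependence of the coefficients of $A_0$ and $D_0$ (which the paper leaves implicit) is a welcome bit of extra care rather than a gap.
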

\begin{proof}
The proof is straightforward. On one hand we know that  $w_0$ is of the same size as the one defined in \eqref{w-size}.  The bounds \eqref{w-size} and  \eqref{dw-size} and
\[ |\partial^a_{\beta_j, s_1,s_2}  \mu_j|  \lesssim |\mu| \] 
imply the uniform bounds. These bounds for $\mu_1$ and $\mu_2$ are obvious. Both $\cosh(\gamma_1-\gamma_2)$
and $ \alpha= \frac{\sin(\gamma_1-\gamma_2)}{z_1-z_2}$ are even and analytic as functions of $z_2$ and $z_1$, and hence they are holomorphic functions of $s_1$ and $s_2$. The bounds on derivatives then follow by Cauchy's integral formula on balls around $s_1$ and $s_2$.
\end{proof}

We now describe two soliton states. Relative to the $(\bfz,\bbeta)$ parametrization with $ \beta_j \in \R$, this corresponds to choosing 
\[
Q_{\bfz,\bbeta}(x) = w(\bfz,\tilde \bbeta),
\]
where 
\[
\tilde \bbeta = (\beta_0,\beta_1+x, \beta_2,\beta_3),
\]
and the corresponding  $\gamma_j$'s are 
\[
\gamma_j = i ( \beta_0 + (\beta_1+x)z_j + \beta_2 z_j^2 + \beta_3 z_j^3).
\]
To compare with our general set-up, the  associated scattering parameters $\kappa_j$ are 
\[
\kappa_j = i ( \beta_0 + \beta_1 z_j + \beta_2 z_j^2 + \beta_3 z_j^3).
\]

In particular we have
\begin{equation}\label{eq:gamma} 
2\gamma = i(2\beta_0 + (\beta_1+x) (z_1+z_2) + \beta_2 (z_1^2+z_2^2) +\beta_3(z_1^3+z_2^3))
\end{equation} 
and 
\begin{equation} \label{gamma0} 
\gamma_0:= \frac{\gamma_1-\gamma_2}{z_1-z_2} = i(\beta_1 + x + \beta_2 (z_1+z_2) +\beta_3(z_1^2+z_1 z_2 +z_2^2)).
\end{equation} 
Next we consider the location of the two bumps for the $2$-solitons. We begin with the location of the 
single bumps for the corresponding $1$-solitons with the same spectral  and scattering parameters, whose centers are given by
$x_1$, $x_2$ determined by
\begin{equation}\label{xj}  
\real \gamma_j = 0 \Longleftrightarrow x_j=  - \frac{\im \kappa_j}{\im z_j} = -\frac{\im (\beta_1 z_j +  \beta_2 z_j^2 + \beta_3 z_j^3)}{\im z_j} .   
\end{equation} 
For later considerations we denote their phase at the center of soliton by 
\begin{equation}\label{thetaj}   \theta_j = \beta_0 +  (\beta_1 + x_j ) \real z_j + \beta_2 \real z_j^2 + \beta_3 \real z_j^3. 
\end{equation}   
Then
\[
\gamma_j = i ( \theta_j + z_j(x-x_j))
\]
and 
\[ 
\gamma_0 = i  \frac{\theta_1-\theta_2 + z_1(x-x_1) - z_2(x-x_2) }{z_1-z_2}. 
\]
Recall that
\[
\alpha = \frac{\sinh((z_1-z_2) \gamma_0)}{z_1-z_2}, \qquad 
\sigma = (z_1-z_2) \coth ((z_1-z_2) \gamma_0).
\]
The two bumps are centered (recall \eqref{nearmax}) for $|z_1-z_2|\ll \im z_1$) where 
\begin{equation}\label{bumps}
\cosh(2\real \gamma) \approx |z_1-\bar z_2| |\alpha|.
\end{equation}
Here the expression $\real \gamma$ is linear in $x$, decreasing at a uniform rate, and vanishing at a point $x_0$, which is related to 
$x_1$ and $x_2$ by the relation
\begin{equation} \label{centerx} 
x_0 =- \beta_1 - \frac{ \beta_2 \im (z_1^2 + z_2^2) + \beta_3 \im (z_1^3+ z_2^3)}{\im z_1 +\im z_2}    = \frac{\im z_1}{\im (z_1+z_2)} x_1 + \frac{\im z_2}{\im (z_1 +z_2)} x_2 
\end{equation} 
which can be seen as the center of mass of the $2$-soliton state.

We also define an averaged phase at the center by 
\begin{equation} \label{centerphase} 
2 \theta = \theta_1+\theta_2 + \frac{\im z_1 \real z_2 - \im z_2 \real z_1}{\im(z_1+z_2)}(x_1-x_2),
\end{equation} 
in order to have
\begin{equation}\label{def-center}
2\gamma = i (2 \theta + (z_1+z_2)(x-x_0)).
\end{equation}

Moreover,  $\cosh(2\real \gamma)$ grows at uniform exponential rates away from $x_0$. On the other hand, $\alpha$ has a smaller logarithmic
derivative,
\[
\frac{\partial \alpha}{\partial x} = i \alpha \sigma = O( |\alpha||z_1-z_2| + 1)
\]
As a consequence, if $|\alpha(x_0)|$ is large then there are exactly two unit size regions where  \eqref{bumps} (where we consider both sides as functions of $x$) is satisfied. Furthermore, in this region
the coefficients $c_+$ and $c_-$ are slowly varying, as 
\[
\frac{\partial \sigma}{\partial x} = i \alpha^{-2}
\]
This implies that, with $O(|\alpha^{-2}|)$ accuracy, the maximum points
of $w$ are described by the relation
\begin{equation}\label{bumps+}
\cosh(2\real \gamma) = |z_1-\bar z_2| |\alpha|.
\end{equation}

To accurately calculate the roots of \eqref{bumps+} it is useful
to consider the value of $\alpha$ at the center $x_0$.
This is determined by 
\begin{equation} \label{gamma00}  
(z_1-z_2) \gamma_{00} := (z_1-z_2) \gamma_0(x_0) = i \left(  \theta_1-\theta_2 + \frac{z_1 \im z_2 + z_2 \im z_1}{2\im z} (x_2 -x_1) \right),  
\end{equation}  
and in particular 
\begin{equation} \label{realpha0}
\real ((z_1-z_2)\gamma_{00}) = \frac{\im z_1 \im z_2}{\im z}(x_2-x_1).
\end{equation} 
Thus  we define 
\begin{equation} \label{alpha0}  
\alpha_0 = \frac{\sinh((z_1-z_2)\gamma_{00})}{z_1-z_2}, 
\qquad \sigma_0 = (z_1-z_2) \coth((z_1-z_2)\gamma_{00}),
\end{equation} 
where $\gamma_{00}$ is linear in $\beta_2$ and $\beta_3$,
\[
\gamma_{00} = \gamma_0(x_0) =: a_2 \beta_2 + a_3 \beta_3
\]
where the coefficients $a_2$, $a_3$ are symmetric functions 
in $z_1,z_2$, given by (denoting $z_1+z_2=2z$)
\begin{equation} \label{def-a2}
\begin{split}
a_2 = i (z_1+z_2) - i \frac{\im (z_1^2+z_2^2)}{\im (z_1+z_2) }
 = - 2\im z -i \frac{\real(z_2-z_1)\im(z_2-z_1)}{2\im z}
\end{split}
\end{equation}
\begin{equation} \label{def-a3}
\! \begin{split}
a_3 &\,= i   (z_1^2 + z_1 z_2 +z_2^2) - i \frac{\im (z_1^3+z_2^3)}{\im (z_1+z_2) } 
 \\
&\, = - \frac32 \real(z_1\!+ \! z_2) \im (z_1\!+\!z_2) - \frac{i}{2} [\im (z_1\!+\!z_2)]^2
+ \frac{i}{4}(z_1\!-\!z_2)^2 - \frac{3i}4 \frac{\im [(z_1+z_2)(z_1-z_2)^2]}{\im (z_1+z_2)}
\end{split} \!\!\!
\end{equation}

These can be checked to be linearly independent over $\R$, for instance 
by verifying that the following expression is nonzero ( we write $ z_j = x_j+i y_j$)
\[
\begin{split} 
J = &\,  \im(z_1+z_2)\im (a_2 \bar a_3) 
\\ = &\,  \im(z_1+ z_2)\Big[\im (z_1+z_2)\real (z_1^2+ z_1 z_2 + z_2^2) - \im (z_1^3 + z_2^3)\Big] 
\\ & - \Big[\real (z_1 + z_2)\im (z_1+z_2) - \im (z_1^2+ z_2^2)\Big]\im (z_1^2+z_1z_2 + z_2^2) 
\\ = &\,  (y_1+y_2)^2(-\frac12(x_1-x_2)^2 -2y_1y_2) - \frac32 (y_1+y_2)(x_1-x_2) (y_1-y_2)(x_1+x_2)
\\  &
+ (x_1-x_2)(y_1-y_2)\Big[\frac32 (y_1+y_2)(x_1+x_2)  +\frac12 (y_1-y_2)(x_1-x_2)\Big]  
\\  = & \, -2 y_1 y_2 ((x_1-x_2)^2 + (y_1+y_2)^2 )
\\ = & \,  - 2 \im z_1 \im z_2 |z_1-\bar z_2|^2.
\end{split}
\]

This implies that
\[
|\gamma_{00}| \approx |\beta_2| + |\beta_3|.
\]

In particular we will be interested in $\real ((z_1-z_2) \gamma_{00})$ (see \eqref{gamma00}) , which can be alternatively expressed in the form
\[
\real ((z_1-z_2) \gamma_{00}) = \frac{\im z_1 \im z_2}{\im z}(x_1-x_2).
\]
On the other hand for the imaginary part we get
 \[
 \im ((z_1-z_2) \gamma_{00}) = \theta_1 - \theta_2 + 
 \frac{\im(z_1 z_2)}{2 \im z}(x_2-x_1).
 \]

As above we distinguish two scenarios, still assuming $|z_1-z_2| \ll  1$:
\begin{enumerate}[label=(\roman*)]
\item {\bf Single bump case.} This becomes
\[ \text{ dist } (\gamma_{00} , i \pi (z_1-z_2)^{-1} \Z) \lesssim 1  \]  
and  corresponds to $2$-solitons $Q_{\bfz,\bbeta}$ which have two overlapping solitons. 
 The amplitude of $w$ could get as high as $2 \im(z_1+z_2) $, which value is attained at $x = x_0$ when $\beta_2=\beta_3 = 0$. In this case we have exponential decay away from $x_0$ and the two soliton is close to a 2-soliton with $z_1=z_2$ since the 2-soliton depends smoothly on $s_1$ and $s_2$.  
\item {\bf Two bumps case.}  
\[ \text{ dist } (\gamma_{00} ,  i \pi (z_1-z_2)^{-1} \Z) \gg 1 , \] 
which corresponds
to $2$-solitons $Q_{\bfz,\bbeta}$ which have
two simple bumps, with amplitudes closer to the range between $2\im z_1$ and $2\im z_2$. 

\end{enumerate}

In the second case above, we seek a more accurate description 
of the location of the two bumps, which are given by the relation
\eqref{bumps+}, which  translates to 
\[
\cosh(\im (z_1+ z_2)(x-x_0)) = |z_1 - \bar z_2| \left| \frac{\sinh((z_1-z_2)(\gamma_{00}+i(x-x_0))}{z_1-z_2} \right|.
\]
Based on the discussion above, this equation will have two roots,
one above and one below $x_0$.  

In a first approximation we evaluate the size of $|x-x_0|$ for 
the two roots by
\[
|x-x_0| \approx \log \left|\frac{\sinh((z_1-z_2)\gamma_{00})}{z_1-z_2} \right|.
\]
In this region we take a Taylor expansion of $\ln \alpha$,
\[
\ln \alpha = \ln \alpha_0 + i (x-x_0)\sigma_0
+ O(|x-x_0|^2 |\alpha_0|^{-2}).
\]
At the roots $x$ this leads to 
\[
2\im z |x-x_0| = \ln |z_1-\bar z_2| +
\ln (2|\alpha_0|) - (x-x_0) \im \sigma_0
+ O(\frac{\ln^2|\alpha_0|}{|\alpha_0|^2}), 
\]
and finally to 
\[
x-x_0 = \pm \frac{\ln |z_1-\bar z_2| +
\ln (2|\alpha_0|)}{ 2\im z \pm \im \sigma_0}+ O(\frac{\ln^2|\alpha_0|}{|\alpha_0|^2}).
\]
This gives the approximate locations of the centers for the two bumps as
\begin{equation}\label{app-bumps}
 x^{\pm} = x_0   \pm \frac{\ln |z_1-\bar z_2| +
\ln (2|\alpha_0|)}{ 2\im z \pm \im \sigma_0}, 
\end{equation}
with accuracy $O(\epsilon)$ where 
\begin{equation}
\epsilon =  \frac{\ln^2|\alpha_0|}{|\alpha_0|^2} . 
\end{equation}

\bigskip

We remark that when $\real ((z_1-z_2) \gamma_{00})  \gg 1$ 
(which corresponds to $x_1-x_2 \gg 1$)
we can approximate
\[
\sigma_0 \approx z_1-z_2, \qquad \ln (2\alpha_0) \approx 
(z_1-z_2) \gamma_{00} - \ln (z_1-z_2) 
\]
at the expense of allowing larger errors of size
\[
e^{-2\real (z_1-z_2)\gamma_{00}} \approx (|z_1-z_2| |\alpha_0|)^{-2}.
\]

This yields with $z_+=z_1\approx \frac12(z +  \sigma_0)$, $z_-=z_2\approx \frac12(z-  \sigma_0) $ 
\[
x-x_0 = \pm \frac{\ln |z_1- \bar z_2| +
\ln 2(|\alpha_0|)}{ 2\im z_\pm }+ O\Big(\frac{\ln^2|\alpha_0|+ |z_1-z_2|^{-2}}{|\alpha_0|^2}\Big).
\]
Then the larger solution can be associated to $z_1$,
\[
\hat x_{{1}} \approx x_0 + \frac{\ln |z_1-\bar z_2| - \ln |z_1-z_2|
+ \real  [(z_1-z_2) \gamma_{00}]}{ 2 \im z_2} = x_2 + \frac{\ln |z_1-\bar z_2| - \ln |z_1-z_2|} { 2 \im z_2},
\]
and the smaller one can be associated to $z_1$, using \eqref{centerx} and \eqref{realpha0}, 
\[
\hat x_{{ 2}}  \approx x_0 - \frac{\ln |z_1-\bar z_2| - \ln |z_1-z_2|
+ \real [(z_1-z_2) \gamma_{00}]}{ 2 \im z_1} = x_1 - \frac{\ln |z_1-\bar z_2|- \ln |z_1-z_2|}{ 2 \im z_1}.
\]
These formulas agree with \eqref{asym} and the ones in the introduction.

\bigskip

Our next objective is to determine the associated effective 
spectral parameters $z^{\pm}$ and phase parameter $\theta^{\pm}$.
These are determined also with $\epsilon$ accuracy as follows:

\begin{enumerate}
    \item[(i)] The imaginary parts $\im z^{\pm}$ correspond to the 
amplitudes of the two bumps, 
\[
2 \im z^{\pm} \approx |Q(x^\pm)|.
\]
\item[(ii)] The real parts correspond to the frequencies near the two bumps,
\[
2 \real z^{\pm} \approx |Q^{-1}(x^{\pm})| \im \partial_x Q(x^{\pm}).  \]

\item[(iii)] The phases correspond to the arguments of $Q$ at $x^{\pm}$,
\[
2\theta^{\pm} = \arg{Q(x^{\pm})}.
\]
    
\end{enumerate}

We now proceed to compute the three quantities. We recall 
that, with
\[
\begin{split}
 c_+ = & \  i [|z_1+z_2|^2 - 2(z_1 z_2 + \bar z_1 \bar z_2)-(\bar z_1-\bar z_2)^2] + 2 \im(z_1+z_2) \bar \sigma,
  \\
 c_- = & \  i [|z_1+z_2|^2 - 2(z_1 z_2 + \bar z_1 \bar z_2)-( z_1- z_2)^2] + 2 \im(z_1+z_2)  \sigma,
 \end{split}
\]
we have 
\begin{equation}\label{w-app1-re}
Q(x) = e^{2 i \im \gamma} 
\frac{ c_+ \bar\alpha e^{2\real \gamma} + c_- \alpha e^{-2 \real \gamma}}
{2 \cosh(4 \real \gamma) + 2|z_1-\bar z_2|^2 |\alpha|^2} 
+O(\frac{1}
{\cosh(4 \real \gamma) +  |\alpha|^2} ).
\end{equation}
For the amplitude near $x^{+}$ we have $e^{-2\real \gamma}$ as the leading factor at the numerator, so we further simplify this as
\[
Q =  e^{2i \im \gamma}  c_- \frac{\alpha}{|\alpha|}
   \frac{  \cosh(2 \real \gamma) |\alpha|}
{\cosh(2 \real \gamma)^2 + |z_1-\bar z_2|^2 |\alpha|^2 }
+O(\frac{1}
{\cosh(4 \real \gamma) +  |\alpha|^2} ).
\]
Similarly, near $x^{-}$ we have 
\[
Q   =   e^{2i \im \gamma} c_+ \frac{\bar \alpha}{|\alpha|}
   \frac{  \cosh(2 \real \gamma) |\alpha|}
{\cosh(2 \real \gamma)^2 + |z_1-\bar z_2|^2  |\alpha|^2 } 
+O(\frac{1}
{\cosh(4 \real \gamma) +  |\alpha|^2} ).
\]
At the bump center $x^{+}$ (the approximate center is accurate enough) 
we can  also replace $\sigma$ by $\sigma_0$
given by
\[
\sigma_0 := \sigma(x_0) = (z_1-z_2) \coth((z_1-z_2)\gamma_{00}),
\]
to obtain, with slightly larger $\epsilon$ errors, 
\[
|Q(x^{+})| \approx \frac{|c_{-}|}{2|z_1-\bar z_2|}
\approx  \frac{|[|z_1+z_2|^2 - 2(z_1 z_2 + \bar z_1 \bar z_2)-(z_1-z_2)^2] - 2i \im(z_1+z_2) \sigma_0|}{2|z_1-\bar z_2|}.
\]

Here we can rewrite the expression at the numerator as (using again $2z=z_1+z_2$)
\[
4 \im z( 2\im z+ \im \sigma_0)
+ 2i [(\real (z_1 - z_2)\im (z_1 - z_2) + 2\real \sigma_0  \im z],
\]
and, with $\epsilon$ errors, we replace $(\real z_1 -\real z_2)(\im z_1 -\im z_2)$ by $\real \sigma_0 \im \sigma_0$ to get
\[
Q(x_+) \approx  \frac{|2( 2\im z+ \im \sigma_0) (2\im z - i \real \sigma_0)|}{|z_1-\bar z_2|}.
\]
Taking the square norm we replace back $ |\real \sigma_0|^2$
by $|\real (z_1 - z_2)|^2$. Then we get 
\begin{equation} 
|Q(x^{+})| =   2 \im z + \im \sigma_0  + O(\epsilon).
\end{equation} 
A similar computation yields
\[
|Q(x^{-})| =   2 \im z - \im \sigma_0 + O(\epsilon). 
\]
Based on this, we define the imaginary part of the effective 
spectral parameter for the bumps as  
\begin{equation}
\label{app-amplitude}
2 \im z^\pm =  2 \im z \pm \im \sigma_0.   
\end{equation}

\bigskip

Next we consider the effective frequency parameter. Near $x^+$ we have 
\[
\im Q^{-1} Q_x  =   2 \real z
+ \im (\alpha^{-1} \partial_x \alpha) + \im (c_+^{-1} \partial_x c_+) + 
O(\epsilon).
\]
The last term has size $O(\epsilon)$ and can be placed into the error.
For the middle term we compute
\[
\alpha^{-1} \partial_x \alpha =
\frac{i(z_1-z_2)\cosh((z_1-z_2)\gamma_0)}{ \sinh((z_1-z_2)\gamma_0)} = i \sigma_0.
\]
We can again freeze $\sigma$ to $\sigma_0$. This 
yields the approximate effective frequencies
\begin{equation}\label{app-frequency}
  z^\pm :=    z \pm \frac{\sigma_0}2.  
\end{equation}
\bigskip

Finally, we consider the phase, which at the maximal amplitude near $x^{+}$ respectively $x^-$ is given by 
\[
2 \theta^\pm \approx   2 \im \gamma \pm \arg(\alpha)  
+ \arg(c_{\mp}).
\]
We evaluate the three components. For $\gamma$ we have
\[
2 \im \gamma(x^{\pm}) = 2 (\theta + 
(x^{\pm}-x_0) \real z),
\]
For $\alpha$, using logarithmic derivatives,
\[
 \arg(\alpha) \approx \arg \alpha_0 + 
 (x^\pm - x_0) \real \sigma \approx \arg \alpha_0 + 
 (x^\pm - x_0) \real \sigma_0.
\]
Finally, for $c_{\mp}$, reusing some of the computations we 
did for the effective frequency,
we have 
\[
\arg (c_{\mp}) \approx  \arg(2 i \im z \pm \real \sigma_0)
= \arg(z_\pm -\bar z_\mp).
\]

We conclude that the phase is
\[
2 \theta^\pm \approx   2 \theta
\pm \arg(\alpha_0) + (x^{\pm}-x_0)
[\real(z_1+z_2) \pm \real \sigma_0] +
 \arg(z_\pm -\bar z_\mp)
\]
which we rewrite  as 
\begin{equation} \label{app-phase}
2 \theta^\pm =    2 \theta
\pm \arg(\alpha_0) + 2(x^{\pm}-x_0)
\real z^{\pm} \pm  \arg(z^{\pm} - \bar z^{\mp}).
\end{equation}

One can now match this with the known asymptotics for separated 
$x_1, x_2$ in \cite{MR905674}, see \eqref{app-frequency}. There we have 
\[
\real \sigma_0 = \pm \real(z_1-z_2).
\]
\begin{itemize}
\item The expression coming from the last term
\[
\im \ln(z_1-\bar z_2) \text{ or } \im \ln(z_2-\bar z_1)
\]
is one part of what we expect.

\item The term $\arg \alpha_0$ has two components,
$\im \ln (z_1-z_2)$ which we expect, and 
\[
\pm \im [(z_1-z_2) \gamma_{00}] \approx 
\pm [\theta_1-\theta_2+ \frac{\im(z_1 z_2)}{\im(z_1+z_2)}(x_1-x_2) ].  
\]
The first term combines with the first term of $2\theta$ to give 
$\theta_1$ or $\theta_2$. The second term combines with the 
second term of $\theta$ and with the expression $(x^{\pm}-x_0)
[\real(z_1+z_2) \pm \real \sigma_0]$ with $x^{\pm}$ replaced by 
$x_1$ or $x_2$ and $\sigma_0$ as above, and they all cancel.

\item We are left with the extra error coming from the substitution
$x^{\pm}$  by $x_1$ (or $x_2$) which is 
\[
\real z_1 (x_1 - \hat x_1) 
\]
which does not appear in the earlier asymptotics. But this is 
simply a matter of notations, i.e. in our 
computations the new phase is evaluated  at $\hat x_1$,
whereas in \cite{MR905674} it is evaluated at $x_1$.
 One  could also choosing the center of mass as a reference point,
in which case the phase adjustment would be
\[
(x^{\pm} -x_0) z^{\pm} + i (\theta^{\pm} - \theta)
\approx -\frac{\pi}{2} \pm \ln \alpha_0  \pm \ln 2(z^{\pm}-\bar z^{\mp})
\]
But this is a less stable computation.
\end{itemize}

Summarizing the outcome of the analysis in this section, we have proved the following:

\begin{theorem}\label{approximate} Let $ \sigma_0$ and $\alpha_0$ be defined as in \eqref{alpha0}, 
\[
\alpha_0 = \frac{\sinh( (z_1-z_2) \gamma_{00})}{z_1-z_2}, \quad 
\sigma_0 = (z_1-z_2) \frac{\cosh((z_1-z_2)\gamma_{00})}{\sinh((z_1-z_2)\gamma_{00})},
\] 
where 
\[ 
\begin{split} 
\gamma_{00}  =  &\, \Big( -2 \im z - i\frac{ \im(z_1-z_2)\real(z_1-z_2)}{2\im z}\Big) \beta_2 
\\ & +  \Big( -3 \im z^2 -2i (\im z)^2 + \frac{i}4 (z_1-z_2)^2   -\frac{3i} 4 \frac{\im \Big(z(z_1-z_2)^2\Big)}{\im z}\Big) \beta_3.
\end{split} 
\]
Let $z_{\pm}$, $x_{\pm}$ and $\theta_{\pm}$
be defined by \eqref{app-frequency}, \eqref{app-bumps} respectively 
\eqref{app-phase}, i.e.
\[
z_{\pm}= z \pm  \frac{\sigma_0}2,
\] 
\[
x_{\pm} = x_0 \pm \frac{\ln |z_1-\bar z_2| + \ln 2|\alpha_0|} {2 \im z_\pm},  
\] 
\[
\theta^{\pm}= \theta + (x_\pm -x_0) \real z^\pm \pm \frac{\arg \alpha_0+ \arg (z_\pm -\bar z_\mp )}2. 
\]  
Then $Q$ is a sum of two solitons with a small error. 
\begin{equation}
   Q(x)   =  \frac{ 2 e^{i \theta_++ 2i \real z_{+} ( x-  x_+)}}{\im z_+}   \sech\Big( \frac{2(x-x_+)}{\im z_+}  \Big) + \frac{2 e^{i \theta_-+ 2i \real z_{-} ( x-  x_-)}}{\im z_-}  \sech\Big( \frac{2(x-x_-)}{\im z_-}  \Big)
   + O \Big(\frac{ \ln^2 |\alpha_0|}{ |\alpha_0|^2 }  \Big). \! 
\end{equation}
\end{theorem}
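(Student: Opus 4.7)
The plan is to build directly on the pointwise asymptotic analysis already carried out in subsections 9.2--9.3. The starting point is the representation \eqref{w-app1} for $Q$, combined with the observation that in the two-bump regime ($|\alpha_0|\gg 1$) the denominator $2\cosh(4\real\gamma)+2|z_1-\bar z_2|^2|\alpha|^2$ is locally dominated either by $\cosh(4\real\gamma)$ (in the far tails), by $|\alpha|^2$ (between the two bumps), or by both in comparable sizes exactly at the bump centers $x^\pm$ defined by \eqref{app-bumps}. The strategy is to freeze $\alpha$ and $\sigma$ at their values at the centers, extract the two $\sech$ profiles, and then control all errors by $O(\ln^2|\alpha_0|/|\alpha_0|^2)$.

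First, I would freeze the slowly varying quantities. Since $\alpha^{-1}\partial_x\alpha = i\sigma$ and $\partial_x\sigma = O(|\alpha|^{-2})$, inside a window of width $\log|\alpha_0|$ around $x^\pm$ one can replace $\sigma(x)$ by $\sigma_0$ and $\ln\alpha(x)$ by $\ln\alpha_0 + i\sigma_0(x-x_0)$ at the cost of $O(\ln^2|\alpha_0|/|\alpha_0|^2)$ errors. After this substitution, the spatial dependence in \eqref{w-app1} enters only through the exponentials $e^{\pm 2\real\gamma}$ and $e^{2i\im\gamma}$, with $\real\gamma$ linear in $x$ and slope $\im z$ by \eqref{def-center}. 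Second, I would identify each sech profile: near $x^+$ the dominant numerator term is $c_-\alpha e^{-2\real\gamma}$, and the pairing with the balanced denominator yields, up to the frozen coefficients, the ratio $c_-\alpha_0/(2|z_1-\bar z_2|\cosh(2\im z(x-x^+)))$. Using the identities $|c_-|/(2|z_1-\bar z_2|)=2\im z_+ + O(\epsilon)$ from subsection 9.3, the definitions \eqref{app-frequency}, \eqref{app-phase}, and matching the extra phase $\arg(c_-)+\arg(\alpha_0)$ with $\arg(z^+-\bar z^-)+\arg\alpha_0$ exactly as in the derivation of \eqref{app-phase}, the expression becomes the soliton profile with parameters $(z_+,x_+,\theta_+)$, modulo an $O(\ln^2|\alpha_0|/|\alpha_0|^2)$ remainder. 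The mirror argument produces the second soliton near $x^-$.

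Third, I would control the cross contributions and the far tails. In the region $|x-x^+|\lesssim \log|\alpha_0|$, the ``wrong'' term $c_+\bar\alpha e^{2\real\gamma}$ in the numerator of \eqref{w-app1} is of size $|\alpha_0|\cdot|\alpha_0|/|z_1-\bar z_2|$ while the denominator is $|\alpha_0|^2$, so this term contributes an $O(1)$ piece; but this piece, computed explicitly, is precisely the one-soliton $Q_{z_-,x_-,\theta_-}$ evaluated at $x\approx x^+$, where it is exponentially small, of size $O(e^{-2\im z_-(x^+-x^-)})=O(|\alpha_0|^{-2})$. Symmetrically near $x^-$. Outside both windows, both numerator terms are exponentially smaller than the $\cosh(4\real\gamma)$ piece of the denominator, and match the exponential decay of the sum of two solitons to any polynomial order in $|\alpha_0|^{-1}$.

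The main obstacle I expect is the accounting for the three simultaneous parameter matchings (amplitude, carrier frequency, phase) in the second step. Each Taylor substitution $\ln\alpha(x)\leadsto \ln\alpha_0+i\sigma_0(x-x_0)$, $\sigma(x)\leadsto\sigma_0$, $c_\pm(x)\leadsto c_\pm(x_0)$ produces corrections whose leading terms must combine to give exactly the shifts encoded in \eqref{app-bumps}, \eqref{app-frequency}, \eqref{app-phase}; any unaccounted contribution at order $\log|\alpha_0|/|\alpha_0|$ would ruin the final error bound. A delicate sub-case arises in the crossover regime where $\real((z_1-z_2)\gamma_{00})$ is small but $|\alpha_0|$ is large (forcing $|z_1-z_2|$ small): there the two windows around $x^\pm$ touch, and one must verify that the sum of the two sech profiles, evaluated with the slightly different effective parameters $z_\pm$, reproduces the central value of $Q$ in the overlap region. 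This follows because in that regime $z_+\approx z_-\approx z$ up to $O(|z_1-z_2|+|\alpha_0|^{-1})$ and the approximation $\alpha(x)\approx\alpha_0 e^{i\sigma_0(x-x_0)}$ inside \eqref{w-app1} expands cleanly into the sum of the two prescribed $\sech$ profiles with the claimed error.
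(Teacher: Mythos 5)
Your proposal follows essentially the same route as the paper: both start from the reduced representation \eqref{w-app1}, locate the bump centers via \eqref{bumps+}, freeze $\sigma$ at $\sigma_0$ and Taylor-expand $\ln\alpha$ about $x_0$ to read off the effective parameters \eqref{app-amplitude}, \eqref{app-frequency}, \eqref{app-phase}, and absorb all remaining corrections into the $O(\ln^2|\alpha_0|/|\alpha_0|^2)$ error. The only slip is in your cross-term estimate near $x^{+}$: there $e^{2\real\gamma}\approx (2|z_1-\bar z_2|\,|\alpha_0|)^{-1}$ is small rather than of size $|\alpha_0|$, so the term $c_+\bar\alpha e^{2\real\gamma}$ is already $O(|\alpha_0|^{-2})$ relative to the denominator --- consistent with your subsequent (and correct) identification of it as the exponentially small tail of the other soliton.
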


The solitons are given as a function of $x$ by the quotient of $A_0$ in \eqref{A0} and $D_0$ in \eqref{D0} with $\gamma_j$ and $\gamma$ defined in  \eqref{gammaj} and \eqref{eq:gamma}, $\gamma_0$ in \eqref{gamma0}, $\gamma_{00}$ in \eqref{gamma00}, \eqref{def-a2}, \eqref{def-a3}, $\alpha_0$ and $\sigma_0$ in \eqref{alpha0}   , $x_j$ and $\theta_j$ in \eqref{xj} and \eqref{thetaj}, $x_0$ in \eqref{centerx} and $\theta$ in  \eqref{centerphase}.

In other words, the above approximation has errors which are
not only exponentially small in the distance between the bumps,
but also uniformly small as $z_1-z_2 \to 0$, and accurate enough to capture the leading order interaction between the two bumps.


\bigskip


\subsection{ A uniform parametrization of the 2-soliton manifold}
We have seen that the set of  pure $2$-solitons is a uniformly smooth manifold in $L^2$, or more generally in $H^s$ for $s >-\frac12$. In this section we will use Proposition  \ref{approximate}
to provide concrete uniform parametrizations. We will also discuss nonuniform parametrizations.
We begin by discussing several ways we can smoothly parametrize the  $2$-soliton manifold.
\medskip

\begin{enumerate}[label=\alph*)]

\item Using the variables 
\[
(\bfz,\bbeta),
\]
employed earlier in the paper in the general case of $N$-solitons.
Here $\bbeta$ describes 
the correspondence between $Q_{\bfz,0}$ and $Q_{\bfz,\bbeta}$
using the first four flows. This is the simplest description, 
but it is only uniform in the region $|\bbeta| \lesssim 1$.  Here we need to take a double quotient space for $\bbeta$, namely modulo $\kappa_j \in \pi i \Z$.

\medskip

\item  Centering the $2$-soliton around the center of mass and phase\footnote{We use this terminology for convenience here,
but the notion of \emph{center of phase} does not seem to be  well-defined outside of the $2$-soliton manifold.}
$(x_0,\theta)$ (which can be viewed as associated to the global translation and phase shift symmetries), we can instead use 
the following set of parameters:
\[
(\bfz, x_0,\theta,\gamma_{00}).
\]
Here $\gamma_{00}$ is linearly equivalent to $\beta_2$ and $\beta_3$. In this case the quotient structure decouples partially. Precisely,
we have 
\[
\theta \in \R \quad (\mod \pi), \qquad \gamma_{00} \in \C \quad \quad (\mod (z_1-z_2)^{-1} \pi i),
\]
but with the nontrivial gluing
\[
(\theta, \gamma_{00}+\frac{\pi i}{z_1-z_2}) \leftrightarrow
(\theta+\frac{\pi}2, \gamma_{00}).
\]

\medskip

\item By the set of parameters $P$ of pairs: 
\[
(z, x_0,\theta , (z_1-z_2)^2, \alpha_0,\mu_{0})
\]
obtained by replacing the parameter $\gamma_{00}$ by its hyperbolic functions,
\[
\alpha_0 = \frac{\sinh((z_1-z_2)\gamma_{00})}{z_1-z_2}, \qquad \mu_0 = 
\cosh((z_1-z_2)\gamma_{00})
\]
which lie on the smooth manifold 
\[
\mu_0^2 - (z_1-z_2)^2 \alpha_0^2 = 1.
\]

Here we have one remaining symmetry
\[
(\theta,\alpha_0,\mu_0) \to (\theta+\frac{\pi}2,-\alpha_0,-\mu_0). 
\]
Alternatively, away from $\alpha_0= 0$ one can replace $\mu_0$
by $\sigma_0$ given by 
\[
\sigma_0 = (z_1-z_2) \coth((z_1-z_2)\gamma_{00}).
\]

\medskip 
\item   We can also parametrize the $2$-solitons   by  the set of (approximate) effective parameters
\begin{equation}\label{uniform}   
\Big(z_-, z_+ , x_-, x_+, \theta_-,\theta_+\Big) 
\end{equation}  
 where $\theta_{\pm}  \in  \R / (\pi \Z)$, provided the solitons are well separated.  Here we  describe the two soliton set for separated solitons by their approximate position and their phases.  

 The set of pure two solitons is a uniformly smooth manifold by 
 Theorem \ref{t:pure}. The sum of two solitons with the effective parameters is clearly a uniformly smooth manifold with the uniform parametrization by these parameters. Since the Hausdorff distance between the set of pure 2 solitons and the sum of the two solitons is close in $L^2$ as well as in any other Sobolev space $H^s$ we see that we obtain a uniformly smooth parametrization of the pure 2 solitons in the well separated regime.

This will turn out to be uniform, but it is defined only for separated solitons. 
We can also use half of the above parameters along with the center
parameters
\[ 
(z, z_+ , x_0, x_+, \theta,\theta_+). 
\] 
which can be defined via the relations
\[
z = \frac{z_+ + z_-}{2}
\]
\[
x_0 = \frac{x_+ \im z_+ + x_- \im z_-}{2 \im z}
\]
\[
\theta = \frac{\theta_+ +\theta_-}2
-  (x_+ -x_0) \real z^+ -  (x_- -x_0) \real z^- - \arg(z_+-z_-) - \frac{\pi}2
\]


\end{enumerate}

In order to describe a uniform parametrization we distinguish two cases.

\bigskip 

I) \textbf{The double bump region.} This corresponds to 
$|\beta_2|+|\beta_3| \lesssim 1$ in (a) or equivalently to 
$|\gamma_{00}| \lesssim 1$ in (b), to $|\alpha_0| \lesssim 1$
in (c) but is not covered by (d). Here matters are simple 
because the uniform topology is used in the three cases  (a), (b) and (c).

\bigskip

II) \textbf{Separated bumps.} This is the region covered in (d),
where we the metric is simply equivalent to the euclidean metric,
\[
g = d z_{\pm}^2 + d x_{\pm}^2 + d \theta_{\pm}^2
\]
We next recast this metric in terms of the parametrization in 
(c). We have
\[
2z_{\pm} = 2z \pm \sigma_0 
\]
\[
x_{\pm} = x_0 \pm \frac{\frac12 \ln ( 4 \im z^2 +  \real(\frac{1}{\alpha_0^2}-\sigma_0^2)) + \ln (2 |\alpha_0|)}{2 \im z_{\pm}} 
\]
\[
\theta_{\pm} = \theta \pm \arg \alpha_0 + 2 x^\pm \real z^{\pm} \pm 
\arg( z^\pm -z^\mp )
\]
We take the uniform coordinates  \eqref{uniform}  and the corresponding standard metric in these coordinates -
recall that search for uniform estimates for $ z_1, z_2 $ in compact subset of the open upper half plane. 
We write the metric on an $8$ dimensional set in a schematic fashion as 
\[
d z_{\pm}^2 + d x_\pm^2 +  d \theta_{\pm}^2 
\]
and seek to express it in equivalent form in terms of the variables $z,\alpha_0$ and  $\mu_0$.  To be more precise we set up some notation for this section. For a real function $f$ $df$ denotes the differential and $df^2$ the quadratic form 
\[ (y_1,y_2) \to  df^2(x)(y_1,y_2) :=  (df(x)y_1) df(x)y_2\] 
and similarly for vector valued functions $F$ 
\[ d F^2(y_1,y_2) = \langle dF(x)y_1, dF(x)y_2\rangle. \]
We identify maps to $\C$ with the corresponding map to $\R^2$.

We shall see in the end that the metric tensor is at least as large as the standard metric. This will allow to neglect some terms. we write 
\[ dF \sim dG    \Longleftrightarrow   dF^2 \sim dG^2 \] 
if the Gram matrices have small distance, i.e. 
$ \Vert dF^T dF - dG^T dG \Vert \lesssim 1 $. 

For $z_{\pm}$ we have the obvious relation
\[
dz_-^2 + dz^2_{+} \approx dz^2 +  d \sigma_0^2.
\]
Next we consider $x_{\pm}$, for which we harmlessly discard the middle component, 
\[
dx_{\pm} \sim d x_0 \pm  d ( \frac{\ln |\alpha_0|}{\im z_{\pm}} )
= dx_0 \pm \left(\frac{1}{\im z_{\pm}} d \ln |\alpha_0| - \frac{\ln |\alpha_0|}{\im^2 z_{\pm}} d \im z_{\pm} \right). 
\]
We multiply the '+' equation by $\im z_+$, the '-' equation  by $\im z_-$ and add
\[
\! 2\im z dx_0 - \ln |\alpha_0| \left( \frac1{\im z_{+}} d \im z_{+} - 
\frac{1}{\im z_{-}} d \im z_{-}\right) 
= 2\im z dx_0 - \frac{\ln |\alpha_0|}{\im z_+ \im z_{-}}
\left( 2\im z d \sigma_0 - \sigma_0 d \im z \right)\!\!
\]
Discarding the $\im z_{\pm}$ denominators we are left with
\begin{equation}\label{unif1}
 \im z_- dx_- +  \im z_+ dx_+ \sim  \im z ( 4\im^2 z -\im^2\sigma) dx_0 -2 \ln |\alpha_0| \left( 2\im z d \im \sigma_0 - \im \sigma_0 d \im z     \right)
\end{equation}
Next we take the difference  to obtain
\[
dx_+-dx_-\sim \frac{1}{\im^2 z_{+} \im^2 z_-}\left( 2 \im z( \im^2 z - \frac14 \im^2 \sigma)
 d \ln |\alpha_0| - \ln |\alpha_0| (\im^2 z_- d \im z_- + \im^2 z_+ d \im z_-)\right)\hspace{-22pt} 
\]
and discarding the fraction 
\begin{equation}\label{unif2}
 \sim 2 \im z( \im^2 z - \frac14 \im^2 \sigma)
 d \ln |\alpha_0| - 2\ln |\alpha_0| \left((\im^2 z + \frac14 \im^2 \sigma_0) d \im z -  \im z \im \sigma_0  d \im \sigma_0\right)
\end{equation}

We repeat the same computation for $\theta_\pm$. We can 
harmlessly discard the last term, as well as the $dx_{\pm}$ component, leaving us with the equivalent form
\[
d\theta_{\pm} \sim d \theta \pm (d \arg \alpha_0 +  \frac{\log |\alpha_0|}{\im z_{\pm}} d\real z_{\pm})
\]
Adding the $\pm$ forms yields
\[
2 d\theta+ \ln |\alpha_0| \left( \frac1{\im z_{+}} d \real z_{+} - 
\frac{1}{\im z_{-}} d \real z_{-}\right) 
= 2d\theta+
\frac{\ln |\alpha_0|}{\im z_+ \im z_{-}}
\left( 2\im z d \real \sigma_0 - \im \sigma_0 d \real z     \right)
\]
Discarding the $\im z_{\pm}$ denominators we are left with
\begin{equation}\label{unif3}
d\theta_++d\theta_- \sim 2( \im^2 z -\frac14 \im^2 \sigma_0) d\theta +
\ln |\alpha_0| \left( 2\im z d \real \sigma_0 - \im \sigma_0 d \real z     \right)
\end{equation}
On the other hand taking the difference we obtain
\[
d\theta_+-d\theta_- \sim  2 d \arg(\alpha_0) + \frac{2\ln |\alpha_0|}{\im z_+ \im z_-} 
 ( \im z d \real z - \frac14 \im \sigma_0 d \real \sigma_0)
\]
and eliminating the denominators
\begin{equation}\label{unif4}d\theta_+-d\theta_-  \sim 
 2(\im^2 z - \frac14 \im^2 \sigma_0) d \arg(\alpha_0) + 2\ln |\alpha_0| 
 ( \im z d \real z - \frac14 \im \sigma_0 d \real \sigma_0)
\end{equation}

Combining \eqref{unif1} with \eqref{unif3} and \eqref{unif2}
with \eqref{unif4} we obtain the  set of forms
\begin{equation}\label{unif5}
    e_1 = 2(\im^2 z - \frac14 \im^2 \sigma_0)(\im z dx_0 + i d\theta)+
\ln |\alpha_0| \left( 2\im z d \sigma_0 - \im \sigma_0 d z     \right)
\end{equation}
respectively
\begin{equation}\label{unif6}
 e_2=2(\im^2 z - \frac14 \im^2 \sigma_0) d \ln \alpha_0 + 2i \ln |\alpha_0| (\im z dz - \frac14 \im \sigma_0 d \sigma_0)
\end{equation}
which is equivalent to $(dx_{\pm}, d\theta_{\pm})$,
\[ 
dx_+^2+ dx_-^2 + d\theta_+^2+ d\theta_-^2 \sim e_1^2+ e_2^2. \]

Then the correct metric is
\begin{equation}\label{good-metric}
\begin{split}
 g =   dz^2 + d \sigma_0^2 +   e_1^2 + e_2^2.
 \end{split}
\end{equation}
Thus we can uniformly characterize the two soliton manifold:

\begin{theorem}
The two soliton manifold is smoothly and uniformly parametrized by the parameters  $(z, x_0,\theta_0, \alpha_0,\sigma_{0})$ endowed with the metric \eqref{good-metric} in the range when $|\alpha_0|\ge 1$,
and by the parameters  $(z, x_0,\theta_0, \alpha_0,\mu_{0})$ endowed with the euclidean metric in the range when $|\alpha_0| \lesssim 1$.
\end{theorem}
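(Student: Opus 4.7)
The plan is to combine the already-established general smoothness of $\M_U^N$ from Theorem~\ref{t:pure} with the explicit computations of this section, and verify the two parametrizations separately on their respective regions.

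First I would dispose of the bounded region $|\alpha_0|\lesssim 1$. Here $|(z_1-z_2)\gamma_{00}|\lesssim 1$, so the map $(\beta_2,\beta_3)\mapsto \gamma_{00}$ (which is linear by \eqref{def-a2}--\eqref{def-a3} with determinant $J\ne 0$ in this compact regime) is a uniformly smooth linear isomorphism, and $\gamma_{00}\mapsto(\alpha_0,\mu_0)$ onto the quadric $\mu_0^2-(z_1-z_2)^2\alpha_0^2=1$ is likewise uniformly smooth with uniformly smooth inverse, since $(z_1-z_2)\gamma_{00}$ lies in a compact set where $\cosh,\sinh$ are a biholomorphism onto their image. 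Combining with the already-uniform $(\bfz,\bbeta)$ parametrization of $\M_2$ in the bounded-$\bbeta$ regime (a direct consequence of Theorem~\ref{t:pure} and the explicit smooth dependence $Q=-2A_0/D_0$ derived in Subsection~\ref{ss:w}), we obtain a uniformly smooth parametrization $(z,x_0,\theta,\alpha_0,\mu_0)\mapsto Q_{\bfz,\bbeta}$ with euclidean metric.

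Next I would handle the separated regime $|\alpha_0|\ge 1$. The starting point is Theorem~\ref{approximate}, which writes $Q_{\bfz,\bbeta}$ as an algebraic sum of two single solitons with effective parameters $(z_\pm,x_\pm,\theta_\pm)$ modulo an error $O(\ln^2|\alpha_0|/|\alpha_0|^2)$ in $L^\infty$, and, by exponential decay of the building blocks together with interpolation, in every $H^s$ with $s>-\tfrac12$. The map sending effective parameters to the literal algebraic sum is a uniformly smooth embedding of an $8$-dimensional manifold into $H^s$ with the standard euclidean metric, since the Jacobian is block-diagonal with two copies of the uniformly nondegenerate single-soliton parametrization plus exponentially small cross terms. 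Since $\M_2$ is uniformly smooth by Theorem~\ref{t:pure} and agrees with this manifold up to the above error, the uniform implicit function theorem (on the a priori smooth manifold $\M_2$) upgrades the $H^s$-closeness into a uniformly smooth diffeomorphism $(z_-,z_+,x_-,x_+,\theta_-,\theta_+)\mapsto Q_{\bfz,\bbeta}$. Pulling back the euclidean metric $dz_-^2+dz_+^2+dx_-^2+dx_+^2+d\theta_-^2+d\theta_+^2$ through the change of variables recorded in \eqref{app-frequency}, \eqref{app-bumps}, \eqref{app-phase} is exactly the schematic computation \eqref{unif1}--\eqref{unif6} already carried out in the text, and yields the form \eqref{good-metric}, which is therefore equivalent to the pullback of a uniformly smooth Riemannian structure.

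The main obstacle, and the step requiring most care, will be the upgrade from $H^s$-closeness of two manifolds to equivalence of their smooth structures in the separated regime. The difficulty is that the error $O(\ln^2|\alpha_0|/|\alpha_0|^2)$, while small, is not automatically small together with all of its derivatives in the effective parameters once the latter become large (in particular $\partial_{x_\pm}$ differentiates the bump locations, which can amplify an error). The cleanest way around this is to work on unit intervals around $x_+$ and $x_-$ separately, where the soliton addition map is uniformly smooth in the $(\bfz,\bbeta)$ coordinates by Theorem~\ref{t:dress-reg}, so that differentiation in the effective parameters translates into differentiation in $(x_0,\theta,\alpha_0,\sigma_0)$ with a uniformly bounded (and bounded-inverse) Jacobian, up to exponentially small tails coming from the other bump. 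Finally, compatibility in the overlap $|\alpha_0|\sim 1$ is immediate: there $(z_1-z_2)\gamma_{00}=O(1)$ so $\mu_0$ and $\sigma_0$ are interchangeable smooth coordinates on the quadric, and the two metrics are uniformly equivalent, so the two charts glue into a single uniformly smooth atlas on $\M_2$.
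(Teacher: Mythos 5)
Your proposal is correct and follows essentially the same route as the paper: in the single-bump regime it chains the uniformly equivalent parametrizations $(\bfz,\bbeta)\to(\bfz,x_0,\theta,\gamma_{00})\to(\bfz,x_0,\theta,\alpha_0,\mu_0)$ using the nondegeneracy $J\neq 0$ and Theorem~\ref{t:dress-reg}(a), and in the separated regime it combines Theorem~\ref{approximate} with the uniform smoothness of both $\M_2$ and the sum-of-two-solitons manifold to transfer the effective-parameter chart, then pulls back the euclidean metric via \eqref{unif1}--\eqref{unif6} to obtain \eqref{good-metric}. Your extra care about upgrading $H^s$-closeness of the two manifolds to closeness of their smooth structures is the same principle the paper invokes (two locally uniformly smooth, Hausdorff-close manifolds are close as smooth manifolds, cf.\ the discussion around $\Gamma$ and $\tilde\Gamma$ in Section~\ref{s:structure}), so no genuinely different ingredient is introduced.
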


\subsection{ Double solitons}
These are the limiting solitons where we have a double
eigenvalue.  They are parametrized by the eigenvalue $z$ and the flow parameters $\beta_0$, $\beta_1$, $\beta_2$
and $\beta_3$. To better describe the bump locations, we  translate these into  the alternative set consisting of the spectral parameter $z$, the center
of mass/momentum $x_0,\theta$ and $\gamma_{00}$.

For the center $(x_0,\theta)$ it is easiest to use the formula \eqref{def-center}, which yields
\[
\theta- z x_0 = \beta_0 + \beta_1 z + \beta_2 z^2+ \beta_3 z^3.
\]
Matching imaginary parts we get
\[
x_0 = - \beta_1 - 2 \real z  \beta_2 - (3 \real^2 z-\im^2 z) \beta_3,
\]
and matching real parts,
\[
\theta = \beta_0 - |z|^2 \beta_2 - 2 \real z |z|^2 \beta_3.
\]

On the other hand by \eqref{def-a2} and \eqref{def-a3} we have 
\[
\alpha_0 = 1/\sigma_0 = \gamma_{00} = -2 \im z \beta_2 -(6 \real z \im z + 2i \im^2 z)  \beta_3,
\]
and 
\[ \alpha = -2 \im z \beta_2 - (3 \im  z^2 + 2i \im^2 z)  \beta_3  + i(x-x_0).
\] 
For simplicity we set $x_0 = \theta=0$, which amount to a shift in $x$ plus adjusting the phase. We  plug these values into the expression of Proposition \ref{approximate} to obtain the corresponding approximate 
spectral and scattering parameters
\[ 
z_{\pm}= z \pm  \frac1{2\gamma_{00}},      
\] 
\[
x_{\pm} = x_0 \pm \frac{2\ln 2 +  \ln \im z + \ln |\gamma_{00}|}{2\im z_{\pm} }, 
\] 
and 
\[ 
\theta_{\pm} =  \theta \pm \frac{ \real z_\pm}{2\im z_\pm} \Big( 2\ln 2 +  \ln \im z + \ln |\gamma_{00}|  \Big) 
\pm  \frac12 (\arg \gamma_{00}+ \arg(z_\pm - \bar z_{\mp})).  
\] 
These formulas are valid when $|\gamma_{00}| \gg 1$, which corresponds to $|\beta_2|+|\beta_3| \gg 1$.

It may also be interesting to write down the exact formula for the $2$-soliton, 
namely 
\begin{equation} \label{double-Q} 
Q = -4\im z \frac{e^{2\gamma}   + e^{-2\bar \gamma} 
  +  2i \im z ( \bar \alpha e^{2\gamma}+ \alpha e^{-2\bar \gamma})}{ |\cosh(2\gamma)|^2+ |\sinh(2\gamma)|^2 +  1  +  8  |\im z|^2 |\alpha|^2},
\end{equation}
with 
\[ 
2\alpha = 2\gamma_0 = 2\gamma_{00} + 2i(x-x_0) = 
2 (a_2 \beta_2 + a_3 \beta_3)  + 2i(x-x_0)
\]

  We scale and apply a Galilean transform to normalize to $z=i$. 
 Then $ \gamma_{00}= -2 \beta_2 -2i \beta_3$. After a translation and a phase change
 we have $x_0=\theta=0$
 which leads to 
 $\beta_1 = \beta_3$, $\beta_0 = \beta_2$ 
 and with \eqref{eq:gamma}   $ \gamma  = \red{-} 2x$.     
  Then $\alpha= -2 \beta_2 +i(x-2 \beta_3)  $, and the normalized 
  double soliton has the form

 \begin{equation}\label{double-Q1}
 Q= 4 \frac{(1-4 i\beta_2) \cosh(2x) - 2(x-2\beta_3)\sinh(2x)}{ \cosh^2(2x) + 4(4\beta_2^2 + (x-2\beta_3)^2)}.     
 \end{equation} 
which we plot for selected parameters. First we show real 2-soliton  functions  corresponding to $\beta_2=0$: 


 \begin{figure}[ht]
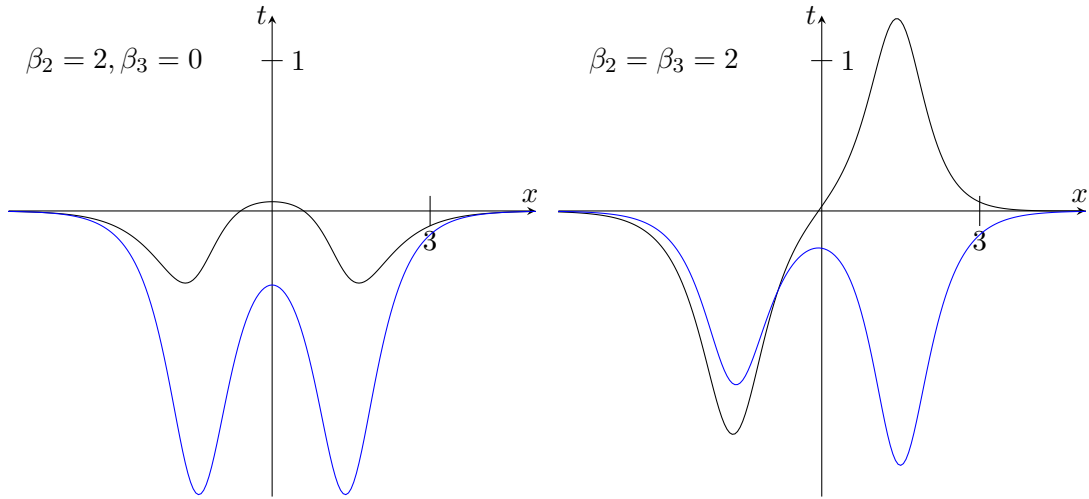


\caption{The real part is shown with a black line, the imaginary part by a blue line.} 
\label{complexsol} 
\end{figure}

\subsection{A description of the two soliton dynamics for NLS}

In this section we describe the possible patterns for the interaction
of two solitons with nearby spectral parameters along the NLS flow.

Along the flow the two spectral parameters $z_1$ and $z_2$ stay fixed,
while the scattering parameters $\kappa_1$ and $\kappa_2$ evolve along
the NLS flow according to
\[
\dot \kappa_1 = i z_1^2, \qquad \dot \kappa_2 = iz_2^2
\]
which expressed in terms of the $\beta$'s becomes
\[
\dot \beta_0 = 0, \dot \beta_1 = 0, \dot \beta_2 = 1, \dot \beta_3 = 0,
\]
i.e. $\beta_3$ is the NLS time, which we redenote by $t$, and the others stay fixed.
We set the trivial parameters $\beta_0$ and $\beta_1$ to zero,
and we work out the formulas for the approximate effective 
position of Proposition \ref{approximate} in this case. We begin with the center of mass, 
which moves with velocity
\[
\dot x_0 = - \frac{\im(z_1^2 + z_2^2)}{\im(z_1 + z_2)}.
\]
The remaining interesting parameter is $\gamma_{00}$, which also moves linearly,
with velocity 
\[
\dot \gamma_{00} = a_2,
\]
where we recall that the coefficient $a_2$ is given by
\[
a_2  = i \frac{(z_1+z_2) \im (z_1+z_2)- \im (z_1^2+z_2^2)}{\im (z_1+z_2) }
= - \im(z_1+z_2) -i  \frac{(\real z_1 - \real z_2)(\im z_1-\im z_2)}{\im z_1+\im z_2}.
\]

Assuming that $z_1$ and $z_2$ are close, this has a small real part, and an imaginary part which is away from zero. Then we write
$\gamma_{00}$ in the form 
\[
\gamma_{00}(t) = a +  a_2 t,
\]
where the complex parameter $a$ is our remaining degree of freedom. We can use time translations 
to further normalize $a$, e.g. by choosing it purely real, and then periodicity to insure
that $|a| \lesssim |z_1-z_2|^{-1}$.

With these notations we have
\[
\alpha_0(t)= \frac{\sinh\left((z_1-z_2) (a+ta_2)\right) }{z_1-z_2},
\]
\[
\sigma_0(t) = (z_1-z_2)\coth\left( (z_1-z_2) (a+ta_2)\right),
\]
and the approximate effective bump position is 
\[
x_{\pm}(t) = x_0(t) \pm \frac{ \ln |z_1-\bar z_2| + \ln 2 |\alpha_0(t)|}{\im z_1 +\im z_2 \pm \sigma_0(t) }.
\]

To understand the behavior of the two bumps in time, we need to look at the 
location of the line 
\[
L: \qquad t \to (z_1-z_2) \gamma_{00} = (z_1-z_2) (a+ta_2)
\]
relative to the imaginary axis, and, more importantly, relative to $i \pi \Z $.
Based on this relative position we distinguish two main scenarios, with several 
interesting subcases each. These are described in terms of the difference
$\delta z = z_2 - z_1$ of the two spectral parameters.

\begin{enumerate}[label=(\alph*)]

\item The double soliton case, $\delta z= 0$, which will be viewed both separately and as a limit 
of the scenarios below.

\item Split velocities, where $L$ is fully transversal to the imaginary axis. 
This corresponds to $|\im \delta z| \gtrsim |\real \delta z|$.  Depending on how how close $L$ gets 
to $i \pi \Z$ we have two subcases:

\begin{enumerate}[label=(\roman*)]
    \item Nonresonant, where $d(L,i \pi \Z) \approx 1$, where the two bumps stay as far as 
    possible from each other, i.e. $|\log |\delta z||$ at the closest approach.
    \item Resonant,   where $d(L,i \pi \Z) \ll 1$, and the bumps approach closer than the 
    above threshold. The double soliton case can be seen as a limit of this scenario 
    where $a$ is fixed, and the closest approach is $|\log |a||$.
\end{enumerate}

\item Split scales,  where $L$ is close to parallel to the imaginary axis. 
This corresponds to $|\im \delta z| \ll |\real \delta z|$.  Depending on  how close $L$ gets 
to $i \pi \Z$ we also have two main subcases, and an interesting limiting case:

\begin{enumerate}[label=(\roman*)]
    \item Nonresonant, where $d(L,i \pi \Z) \approx  d_0 = |\im \delta z| / |\real \delta z|$, 
    where the two bumps stay as far as possible from each other, i.e. $|\log |\im \delta z||$ at the closest approach.
    
    \item Resonant,   where $d(L,i \pi \Z)  = d_1 \ll  d_0$, and the bumps approach closer than the 
    above threshold. The double soliton case can also be seen as a limit of this scenario 
    where $a$ is fixed and the closest approach is $|\log |a||$.
    
    \item Quasiperiodic, where $\im \delta z = 0$ and $L$ is parallel to the imaginary axis, at 
    distance $d$. There the soliton distance oscillates between $\log |a|$ and $\log |z_1-z_2|$.
\end{enumerate}

\end{enumerate}
We successively discuss each of these scenarios in turn.

\def\dz{\delta z}

\subsubsection*{(a) The double solitons  $z_1 = z_2$} There we have 
\[
\dot x_0 = - 2 \real z, \qquad \dot \gamma_{00} =   2i \im z. 
\]
Hence after suitable space and time translations we can set 
\[
x_0 = - 2t \real z
\]
\[
\gamma_{00} = 2 it \im z + a, \qquad a \in \R
\]
Hence if $|a| \gg 1$ we have the approximate bump locations 
\[ 
z_{\pm}= z \pm  \frac1{4it\im z+2a}      
\] 
\[
x_{\pm} = x_0 \pm \frac{2\ln 2 +  \ln \im z + \ln |2 it \im z + a |}{2\im z_{\pm} } 
\] 

The separation between the two bumps is $O(\ln |\gamma_{00}|)$, with a $\log |a|$ minimum.
The trajectories of the bumps are like in the following picture:

\begin{figure}[h]
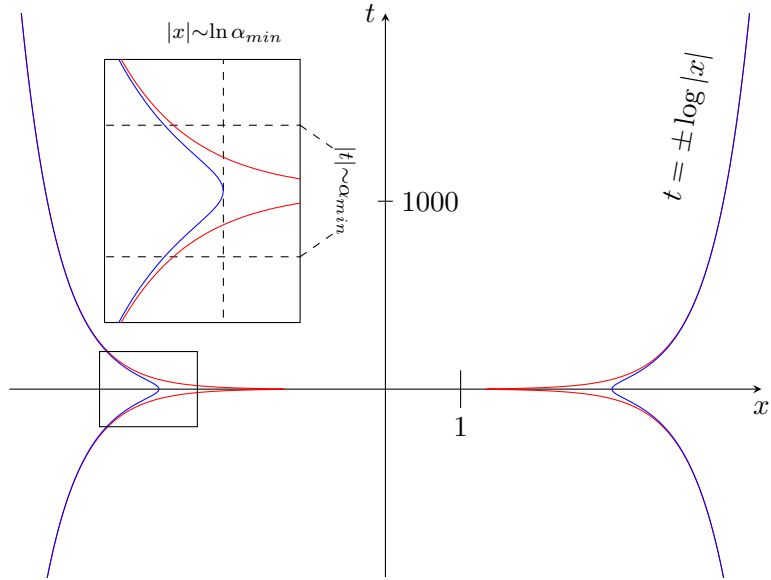
\label{doublez}  

\caption{
The path $x_{\pm}(t)$ of the  solitons  for an eigenvalue $z=i$  of multiplicity $2$. The red curve has $a=2$ and the blue curve $a=100$. Turning is smooth, as seen in the enlarged window.}
\end{figure}


\subsubsection*{ More general $2$-solitons}
Now we consider the case of two different but close spectral parameters.  
We use a galilean transformation and a translation to normalize so that the
center of mass is time independent at $x_0 = 0$, and set
\[
z_1^2 + z_2^2 = -2
\]
so that both $z_1$ and $z_2$ are close to $i$. To describe the dynamics we will
use the small parameter $\dz = z_1-z_2$. This parameter will play a major 
role for in the region where $|\real \alpha_0| \lesssim 1$, which happens for  
a time range
\[
T \approx \frac{1}{\real \dz},
\]
after which the interaction of the two bumps trivializes, in the sense  that the two bumps will evolve linearly but with a spatial shift as predicted by Proposition \ref{approximate}. 

\bigskip

\subsubsection*{(b)(i) Split velocities $|\im \dz| \lesssim |\real \dz|$ and nonresonant 
$ \dist( \dz \gamma_{00}, i \pi \Z) \gtrsim 1 $.}
Then solitons come together with their respective speed, until they reach distance $-\log |\dz|$.
Then they exchange spectral parameters and move away. The effective 
scattering parameters are shifted between the asymptotes at $\pm \infty$ by $\ln |z|$.

\subsubsection*{(b)(ii) Split velocities $|\im \dz| \lesssim |\real \dz|$ and resonant $ \dist(\dz \gamma_{00}, i \pi \Z) = r \ll 1 $.}
Then solitons come together with their respective speed, until they reach distance $-\log |\dz|$ but then they 
continue to approach logarithmically for another $-\log r$ before turnaround.
In the limiting case $z_1 = z_2$ then this reduces to only the logarithmic pattern which comes up to 
minimal distance $- \log |a|$.


\begin{figure}  

\bigskip 

\begin{tikzpicture}[yscale=2]
\draw[->] (0.0000, -2.5)--(0.0000,2.5);
\draw[->] (-5.0000, 0.0000)--(5.0000,0.0000);
\draw (-.1,1.25)--(.1,1.25); \draw(.6,1.25) node{$500$};
\draw (1,.1)--(1,-.1); \draw (1,-.2) node{$1$};
\draw (1.942,-.1)--(1.942,.1); 
\draw (1.942,.7) node[rotate=90]{$x\sim \ln \alpha_{min}$};
\draw(4,1.7) node[rotate=85]{dashed: $t \sim \log x$};
\draw(4.8,2) node[rotate=78]{solid: affine};
\draw (-.3,2.5) node{$t$};
\draw (5,-.2) node{$x$};
\clip (-5,-2.5) rectangle (5,2.5); 
\draw[dashed] (4.5146,-2.6075)--(4.4896,-2.4800)--(4.4645,-2.3587)--(4.4394,-2.2433)--(4.4143,-2.1335)--(4.3893,-2.0291)--(4.3642,-1.9298)--(4.3391,-1.8353)--(4.3140,-1.7455)--(4.2889,-1.6600)--(4.2637,-1.5787)--(4.2386,-1.5013)--(4.2135,-1.4277)--(4.1884,-1.3577)--(4.1632,-1.2912)--(4.1381,-1.2278)--(4.1129,-1.1676)--(4.0878,-1.1103)--(4.0626,-1.0558)--(4.0374,-1.0039)--(4.0122,-0.9546)--(3.9870,-0.9077)--(3.9618,-0.8630)--(3.9366,-0.8206)--(3.9114,-0.7802)--(3.8861,-0.7418)--(3.8609,-0.7053)--(3.8356,-0.6705)--(3.8103,-0.6374)--(3.7850,-0.6060)--(3.7597,-0.5761)--(3.7344,-0.5476)--(3.7090,-0.5205)--(3.6837,-0.4948)--(3.6583,-0.4703)--(3.6329,-0.4470)--(3.6074,-0.4248)--(3.5820,-0.4037)--(3.5565,-0.3837)--(3.5310,-0.3646)--(3.5055,-0.3465)--(3.4800,-0.3292)--(3.4544,-0.3128)--(3.4288,-0.2971)--(3.4031,-0.2823)--(3.3775,-0.2682)--(3.3518,-0.2547)--(3.3260,-0.2419)--(3.3002,-0.2298)--(3.2744,-0.2182)--(3.2485,-0.2072)--(3.2226,-0.1967)--(3.1967,-0.1868)--(3.1707,-0.1773)--(3.1446,-0.1683)--(3.1185,-0.1597)--(3.0923,-0.1515)--(3.0660,-0.1438)--(3.0397,-0.1364)--(3.0134,-0.1294)--(2.9869,-0.1227)--(2.9604,-0.1164)--(2.9338,-0.1103)--(2.9071,-0.1046)--(2.8803,-0.0991)--(2.8534,-0.0939)--(2.8265,-0.0890)--(2.7994,-0.0843)--(2.7722,-0.0798)--(2.7449,-0.0755)--(2.7174,-0.0715)--(2.6898,-0.0676)--(2.6621,-0.0640)--(2.6343,-0.0605)--(2.6062,-0.0572)--(2.5781,-0.0540)--(2.5497,-0.0510)--(2.5212,-0.0482)--(2.4924,-0.0454)--(2.4635,-0.0429)--(2.4343,-0.0404)--(2.4049,-0.0381)--(2.3753,-0.0358)--(2.3454,-0.0337)--(2.3152,-0.0317)--(2.2847,-0.0298)--(2.2539,-0.0280)--(2.2228,-0.0263)--(2.1913,-0.0246)--(2.1595,-0.0230)--(2.1273,-0.0216)--(2.0946,-0.0201)--(2.0616,-0.0188)--(2.0281,-0.0175)--(1.9941,-0.0163)--(1.9596,-0.0151)--(1.9246,-0.0140)--(1.8891,-0.0130)--(1.8530,-0.0120)--(1.8165,-0.0110)--(1.7793,-0.0101)--(1.7417,-0.0093)--(1.7037,-0.0084)--(1.6654,-0.0077)--(1.6268,-0.0069)--(1.5884,-0.0062)--(1.5503,-0.0056)--(1.5132,-0.0049)--(1.4777,-0.0043)--(1.4448,-0.0037)--(1.4159,-0.0032)--(1.3925,-0.0027)--(1.3764,-0.0022)--(1.3687,-0.0017)--(1.3694,-0.0013)--(1.3757,-0.0008)--(1.3829,-0.0004)--(1.3863,-0.0000)--(1.3839,0.0004)--(1.3771,0.0008)--(1.3702,0.0012)--(1.3683,0.0016)--(1.3745,0.0021)--(1.3892,0.0026)--(1.4115,0.0031)--(1.4396,0.0036)--(1.4719,0.0042)--(1.5070,0.0048)--(1.5439,0.0054)--(1.5818,0.0061)--(1.6202,0.0068)--(1.6588,0.0075)--(1.6972,0.0083)--(1.7352,0.0091)--(1.7729,0.0100)--(1.8101,0.0109)--(1.8468,0.0118)--(1.8829,0.0128)--(1.9186,0.0138)--(1.9536,0.0149)--(1.9882,0.0161)--(2.0223,0.0173)--(2.0559,0.0186)--(2.0890,0.0199)--(2.1217,0.0213)--(2.1540,0.0228)--(2.1859,0.0243)--(2.2174,0.0260)--(2.2486,0.0277)--(2.2794,0.0295)--(2.3100,0.0314)--(2.3402,0.0334)--(2.3701,0.0355)--(2.3998,0.0377)--(2.4293,0.0400)--(2.4585,0.0424)--(2.4875,0.0450)--(2.5162,0.0477)--(2.5448,0.0505)--(2.5732,0.0535)--(2.6014,0.0566)--(2.6295,0.0599)--(2.6574,0.0634)--(2.6851,0.0670)--(2.7127,0.0708)--(2.7402,0.0748)--(2.7675,0.0790)--(2.7947,0.0835)--(2.8218,0.0881)--(2.8488,0.0930)--(2.8757,0.0982)--(2.9025,0.1036)--(2.9292,0.1093)--(2.9558,0.1153)--(2.9824,0.1216)--(3.0088,0.1282)--(3.0352,0.1352)--(3.0615,0.1425)--(3.0878,0.1502)--(3.1140,0.1583)--(3.1401,0.1668)--(3.1662,0.1757)--(3.1922,0.1851)--(3.2182,0.1950)--(3.2441,0.2054)--(3.2700,0.2163)--(3.2958,0.2277)--(3.3216,0.2398)--(3.3473,0.2525)--(3.3731,0.2658)--(3.3987,0.2798)--(3.4244,0.2945)--(3.4500,0.3100)--(3.4756,0.3263)--(3.5011,0.3434)--(3.5266,0.3614)--(3.5521,0.3803)--(3.5776,0.4002)--(3.6031,0.4211)--(3.6285,0.4431)--(3.6539,0.4662)--(3.6793,0.4905)--(3.7047,0.5160)--(3.7300,0.5429)--(3.7554,0.5711)--(3.7807,0.6007)--(3.8060,0.6319)--(3.8313,0.6647)--(3.8565,0.6992)--(3.8818,0.7354)--(3.9070,0.7735)--(3.9323,0.8135)--(3.9575,0.8556)--(3.9827,0.8999)--(4.0079,0.9464)--(4.0331,0.9953)--(4.0583,1.0467)--(4.0834,1.1007)--(4.1086,1.1576)--(4.1338,1.2173)--(4.1589,1.2801)--(4.1841,1.3461)--(4.2092,1.4155)--(4.2343,1.4884)--(4.2594,1.5651)--(4.2846,1.6457)--(4.3097,1.7305)--(4.3348,1.8196)--(4.3599,1.9133)--(4.3850,2.0117)--(4.4100,2.1152)--(4.4351,2.2241)--(4.4602,2.3385)--(4.4853,2.4587)--(4.5103,2.5852);

\draw[dashed] (-4.5146,-2.6075)--(-4.4896,-2.4800)--(-4.4645,-2.3587)--(-4.4394,-2.2433)--(-4.4143,-2.1335)--(-4.3893,-2.0291)--(-4.3642,-1.9298)--(-4.3391,-1.8353)--(-4.3140,-1.7455)--(-4.2889,-1.6600)--(-4.2637,-1.5787)--(-4.2386,-1.5013)--(-4.2135,-1.4277)--(-4.1884,-1.3577)--(-4.1632,-1.2912)--(-4.1381,-1.2278)--(-4.1129,-1.1676)--(-4.0878,-1.1103)--(-4.0626,-1.0558)--(-4.0374,-1.0039)--(-4.0122,-0.9546)--(-3.9870,-0.9077)--(-3.9618,-0.8630)--(-3.9366,-0.8206)--(-3.9114,-0.7802)--(-3.8861,-0.7418)--(-3.8609,-0.7053)--(-3.8356,-0.6705)--(-3.8103,-0.6374)--(-3.7850,-0.6060)--(-3.7597,-0.5761)--(-3.7344,-0.5476)--(-3.7090,-0.5205)--(-3.6837,-0.4948)--(-3.6583,-0.4703)--(-3.6329,-0.4470)--(-3.6074,-0.4248)--(-3.5820,-0.4037)--(-3.5565,-0.3837)--(-3.5310,-0.3646)--(-3.5055,-0.3465)--(-3.4800,-0.3292)--(-3.4544,-0.3128)--(-3.4288,-0.2971)--(-3.4031,-0.2823)--(-3.3775,-0.2682)--(-3.3518,-0.2547)--(-3.3260,-0.2419)--(-3.3002,-0.2298)--(-3.2744,-0.2182)--(-3.2485,-0.2072)--(-3.2226,-0.1967)--(-3.1967,-0.1868)--(-3.1707,-0.1773)--(-3.1446,-0.1683)--(-3.1185,-0.1597)--(-3.0923,-0.1515)--(-3.0660,-0.1438)--(-3.0397,-0.1364)--(-3.0134,-0.1294)--(-2.9869,-0.1227)--(-2.9604,-0.1164)--(-2.9338,-0.1103)--(-2.9071,-0.1046)--(-2.8803,-0.0991)--(-2.8534,-0.0939)--(-2.8265,-0.0890)--(-2.7994,-0.0843)--(-2.7722,-0.0798)--(-2.7449,-0.0755)--(-2.7174,-0.0715)--(-2.6898,-0.0676)--(-2.6621,-0.0640)--(-2.6343,-0.0605)--(-2.6062,-0.0572)--(-2.5781,-0.0540)--(-2.5497,-0.0510)--(-2.5212,-0.0482)--(-2.4924,-0.0454)--(-2.4635,-0.0429)--(-2.4343,-0.0404)--(-2.4049,-0.0381)--(-2.3753,-0.0358)--(-2.3454,-0.0337)--(-2.3152,-0.0317)--(-2.2847,-0.0298)--(-2.2539,-0.0280)--(-2.2228,-0.0263)--(-2.1913,-0.0246)--(-2.1595,-0.0230)--(-2.1273,-0.0216)--(-2.0946,-0.0201)--(-2.0616,-0.0188)--(-2.0281,-0.0175)--(-1.9941,-0.0163)--(-1.9596,-0.0151)--(-1.9246,-0.0140)--(-1.8891,-0.0130)--(-1.8530,-0.0120)--(-1.8165,-0.0110)--(-1.7793,-0.0101)--(-1.7417,-0.0093)--(-1.7037,-0.0084)--(-1.6654,-0.0077)--(-1.6268,-0.0069)--(-1.5884,-0.0062)--(-1.5503,-0.0056)--(-1.5132,-0.0049)--(-1.4777,-0.0043)--(-1.4448,-0.0037)--(-1.4159,-0.0032)--(-1.3925,-0.0027)--(-1.3764,-0.0022)--(-1.3687,-0.0017)--(-1.3694,-0.0013)--(-1.3757,-0.0008)--(-1.3829,-0.0004)--(-1.3863,-0.0000)--(-1.3839,0.0004)--(-1.3771,0.0008)--(-1.3702,0.0012)--(-1.3683,0.0016)--(-1.3745,0.0021)--(-1.3892,0.0026)--(-1.4115,0.0031)--(-1.4396,0.0036)--(-1.4719,0.0042)--(-1.5070,0.0048)--(-1.5439,0.0054)--(-1.5818,0.0061)--(-1.6202,0.0068)--(-1.6588,0.0075)--(-1.6972,0.0083)--(-1.7352,0.0091)--(-1.7729,0.0100)--(-1.8101,0.0109)--(-1.8468,0.0118)--(-1.8829,0.0128)--(-1.9186,0.0138)--(-1.9536,0.0149)--(-1.9882,0.0161)--(-2.0223,0.0173)--(-2.0559,0.0186)--(-2.0890,0.0199)--(-2.1217,0.0213)--(-2.1540,0.0228)--(-2.1859,0.0243)--(-2.2174,0.0260)--(-2.2486,0.0277)--(-2.2794,0.0295)--(-2.3100,0.0314)--(-2.3402,0.0334)--(-2.3701,0.0355)--(-2.3998,0.0377)--(-2.4293,0.0400)--(-2.4585,0.0424)--(-2.4875,0.0450)--(-2.5162,0.0477)--(-2.5448,0.0505)--(-2.5732,0.0535)--(-2.6014,0.0566)--(-2.6295,0.0599)--(-2.6574,0.0634)--(-2.6851,0.0670)--(-2.7127,0.0708)--(-2.7402,0.0748)--(-2.7675,0.0790)--(-2.7947,0.0835)--(-2.8218,0.0881)--(-2.8488,0.0930)--(-2.8757,0.0982)--(-2.9025,0.1036)--(-2.9292,0.1093)--(-2.9558,0.1153)--(-2.9824,0.1216)--(-3.0088,0.1282)--(-3.0352,0.1352)--(-3.0615,0.1425)--(-3.0878,0.1502)--(-3.1140,0.1583)--(-3.1401,0.1668)--(-3.1662,0.1757)--(-3.1922,0.1851)--(-3.2182,0.1950)--(-3.2441,0.2054)--(-3.2700,0.2163)--(-3.2958,0.2277)--(-3.3216,0.2398)--(-3.3473,0.2525)--(-3.3731,0.2658)--(-3.3987,0.2798)--(-3.4244,0.2945)--(-3.4500,0.3100)--(-3.4756,0.3263)--(-3.5011,0.3434)--(-3.5266,0.3614)--(-3.5521,0.3803)--(-3.5776,0.4002)--(-3.6031,0.4211)--(-3.6285,0.4431)--(-3.6539,0.4662)--(-3.6793,0.4905)--(-3.7047,0.5160)--(-3.7300,0.5429)--(-3.7554,0.5711)--(-3.7807,0.6007)--(-3.8060,0.6319)--(-3.8313,0.6647)--(-3.8565,0.6992)--(-3.8818,0.7354)--(-3.9070,0.7735)--(-3.9323,0.8135)--(-3.9575,0.8556)--(-3.9827,0.8999)--(-4.0079,0.9464)--(-4.0331,0.9953)--(-4.0583,1.0467)--(-4.0834,1.1007)--(-4.1086,1.1576)--(-4.1338,1.2173)--(-4.1589,1.2801)--(-4.1841,1.3461)--(-4.2092,1.4155)--(-4.2343,1.4884)--(-4.2594,1.5651)--(-4.2846,1.6457)--(-4.3097,1.7305)--(-4.3348,1.8196)--(-4.3599,1.9133)--(-4.3850,2.0117)--(-4.4100,2.1152)--(-4.4351,2.2241)--(-4.4602,2.3385)--(-4.4853,2.4587)--(-4.5103,2.5852);

\draw (4.8217,-2.5737)--(4.7860,-2.4874)--(4.7508,-2.4029)--(4.7162,-2.3203)--(4.6822,-2.2396)--(4.6488,-2.1608)--(4.6159,-2.0839)--(4.5835,-2.0088)--(4.5516,-1.9357)--(4.5203,-1.8645)--(4.4894,-1.7952)--(4.4590,-1.7278)--(4.4291,-1.6623)--(4.3996,-1.5986)--(4.3705,-1.5368)--(4.3418,-1.4769)--(4.3134,-1.4187)--(4.2854,-1.3624)--(4.2578,-1.3078)--(4.2304,-1.2550)--(4.2033,-1.2039)--(4.1765,-1.1545)--(4.1499,-1.1068)--(4.1236,-1.0606)--(4.0974,-1.0161)--(4.0714,-0.9731)--(4.0456,-0.9317)--(4.0199,-0.8917)--(3.9944,-0.8532)--(3.9690,-0.8161)--(3.9437,-0.7804)--(3.9184,-0.7460)--(3.8932,-0.7130)--(3.8681,-0.6812)--(3.8430,-0.6506)--(3.8180,-0.6212)--(3.7929,-0.5930)--(3.7679,-0.5658)--(3.7429,-0.5398)--(3.7178,-0.5148)--(3.6927,-0.4909)--(3.6676,-0.4679)--(3.6424,-0.4458)--(3.6172,-0.4247)--(3.5919,-0.4045)--(3.5666,-0.3851)--(3.5412,-0.3665)--(3.5157,-0.3487)--(3.4901,-0.3317)--(3.4644,-0.3153)--(3.4385,-0.2997)--(3.4126,-0.2848)--(3.3865,-0.2705)--(3.3604,-0.2569)--(3.3340,-0.2438)--(3.3075,-0.2313)--(3.2809,-0.2194)--(3.2541,-0.2080)--(3.2271,-0.1971)--(3.2000,-0.1867)--(3.1726,-0.1767)--(3.1450,-0.1672)--(3.1173,-0.1582)--(3.0892,-0.1495)--(3.0610,-0.1412)--(3.0325,-0.1334)--(3.0037,-0.1258)--(2.9747,-0.1186)--(2.9454,-0.1118)--(2.9157,-0.1052)--(2.8858,-0.0990)--(2.8555,-0.0930)--(2.8248,-0.0873)--(2.7937,-0.0819)--(2.7623,-0.0768)--(2.7305,-0.0718)--(2.6982,-0.0671)--(2.6654,-0.0627)--(2.6322,-0.0584)--(2.5985,-0.0543)--(2.5643,-0.0504)--(2.5296,-0.0467)--(2.4944,-0.0432)--(2.4586,-0.0398)--(2.4224,-0.0366)--(2.3857,-0.0336)--(2.3486,-0.0307)--(2.3112,-0.0279)--(2.2735,-0.0253)--(2.2359,-0.0228)--(2.1985,-0.0204)--(2.1617,-0.0181)--(2.1258,-0.0159)--(2.0914,-0.0138)--(2.0592,-0.0119)--(2.0298,-0.0100)--(2.0039,-0.0082)--(1.9822,-0.0065)--(1.9651,-0.0049)--(1.9527,-0.0033)--(1.9450,-0.0019)--(1.9417,-0.0005)--(1.9424,0.0009)--(1.9471,0.0023)--(1.9563,0.0038)--(1.9702,0.0054)--(1.9889,0.0071)--(2.0121,0.0088)--(2.0392,0.0106)--(2.0696,0.0125)--(2.1026,0.0145)--(2.1375,0.0166)--(2.1738,0.0188)--(2.2109,0.0211)--(2.2484,0.0236)--(2.2860,0.0261)--(2.3236,0.0288)--(2.3609,0.0316)--(2.3979,0.0346)--(2.4344,0.0377)--(2.4705,0.0409)--(2.5061,0.0444)--(2.5411,0.0479)--(2.5757,0.0517)--(2.6097,0.0556)--(2.6433,0.0598)--(2.6763,0.0641)--(2.7089,0.0687)--(2.7411,0.0734)--(2.7728,0.0784)--(2.8041,0.0837)--(2.8350,0.0892)--(2.8655,0.0950)--(2.8957,0.1010)--(2.9256,0.1074)--(2.9551,0.1140)--(2.9844,0.1210)--(3.0133,0.1283)--(3.0420,0.1359)--(3.0704,0.1439)--(3.0986,0.1523)--(3.1265,0.1611)--(3.1542,0.1703)--(3.1817,0.1800)--(3.2090,0.1901)--(3.2361,0.2007)--(3.2630,0.2117)--(3.2898,0.2233)--(3.3163,0.2354)--(3.3428,0.2481)--(3.3691,0.2613)--(3.3952,0.2752)--(3.4212,0.2897)--(3.4471,0.3048)--(3.4729,0.3207)--(3.4986,0.3372)--(3.5241,0.3545)--(3.5496,0.3726)--(3.5750,0.3914)--(3.6003,0.4111)--(3.6256,0.4316)--(3.6508,0.4530)--(3.6759,0.4754)--(3.7010,0.4987)--(3.7261,0.5230)--(3.7512,0.5483)--(3.7762,0.5747)--(3.8012,0.6022)--(3.8263,0.6308)--(3.8513,0.6606)--(3.8764,0.6916)--(3.9016,0.7238)--(3.9268,0.7573)--(3.9520,0.7921)--(3.9774,0.8283)--(4.0029,0.8658)--(4.0284,0.9048)--(4.0542,0.9453)--(4.0800,0.9872)--(4.1061,1.0307)--(4.1323,1.0758)--(4.1587,1.1224)--(4.1854,1.1707)--(4.2123,1.2207)--(4.2394,1.2723)--(4.2669,1.3257)--(4.2947,1.3809)--(4.3228,1.4378)--(4.3513,1.4965)--(4.3801,1.5571)--(4.4093,1.6195)--(4.4390,1.6838)--(4.4691,1.7499)--(4.4996,1.8180)--(4.5306,1.8879)--(4.5621,1.9597)--(4.5942,2.0335)--(4.6267,2.1092)--(4.6598,2.1867)--(4.6934,2.2662)--(4.7276,2.3475)--(4.7624,2.4307)--(4.7978,2.5158);

\draw (-4.8217,-2.5737)--(-4.7860,-2.4874)--(-4.7508,-2.4029)--(-4.7162,-2.3203)--(-4.6822,-2.2396)--(-4.6488,-2.1608)--(-4.6159,-2.0839)--(-4.5835,-2.0088)--(-4.5516,-1.9357)--(-4.5203,-1.8645)--(-4.4894,-1.7952)--(-4.4590,-1.7278)--(-4.4291,-1.6623)--(-4.3996,-1.5986)--(-4.3705,-1.5368)--(-4.3418,-1.4769)--(-4.3134,-1.4187)--(-4.2854,-1.3624)--(-4.2578,-1.3078)--(-4.2304,-1.2550)--(-4.2033,-1.2039)--(-4.1765,-1.1545)--(-4.1499,-1.1068)--(-4.1236,-1.0606)--(-4.0974,-1.0161)--(-4.0714,-0.9731)--(-4.0456,-0.9317)--(-4.0199,-0.8917)--(-3.9944,-0.8532)--(-3.9690,-0.8161)--(-3.9437,-0.7804)--(-3.9184,-0.7460)--(-3.8932,-0.7130)--(-3.8681,-0.6812)--(-3.8430,-0.6506)--(-3.8180,-0.6212)--(-3.7929,-0.5930)--(-3.7679,-0.5658)--(-3.7429,-0.5398)--(-3.7178,-0.5148)--(-3.6927,-0.4909)--(-3.6676,-0.4679)--(-3.6424,-0.4458)--(-3.6172,-0.4247)--(-3.5919,-0.4045)--(-3.5666,-0.3851)--(-3.5412,-0.3665)--(-3.5157,-0.3487)--(-3.4901,-0.3317)--(-3.4644,-0.3153)--(-3.4385,-0.2997)--(-3.4126,-0.2848)--(-3.3865,-0.2705)--(-3.3604,-0.2569)--(-3.3340,-0.2438)--(-3.3075,-0.2313)--(-3.2809,-0.2194)--(-3.2541,-0.2080)--(-3.2271,-0.1971)--(-3.2000,-0.1867)--(-3.1726,-0.1767)--(-3.1450,-0.1672)--(-3.1173,-0.1582)--(-3.0892,-0.1495)--(-3.0610,-0.1412)--(-3.0325,-0.1334)--(-3.0037,-0.1258)--(-2.9747,-0.1186)--(-2.9454,-0.1118)--(-2.9157,-0.1052)--(-2.8858,-0.0990)--(-2.8555,-0.0930)--(-2.8248,-0.0873)--(-2.7937,-0.0819)--(-2.7623,-0.0768)--(-2.7305,-0.0718)--(-2.6982,-0.0671)--(-2.6654,-0.0627)--(-2.6322,-0.0584)--(-2.5985,-0.0543)--(-2.5643,-0.0504)--(-2.5296,-0.0467)--(-2.4944,-0.0432)--(-2.4586,-0.0398)--(-2.4224,-0.0366)--(-2.3857,-0.0336)--(-2.3486,-0.0307)--(-2.3112,-0.0279)--(-2.2735,-0.0253)--(-2.2359,-0.0228)--(-2.1985,-0.0204)--(-2.1617,-0.0181)--(-2.1258,-0.0159)--(-2.0914,-0.0138)--(-2.0592,-0.0119)--(-2.0298,-0.0100)--(-2.0039,-0.0082)--(-1.9822,-0.0065)--(-1.9651,-0.0049)--(-1.9527,-0.0033)--(-1.9450,-0.0019)--(-1.9417,-0.0005)--(-1.9424,0.0009)--(-1.9471,0.0023)--(-1.9563,0.0038)--(-1.9702,0.0054)--(-1.9889,0.0071)--(-2.0121,0.0088)--(-2.0392,0.0106)--(-2.0696,0.0125)--(-2.1026,0.0145)--(-2.1375,0.0166)--(-2.1738,0.0188)--(-2.2109,0.0211)--(-2.2484,0.0236)--(-2.2860,0.0261)--(-2.3236,0.0288)--(-2.3609,0.0316)--(-2.3979,0.0346)--(-2.4344,0.0377)--(-2.4705,0.0409)--(-2.5061,0.0444)--(-2.5411,0.0479)--(-2.5757,0.0517)--(-2.6097,0.0556)--(-2.6433,0.0598)--(-2.6763,0.0641)--(-2.7089,0.0687)--(-2.7411,0.0734)--(-2.7728,0.0784)--(-2.8041,0.0837)--(-2.8350,0.0892)--(-2.8655,0.0950)--(-2.8957,0.1010)--(-2.9256,0.1074)--(-2.9551,0.1140)--(-2.9844,0.1210)--(-3.0133,0.1283)--(-3.0420,0.1359)--(-3.0704,0.1439)--(-3.0986,0.1523)--(-3.1265,0.1611)--(-3.1542,0.1703)--(-3.1817,0.1800)--(-3.2090,0.1901)--(-3.2361,0.2007)--(-3.2630,0.2117)--(-3.2898,0.2233)--(-3.3163,0.2354)--(-3.3428,0.2481)--(-3.3691,0.2613)--(-3.3952,0.2752)--(-3.4212,0.2897)--(-3.4471,0.3048)--(-3.4729,0.3207)--(-3.4986,0.3372)--(-3.5241,0.3545)--(-3.5496,0.3726)--(-3.5750,0.3914)--(-3.6003,0.4111)--(-3.6256,0.4316)--(-3.6508,0.4530)--(-3.6759,0.4754)--(-3.7010,0.4987)--(-3.7261,0.5230)--(-3.7512,0.5483)--(-3.7762,0.5747)--(-3.8012,0.6022)--(-3.8263,0.6308)--(-3.8513,0.6606)--(-3.8764,0.6916)--(-3.9016,0.7238)--(-3.9268,0.7573)--(-3.9520,0.7921)--(-3.9774,0.8283)--(-4.0029,0.8658)--(-4.0284,0.9048)--(-4.0542,0.9453)--(-4.0800,0.9872)--(-4.1061,1.0307)--(-4.1323,1.0758)--(-4.1587,1.1224)--(-4.1854,1.1707)--(-4.2123,1.2207)--(-4.2394,1.2723)--(-4.2669,1.3257)--(-4.2947,1.3809)--(-4.3228,1.4378)--(-4.3513,1.4965)--(-4.3801,1.5571)--(-4.4093,1.6195)--(-4.4390,1.6838)--(-4.4691,1.7499)--(-4.4996,1.8180)--(-4.5306,1.8879)--(-4.5621,1.9597)--(-4.5942,2.0335)--(-4.6267,2.1092)--(-4.6598,2.1867)--(-4.6934,2.2662)--(-4.7276,2.3475)--(-4.7624,2.4307)--(-4.7978,2.5158);
\draw[color=red]  (4.8217,-2.5737)--(2.1207,  3.8063);
\draw[color=red]  (-4.8217,+2.5737)--(-2.1207, -3.8063);

\draw[dashed,color=red,->] (0 ,-2)--(4.5 ,-2);  
\draw[dashed,color=red,->] (0,-2)--(-2.8,-2);
\draw (2,-1.8) node[color=red]{Shift due to interaction};
\end{tikzpicture}
\caption{For reference, the dashed lines are the path for the double eigenvalue $i$. The solid line is the path of the two solitons with 
$z_1=0.0005+i$, $z_2=-0.0005+i$ and $a=10$, which switches back and forth from affine to logarithmic shape.
The red lines show the asymptotic shift.}

\label{figgeneric} 
\end{figure}
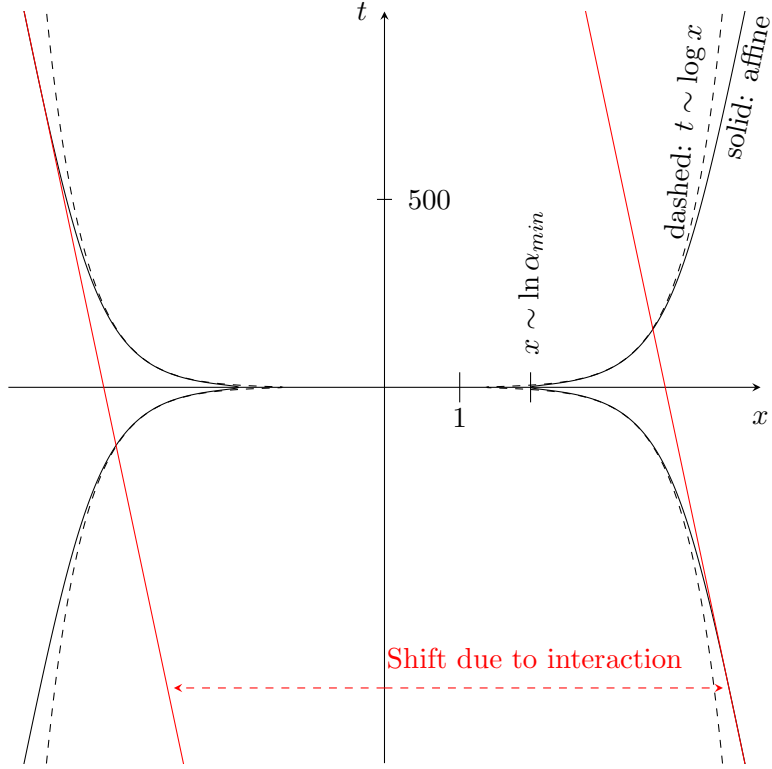



\bigskip

\subsubsection*{(c)(i) Split scales $|\im \dz| \gg |\real \dz|$ and nonresonant 
$ r_0 = \dist(\dz \gamma_{00}, i \pi \Z)  \approx \dfrac{\real \dz}{\im \dz} $.}
Then solitons come together with their respective speed, until they reach distance $-\log |\dz|$ but then they 
continue to approach logarithmically until distance $-\log r_0$ before turnaround; this pattern repeats until  distance grows again above $-\log |\dz|$, for a time $T \approx 1/\Re \dz$.

\subsubsection*{(c)(ii) Split scales $|\im z| \gg |\Re z|$ and resonant 
$ r = \dist(\dz \gamma_{00}, i \pi \Z)  \lesssim  \dfrac{\real \dz}{\im \dz} $.}
This is the same periodic pattern as above but it comes closer in exactly once,
to distance $-\log r$.

\subsubsection*{ (c)(iii) The quasiperiodic solutions $\Im \delta z = 0$} \
In this case we can set
\[
\gamma_{00} = 2i\Im z + a, \qquad a \in \R
\]
which leads to the time frequency
\[
\omega = 2 \real \delta z \Im z
\]
and the time period $2\pi/\omega$.

Assuming that 
\[ 
1 << |a|  \lesssim |z_1-z_2|^{-1},       
\]
the maximal distance from the center of mass axis is approximately $\log |z_1-z_2|$  and the minimal distance  is about $\log |a|$. Else we get a uniform $\log |a|$ distance.


\begin{figure}\label{periodic}  

\bigskip 

\begin{tikzpicture}[yscale=2]
\draw[->] (0.0000, -2.5)--(0.0000,2.5);
\draw[->] (-5.0000, 0.0000)--(5.0000,0.0000);
\draw (-.1,1.25)--(.1,1.25); \draw(.6,1.25) node{$500$};
\draw (1,.1)--(1,-.1); \draw (1,-.2) node{$1$};
\draw (1.942,-.1)--(1.942,.1); 
\draw (1.942,.7) node[rotate=90]{$x\sim \ln \alpha_{min}$};
\draw(4,1.7) node[rotate=85]{dashed: $t \sim \log x$};
\draw(4.8,2) node[rotate=78]{solid: affine};
\draw (-.3,2.5) node{$t$};
\draw (5,-.2) node{$x$};
\clip (-5,-2.5) rectangle (5,2.5); 
\draw[dashed] (4.5146,-2.6075)--(4.4896,-2.4800)--(4.4645,-2.3587)--(4.4394,-2.2433)--(4.4143,-2.1335)--(4.3893,-2.0291)--(4.3642,-1.9298)--(4.3391,-1.8353)--(4.3140,-1.7455)--(4.2889,-1.6600)--(4.2637,-1.5787)--(4.2386,-1.5013)--(4.2135,-1.4277)--(4.1884,-1.3577)--(4.1632,-1.2912)--(4.1381,-1.2278)--(4.1129,-1.1676)--(4.0878,-1.1103)--(4.0626,-1.0558)--(4.0374,-1.0039)--(4.0122,-0.9546)--(3.9870,-0.9077)--(3.9618,-0.8630)--(3.9366,-0.8206)--(3.9114,-0.7802)--(3.8861,-0.7418)--(3.8609,-0.7053)--(3.8356,-0.6705)--(3.8103,-0.6374)--(3.7850,-0.6060)--(3.7597,-0.5761)--(3.7344,-0.5476)--(3.7090,-0.5205)--(3.6837,-0.4948)--(3.6583,-0.4703)--(3.6329,-0.4470)--(3.6074,-0.4248)--(3.5820,-0.4037)--(3.5565,-0.3837)--(3.5310,-0.3646)--(3.5055,-0.3465)--(3.4800,-0.3292)--(3.4544,-0.3128)--(3.4288,-0.2971)--(3.4031,-0.2823)--(3.3775,-0.2682)--(3.3518,-0.2547)--(3.3260,-0.2419)--(3.3002,-0.2298)--(3.2744,-0.2182)--(3.2485,-0.2072)--(3.2226,-0.1967)--(3.1967,-0.1868)--(3.1707,-0.1773)--(3.1446,-0.1683)--(3.1185,-0.1597)--(3.0923,-0.1515)--(3.0660,-0.1438)--(3.0397,-0.1364)--(3.0134,-0.1294)--(2.9869,-0.1227)--(2.9604,-0.1164)--(2.9338,-0.1103)--(2.9071,-0.1046)--(2.8803,-0.0991)--(2.8534,-0.0939)--(2.8265,-0.0890)--(2.7994,-0.0843)--(2.7722,-0.0798)--(2.7449,-0.0755)--(2.7174,-0.0715)--(2.6898,-0.0676)--(2.6621,-0.0640)--(2.6343,-0.0605)--(2.6062,-0.0572)--(2.5781,-0.0540)--(2.5497,-0.0510)--(2.5212,-0.0482)--(2.4924,-0.0454)--(2.4635,-0.0429)--(2.4343,-0.0404)--(2.4049,-0.0381)--(2.3753,-0.0358)--(2.3454,-0.0337)--(2.3152,-0.0317)--(2.2847,-0.0298)--(2.2539,-0.0280)--(2.2228,-0.0263)--(2.1913,-0.0246)--(2.1595,-0.0230)--(2.1273,-0.0216)--(2.0946,-0.0201)--(2.0616,-0.0188)--(2.0281,-0.0175)--(1.9941,-0.0163)--(1.9596,-0.0151)--(1.9246,-0.0140)--(1.8891,-0.0130)--(1.8530,-0.0120)--(1.8165,-0.0110)--(1.7793,-0.0101)--(1.7417,-0.0093)--(1.7037,-0.0084)--(1.6654,-0.0077)--(1.6268,-0.0069)--(1.5884,-0.0062)--(1.5503,-0.0056)--(1.5132,-0.0049)--(1.4777,-0.0043)--(1.4448,-0.0037)--(1.4159,-0.0032)--(1.3925,-0.0027)--(1.3764,-0.0022)--(1.3687,-0.0017)--(1.3694,-0.0013)--(1.3757,-0.0008)--(1.3829,-0.0004)--(1.3863,-0.0000)--(1.3839,0.0004)--(1.3771,0.0008)--(1.3702,0.0012)--(1.3683,0.0016)--(1.3745,0.0021)--(1.3892,0.0026)--(1.4115,0.0031)--(1.4396,0.0036)--(1.4719,0.0042)--(1.5070,0.0048)--(1.5439,0.0054)--(1.5818,0.0061)--(1.6202,0.0068)--(1.6588,0.0075)--(1.6972,0.0083)--(1.7352,0.0091)--(1.7729,0.0100)--(1.8101,0.0109)--(1.8468,0.0118)--(1.8829,0.0128)--(1.9186,0.0138)--(1.9536,0.0149)--(1.9882,0.0161)--(2.0223,0.0173)--(2.0559,0.0186)--(2.0890,0.0199)--(2.1217,0.0213)--(2.1540,0.0228)--(2.1859,0.0243)--(2.2174,0.0260)--(2.2486,0.0277)--(2.2794,0.0295)--(2.3100,0.0314)--(2.3402,0.0334)--(2.3701,0.0355)--(2.3998,0.0377)--(2.4293,0.0400)--(2.4585,0.0424)--(2.4875,0.0450)--(2.5162,0.0477)--(2.5448,0.0505)--(2.5732,0.0535)--(2.6014,0.0566)--(2.6295,0.0599)--(2.6574,0.0634)--(2.6851,0.0670)--(2.7127,0.0708)--(2.7402,0.0748)--(2.7675,0.0790)--(2.7947,0.0835)--(2.8218,0.0881)--(2.8488,0.0930)--(2.8757,0.0982)--(2.9025,0.1036)--(2.9292,0.1093)--(2.9558,0.1153)--(2.9824,0.1216)--(3.0088,0.1282)--(3.0352,0.1352)--(3.0615,0.1425)--(3.0878,0.1502)--(3.1140,0.1583)--(3.1401,0.1668)--(3.1662,0.1757)--(3.1922,0.1851)--(3.2182,0.1950)--(3.2441,0.2054)--(3.2700,0.2163)--(3.2958,0.2277)--(3.3216,0.2398)--(3.3473,0.2525)--(3.3731,0.2658)--(3.3987,0.2798)--(3.4244,0.2945)--(3.4500,0.3100)--(3.4756,0.3263)--(3.5011,0.3434)--(3.5266,0.3614)--(3.5521,0.3803)--(3.5776,0.4002)--(3.6031,0.4211)--(3.6285,0.4431)--(3.6539,0.4662)--(3.6793,0.4905)--(3.7047,0.5160)--(3.7300,0.5429)--(3.7554,0.5711)--(3.7807,0.6007)--(3.8060,0.6319)--(3.8313,0.6647)--(3.8565,0.6992)--(3.8818,0.7354)--(3.9070,0.7735)--(3.9323,0.8135)--(3.9575,0.8556)--(3.9827,0.8999)--(4.0079,0.9464)--(4.0331,0.9953)--(4.0583,1.0467)--(4.0834,1.1007)--(4.1086,1.1576)--(4.1338,1.2173)--(4.1589,1.2801)--(4.1841,1.3461)--(4.2092,1.4155)--(4.2343,1.4884)--(4.2594,1.5651)--(4.2846,1.6457)--(4.3097,1.7305)--(4.3348,1.8196)--(4.3599,1.9133)--(4.3850,2.0117)--(4.4100,2.1152)--(4.4351,2.2241)--(4.4602,2.3385)--(4.4853,2.4587)--(4.5103,2.5852);

\draw[dashed] (-4.5146,-2.6075)--(-4.4896,-2.4800)--(-4.4645,-2.3587)--(-4.4394,-2.2433)--(-4.4143,-2.1335)--(-4.3893,-2.0291)--(-4.3642,-1.9298)--(-4.3391,-1.8353)--(-4.3140,-1.7455)--(-4.2889,-1.6600)--(-4.2637,-1.5787)--(-4.2386,-1.5013)--(-4.2135,-1.4277)--(-4.1884,-1.3577)--(-4.1632,-1.2912)--(-4.1381,-1.2278)--(-4.1129,-1.1676)--(-4.0878,-1.1103)--(-4.0626,-1.0558)--(-4.0374,-1.0039)--(-4.0122,-0.9546)--(-3.9870,-0.9077)--(-3.9618,-0.8630)--(-3.9366,-0.8206)--(-3.9114,-0.7802)--(-3.8861,-0.7418)--(-3.8609,-0.7053)--(-3.8356,-0.6705)--(-3.8103,-0.6374)--(-3.7850,-0.6060)--(-3.7597,-0.5761)--(-3.7344,-0.5476)--(-3.7090,-0.5205)--(-3.6837,-0.4948)--(-3.6583,-0.4703)--(-3.6329,-0.4470)--(-3.6074,-0.4248)--(-3.5820,-0.4037)--(-3.5565,-0.3837)--(-3.5310,-0.3646)--(-3.5055,-0.3465)--(-3.4800,-0.3292)--(-3.4544,-0.3128)--(-3.4288,-0.2971)--(-3.4031,-0.2823)--(-3.3775,-0.2682)--(-3.3518,-0.2547)--(-3.3260,-0.2419)--(-3.3002,-0.2298)--(-3.2744,-0.2182)--(-3.2485,-0.2072)--(-3.2226,-0.1967)--(-3.1967,-0.1868)--(-3.1707,-0.1773)--(-3.1446,-0.1683)--(-3.1185,-0.1597)--(-3.0923,-0.1515)--(-3.0660,-0.1438)--(-3.0397,-0.1364)--(-3.0134,-0.1294)--(-2.9869,-0.1227)--(-2.9604,-0.1164)--(-2.9338,-0.1103)--(-2.9071,-0.1046)--(-2.8803,-0.0991)--(-2.8534,-0.0939)--(-2.8265,-0.0890)--(-2.7994,-0.0843)--(-2.7722,-0.0798)--(-2.7449,-0.0755)--(-2.7174,-0.0715)--(-2.6898,-0.0676)--(-2.6621,-0.0640)--(-2.6343,-0.0605)--(-2.6062,-0.0572)--(-2.5781,-0.0540)--(-2.5497,-0.0510)--(-2.5212,-0.0482)--(-2.4924,-0.0454)--(-2.4635,-0.0429)--(-2.4343,-0.0404)--(-2.4049,-0.0381)--(-2.3753,-0.0358)--(-2.3454,-0.0337)--(-2.3152,-0.0317)--(-2.2847,-0.0298)--(-2.2539,-0.0280)--(-2.2228,-0.0263)--(-2.1913,-0.0246)--(-2.1595,-0.0230)--(-2.1273,-0.0216)--(-2.0946,-0.0201)--(-2.0616,-0.0188)--(-2.0281,-0.0175)--(-1.9941,-0.0163)--(-1.9596,-0.0151)--(-1.9246,-0.0140)--(-1.8891,-0.0130)--(-1.8530,-0.0120)--(-1.8165,-0.0110)--(-1.7793,-0.0101)--(-1.7417,-0.0093)--(-1.7037,-0.0084)--(-1.6654,-0.0077)--(-1.6268,-0.0069)--(-1.5884,-0.0062)--(-1.5503,-0.0056)--(-1.5132,-0.0049)--(-1.4777,-0.0043)--(-1.4448,-0.0037)--(-1.4159,-0.0032)--(-1.3925,-0.0027)--(-1.3764,-0.0022)--(-1.3687,-0.0017)--(-1.3694,-0.0013)--(-1.3757,-0.0008)--(-1.3829,-0.0004)--(-1.3863,-0.0000)--(-1.3839,0.0004)--(-1.3771,0.0008)--(-1.3702,0.0012)--(-1.3683,0.0016)--(-1.3745,0.0021)--(-1.3892,0.0026)--(-1.4115,0.0031)--(-1.4396,0.0036)--(-1.4719,0.0042)--(-1.5070,0.0048)--(-1.5439,0.0054)--(-1.5818,0.0061)--(-1.6202,0.0068)--(-1.6588,0.0075)--(-1.6972,0.0083)--(-1.7352,0.0091)--(-1.7729,0.0100)--(-1.8101,0.0109)--(-1.8468,0.0118)--(-1.8829,0.0128)--(-1.9186,0.0138)--(-1.9536,0.0149)--(-1.9882,0.0161)--(-2.0223,0.0173)--(-2.0559,0.0186)--(-2.0890,0.0199)--(-2.1217,0.0213)--(-2.1540,0.0228)--(-2.1859,0.0243)--(-2.2174,0.0260)--(-2.2486,0.0277)--(-2.2794,0.0295)--(-2.3100,0.0314)--(-2.3402,0.0334)--(-2.3701,0.0355)--(-2.3998,0.0377)--(-2.4293,0.0400)--(-2.4585,0.0424)--(-2.4875,0.0450)--(-2.5162,0.0477)--(-2.5448,0.0505)--(-2.5732,0.0535)--(-2.6014,0.0566)--(-2.6295,0.0599)--(-2.6574,0.0634)--(-2.6851,0.0670)--(-2.7127,0.0708)--(-2.7402,0.0748)--(-2.7675,0.0790)--(-2.7947,0.0835)--(-2.8218,0.0881)--(-2.8488,0.0930)--(-2.8757,0.0982)--(-2.9025,0.1036)--(-2.9292,0.1093)--(-2.9558,0.1153)--(-2.9824,0.1216)--(-3.0088,0.1282)--(-3.0352,0.1352)--(-3.0615,0.1425)--(-3.0878,0.1502)--(-3.1140,0.1583)--(-3.1401,0.1668)--(-3.1662,0.1757)--(-3.1922,0.1851)--(-3.2182,0.1950)--(-3.2441,0.2054)--(-3.2700,0.2163)--(-3.2958,0.2277)--(-3.3216,0.2398)--(-3.3473,0.2525)--(-3.3731,0.2658)--(-3.3987,0.2798)--(-3.4244,0.2945)--(-3.4500,0.3100)--(-3.4756,0.3263)--(-3.5011,0.3434)--(-3.5266,0.3614)--(-3.5521,0.3803)--(-3.5776,0.4002)--(-3.6031,0.4211)--(-3.6285,0.4431)--(-3.6539,0.4662)--(-3.6793,0.4905)--(-3.7047,0.5160)--(-3.7300,0.5429)--(-3.7554,0.5711)--(-3.7807,0.6007)--(-3.8060,0.6319)--(-3.8313,0.6647)--(-3.8565,0.6992)--(-3.8818,0.7354)--(-3.9070,0.7735)--(-3.9323,0.8135)--(-3.9575,0.8556)--(-3.9827,0.8999)--(-4.0079,0.9464)--(-4.0331,0.9953)--(-4.0583,1.0467)--(-4.0834,1.1007)--(-4.1086,1.1576)--(-4.1338,1.2173)--(-4.1589,1.2801)--(-4.1841,1.3461)--(-4.2092,1.4155)--(-4.2343,1.4884)--(-4.2594,1.5651)--(-4.2846,1.6457)--(-4.3097,1.7305)--(-4.3348,1.8196)--(-4.3599,1.9133)--(-4.3850,2.0117)--(-4.4100,2.1152)--(-4.4351,2.2241)--(-4.4602,2.3385)--(-4.4853,2.4587)--(-4.5103,2.5852);

\draw (4.8217,-2.5737)--(4.7860,-2.4874)--(4.7508,-2.4029)--(4.7162,-2.3203)--(4.6822,-2.2396)--(4.6488,-2.1608)--(4.6159,-2.0839)--(4.5835,-2.0088)--(4.5516,-1.9357)--(4.5203,-1.8645)--(4.4894,-1.7952)--(4.4590,-1.7278)--(4.4291,-1.6623)--(4.3996,-1.5986)--(4.3705,-1.5368)--(4.3418,-1.4769)--(4.3134,-1.4187)--(4.2854,-1.3624)--(4.2578,-1.3078)--(4.2304,-1.2550)--(4.2033,-1.2039)--(4.1765,-1.1545)--(4.1499,-1.1068)--(4.1236,-1.0606)--(4.0974,-1.0161)--(4.0714,-0.9731)--(4.0456,-0.9317)--(4.0199,-0.8917)--(3.9944,-0.8532)--(3.9690,-0.8161)--(3.9437,-0.7804)--(3.9184,-0.7460)--(3.8932,-0.7130)--(3.8681,-0.6812)--(3.8430,-0.6506)--(3.8180,-0.6212)--(3.7929,-0.5930)--(3.7679,-0.5658)--(3.7429,-0.5398)--(3.7178,-0.5148)--(3.6927,-0.4909)--(3.6676,-0.4679)--(3.6424,-0.4458)--(3.6172,-0.4247)--(3.5919,-0.4045)--(3.5666,-0.3851)--(3.5412,-0.3665)--(3.5157,-0.3487)--(3.4901,-0.3317)--(3.4644,-0.3153)--(3.4385,-0.2997)--(3.4126,-0.2848)--(3.3865,-0.2705)--(3.3604,-0.2569)--(3.3340,-0.2438)--(3.3075,-0.2313)--(3.2809,-0.2194)--(3.2541,-0.2080)--(3.2271,-0.1971)--(3.2000,-0.1867)--(3.1726,-0.1767)--(3.1450,-0.1672)--(3.1173,-0.1582)--(3.0892,-0.1495)--(3.0610,-0.1412)--(3.0325,-0.1334)--(3.0037,-0.1258)--(2.9747,-0.1186)--(2.9454,-0.1118)--(2.9157,-0.1052)--(2.8858,-0.0990)--(2.8555,-0.0930)--(2.8248,-0.0873)--(2.7937,-0.0819)--(2.7623,-0.0768)--(2.7305,-0.0718)--(2.6982,-0.0671)--(2.6654,-0.0627)--(2.6322,-0.0584)--(2.5985,-0.0543)--(2.5643,-0.0504)--(2.5296,-0.0467)--(2.4944,-0.0432)--(2.4586,-0.0398)--(2.4224,-0.0366)--(2.3857,-0.0336)--(2.3486,-0.0307)--(2.3112,-0.0279)--(2.2735,-0.0253)--(2.2359,-0.0228)--(2.1985,-0.0204)--(2.1617,-0.0181)--(2.1258,-0.0159)--(2.0914,-0.0138)--(2.0592,-0.0119)--(2.0298,-0.0100)--(2.0039,-0.0082)--(1.9822,-0.0065)--(1.9651,-0.0049)--(1.9527,-0.0033)--(1.9450,-0.0019)--(1.9417,-0.0005)--(1.9424,0.0009)--(1.9471,0.0023)--(1.9563,0.0038)--(1.9702,0.0054)--(1.9889,0.0071)--(2.0121,0.0088)--(2.0392,0.0106)--(2.0696,0.0125)--(2.1026,0.0145)--(2.1375,0.0166)--(2.1738,0.0188)--(2.2109,0.0211)--(2.2484,0.0236)--(2.2860,0.0261)--(2.3236,0.0288)--(2.3609,0.0316)--(2.3979,0.0346)--(2.4344,0.0377)--(2.4705,0.0409)--(2.5061,0.0444)--(2.5411,0.0479)--(2.5757,0.0517)--(2.6097,0.0556)--(2.6433,0.0598)--(2.6763,0.0641)--(2.7089,0.0687)--(2.7411,0.0734)--(2.7728,0.0784)--(2.8041,0.0837)--(2.8350,0.0892)--(2.8655,0.0950)--(2.8957,0.1010)--(2.9256,0.1074)--(2.9551,0.1140)--(2.9844,0.1210)--(3.0133,0.1283)--(3.0420,0.1359)--(3.0704,0.1439)--(3.0986,0.1523)--(3.1265,0.1611)--(3.1542,0.1703)--(3.1817,0.1800)--(3.2090,0.1901)--(3.2361,0.2007)--(3.2630,0.2117)--(3.2898,0.2233)--(3.3163,0.2354)--(3.3428,0.2481)--(3.3691,0.2613)--(3.3952,0.2752)--(3.4212,0.2897)--(3.4471,0.3048)--(3.4729,0.3207)--(3.4986,0.3372)--(3.5241,0.3545)--(3.5496,0.3726)--(3.5750,0.3914)--(3.6003,0.4111)--(3.6256,0.4316)--(3.6508,0.4530)--(3.6759,0.4754)--(3.7010,0.4987)--(3.7261,0.5230)--(3.7512,0.5483)--(3.7762,0.5747)--(3.8012,0.6022)--(3.8263,0.6308)--(3.8513,0.6606)--(3.8764,0.6916)--(3.9016,0.7238)--(3.9268,0.7573)--(3.9520,0.7921)--(3.9774,0.8283)--(4.0029,0.8658)--(4.0284,0.9048)--(4.0542,0.9453)--(4.0800,0.9872)--(4.1061,1.0307)--(4.1323,1.0758)--(4.1587,1.1224)--(4.1854,1.1707)--(4.2123,1.2207)--(4.2394,1.2723)--(4.2669,1.3257)--(4.2947,1.3809)--(4.3228,1.4378)--(4.3513,1.4965)--(4.3801,1.5571)--(4.4093,1.6195)--(4.4390,1.6838)--(4.4691,1.7499)--(4.4996,1.8180)--(4.5306,1.8879)--(4.5621,1.9597)--(4.5942,2.0335)--(4.6267,2.1092)--(4.6598,2.1867)--(4.6934,2.2662)--(4.7276,2.3475)--(4.7624,2.4307)--(4.7978,2.5158);

\draw (-4.8217,-2.5737)--(-4.7860,-2.4874)--(-4.7508,-2.4029)--(-4.7162,-2.3203)--(-4.6822,-2.2396)--(-4.6488,-2.1608)--(-4.6159,-2.0839)--(-4.5835,-2.0088)--(-4.5516,-1.9357)--(-4.5203,-1.8645)--(-4.4894,-1.7952)--(-4.4590,-1.7278)--(-4.4291,-1.6623)--(-4.3996,-1.5986)--(-4.3705,-1.5368)--(-4.3418,-1.4769)--(-4.3134,-1.4187)--(-4.2854,-1.3624)--(-4.2578,-1.3078)--(-4.2304,-1.2550)--(-4.2033,-1.2039)--(-4.1765,-1.1545)--(-4.1499,-1.1068)--(-4.1236,-1.0606)--(-4.0974,-1.0161)--(-4.0714,-0.9731)--(-4.0456,-0.9317)--(-4.0199,-0.8917)--(-3.9944,-0.8532)--(-3.9690,-0.8161)--(-3.9437,-0.7804)--(-3.9184,-0.7460)--(-3.8932,-0.7130)--(-3.8681,-0.6812)--(-3.8430,-0.6506)--(-3.8180,-0.6212)--(-3.7929,-0.5930)--(-3.7679,-0.5658)--(-3.7429,-0.5398)--(-3.7178,-0.5148)--(-3.6927,-0.4909)--(-3.6676,-0.4679)--(-3.6424,-0.4458)--(-3.6172,-0.4247)--(-3.5919,-0.4045)--(-3.5666,-0.3851)--(-3.5412,-0.3665)--(-3.5157,-0.3487)--(-3.4901,-0.3317)--(-3.4644,-0.3153)--(-3.4385,-0.2997)--(-3.4126,-0.2848)--(-3.3865,-0.2705)--(-3.3604,-0.2569)--(-3.3340,-0.2438)--(-3.3075,-0.2313)--(-3.2809,-0.2194)--(-3.2541,-0.2080)--(-3.2271,-0.1971)--(-3.2000,-0.1867)--(-3.1726,-0.1767)--(-3.1450,-0.1672)--(-3.1173,-0.1582)--(-3.0892,-0.1495)--(-3.0610,-0.1412)--(-3.0325,-0.1334)--(-3.0037,-0.1258)--(-2.9747,-0.1186)--(-2.9454,-0.1118)--(-2.9157,-0.1052)--(-2.8858,-0.0990)--(-2.8555,-0.0930)--(-2.8248,-0.0873)--(-2.7937,-0.0819)--(-2.7623,-0.0768)--(-2.7305,-0.0718)--(-2.6982,-0.0671)--(-2.6654,-0.0627)--(-2.6322,-0.0584)--(-2.5985,-0.0543)--(-2.5643,-0.0504)--(-2.5296,-0.0467)--(-2.4944,-0.0432)--(-2.4586,-0.0398)--(-2.4224,-0.0366)--(-2.3857,-0.0336)--(-2.3486,-0.0307)--(-2.3112,-0.0279)--(-2.2735,-0.0253)--(-2.2359,-0.0228)--(-2.1985,-0.0204)--(-2.1617,-0.0181)--(-2.1258,-0.0159)--(-2.0914,-0.0138)--(-2.0592,-0.0119)--(-2.0298,-0.0100)--(-2.0039,-0.0082)--(-1.9822,-0.0065)--(-1.9651,-0.0049)--(-1.9527,-0.0033)--(-1.9450,-0.0019)--(-1.9417,-0.0005)--(-1.9424,0.0009)--(-1.9471,0.0023)--(-1.9563,0.0038)--(-1.9702,0.0054)--(-1.9889,0.0071)--(-2.0121,0.0088)--(-2.0392,0.0106)--(-2.0696,0.0125)--(-2.1026,0.0145)--(-2.1375,0.0166)--(-2.1738,0.0188)--(-2.2109,0.0211)--(-2.2484,0.0236)--(-2.2860,0.0261)--(-2.3236,0.0288)--(-2.3609,0.0316)--(-2.3979,0.0346)--(-2.4344,0.0377)--(-2.4705,0.0409)--(-2.5061,0.0444)--(-2.5411,0.0479)--(-2.5757,0.0517)--(-2.6097,0.0556)--(-2.6433,0.0598)--(-2.6763,0.0641)--(-2.7089,0.0687)--(-2.7411,0.0734)--(-2.7728,0.0784)--(-2.8041,0.0837)--(-2.8350,0.0892)--(-2.8655,0.0950)--(-2.8957,0.1010)--(-2.9256,0.1074)--(-2.9551,0.1140)--(-2.9844,0.1210)--(-3.0133,0.1283)--(-3.0420,0.1359)--(-3.0704,0.1439)--(-3.0986,0.1523)--(-3.1265,0.1611)--(-3.1542,0.1703)--(-3.1817,0.1800)--(-3.2090,0.1901)--(-3.2361,0.2007)--(-3.2630,0.2117)--(-3.2898,0.2233)--(-3.3163,0.2354)--(-3.3428,0.2481)--(-3.3691,0.2613)--(-3.3952,0.2752)--(-3.4212,0.2897)--(-3.4471,0.3048)--(-3.4729,0.3207)--(-3.4986,0.3372)--(-3.5241,0.3545)--(-3.5496,0.3726)--(-3.5750,0.3914)--(-3.6003,0.4111)--(-3.6256,0.4316)--(-3.6508,0.4530)--(-3.6759,0.4754)--(-3.7010,0.4987)--(-3.7261,0.5230)--(-3.7512,0.5483)--(-3.7762,0.5747)--(-3.8012,0.6022)--(-3.8263,0.6308)--(-3.8513,0.6606)--(-3.8764,0.6916)--(-3.9016,0.7238)--(-3.9268,0.7573)--(-3.9520,0.7921)--(-3.9774,0.8283)--(-4.0029,0.8658)--(-4.0284,0.9048)--(-4.0542,0.9453)--(-4.0800,0.9872)--(-4.1061,1.0307)--(-4.1323,1.0758)--(-4.1587,1.1224)--(-4.1854,1.1707)--(-4.2123,1.2207)--(-4.2394,1.2723)--(-4.2669,1.3257)--(-4.2947,1.3809)--(-4.3228,1.4378)--(-4.3513,1.4965)--(-4.3801,1.5571)--(-4.4093,1.6195)--(-4.4390,1.6838)--(-4.4691,1.7499)--(-4.4996,1.8180)--(-4.5306,1.8879)--(-4.5621,1.9597)--(-4.5942,2.0335)--(-4.6267,2.1092)--(-4.6598,2.1867)--(-4.6934,2.2662)--(-4.7276,2.3475)--(-4.7624,2.4307)--(-4.7978,2.5158);
\draw[color=red]  (4.8217,-2.5737)--(2.1207,  3.8063);
\draw[color=red]  (-4.8217,+2.5737)--(-2.1207, -3.8063);

\draw[dashed,color=red,->] (0 ,-2)--(4.5 ,-2);  
\draw[dashed,color=red,->] (0,-2)--(-2.8,-2);
\draw (2,-1.8) node[color=red]{Shift due to interaction};
\end{tikzpicture}
\caption{
Here $a=1$. The dashed curve corresponds to the double eigenvalue $z=i$ for reference. The other curves correspond to 
$z_{0,1}  = (1\pm 0.01)i \mp h $ 
where $h = 0, 0.002,0.004 $ and $0.016$.}
\end{figure}
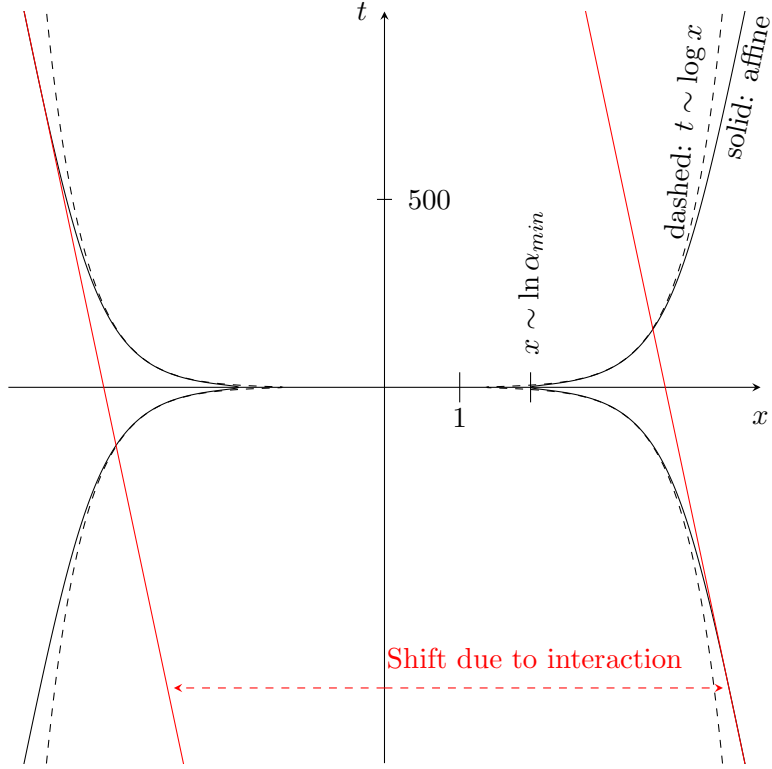

\bigskip

\subsection{A description of the 2-soliton  dynamics for mKdV} 
Here we are interested in real solutions to 
\[ u_t + u_{xxx} + 6 u^2 u_x = 0. \] 
In this case the spectrum is symmetric under the reflection on the imaginary axis $ z \to - \bar z $.
Hence $1$-soliton solutions correspond to pure imaginary eigenvalues $i w$ with $ w>0$. Then $\theta \in \pi/2 \Z$, the unbounded $iw$ wave is 
\[ 
\left( \begin{matrix} e^{- w(x-   x_0)-w^3t} \\ e^{w(x-x_0)- w^3 t}\end{matrix} \right)     
\] 
and the soliton  has speed $w^2$ and the explicit form
\[
w \cosh( w ( x-x_0 - w^2 t) ). 
\] 
For the two soliton case we first specialize the previous formulas.  We are interested in real solutions with close eigenvalues, which we write as 
\[
z_{1,2} = i (w \pm \rho) 
\] 
with $ \rho^2 \in \R$. Then we have
\[
\dot x_0 =  w^2+ 3 \rho^2,  \qquad 
\dot \gamma_{00} = 2i(\rho^2-w^2)  
\] 
\[ \gamma = - w[(x-x_0) - (w^2+ 3 \rho^2) t]  \] 
\[ 
\gamma_1-\gamma_2 = i \tilde \gamma = 2\rho(x-x_0 -t (3 w^2+\rho^2))+ i \mu. \] 
for some fixed constant $\mu$.
With these parameters, the two soliton $Q$ is given by
\[ 
Q= - 2 \frac{e^{2\gamma}\Big[ 2w \cosh(\overline{\gamma_1-\gamma_2}) 
- \dfrac{4w^2}{\bar \rho} \sinh(\overline{\gamma_1-\gamma_2}) \Big] 
 + e^{-2\gamma}
\Big[  2w \cosh(\gamma_1-\gamma_2) +
\dfrac{4w^2}\rho \sinh(\gamma_1-\gamma_2) \Big]}
{4 \cosh^2(2\gamma)  + 2 |\cosh(\gamma_1-\gamma_2) |^2-2 + 8(2(w^2/\rho^2)-1)   |\sinh(\gamma_1-\gamma_2)|^2.}  
\]  

Since we are interested in real solitons, the  parameter $\mu$ cannot 
be chosen arbitrarily. Precisely, there are three possibilities for real solutions:

\begin{enumerate}[label=\alph*)]
    \item  \label{separateeigen} Both eigenvalues are distinct and purely imaginary, $\rho \in (0.w)$. Here we have four connected components,
which up to symmetries are grouped in  two subcases: 
    
    (i) $\mu \in  2\pi  \Z$, which gives the formula 
\[ 
    Q=  - \frac{  4w \cosh(2\gamma) \cosh(\gamma_1-\gamma_2) 
- 4w^2 \sinh(2\gamma) \rho^{-1} \sinh(\gamma_1-\gamma_2) }
{ 2 \cosh^2(2\gamma)  + (8(w^2/ \rho)-3)   \sinh^2(\gamma_1-\gamma_2)} . 
\]     
which corresponds to two bumps of opposite signs, and $\mu \in  \pi + 2\pi  \Z$
which yields the soliton $-Q$ with two negative bumps.

(ii) If $ \mu \in   2\pi \Z + \pi/2 $  then we divide by $i$ and get    
\[ 
Q= - \frac{  4w \sinh(2\gamma) \sinh(\gamma_1-\gamma_2) 
- 4w^2 \cosh(2\gamma) \rho^{-1} \cosh(\gamma_1-\gamma_2) }
{ 2 \sinh^2(2\gamma)  + ( 8(w^2\rho^{-2})-3)  \cosh^2(\gamma_1-\gamma_2)} . 
\] 
which corresponds to two positive bumps, and $\mu \in  \frac32 \pi + 2\pi  \Z$ which yields $-Q$.

 
    \item \label{breather}  Distinct complex conjugate eigenvalues $ z_{1,2} = iw \pm \tilde \rho$ for some $ \tilde \rho >0$. This is the breather solution, which is time periodic in a moving frame. Then $\tilde \gamma$ is real valued, so after a time translation we can set $\mu = 0$, and 
 \[ Q=  - \frac{  4w \cosh(2\gamma) \cos(\tilde \gamma) 
- 4w^2 \sinh(2\gamma) \tilde \rho^{-1}\sin(\tilde \gamma) }
{ 2 \cosh^2(2\gamma)  + (8 w^2 /\tilde \rho^{2}+3) \sin^2(\tilde \gamma)} . 
\]  
    \item \label{doubleeigen} The double eigenvalue $2$-soliton, with eigenvalues $z_{1,2} = i w$, which is the transition between the regimes  \ref{separateeigen} i) and \ref{breather}   above. The formula for the  2 soliton is obtained as limit of the formulas above. 
    \[  Q= \frac{4  w \cosh(2 w (x-x_0- tw^2)) - 4w^2 (x-x_0- 3tw^2)\sinh(2(x-x_0-tw^2))}{2 \cosh^2(2w(x-x_0-tw^2)) + 8w^2(x-x_0-3tw^2)^2}.         \]
\end{enumerate}

Precisely, we can see the solitons in (a)(i), (b) and (c) above
as a single,  analytic function of $\rho^2$. If $\rho^2 <0$ we have breathers, and if $\rho^2 > 0$ we have  two soliton states with eigenvalues $i(w \pm |\rho|) $. By contrast, the solitons in (a)(ii) do not connect to the double eigenvalue and breather case. We briefly discuss the three cases
further below.

\subsubsection*{Case \ref{doubleeigen}  The double eigenvalue $z_{1,2} = iw$}

Here the center of mass evolves according to 
\[
\dot x_0 = w^2
\]
whereas $\alpha_0$ is purely imaginary, and is given by
\[
\dot \alpha_0  = -2i w^2
\]
After a time translation we can set 
\[
\alpha_0 = -2i t w^2,
\]
and then the two bumps are nearly symmetric around the center of mass,
at distance
\[
x_\pm - x_0 \approx \pm \frac{\ln \langle |t| w^2 \rangle}{2w}, \qquad 
|t| \gg 1.
\]
The  two soliton looks like a sum of two simple solitons unless the two are close. Several time sections of the graph are sketched in Figure \ref{mkdv-figure}.

\subsubsection*{Case \ref{separateeigen} The $2$-soliton  $z_{1,2} = iw_{1,2}$}
Here the center of mass evolves according to 
\[
\dot x_0 = w_1^2 + w_2^2 - w_1 w_2 = \Big(\frac{w_1+w_2}2\Big)^2 + \frac34 (w_1-w_2)^2 
\]
whereas in case (i) $\gamma_{00}$ is purely imaginary, and is given by
\[
\dot \gamma_{00}  = -2i w_1 w_2
\]
but in case (ii) it is shifted by $\frac{i\pi}{w_1-w_2}$.
Here we have two different pictures. 

In case (i),
which corresponds to bumps of opposite sign, the solitons cross 
each other and, relative to the center of mass, their centers 
move as in the similar NLS picture in Figure~\ref{figgeneric} 
but with a smaller $a$.

In case (ii), on the other hand, the two solitons approach only to distance 
$\log |w_1-w_2|$, where they exchange the effective spectral parameters, 
and then separate back.

\subsubsection*{Case \ref{breather}  The breather  $z_{1,2} = i w \pm  \rho$}

Here the center of mass evolves according to 
\[
\dot x_0 = w^2 - 3 \rho^2
\]
which is the same as the speed of each of the two corresponding solitons, taken separately.

On the other hand $\gamma_{00}$ is again purely imaginary, and is given by
\[
\dot \gamma_{00}  = -2i( w^2 + \rho^2)
\]
Here this implies that 
\[
(z_1-z_2) \gamma_{00} = -4i \rho(w^2+\rho^2)
\]
which yields the period 
\[
T = \frac{\pi}{2\rho(w^2+\rho^2)}
\]
which is presumably also the time scale on which the breather matches the double soliton.

The path of the soliton relative to the path of the center is periodic. If 
\[ \dist(  \gamma_{00} , i \pi \Z)   \ll |z_1-z_2| \] 
then the two-soliton is close a 2 soliton for the double eigenvalue. In the opposite regime the path is similar to figure \ref{periodic}, relative to the uniform movement of the center.

\bigskip

 \bibliography{nls} 
\bibliographystyle{plain} 
\end{document}